\newcommand\AlgPhase[1]{%
\Statex\hspace*{-8pt}\textbf{#1}%
}
\def\var{\mbox{Var}}
\def\X{\mathbf{X}}
\def\Y{\mathbf{Y}}
\def\1{{\mathbf 1}}
\def\N{{\mathcal{N}}}
\def\D{{\mathcal{D}}}
\def\H{{\mathbf{H}}}
\def\S{{\S}}
\def\K{{\mathcal{K}}}
\newcommand{\hyp}{\mathcal{H}}
\newcommand{\pa}[1]{\left({#1}\right)}
\newcommand{\ac}[1]{\left\{{#1}\right\}}
\def\SLasso{{\widehat{\mathcal{S}}_{\text{Lasso}}}}
\newtheorem{thrm}{Theorem}[section]
\newtheorem{proposition}[thrm]{Proposition}
\newtheorem{lemma}[thrm]{Lemma}
\newtheorem{definition}[thrm]{Definition}
\newtheorem{remark}{Remark}[section]
\def\S{{\bf {S}}}
\def\cS{{\mathcal{S}}}
\def\eps{\boldsymbol{\epsilon}}
\begin{document}

\begin{frontmatter}
\title{A Global Homogeneity Test for High-Dimensional
  Linear Regression}
\runtitle{Homogeneity tests}
\runauthor{Charbonnier et al.}
\begin{aug}

\author{\fnms{Camille} \snm{Charbonnier}\thanksref{t1}\ead[label=u]{camille.charbonnier@ensae.org}},
\author{\fnms{Nicolas} \snm{Verzelen}\thanksref{t2}\ead[label=v]{nicolas.verzelen@supagro.inra.fr}}
\and
\author{\fnms{Fanny} \snm{Villers}\thanksref{t3}\ead[label=w]{fanny.villers@upmc.fr}}

\thankstext{t1}{\printead{u}}
\thankstext{t2}{\printead{v}}
\thankstext{t3}{\printead{w}}

\address{}

\end{aug}

 \begin{abstract}
 This paper is motivated by the comparison of genetic networks based on microarray samples. The aim is to test whether the differences observed between two inferred Gaussian graphical models come from real differences or arise from estimation uncertainties.
 Adopting a neighborhood approach, we consider a two-sample linear regression model with random design and propose a procedure to test whether these two regressions are the same. Relying on  multiple testing  and variable selection strategies, we develop a testing procedure that applies to high-dimensional settings where the number of covariates $p$ is larger than the number of observations $n_1$ and $n_2$ of the two samples.  Both type I and type II errors are explicitely controlled from a non-asymptotic perspective and  the test is proved to be minimax adaptive to the sparsity.
The performances of the test are  evaluated on simulated
data. Moreover, we illustrate how this procedure can be used to
compare genetic networks on Hess \emph{et al} breast cancer microarray dataset. 
\end{abstract}
 
\begin{keyword}[class=AMS]
 \kwd[Primary ]{62H15}
\kwd[; secondary ]{62P10}
\end{keyword}
\begin{keyword}
 \kwd{Gaussian graphical model}
 \kwd{two-sample hypothesis testing}
 \kwd{high-dimensional statistics}
 \kwd{multiple testing}
\kwd{adaptive testing}
 \kwd{minimax hypothesis testing}
 \kwd{detection boundary}
 \end{keyword}

\end{frontmatter}

\section{Introduction}

The recent flood of high-dimensional data has motivated the development of a vast range
of sparse estimators for linear regressions, in particular a large variety of derivatives
from the Lasso. Although theoretical guarantees have been
provided in terms of prediction, estimation and selection
performances (among a lot of others~\cite{2009_AS_Bickel,2009_IEEE_Wainwright,2008_AS_Meinshausen}), the
research effort has only recently turned to the construction of high-dimensional confidence intervals or parametric
hypothesis testing schemes~\cite{2011_arxiv_Zhang,2012_arXiv_Buhlmann,Montanari,2013arXiv1301.7161L,2013arXiv1309.3489M}. Yet, quantifying the confidence surrounding coefficient estimates and selected covariates is essential in areas of application where these will nourrish further targeted investigations.

In this paper we consider the two-sample linear regression model with
Gaussian random design.
\begin{eqnarray}
 {Y}^{(1)} &= & { X}^{(1)}\beta^{(1)}+ \epsilon^{(1)} \label{Sodel_01}\\
 {Y}^{(2)} &= & { X}^{(2)}\beta^{(2)}+\epsilon^{(2)}\ \label{Sodel_02} .
 \end{eqnarray}
In this statistical model, the size $p$ row vectors $X^{(1)}$ and $X^{(2)}$ follow Gaussian
distributions $\N(0_p,\Sigma^{(1)})$ and $\N(0_p,\Sigma^{(2)})$ whose
covariance matrices remain unknown. The noise components
$\epsilon^{(1)}$ and $\epsilon^{(2)}$ are independent from the design
matrices and follow a centered Gaussian
distribution with unknown standard deviations $\sigma^{(1)}$ and
$\sigma^{(2)}$. In this formal setting, our objective is to develop a
test for the equality of $\beta^{(1)}$ and $\beta^{(2)}$ which remains
valid in high-dimension.

Suppose that we observe an $n_1$-sample of $(Y^{(1)},X^{(1)})$ and an
$n_2$-sample of $(Y^{(2)},X^{(2)})$ noted ${\bf Y}^{(1)}$, ${\bf
  X}^{(1)}$, and ${\bf Y}^{(2)}$, ${\bf X}^{(2)}$, with $n_1$ and $n_2$ remaining smaller than $p$.
Defining analogously $\eps^{(1)}$ and $\eps^{(2)}$, we obtain the decompositions 
$ {\bf Y}^{(1)} =  {\bf X}^{(1)}\beta^{(1)}+ \eps^{(1)}$ and 
${\bf Y}^{(2)} =  {\bf X}^{(2)}\beta^{(2)}+\eps^{(2)}$.
Given these observations, we want to test whether models (\ref{Sodel_01}) and (\ref{Sodel_02}) are the same, that is 
\begin{equation}\label{eq:hypothese}
\begin{cases}
\hyp_0:&  \beta^{(1)}=\beta^{(2)}\ ,\quad \sigma^{(1)}=\sigma^{(2)}\ ,\quad  \text{and}\quad  \Sigma^{(1)}=
\Sigma^{(2)} \\
\hyp_1:&  \beta^{(1)}\neq\beta^{(2)}\ \text{ or }\quad
\sigma^{(1)}\neq\sigma^{(2)}\ .\\
\end{cases}
\end{equation}

In the null hypothesis, we include the assumption that the population covariances of the covariates are equal ($\Sigma^{(1)} = \Sigma^{(2)}$), while under the alternative hypothesis the population covariances are not  required to be the same.  This choice of assumptions is primarely motivated by our final objective to derive homogeneity tests for Gaussian graphical models (see below). A discussion of the design hypotheses is deferred to Section \ref{sec:discussion}.

\subsection{Connection with two-sample Gaussian graphical model testing}

This testing framework is mainly motivated by the validation of differences observed between 
Gaussian graphical models (modelling regulation networks) inferred from transcriptomic data from two
samples \cite{2006_AS_Meinshausen,2008_B_Friedman,2011_SC_Chiquet}
when looking for new potential drug or knock-out targets
\cite{2011_JSFDS_Jeanmougin}. Following the development of
univariate differential analysis techniques, there is now a surging demand for the detection of differential regulations between pairs of conditions (treated vs. placebo, diseased vs. healthy,
exposed vs. control, \dots). Given  two gene regulation networks inferred from two transcriptomic data samples,
it is however difficult to disentangle differences in the estimated networks that are due to estimation errors from real differences in the true underlying networks.

We suggest to build upon our two-sample high-dimensional linear regression testing scheme to derive a global test for the equality of high-dimensional Gaussian graphical models inferred under pairs of conditions.


Formally speaking, the global two-sample GGM testing problem is defined as follows. Consider two Gaussian
random vectors $ {Z}^{(1)} \sim \mathcal{N}(0,[\Omega^{(1)}]^{-1})$ and $
Z^{(2)} \sim \mathcal{N}(0,[\Omega^{(2)}]^{-1})$. The dependency graphs  are characterized by the non-zero entries of the precision matrices $\Omega^{(1)}$ and $\Omega^{(2)}$~\cite{1996_Lauritzen}. Given an $n_1$-sample of $Z^{(1)}$ and an $n_2$-sample of $Z^{(2)}$, the objective is to test 
\begin{equation}\label{eq:hypothese_GGM}
\hyp^{G}_0:\  \Omega^{(1)}=\Omega^{(2)}\quad \text{ versus }\quad 
\hyp^{G}_1:\  \Omega^{(1)}\neq\Omega^{(2)}\ ,
\end{equation}
where $\Omega^{(1)}$ and $\Omega^{(2)}$ are assumed to be sparse (most of their entries are zero). This testing problem is global as the objective is to assess a statistically significant difference between the two distributions. If the test is rejected, a more ambitious objective is to infer the entries where the precision matrices differ (ie $\Omega^{(1)}_{i,j}\neq \Omega^{(2)}_{i,j}$).

%

Adopting a neighborhood selection approach \cite{2006_AS_Meinshausen}
as recalled in Section \ref{sec:GGM}, high-dimensional GGM estimation can be solved by multiple high-dimensional
linear regressions. As such, two-sample GGM testing \eqref{eq:hypothese_GGM}
 can be solved via
multiple two-sample hypothesis testing as \eqref{eq:hypothese} in the usual linear regression framework. This extension of two-sample linear regression tests to  GGMs is described in Section 
 \ref{sec:GGM}.



\subsection{Related work}

The literature on high-dimensional two-sample tests is  very light. In
the context of high-dimensional two-sample comparison of means,
\cite{1996_SS_Bai,2008_JMA_Srivastava,2010_AS_Chen,2011_NIPS_Lopes}
have introduced global tests to compare the means of two
high-dimensional Gaussian vectors with unknown variance. Recently,
\cite{cai11_covariance_comparison,chen_li_comparison}  developped 
two-sample tests for covariance matrices of two high-dimensional
vectors.  

In contrast, the one-sample analog of our problem has recently
attracted a lot of attention, offering as many theoretical bases for
extension to the two-sample problem. In fact, the high-dimensional
linear regression tests for the nullity of coefficients can be interpreted as a limit of the two-sample test in the case where $\beta^{(2)}$ is known to be zero, and the sample size $n_2$ is considered infinite so that we perfectly know the distribution of the second sample.

There are basically two different objectives in high-dimensional
linear testing: a local and a global approach. In the
local approach, one considers the $p$ tests for the nullity of each coefficient $\hyp_{0,i}:~\beta_i^{(1)} =0$ ($i=1,\ldots,p$) with the purpose of controling error measures such as the false discovery rate of the resulting multiple testing procedures. 
In a way,  one aims to assess the  individual statistical significance
of each of the variables. This can be achieved by providing a confidence region for $\beta^{(1)}$~\cite{2011_arxiv_Zhang,2012_arXiv_Buhlmann,Montanari,2013arXiv1301.7161L,2013arXiv1309.3489M}.
Another line of work derives $p$-values for the nullity of each of the coefficients.
Namely, \cite{2009_AS_Wasserman}
suggests a screen and  clean procedure based upon half-sampling. Model
selection is first applied upon a random half of the sample in order to test for the significance of each coefficient
using the usual combination of ordinary least squares and Student t-tests on a
model of reasonnable size on the remaining second half. To reduce the dependency
of the results to the splitting, \cite{2009_JASA_Meinshausen} 
advocate to use half-sampling $B$ times and then aggregate the $B$
$p$-values obtained for variable $j$ in
a way which controls either the family-wise error rate or false
discovery rate.




In the global approach, the objective is to test the null hypothesis $\hyp_{0}:~\beta^{(1)}=0$.
Although global approaches are clearly less informative than approaches providing individual
significance tests like \cite{2009_JASA_Meinshausen,2011_arxiv_Zhang,2012_arXiv_Buhlmann},
they can reach  better performances from smaller sample  sizes. Such a
property is of tremendous importance when dealing with high-dimensional datasets.
The idea of \cite{2010_AS_Verzelen}, based upon
the work of \cite{2003_AS_Baraud}, is to approximate the alternative
$\hyp_1:  \beta^{(1)} \neq 0$
by a collection of tractable alternatives $\{\hyp_1^{S}: \exists j
\in S\ ,\ \beta_j^{(1)} \neq 0, S \in \mathcal{S}\}$
working on subsets $S\subset\{1,\ldots,p\}$ of reasonable sizes. The null hypothesis is
rejected if the null hypothesis $\hyp_0^{S}$ is
rejected     for      at     least     one      of     the     subsets
$S\in\mathcal{S}$. Admittedly, the
resulting procedure is computationally intensive. Nonetheless it is
non-asymptotically minimax adaptive to the unknown sparsity of
$\beta^{(1)}$, that is it achieves the optimal rate of detection
without any assumption on the population covariance $\Sigma^{(1)}$ of
the covariates. Another series of work relies on higher-criticism.
This last testing framework was originally introduced in
orthonormal designs \cite{2004_AS_Donoho}, but has been
proved to reach optimal detection rates in high-dimensional linear
regression as well \cite{2011_AS_Arias-Castro,2010_EJS_Ingster}. 
In the end, higher-criticism is  highly
competitive in terms of computing time and achieves the asymptotic rate of detection with the optimal constants. However, these nice properties require strong assumptions on the design.\\ 


While writing this paper, we came across the parallel work of St\"adler
and Mukherjee \cite{2013_ArXiv_Stadler}, which adopts a local approach
in an elegant adaptation of the
\emph{screen    and    clean}    procedure   in    its    simple-split
\cite{2009_AS_Wasserman}  and multi-split \cite{2009_JASA_Meinshausen}
versions to the two-sample framework. Interestingly, their work also led to an extension to GGM
testing \cite{2014_ArXiv_Stadler}. 

In contrast we build our testing strategy upon the global approach
developed by \cite{2003_AS_Baraud} and \cite{2010_AS_Verzelen}. A more
detailed  comparison  of  \cite{2013_ArXiv_Stadler,2014_ArXiv_Stadler}
with   our   contribution   is   deferred  to   simulations   (Section
\ref{sec:num}) and discussion (Section \ref{sec:discussion}).

\subsection{Our contribution}

Our suggested approach stems from the fundamental assumption that either the true
supports of $\beta^{(1)}$ and $\beta^{(2)}$ are sparse or that their
difference $\beta^{(1)}-\beta^{(2)}$ is sparse, so that the test can
be successfully led in a subset $S^\star\subset\{1,\ldots,p\}$ of variables with reasonnable size,
compared to the sample sizes $n_1$ and $n_2$. Of course, this low
dimensional subset $S^\star$ is unknown. The whole objective of the
testing strategy is to achieve similar rates of detection (up to a
logarithmic constant) as an oracle test which would know in advance
the optimal low-dimensional subset $S^\star$.

\medskip

\noindent 
Concretely, we proceed in three steps : 
\begin{enumerate}
\item We define algorithms to select a data-driven collection of
  subsets $\widehat{\cS}$  identified as most informative for our
  testing problem, in an attempt to circumvent the optimal subset $S^\star$.
\item New parametric statistics related to the likelihood ratio statistic 
 between the conditional distributions  $Y^{(1)}|X_S^{(1)}$ and
 $Y^{(2)}|X_S^{(2)}$ are defined for $S \in \widehat{\cS}$. 
\item We define two calibration procedures which both guarantee a control
on type-I error:
\begin{itemize}
\item we use a Bonferroni calibration which is both computationally
  and conceptually simple, allowing us to prove that this procedure is minimax adaptive to the sparsity of
$\beta^{(1)}$ and $\beta^{(2)}$ from a non-asymptotic point of view; 
\item we define a calibration procedure based upon permutations to reach a fine
tuning of multiple testing calibration in practice, for an increase in
empirical power.
\end{itemize}
\end{enumerate}
The resulting testing procedure  is completely data-driven and its type I
 error is explicitely controlled. 
Furthermore, it is computationally amenable in a large $p$ and small $n$ setting. 
 Interestingly, the procedure does not require any half-sampling steps which are known to
 decrease the robustness when the sample size is small.

The  procedure is  described in  Section \ref{sec:test}  while Section
\ref{sec:testdetails}  is devoted  to technical  details,  among which
theoretical controls on Type I error, as well as some useful empirical
tools for interpretation.
 Section \ref{sec:power1} provides a non-asymptotic control of the power. 
 Section \ref{sec:num} provides simulated experiments comparing the performances
of the suggested procedures with the approach of
\cite{2013_ArXiv_Stadler}. In Section \ref{sec:GGM}, we detail the
extension of the procedure to handle the comparison of Gaussian
graphical models. The method is illustrated on Transcriptomic Breast
Cancer Data.  Finally, all the proofs are postponed to Section \ref{sec:proofs}.

The $R$ codes of our algorithms are available at \cite{code}.


%


\subsection{Notation}

In the sequel, $\ell_p$ norms are denoted $|\cdot|_p$, except for the $l_2$ norm which is
 referred as $\|.\|$ to alleviate notations. For any positive
definite matrix $\Sigma$, $\|.\|_{\Sigma}$ denotes the Euclidean norm
associated with the scalar product induced by $\Sigma$: for every
vector $x$, $\|x\|^2_{\Sigma} =x^\intercal\Sigma x$. Besides, fo
every set $S$, $|S|$ denote its cardinality.
For any integer $k$, ${\bf I}_k$ stands for the identity matrix of
size $k$. For any square matrix $A$, $\varphi_{\max}(A)$
and $\varphi_{\min}(A)$ denote respectively the maximum and minimum
eigenvalues of $A$. When the context makes it obvious, we may omit to
mention $A$ to alleviate notations and use $\varphi_{\max}$ and
$\varphi_{\min}$ instead.  Moreover,  $\Y$ refers to the size $n_1+n_2$  concatenation of $\Y^{(1)}$ and  $\Y^{(2)}$ and $\X$ refers to the size $(n_1+n_2)\times p$ the concatenation of  $\X^{(1)}$ and  $\X^{(2)}$.
To finish with, $L$ refers to a positive numerical constant that may vary from line to line.

\section{Description of the testing strategy \label{sec:test}}

Likelihood ratio statistics used to test hypotheses like $\hyp_0$
in the classical \emph{large~$n$, small~$p$} setting are intractable on
high-dimensional datasets for the mere reason that the maximum
likelihood estimator is not itself defined under high-dimensional design
proportions. Our  approach approximates the
intractable high-dimensional test by a multiple testing construction,
similarly to the strategy developped by \cite{2003_AS_Baraud} in order to
derive statistical tests against non-parametric alternatives and
adapted to one sample tests for high-dimensional linear regression in
\cite{2010_AS_Verzelen}. 

For any subset $S$ of $\{1,\ldots,p\}$ satisfying $2|S|\leq n_1\wedge n_2$, denote $X_S^{(1)}$ and $X_S^{(2)}$  the restrictions of the random vectors $X^{(1)}$ and $X^{(2)}$ to covariates indexed by $S$. Their covariance structure is noted  $\Sigma^{(1)}_S$ (resp. $\Sigma^{(2)}_S$). Consider the linear regression of $Y^{(1)}$ by $X_{S}^{(1)}$ defined by 
\[
\begin{cases}
Y^{(1)} &=  X_S^{(1)} \beta_S^{(1)} + \epsilon_S^{(1)} \\
 Y^{(2)} &=  X_S^{(2)} \beta_S^{(2)} + \epsilon_S^{(2)}, 
\end{cases}
\]
where  the  noise variables $\epsilon_S^{(1)}$ and $\epsilon_S^{(2)}$ are independent from $X_{S}^{(1)}$ and $X_S^{(2)}$ and follow centered Gaussian distributions with new unkwown conditional standard deviations $\sigma_S^{(1)}$ and $\sigma_S^{(2)}$.   We now state the test hypotheses in reduced dimension:
\[
\begin{cases}
\hyp_{0,S}:& \beta_S^{(1)}=\beta_S^{(2)}\ ,\quad \sigma_S^{(1)}=\sigma_S^{(2)}\ ,\quad  \text{and}\quad  \Sigma^{(1)}_S=
\Sigma^{(2)}_S, \\
\hyp_{1,S}:& \beta_S^{(1)}\neq\beta_S^{(2)} \quad \text{or}\quad \sigma_S^{(1)}\neq\sigma_S^{(2)}.
\end{cases}
\]

Of course, there is no reason in general for $\beta_S^{(1)}$ and
$\beta_S^{(2)}$ to coincide with the restrictions of $\beta^{(1)}$ and
$\beta^{(2)}$ to $S$, even less in high-dimension since variables in
$S$ can be in all likelihood correlated with covariates in $S^c$. Yet,
as exhibited by Lemma \ref{lemma_equivalence_hypothese}, there is
still a strong link between the collection of low dimensional hypotheses $\hyp_{0,S}$ and the global null hypothesis $\hyp_0$.

\begin{lemma}\label{lemma_equivalence_hypothese}
The hypothesis $\hyp_{0}$ implies  $\hyp_{0,S}$ for any subset $S\subset\{1,\ldots p\}$.
\end{lemma}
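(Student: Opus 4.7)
The plan is to exploit the observation that $\hyp_0$ forces the joint distributions of $(X^{(1)},Y^{(1)})$ and $(X^{(2)},Y^{(2)})$ to coincide, since each joint law is entirely determined by the triple $(\beta^{(k)},\sigma^{(k)},\Sigma^{(k)})$ and these triples are equal under $\hyp_0$. Every parameter of the reduced regression ($\beta_S^{(k)}$, $\sigma_S^{(k)}$, $\Sigma_S^{(k)}$) is a deterministic functional of this joint law, so equality of the triples passes automatically to equality of their reduced counterparts. The work amounts to writing down these functionals explicitly.

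First I would handle the covariances: by definition $\Sigma_S^{(k)}$ is just the $S\times S$ submatrix of $\Sigma^{(k)}$, so $\Sigma^{(1)}=\Sigma^{(2)}$ immediately yields $\Sigma_S^{(1)}=\Sigma_S^{(2)}$. Next, characterising $\beta_S^{(k)}$ as the population linear regression coefficient of $Y^{(k)}$ onto $X_S^{(k)}$, the orthogonality relation $\mathrm{Cov}(X_S^{(k)},\epsilon_S^{(k)})=0$ yields the closed form
\[
\beta_S^{(k)} \;=\; \bigl(\Sigma_S^{(k)}\bigr)^{-1}\,\mathrm{Cov}\bigl(X_S^{(k)},Y^{(k)}\bigr)
           \;=\; \bigl(\Sigma_S^{(k)}\bigr)^{-1}\,\Sigma^{(k)}_{S,\cdot}\,\beta^{(k)},
\]
using $Y^{(k)}=X^{(k)}\beta^{(k)}+\epsilon^{(k)}$ and the independence of $\epsilon^{(k)}$ with $X^{(k)}$. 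Since $\beta^{(1)}=\beta^{(2)}$ and $\Sigma^{(1)}=\Sigma^{(2)}$, this expression produces the same vector for $k=1,2$, hence $\beta_S^{(1)}=\beta_S^{(2)}$.

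It remains to treat the conditional variances. Writing $\sigma_S^{(k)\,2}=\mathrm{Var}\bigl(Y^{(k)}-X_S^{(k)}\beta_S^{(k)}\bigr)$ and expanding with the decomposition above gives
\[
\sigma_S^{(k)\,2} \;=\; \bigl(\sigma^{(k)}\bigr)^2 + \bigl(\beta^{(k)}\bigr)^\intercal \Sigma^{(k)} \beta^{(k)} - \bigl(\beta_S^{(k)}\bigr)^\intercal \Sigma_S^{(k)} \beta_S^{(k)},
\]
which is once again a functional of $(\beta^{(k)},\sigma^{(k)},\Sigma^{(k)})$ only, so equal across the two samples under $\hyp_0$.

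There is no real obstacle here; the only point demanding a line of care is justifying the closed form for $\beta_S^{(k)}$, i.e.\ that $\mathrm{Cov}(X_S^{(k)},Y^{(k)})=\Sigma^{(k)}_{S,\cdot}\beta^{(k)}$, which follows from the independence of $\epsilon^{(k)}$ and $X^{(k)}$ already assumed in the model~\eqref{Sodel_01}--\eqref{Sodel_02}. Combining the three equalities yields $\hyp_{0,S}$, which finishes the proof.
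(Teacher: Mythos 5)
Your proof is correct, and its opening observation is exactly the paper's proof: under $\hyp_0$ the triples $(\beta^{(k)},\sigma^{(k)},\Sigma^{(k)})$ coincide, so the joint laws of $(Y^{(k)},X^{(k)})$ coincide, and the paper stops there, noting that equality of the conditional distributions of $Y^{(k)}$ given $X_S^{(k)}$ immediately gives $\hyp_{0,S}$. Where you diverge is in the execution: instead of invoking the distributional identity, you compute the reduced parameters as explicit functionals of the full ones, namely $\Sigma_S^{(k)}$ as the $S\times S$ submatrix, $\beta_S^{(k)}=\bigl(\Sigma_S^{(k)}\bigr)^{-1}\Sigma^{(k)}_{S,\cdot}\beta^{(k)}$ via the orthogonality of $\epsilon_S^{(k)}$ and $X_S^{(k)}$, and $\bigl(\sigma_S^{(k)}\bigr)^2=\bigl(\sigma^{(k)}\bigr)^2+\bigl(\beta^{(k)}\bigr)^\intercal\Sigma^{(k)}\beta^{(k)}-\bigl(\beta_S^{(k)}\bigr)^\intercal\Sigma_S^{(k)}\beta_S^{(k)}$; all three checks are valid. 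The explicit route buys you closed-form expressions for the reduced parameters (and would carry over verbatim to best linear predictors without Gaussianity), but it is longer and implicitly requires $\Sigma_S^{(k)}$ to be invertible, a hypothesis the paper's one-line conditional-distribution argument does not need (though it is harmless here, since the paper treats the population covariances as nonsingular). Both arguments establish the lemma.
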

\begin{proof}
 Under $\hyp_0$, the random vectors of size $p+1$ $(Y^{(1)}, X^{(1)})$ and $(Y^{(2)}, X^{(2)})$ follow the same distribution. Hence, for any subset $S$, $Y^{(1)}$ follows conditionally on $X_S^{(1)}$ the same distribution as $Y^{(2)}$ conditionnally on $X_S^{(2)}$. In other words, $\beta_S^{(1)} = \beta_S^{(2)}$.
\end{proof}

By contraposition, it suffices to reject at least one of the $\hyp_{0,S}$ hypotheses to reject the global null hypothesis. This fundamental observation motivates our testing procedure. 
As summarized in Algorithm \ref{algo:overall}, the idea is to build a
well-calibrated multiple testing procedure that considers the testing problems $\hyp_{0,S}$ against $\hyp_{1,S}$ for a collection of subsets $\cS$. 
Obviously, it would be prohibitive in terms of algorithmic complexity to test  $\hyp_{0,S}$ for every $S\subset \{1,\ldots,p\}$, since there would be $2^p$ such sets. As a result, we restrain ourselves to a relatively small collection of hypotheses $\{\hyp_{0,S},S\in \widehat{\cS}\}$, where the collection of supports $\widehat{\cS}$ is potentially data-driven. If the collection $\widehat{\cS}$ is judiciously selected, then we can manage not to lose too much power compared to the exhaustive search.\\

We now turn to the description of the three major elements required by
our overall strategy (see Algorithm \ref{algo:overall}):
\begin{enumerate}
\item a well-targeted data-driven collection of models $\widehat{\cS}$
  as produced by Algorithm \ref{algo:SLasso};
\item a parametric statistic to test the hypotheses $\hyp_{0,S}$ for $S
  \in \widehat{\cS}$, we
  resort actually to a combination of three parametric statistics
  $F_{S,V}$, $F_{S,1}$ and $F_{S,2}$;
\item a calibration procedure guaranteeing the control on type I error
  as in Algorithm \ref{algo:Bonferroni} or \ref{algo:permutations}.
\end{enumerate}

\begin{algorithm}
\caption{Overall Adaptive Testing Strategy}\label{algo:overall}
\begin{algorithmic}
\Require{Data $\mathbf{X}^{(1)}$,$\mathbf{X}^{(2)}$,$\mathbf{Y}^{(1)}$,$\mathbf{Y}^{(2)}$, collection $\cS$ and desired level $\alpha$}
\AlgPhase{Step 1 - Choose a collection $\widehat{\cS}$ of low-dimensional models
(as e.g. $\SLasso$ in Algorithm \ref{algo:SLasso})}
\Procedure{ModelChoice}{$\mathbf{X}^{(1)}$,$\mathbf{X}^{(2)}$,$\mathbf{Y}^{(1)}$,$\mathbf{Y}^{(2)}$,$\cS$}
\State Define the model collection $\widehat{\cS}\subset \cS$
\EndProcedure
\AlgPhase{Step 2 - Compute p-values for each test in low dimension}
\Procedure{Test}{$\mathbf{X}_S^{(1)}$,$\mathbf{X}_S^{(2)}$,$\mathbf{Y}^{(1)}$,$\mathbf{Y}^{(2)}$,$\widehat{\cS}$}
	\For{each subset $S$ in $\widehat{\cS}$}
		\State Compute the p-values $\tilde{q}_{V,S}$, $\tilde{q}_{1,S}$, $\tilde{q}_{2,S}$ associated to the statistics $F_{S,V}$, $F_{S,1}$, $F_{S,2}$
	\EndFor
\EndProcedure
\AlgPhase{Step 3 - Calibrate decision thresholds  as in Algorithms \ref{algo:Bonferroni} (Bonferroni)
or \ref{algo:permutations} (Permutations)}
\Procedure{Calibration}{$\mathbf{X}^{(1)}$,$\mathbf{X}^{(2)}$,$\mathbf{Y}^{(1)}$,$\mathbf{Y}^{(2)}$,$\widehat{\cS}$,$\alpha$}
	\For{each subset $S$ in $\widehat{\cS}$ and each $i=V,1,2$}
		\State  Define a threshold $\alpha_{i,S}$. 
	\EndFor
\EndProcedure
\AlgPhase{Step 4 - Final Decision}
\If{there is a least one model $S$ in $\widehat{\cS}$ such that there is
  at least one  p-value for which $\tilde{q}_{i,S} < \alpha_{i,S}$} 
\State Reject the global null hypothesis $\hyp_0$
\EndIf
\end{algorithmic}
\end{algorithm}

\subsection{Choices of Test Collections (Step 1)}\label{subsec:choice_collection}

The first step of our procedure (Algorithm \ref{algo:overall}) amounts to select a collection $\widehat{\cS}$ of subsets of $\{1,\ldots,p\}$, also called models.
A good collection $\widehat{\mathcal{S}}$ of subsets  must satisfy a tradeoff between the inclusion of the maximum number of relevant subsets $S$ and a reasonable computing time for the whole testing procedure, which is linear in the size $|\widehat{\cS}|$ of the collection. The construction of $\widehat{\cS}$ proceeds in two steps: (i) One chooses a
 {\it deterministic} collection $\cS$ of models. (ii) One defines an algorithm  (called {\em ModelChoice} in Algorithm \ref{algo:overall}) mapping the raw data $(\X,\Y)$
to some  collection  $\widehat{\cS}$ satisfying $\widehat{\cS}\subset \cS$. Even though the introduction of $\cS$ as an argument of the mapping could appear artificial at this point,  this quantity will be used in the calibration step of the procedure. 
Our methodology can be applied to any fixed or data-driven collection. Still, we focus here on two particular collections. The first one is useful for undertaking the first steps of the mathematical analysis. For practical applications, we advise to use the second collection.


\paragraph{Deterministic Collections $\cS_{\leq k}$.} By deterministic, we mean the model choice step is trivial in the sense  {\em ModelChoice}$(\X,\Y,\cS)=\cS$.
Among deterministic collections, the most straightforward collections consist of all size-$k$ subsets of $\{1,\ldots, p\}$, which we denote $\cS_k$. This kind of family provides collections which are independent from the data, 
thereby reducing the risk of overfitting. However, as we allow the model size $k$ or the total number of candidate variables $p$ to grow, 
these deterministic families can rapidly reach unreasasonble sizes. Admittedly, $\cS_1$ always remains feasible, but reducing the search to models of size 1 
can be costly in terms of power. As a variation on size $k$ models, we
introduce  the collection of all models  of size smaller than $k$, denoted $\cS_{\leq k} = \bigcup_{j=1}^k
\cS_j$, which will prove useful in theoretical developments.


\paragraph{Lasso-type Collection $\SLasso$.}
Among all data-driven collections, we suggest the Lasso-type collection $\SLasso$. Before proceeding to its definition, let us informally discuss the subsets that a ``good'' collection $\widehat{\cS}$ should contain.
Let $\mathrm{supp}(\beta)$ denote the support of a vector $\beta$. 
Intuitively,   under the alternative hypothesis, good candidates for the subsets are either subsets of $S^*_\vee:= \mathrm{supp}(\beta^{(1)})\cup \mathrm{supp}(\beta^{(2)})$ or subsets of $S^*_{\Delta}:= \mathrm{Supp}(\beta^{(1)}-\beta^{(2)})$. The first model $S^*_\vee$ nicely satisfies $\beta^{(1)}_{S^*_\vee}= \beta^{(1)}$ and $\beta^{(2)}_{S^*_\vee}= \beta^{(2)}$. The second subset has a smaller size than $S^{*}_\vee$ and focuses on covariates corresponding to different parameters in the full regression. However, the divergence between effects might only appear
conditionally on other variables with similar effects, this is why the
first subset $S^*_\vee$ is also of interest.
Obviously, both subsets $S^*_\vee$ and $S^*_{\Delta}$ are unknown. This is why we consider  a Lasso methodology that amounts to estimating both $S^*_\vee$ and $S^*_{\Delta}$ in the collection $\SLasso$. Details on the construction of $\SLasso$ are postponed to Section \ref{sec:Slasso}.

\subsection{Parametric Test Statistic (Step 2)\label{subsec:parametric}}

Given a subset $S$, we consider the three following statistics to test $\mathcal{H}_{0,S}$ against $\mathcal{H}_{1,S}$:
\begin{eqnarray}\label{eq:FSV}
F_{S,V}& := &-2 +
\frac{\|\Y^{(1)}-\X^{(1)}\widehat{\beta}^{(1)}_S\|^2/n_1}{\|{\bf
    Y}^{(2)}-
  \X^{(2)}\widehat{\beta}^{(2)}_S\|^2/n_2}+\frac{\|\Y^{(2)}-\X^{(2)}\widehat{\beta}^{(2)}_S\|^2/n_2}{\|\Y^{(1)}-\X^{(1)}\widehat{\beta}_S^{(1)}\|^2/n_1}\
,\\ \label{eq:FS1}
 F_{S,1} & :=& \frac{\|{\bf
X}^{(2)}_{S}(\widehat{\beta}^{(1)}_S-\widehat{\beta}^{
(2)}_S)\|^2/n_2 }{
\|\Y^{(1)}-\X^{(1)}\widehat{\beta}^{(1)}_S\|^2/n_1}\ ,
\quad \quad F_{S,2}  := \frac{\|{\bf
X}^{(1)}_{S}(\widehat{\beta}^{(1)}_S-\widehat{\beta}^{
(2)}_S)\|^2/n_1}{\|\Y^{(2)}-\X^{(2)}\widehat{\beta}^{(2)}_S\|^2/n_2} \ . 
\end{eqnarray}
As explained in Section \ref{sec:testdetails}, these three statistics derive 
 from the
Kullback-Leibler divergence between the conditional distributions  $Y^{(1)}|X_S^{(1)}$ and $Y^{(2)}|X_S^{(2)}$.
 While the first term $F_{S,V}$ evaluates the discrepancies in terms of conditional variances, the last two terms $F_{S,1}$ and $F_{S,2}$ address the comparison of $\beta^{(1)}$ to $\beta^{(2)}$.  

Because the distributions under the null of the statistics $F_{S,i}$, for $i=V,1,2$, depend on the size of  $S$, the only way to calibrate the multiple testing step over a collection of models of various sizes is to convert the statistics to a unique common scale. 
The most natural is to convert observed $F_{S,i}$'s into $p$-values. Under $H_{0,S}$, the conditional distributions of $F_{S,i}$ for $i=V,1,2$ to ${\bf X}_S$ are parameter-free
 and explicit (see Proposition \ref{prte:distribution_parametrique} in the next section). Consequently, one can define the exact $p$-values associated
  to $F_{S,i}$, conditional on $\X_S$. However, the computation of the $p$-values require a function inversion, which can be computationally prohibitive. 
  This is why we introduce explicit upper bounds $\tilde{q}_{i,S}$ (Equations (\ref{eq:defQVS},\ref{eq:defQV12})) of the exact $p$-values.

\subsection{Combining the parametric statistics (Step 3)\label{subsec:calibration}}

The objective of this subsection is to calibrate a multiple testing
procedure     based     on      the     sequence     of     $p$-values
$\{(\tilde{q}_{V,S},\tilde{q}_{1,S},\tilde{q}_{2,S})      ,\      S\in
\widehat{\cS}\}$,  so that  the type-I  error remains  smaller  than a
chosen level  $\alpha$. In particular, when using  a data-driven model
collection, we must take good care of preventing the risk of overfitting which results from using the same
dataset both for model selection and hypothesis testing. 

For the sake of simplicity, we assume in the two following paragraphs that $\emptyset \nsubseteq \cS$, which merely means that we do not include in the collection of tests the raw comparison of $\var{(\Y^{(1)})}$ to $\var{(\Y^{(2)})}$.

\paragraph{Testing Procedure}
Given a model collection $\widehat{\cS}$ and a sequence $\widehat{\alpha} =
(\alpha_{i,S})_{ i=V,1,2,\  S\in\widehat{\cS}}$, we define the test function:
\begin{equation}\label{eq:definition_general_test}
T_{\widehat{\cS}}^{\widehat{\alpha}} = \begin{cases} 1 & \text{if } \exists S
  \in \widehat{\mathcal{S}},\ \exists i \in
  \{V,1,2\} \quad \tilde{q}_{i,S} \leq \alpha_{i,S}. 
\\ 0 & \text{otherwise.}\end{cases}
\end{equation}
In other words, the test function rejects the global null if there exists at least one model $S~\in~\widehat{\cS}$ such that at least one of the three $p$-values 
is below the corresponding threshold $\alpha_{i,S}$.
In Section \ref{sec:calibration}, we describe two calibration methods for choosing the thresholds $(\alpha_{i,S})_{S\in \widehat{\cS}}$. 
We first define a natural Bonferroni procedure, whose conceptual simplicity allows us to derive non-asymptotic type II error bounds of the corresponding tests (Section \ref{sec:power1}). 
However, this Bonferroni correction reveals too conservative in practice, in part paying the price for resorting to data-driven collections and upper bounds on the true $p$-values.
This is why we introduce as a second option the permutation calibration procedure. This second procedure controls the type I error at the nominal level and therefore outperforms
the Bonferroni calibration in practice. Nevertheless, the mathematical analysis
of the corresponding test becomes more intricate and we are not able to provide sharp type II error bounds.

\medskip 

\noindent 
{\bf Remark}: In practice, we advocate the use of the Lasso Collection $\SLasso$ (Algorithm~\ref{algo:SLasso}) combined with the permutation calibration method (Algorithm \ref{algo:permutations}).  Henceforth, the corresponding procedure is denoted $T^{P}_{\SLasso}$.

\section{Discussion of the procedure and Type I error \label{sec:testdetails}}

In this section, we provide remaining details on the three steps of the testing procedure. First, we describe the collection $\SLasso$ and provide an informal justification of its definition. Second, we explain the ideas underlying the parametric statistics $F_{S,i}$, $i=V,1,2$ and we define the corresponding $p$-values $\tilde{q}_{i,S}$. Finally, the Bonferroni and permutation calibration methods are defined, which allows us to control the type I error of the corresponding testing procedures.

\subsection{Collection $\SLasso$}\label{sec:Slasso}

We start from $\cS_{\leq D_{\max}}$, where, in practice,  $D_{\max}=\lfloor (n_1\wedge n_2)/2\rfloor$ and we consider the following reparametrized joint regression model.
\begin{equation}
\label{eq:reparam}
\left[\begin{array}{c}
       {\bf Y}^{(1)}\\{\bf Y}^{(2)}
      \end{array}
\right] = \left[\begin{array}{cc}
                 {\bf X}^{(1)}& {\bf X}^{(1)} \\
		{\bf X}^{(2)} & -{\bf X}^{(2)}
                \end{array}
\right]\left[
\begin{array}{c}
      \theta_*^{(1)}\\ \theta_*^{(2)}
      \end{array}
\right] + \left[\begin{array}{c}
                 \eps^{(1)} \\ \eps^{(2)}
                \end{array}
\right]\ .
\end{equation}
In this new model,
$\theta_*^{(1)}$ captures the mean effect $(\beta^{(1)} +
\beta^{(2)})/2$, while $\theta_*^{(2)}$ captures the discrepancy
between the sample-specific effect $\beta^{(i)}$ and the mean effect
$\theta_*^{(1)}$, that is to say $\theta_*^{(2)} = (\beta^{(1)} -
\beta^{(2)})/2$. Consequently, $S^*_{\Delta}:= \mathrm{Supp}(\beta^{(1)}-\beta^{(2)})= \mathrm{supp}(\theta_*^{(2)})$ and $S^*_{\vee}:= \mathrm{supp}(\beta^{(1)})\cup \mathrm{supp}(\beta^{(2)})= \mathrm{supp}(\theta_*^{(1)})\cup \mathrm{supp}(\theta_*^{(2)})$. To simplify notations, denote by $\bf{Y}$ the concatenation of
${\bf{Y}}^{(1)}$ and ${\bf{Y}}^{(2)}$, as well as by ${\bf W}$ the
reparametrized design matrix of \eqref{eq:reparam}.  For a given
$\lambda>0$, the Lasso estimator of $\theta_*$ is defined by 
\begin{eqnarray}
\label{eq:reparamLasso}
\widehat{\theta}_\lambda :=\left(\begin{array}{c}
\widehat{\theta}_{\lambda}^{(1)}\\                                   \widehat{\theta}_{\lambda}^{(2)}
                                 \end{array}
\right) := \arg \min_{\theta \in \mathbb{R}^{2p}} \|{\bf{Y}} - {\bf W} \theta\| + \lambda \|\theta\|_1\ , \\ \hat{V}_{\lambda}:=\mathrm{supp}(\widehat{\theta}_{\lambda}),\quad   \hat{V}^{(i)}_{\lambda}:=\mathrm{supp}(\widehat{\theta}^{(i)}_{\lambda}),\ i=1,2\ . \label{eq:reparamLasso2}
\end{eqnarray}
For a suitable choice of the tuning parameter $\lambda$ and under assumptions of the designs, it is proved~\cite{2009_AS_Bickel,2008_AS_Meinshausen} that $\widehat{\theta}_{\lambda}$ estimates well $\theta_*$ and $\hat{V}_{\lambda}$ is a good estimator of $\mathrm{supp}(\theta_*)$. The Lasso parameter $\lambda$ tunes the amount of sparsity of
$\widehat{\theta}_\lambda$: the larger the parameter $\lambda$, the smaller the
support $\hat{V}_{\lambda}$. As the optimal choice of $\lambda$ is unknown, the collection $\SLasso$ is built using the collection of all estimators $(\hat{V}_{\lambda})_{\lambda>0}$, also called the Lasso regularization path of $\theta_*$. Below we provide an algorithm for computing $\SLasso$ along with some  additional justifications.



\begin{algorithm}
\caption{Construction of the Lasso-type Collection $\SLasso$}\label{algo:SLasso}
\begin{algorithmic}
\Require{Data $\mathbf{X}^{(1)}$,$\mathbf{X}^{(2)}$,$\mathbf{Y}^{(1)}$,$\mathbf{Y}^{(2)}$, Collection $\cS_{\leq D_{max}}$}
\State $\bf{Y} \gets \left[\begin{array}{c}{\bf Y}^{(1)}\\{\bf Y}^{(2)}\end{array}\right]$
\State $\bf{W} \gets \left[\begin{array}{cc}
        {\bf X}^{(1)}& {\bf X}^{(1)} \\
 		{\bf X}^{(2)} & - {\bf X}^{(2)}
                 \end{array}\right]$
\State Compute the function $f:\lambda\ \mapsto \hat{V}_{\lambda}
$
 (defined in (\ref{eq:reparamLasso},\ref{eq:reparamLasso2}))
using Lars-Lasso Algorithm~\cite{lars}
\State Compute the decreasing sequences $(\lambda_k)_{1\leq k\leq q}$ of jumps in $f$
\State $k \gets 1$,\quad  $\widehat{\mathcal{S}}_{L}^{(1)}\gets \emptyset$,\quad  $\widehat{\mathcal{S}}_{L}^{(2)}\gets \emptyset$
 \While{$|\hat{V}^{(1)}_{\lambda_k}\cup\hat{V}^{(2)}_{\lambda_k} | <D_{\max}$}
\State $\widehat{\mathcal{S}}_{L}^{(1)}\gets \widehat{\mathcal{S}}_{L}^{(1)}\cup \{\hat{V}^{(1)}_{\lambda_k}\cup\hat{V}^{(2)}_{\lambda_k}\} $
\State $\widehat{\mathcal{S}}_{L}^{(2)}\gets \widehat{\mathcal{S}}_{L}^{(2)} \cup \{\hat{V}^{(2)}_{\lambda_k}\}$
\State  $k \gets k+1$
\EndWhile
\State$\SLasso \gets \widehat{\mathcal{S}}^{(1)}_L \cup \widehat{\mathcal{S}}_L^{(2)} \cup \cS_1$

\end{algorithmic}
\end{algorithm}

It is known~\cite{lars} that the function $f:\lambda \mapsto \hat{V}_{\lambda}$ is piecewise constant. Consequently, there exist thresholds $\lambda_1>\lambda_2>\ldots$ such that $\hat{V}_{\lambda}$  changes on $\lambda_k$'s only. The function $f$ and the collection $(\lambda_k)$ are computed efficiently using the Lars-Lasso Algorithm~\cite{lars}. We build two collections of models using $(\hat{V}^{(1)}_{\lambda_k})_{k\geq 1}$ and $(\hat{V}^{(2)}_{\lambda_k})_{k\geq 1}$. Following the intuition described above, for a fixed $\lambda_k$, $\hat{V}^{(2)}_{\lambda_k}$ is an estimator of  $\mathrm{supp}(\beta^{(1)}-\beta^{(2)})$ while $\hat{V}^{(1)}_{\lambda_k}\cup \hat{V}^{(2)}_{\lambda_k}$ is an estimator of  $\mathrm{supp}(\beta^{(1)})\cup \mathrm{supp}(\beta^{(2)})$. This is why we define 
\[
\widehat{\mathcal{S}}^{(1)}_L:= \bigcup_{k=1}^{k_{\max}}\left\{\hat{V}^{(1)}_{\lambda_k}\cup \hat{V}^{(2)}_{\lambda_k} \right\}\ , \quad \quad 
\quad \widehat{\mathcal{S}}^{(2)}_L:= \bigcup_{k=1}^{k_{\max}}\left\{\hat{V}^{(2)}_{\lambda_k} \right\}\ ,
\]
where $k_{\max}$ is the smallest integer $q$ such that $|\hat{V}^{(1)}_{\lambda_{q+1}}\cup \hat{V}^{(2)}_{\lambda_{q+1}}|>D_{\max}$. In the end, we consider the following $\SLasso$ data-driven family,
\begin{equation}\label{eq:definition_family_lasso}
\SLasso: = \widehat{\mathcal{S}}^{(1)}_L \cup \widehat{\mathcal{S}}_L^{(2)} \cup \cS_1. 
\end{equation}
 Recall that $\cS_1$ is the collection of the $p$ models of size 1. Recently, data-driven procedures have been proposed to tune the Lasso and find a parameter $\widehat{\lambda}$ is such a way that $\widehat{\theta}_{\widehat{\lambda}}$ is a good estimator of $\theta_*$ (see e.g. \cite{2011_arxiv_Sun,2010_arxiv_Baraud}). We use the whole regularization path instead of the sole estimator $\widehat{\theta}_{\widehat{\lambda}}$, because our objective is to find subsets $S$ such that the statistics $F_{S,i}$ are powerful. Consider an example where $\beta^{(2)}=0$ and $\beta^{(1)}$ contains one large coefficient and many small coefficients. If the sample size is large enough, a well-tuned Lasso estimator will select several variables. In contrast, the best subset $S$ (in terms of power of $F_{S,i}$) contains only one variable. Using the whole regularization path, we hope to find the best trade-off between sparsity (small size of $S$) and differences between $\beta^{(1)}_S$ and $\beta^{(2)}_S$. This last remark is 
formalized in Section \ref{sec:power_Slasso}. 
Finally, the size of the collection $\SLasso$ is generally linear with $(n_1\wedge n_2)\vee p$, which makes the computation of $(\tilde{q}_{i,S})_{S\in \SLasso, i=V,1,2}$ reasonnable.






\subsection{Parametric statistics and $p$-values}
\subsubsection{Symmetric conditional likelihood}

In this subsection, we explain the intuition behind  the choice of the parametric statistics $(F_{S,V},F_{S,1},F_{S,2})$ defined in Equations (\ref{eq:FSV},\ref{eq:FS1}).
Let us denote by $\mathcal{L}^{(1)}$ (resp. $\mathcal{L}^{(2)}$) the  $\log$-likelihood of the first (resp. second)
sample normalized by $n_1$ (resp. $n_2$). Given a subset
$S\subset\{1,\ldots, p\}$ of size smaller than $n_1\wedge n_2$,
$(\widehat{\beta}^{(1)}_S,\widehat{\sigma}_{S}^{(1)})$ stands for the
maximum likelihood estimator of $(\beta^{(1)},\sigma^{(1)})$ among vectors $\beta$ whose supports are included
in $S$. Similarly, we note
$(\widehat{\beta}^{(2)}_S,\widehat{\sigma}_{S}^{(2)})$ for the maximum
likelihood corresponding to the second sample.

Statistics $F_{S,V}$, $F_{S,1}$ and $F_{S,2}$ appear as the
decomposition of a two-sample likelihood-ratio, measuring the
symmetrical adequacy of sample-specific estimators to the opposite sample. To do so, let us define the likelihood ratio in sample $i$ between an arbitrary pair~$(\beta,\sigma)$ and the corresponding sample-specific estimator $(\widehat{\beta}^{(i)}_S,\widehat{
 \sigma } ^ { (i) } _S)$:
\[
 \D_{n_i}^{(i)}(\beta,\sigma) := \mathcal{L}_{n_i}^{(i)}\left(\widehat{\beta}^{(i)}_S,\widehat{
 \sigma } ^ { (i) }
 _S\right)-\mathcal{L}_{n_i}^{(i)}\left(\beta, \sigma \right).
\]
With this definition,
$\D_{n_1}^{(1)}(\widehat{\beta}^{(2)},\widehat{\sigma}^{(2)})$
measures how far $(\widehat{\beta}^{(2)},\widehat{\sigma}^{(2)})$ is
from $(\widehat{\beta}^{(1)},\widehat{\sigma}^{(1)})$ in terms of
likelihood within sample 1.
The following symmetrized likelihood statistic can be decomposed into the sum of
$F_{S,V}$, $F_{S,1}$ and $F_{S,2}$:
\begin{equation}\label{eq:oldFS}
2\left[ \D_{n_1}^{(1)}(\widehat{\beta}^{(2)},\widehat{\sigma}^{(2)}) +
  \D_{n_2}^{(2)}(\widehat{\beta}^{(1)},\widehat{\sigma}^{(1)})\right]
= F_{S,V} + F_{S,1} + F_{S,2}\ .
\end{equation}
Instead of the three statistics $(F_{S,i})_{i=V,1,2}$, one could  use the symmetric likelihood \eqref{eq:oldFS} to build a testing procedure. However, we do not manage to obtain an explicit and sharp upper bound of the $p$-values associated to the statistic \eqref{eq:oldFS}, which makes the resulting procedure either computationally intensive if one estimated the $p$-values by a Monte-Carlo approach or less powerful if one uses a non-sharp upper bound of the $p$-values. In contrast, we explain below how, by considering separately $F_{S,V}$, $F_{S,1}$ and $F_{S,2}$, one upper bounds sharply the exact $p$-values.

\subsubsection{Definition of the $p$-values}\label{section:pvalues}

Denote by $g(x)=-2+x+1/x$ the non-negative function defined on $\mathbb{R}^+$. Since the restriction of $g$ to $[1;+\infty)$ is a bijection, we note $g^{-1}$ the corresponding reciprocal function. 

\begin{proposition}[Conditional distributions of $F_{S,V}$, $F_{S,1}$ and $F_{S,2}$ under $\hyp_{0,S}$]\label{prte:distribution_parametrique}
~\\
\noindent
\begin{enumerate}
\item Let $Z$ denote a Fisher random variable with $(n_1-|S|,n_2-|S|)$ degrees of freedom. Then, under the null hypothesis,
\[
F_{S,V} | \X_S \quad \underset{\hyp_{0,S}}{\sim} \quad g\left[Z\ \frac{n_2(n_1-|S|)}{n_1(n_2-|S|)}\right].
\]
\item Let $Z_1$ and $Z_2$ be two centered and independent Gaussian vectors with covariance $\X_S^{(2)}\left[(\X_S^{(1)T}\X_S^{(1)})^{-1}+(\X_S^{(2)T}\X_S^{(2)})^{-1}\right]\X_S^{(2)T}$ and ${\bf I}_{n_1-|S|}$. Then, under the null hypothesis,
\[
F_{S,1} | \X_S \quad \underset{\hyp_{0,S}}{\sim}\quad \frac{\|Z_1\|^2/n_2}{\|Z_2\|^2/n_1}.
\]
A symmetric result holds for $F_{S,2}$.
\end{enumerate}
\end{proposition}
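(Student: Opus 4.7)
The plan is to exploit classical properties of ordinary least squares, noting that conditionally on $\mathbf{X}_S$, the submodels $\mathbf{Y}^{(i)} = \mathbf{X}_S^{(i)}\beta_S^{(i)} + \boldsymbol{\epsilon}_S^{(i)}$ are standard Gaussian linear models for which the OLS estimator $\widehat{\beta}_S^{(i)} = (\mathbf{X}_S^{(i)T}\mathbf{X}_S^{(i)})^{-1}\mathbf{X}_S^{(i)T}\mathbf{Y}^{(i)}$ is independent of the residual $(I - \Pi_S^{(i)})\mathbf{Y}^{(i)}$, where $\Pi_S^{(i)}$ is the orthogonal projector on the column space of $\mathbf{X}_S^{(i)}$. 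Under $\hyp_{0,S}$ I write $\beta_S := \beta_S^{(1)} = \beta_S^{(2)}$ and $\sigma_S := \sigma_S^{(1)} = \sigma_S^{(2)}$, so that $\mathbf{Y}^{(i)} - \mathbf{X}^{(i)}\widehat{\beta}_S^{(i)} = (I - \Pi_S^{(i)})\boldsymbol{\epsilon}_S^{(i)}$ and $\widehat{\beta}_S^{(i)} - \beta_S = (\mathbf{X}_S^{(i)T}\mathbf{X}_S^{(i)})^{-1}\mathbf{X}_S^{(i)T}\boldsymbol{\epsilon}_S^{(i)}$. All parameters except $\sigma_S$ cancel in both statistics, which is why only the stated distributions survive.

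For part 1, conditionally on $\mathbf{X}_S$, $\|\mathbf{Y}^{(i)} - \mathbf{X}^{(i)}\widehat{\beta}_S^{(i)}\|^2 = \|(I - \Pi_S^{(i)})\boldsymbol{\epsilon}_S^{(i)}\|^2$ is distributed as $\sigma_S^2 U_i$ with $U_i \sim \chi^2_{n_i - |S|}$, and $U_1, U_2$ are independent since the two samples are independent. Writing the first ratio as $\frac{n_2(n_1-|S|)}{n_1(n_2-|S|)} \cdot Z$ with $Z = \frac{U_1/(n_1 - |S|)}{U_2/(n_2 - |S|)} \sim \mathcal{F}(n_1 - |S|, n_2 - |S|)$, the second ratio is its reciprocal, and $F_{S,V} = -2 + a Z + (aZ)^{-1} = g(aZ)$ with $a = \frac{n_2(n_1-|S|)}{n_1(n_2-|S|)}$, giving the announced identity.

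For part 2, I express
\[
\widehat{\beta}_S^{(1)} - \widehat{\beta}_S^{(2)} = (\mathbf{X}_S^{(1)T}\mathbf{X}_S^{(1)})^{-1}\mathbf{X}_S^{(1)T}\boldsymbol{\epsilon}_S^{(1)} - (\mathbf{X}_S^{(2)T}\mathbf{X}_S^{(2)})^{-1}\mathbf{X}_S^{(2)T}\boldsymbol{\epsilon}_S^{(2)},
\]
which, conditionally on $\mathbf{X}_S$, is centered Gaussian with covariance $\sigma_S^2\bigl[(\mathbf{X}_S^{(1)T}\mathbf{X}_S^{(1)})^{-1} + (\mathbf{X}_S^{(2)T}\mathbf{X}_S^{(2)})^{-1}\bigr]$ by independence of $\boldsymbol{\epsilon}_S^{(1)}$ and $\boldsymbol{\epsilon}_S^{(2)}$. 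Multiplying by $\mathbf{X}_S^{(2)}$, the numerator of $F_{S,1}$ equals $\sigma_S^2 \|Z_1\|^2/n_2$ with $Z_1$ having the stated covariance. The denominator is $\sigma_S^2 \|Z_2\|^2/n_1$ with $Z_2 \sim \mathcal{N}(0, I_{n_1 - |S|})$ after an orthonormal change of basis of the orthogonal complement of the range of $\mathbf{X}_S^{(1)}$. The $\sigma_S^2$ factors cancel.

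The only delicate step is the conditional independence of $Z_1$ and $Z_2$: it follows because (i) $\widehat{\beta}_S^{(1)}$ is independent of $(I - \Pi_S^{(1)})\boldsymbol{\epsilon}_S^{(1)}$ by the classical orthogonal decomposition of a Gaussian vector on the range of $\mathbf{X}_S^{(1)}$ and its complement, and (ii) $\widehat{\beta}_S^{(2)}$ involves only sample 2, which is independent of sample 1. Together these give independence of $\widehat{\beta}_S^{(1)} - \widehat{\beta}_S^{(2)}$ and the residual of sample 1 conditionally on $\mathbf{X}_S$, hence of $Z_1$ and $Z_2$. The result for $F_{S,2}$ is obtained by exchanging the roles of the two samples.
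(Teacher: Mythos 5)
Your proposal is correct and follows essentially the same route as the paper: work conditionally on $\X_S$ in the reduced Gaussian regressions, note that under $\hyp_{0,S}$ the residuals $\Pi^{(i)}_{S^\perp}\eps_S^{(i)}$ give independent $\sigma_S^2\chi^2_{n_i-|S|}$ variables (yielding the Fisher variable and $F_{S,V}=g(aZ)$), and write $\X_S^{(2)}(\widehat\beta_S^{(1)}-\widehat\beta_S^{(2)})$ as a centered Gaussian with the stated covariance, independent of the sample-1 residual because the latter depends only on $\Pi^{(1)}_{S^{\perp}}\eps_S^{(1)}$ while the former depends only on $(\eps_S^{(2)},\Pi^{(1)}_{S}\eps_S^{(1)})$. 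This is exactly the paper's argument, including the cancellation of $\sigma_S^2$ between numerator and denominator.
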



Although the distributions identified in Proposition
\ref{prte:distribution_parametrique} are not all familiar distributions
with ready-to-use quantile tables, they all share the advantage that
they do not depend on any unknown quantity, such as design variances
$\Sigma^{(1)}$ and $\Sigma^{(2)}$, noise variances $\sigma^{(1)}$ and
$\sigma^{(2)}$, or even true signals $\beta^{(1)}$ and
$\beta^{(2)}$. For any $i=V,1,2$, we note
$\overline{Q}_{i,|S|}(u|\X_{S})$ for the conditional probability that $F_{S,i}$ is larger than $u$.

By Proposition \ref{prte:distribution_parametrique}, the exact $p$-value $\tilde{q}_{V,S}= \overline{Q}_{V,|S|}(F_{S,V}|\X_{S})$ associated to $F_{S,V}$ is easily computed 
from the distribution function of a Fisher random variable:
\begin{equation}\label{eq:defQVS}
\tilde{q}_{V,S}= \mathcal{F}_{n_1-|S|,n_2-|S|}\left[g^{-1}\left(F_{S,V}\right)\frac{n_1(n_2-|S|)}{n_2(n_1-|S|)}\right]+ \mathcal{F}_{n_2-|S|,n_1-|S|}\left[g^{-1}\left(F_{S,V}\right)\frac{n_2(n_1-|S|)}{n_1(n_2-|S|)}\right]\ ,
\end{equation}
where $\mathcal{F}_{m,n}(u)$ denotes the probability that a Fisher random variable with $(m,n)$ degrees of freedom is larger than $u$.

 Since the conditional distribution  of $F_{S,1}$ given $X_S$ only depends on $|S|$, $n_1$, $n_2$, and $\X_S$, one could compute an estimation of the $p$-value $\overline{Q}_1(u|X_S)$ associated with an observed value $u$ by Monte-Carlo simulations. However, this approach is computationally prohibitive for large collections of subsets $S$. This is why we  use instead an explicit upper bound of  $\overline{Q}_{1,
|S|}(u|\X_{S})$ based on Laplace method, as given in the definition below and justified in the proof of Proposition \ref{prte:majoration_Q}.

\noindent

\begin{definition}[Definition of $\widetilde{Q}_{1,|S|}$ and $\widetilde{Q}_{2,|S|}$]\label{defi:deviation}

Let us note $a=(a_1,\ldots , a_{|S|})$ the positive eigenvalues of 
\[\frac{n_1}{n_2(n_1-|S|)}\X_S^{(2)}\left[(\X_S^{(1)T}\X_S^{(1)})^{-1}+(\X_S^{(2)T}\X_S^{(2)})^{-1}\right]\X_S^{(2)T}\ .\] 
For any $u\leq  |a|_1$, define
$ \widetilde{Q}_{1,|S|}(u|\X_{S}):=1$.
For any $u>|a|_1$, take
\begin{eqnarray}\label{definition_Q1,S}
 \widetilde{Q}_{1,|S|}(u|\X_{S}):=  \exp\left[-\frac{1}{2}\sum_{i=1}^{|S|}\log\left(1-2\lambda^* a_i\right)-\frac{n_1-|S|}{2}\log\left(1+\frac{2\lambda^*u}{n_1-|S|}\right)\right]\ ,
\end{eqnarray}
where $\lambda_*$ is defined as follows.
 If all the components of $a$ are equal, then 
$ \lambda^*:=\frac{u-|a|_1}{2u(|a|_{\infty}+\frac{|a|_1}{n_1-|S|})}$. If $a$ is not a constant vector, then we define
\begin{eqnarray}
\notag  b &:=&\frac{|a|_1u}{|a|_{\infty}(n_1-|S|)}+u+
\frac{\|a\|^2}{|a|_{\infty}}-|a|_1,\\  
\Delta &:=& b^2-\frac{4u
  \left(u-|a|_1\right)}{(n_1-|S|)|a|_{\infty}}\left(|a|_1-\frac{\|a\|^2}{|a|_{\infty}}\right), \label{eq:def_delta}\\
\lambda^*&:=&\frac{1}{\frac{4u}{n_1-|S|}\left(|a|_1-\frac{\|a\|^2}{|a|_{\infty}}\right)} \left(b-\sqrt{\Delta} \right)\ . \label{eq:def_lambda}
\end{eqnarray}
$\widetilde{Q}_{2,|S|}$ is defined analogously by exchanging the role of $\X_S^{(1)}$ and $\X_{S}^{(2)}$.
\end{definition}

\begin{proposition}\label{prte:majoration_Q}
For any $u\geq 0$, and for $i=1,2$, 
$\overline{Q}_{i,|S|}(u|\X_{S}) \leq \widetilde{Q}_{i,|S|}(u|\X_{S})$.
\end{proposition}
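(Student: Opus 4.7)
The plan is to combine the exact conditional distribution from Proposition \ref{prte:distribution_parametrique} with a Chernoff/Laplace bound, then verify that the specific $\lambda^{*}$ given in Definition \ref{defi:deviation} is in the admissible range and coincides with the critical point of an explicit convex upper bound of the log-MGF.

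The case $u\leq |a|_{1}$ is trivial since $\widetilde{Q}_{1,|S|}=1$, so assume $u>|a|_{1}$. By Proposition \ref{prte:distribution_parametrique}, conditionally on $\X_{S}$,
\[
\overline{Q}_{1,|S|}(u\mid\X_{S})=\P\!\left(\tfrac{\|Z_{1}\|^{2}}{n_{2}}-u\,\tfrac{\|Z_{2}\|^{2}}{n_{1}}\geq 0\ \Big|\ \X_{S}\right),
\]
with $Z_{1},Z_{2}$ independent Gaussian vectors whose covariances are described there. Writing $a_{1}',\dots,a_{|S|}'$ for the positive eigenvalues of the covariance of $Z_{1}$, so that $\|Z_{1}\|^{2}$ is distributed as $\sum a_{i}'\xi_{i}^{2}$ with i.i.d.\ $\xi_{i}\sim\mathcal{N}(0,1)$, and noting that $\|Z_{2}\|^{2}\sim \chi^{2}_{n_{1}-|S|}$, the standard Laplace method gives, for every $\lambda>0$ with $\lambda a_{\max}'<n_{2}/2$,
\[
\overline{Q}_{1,|S|}(u\mid\X_{S})\ \leq\ \prod_{i=1}^{|S|}\!\bigl(1-\tfrac{2\lambda a_{i}'}{n_{2}}\bigr)^{-1/2}\bigl(1+\tfrac{2\lambda u}{n_{1}}\bigr)^{-(n_{1}-|S|)/2}.
\]

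Next I reparametrize: setting $a_{i}:=\tfrac{n_{1}}{n_{2}(n_{1}-|S|)}a_{i}'$ (so $a_{i}$ coincides with Definition \ref{defi:deviation}) and $\mu:=\lambda(n_{1}-|S|)/n_{1}$, the bound rewrites as
\[
\prod_{i=1}^{|S|}(1-2\mu a_{i})^{-1/2}\Bigl(1+\tfrac{2\mu u}{n_{1}-|S|}\Bigr)^{-(n_{1}-|S|)/2},
\]
which is exactly $\widetilde{Q}_{1,|S|}(u\mid\X_{S})$ evaluated at $\mu$. It therefore suffices to prove that the choice $\mu=\lambda^{*}$ given in \eqref{eq:def_lambda} lies in the admissible range $(0,1/(2|a|_{\infty}))$.

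To motivate and justify that choice, I differentiate the logarithm of the bound in $\mu$: the critical-point equation is
\[
\sum_{i=1}^{|S|}\frac{a_{i}}{1-2\mu a_{i}}\ =\ \frac{u}{1+\tfrac{2\mu u}{n_{1}-|S|}}.
\]
The left-hand side admits no closed-form root in general, so I use the convexity of $x\mapsto 1/(1-2\mu x)$ on $[0,|a|_{\infty}]$ and the chord inequality
\[
\frac{1}{1-2\mu a_{i}}\ \leq\ 1+\frac{a_{i}}{|a|_{\infty}}\cdot\frac{2\mu |a|_{\infty}}{1-2\mu |a|_{\infty}},
\]
which yields $\sum a_{i}/(1-2\mu a_{i})\leq |a|_{1}+2\mu\|a\|^{2}/(1-2\mu |a|_{\infty})$. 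Plugging this upper bound into the critical-point equation in place of the true left-hand side reduces the problem to a quadratic in $\mu$; a direct computation (clearing denominators and grouping) shows that this quadratic is exactly
\[
\frac{4uC}{n_{1}-|S|}\,\mu^{2}-2b\,\mu+\frac{u-|a|_{1}}{|a|_{\infty}}=0,
\qquad C:=|a|_{1}-\tfrac{\|a\|^{2}}{|a|_{\infty}}\geq 0,
\]
with $b$ as in \eqref{eq:def_delta}, whose discriminant is $\Delta$ and whose smaller root is the $\lambda^{*}$ of \eqref{eq:def_lambda}. The degenerate case $C=0$ (all $a_{i}$ equal) corresponds to the limit where the quadratic becomes linear and yields the formula stated for that case.

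The main technical step to complete the proof is verifying that this $\lambda^{*}$ is admissible. Positivity of $\lambda^{*}$ follows from $u>|a|_{1}$, which makes the constant term negative and forces the smaller root to be positive. For the upper bound $\lambda^{*}<1/(2|a|_{\infty})$, I evaluate the left-hand side of the quadratic at $\mu=1/(2|a|_{\infty})$ and show it is non-negative, which by the shape of the parabola implies that $\lambda^{*}$ lies to the left of $1/(2|a|_{\infty})$; this reduces to an elementary inequality between $|a|_{1}$, $\|a\|^{2}$ and $|a|_{\infty}$, with $C\geq 0$ doing the essential work. The argument for $\widetilde{Q}_{2,|S|}$ is identical after swapping the roles of $\X_{S}^{(1)}$ and $\X_{S}^{(2)}$. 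I expect the only delicate point to be a clean bookkeeping verification that $\lambda^{*}<1/(2|a|_{\infty})$ in every case (both when $C=0$ and $C>0$), since the reparametrization and the Chernoff step are routine.
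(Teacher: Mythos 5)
Your overall route is the same as the paper's: apply the Chernoff/Laplace bound to the exact conditional distribution of $F_{S,1}$ given by Proposition \ref{prte:distribution_parametrique}, observe after rescaling that the bound equals the expression \eqref{definition_Q1,S} evaluated at an arbitrary admissible $\mu\in\bigl(0,1/(2|a|_{\infty})\bigr)$, and reduce the proposition to checking that the specific $\lambda^{*}$ of Definition \ref{defi:deviation} is admissible. Your reparametrization is correct, your quadratic is exactly the one obtained by clearing denominators in \eqref{eq:derive_def_lambda}, and your remark that only admissibility (not optimality) of $\lambda^{*}$ is needed is the right reading of the statement; the convexity/chord observation is a pleasant aside but plays no role in the inequality itself.

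The problem sits precisely in the step you flag as the remaining technical work. With $N=n_1-|S|$, $C=|a|_1-\|a\|^2/|a|_{\infty}$ and $q(\mu)=\frac{4uC}{N}\mu^2-2b\mu+\frac{u-|a|_1}{|a|_{\infty}}$, a direct computation gives $q\bigl(\tfrac{1}{2|a|_{\infty}}\bigr)=-\frac{\|a\|^2}{|a|_{\infty}^2}\bigl(1+\tfrac{u}{N|a|_{\infty}}\bigr)<0$, so your plan to ``show it is non-negative'' cannot succeed; moreover, even if $q$ were non-negative there, for an upward-opening parabola this together with $q(0)>0$ would not force the smaller root to lie to the left of $1/(2|a|_{\infty})$ (both roots could lie to its right), so the inference is invalid as stated. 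The correct argument uses the opposite sign: since $q(0)=\frac{u-|a|_1}{|a|_{\infty}}>0$ and $q\bigl(1/(2|a|_{\infty})\bigr)<0$, the point $1/(2|a|_{\infty})$ lies strictly between the two roots, which are therefore real and distinct; this gives $\Delta>0$ --- a point you never address, yet needed for \eqref{eq:def_lambda} to define a real number --- and shows that the smaller root, which is the $\lambda^{*}$ of \eqref{eq:def_lambda}, belongs to $\bigl(0,1/(2|a|_{\infty})\bigr)$, as required. The constant case is indeed immediate from $u>|a|_1$, since then $\lambda^{*}<u/(2u|a|_{\infty})=1/(2|a|_{\infty})$. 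With this correction your proof is complete, and it in fact makes explicit an admissibility check that the paper's own (terse) proof leaves implicit.
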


Finally we define the approximate $p$-values $\tilde{q}_{1,S}$ and $\tilde{q}_{2,S}$ by
\begin{equation}\label{eq:defQV12}
\tilde{q}_{1,S}:= \widetilde{Q}_{1,|S|}(F_{S,1}|\X_{S})\ , \quad\quad \tilde{q}_{2,S}:= \widetilde{Q}_{2,|S|}(F_{S,2}|\X_{S})\ .
\end{equation}

Although we use similar notations for $\tilde{q}_{i,S}$ with $i=V,1,2$, 
this must not mask the essential difference that $\tilde{q}_{1,S}$ is the {\em exact} $p$-value of $F_{S,1}$ whereas $\tilde{q}_{1,S}$ and $\tilde{q}_{2,S}$ only are upper-bounds on $F_{S,2}$ and $F_{S,2}$ $p$-values. The consequences of this asymetry in terms of calibration of the test is discussed in the next subsection.

\subsection{Comparison of the calibration procedures and Type I error}\label{sec:calibration}

\subsubsection{Bonferroni Calibration  (B)}

Recall that a data-driven model collection $\widehat{\cS}$ is defined as
the result of a fixed algorithm mapping a deterministic collection $\cS$ and 
$(\X,\Y)$ to a subcollection $\widehat{\cS}$. 
The collection of thresholds $\widehat{\alpha}^B = \left\{\alpha_{i,S},\ S\in\widehat{\cS}\right\}$ is chosen such that
\begin{equation} \label{condition_bonf}
\sum_{S \in \cS }\sum_{i=V,1,2}\alpha_{i,S} \leq  \alpha \ .
\end{equation}
For the collection $\cS_{\leq k}$, or any data-driven collection derived from $\cS_{\leq k}$, a natural choice is  
\begin{equation}\label{eq:condition_bonf_classique}
 \alpha_{V,S}:=\frac{\alpha}{2k}\binom{p}{|S|}^{-1}\ ,\quad \alpha_{1,S}=\alpha_{2,S}:=\frac{\alpha}{4k}\binom{p}{|S|}^{-1}\ ,
\end{equation}	
which puts as much weight to the comparison of the conditional
variances ($F_{S,V}$) and the comparison of the coefficients $(F_{S,1},F_{S,2})$.
Similarly for the collection $\SLasso$, a natural choice is (\ref{eq:condition_bonf_classique}) with $k$ replaced by $D_{\max}$ (which equals $\lfloor (n_1\wedge n_2)/2\rfloor$ in practice).

\begin{algorithm}
\caption{Bonferroni Calibration for a collection $\widehat{\cS}\subset \mathcal{S}_{\leq D_{\max}}$}\label{algo:Bonferroni}
\begin{algorithmic}
\Require{maximum model dimension $D_{max}$, model collection $\widehat{\cS}$, desired level $\alpha$}
\For{each subset $S$ in $\widehat{\cS}$}
	\State $\alpha_{V,S} \gets \alpha (2D_{max})^{-1} \binom{p}{|S|}^{-1}$
	\State $\alpha_{1,S} \gets \alpha (4D_{max})^{-1} \binom{p}{|S|}^{-1}$,\quad $\alpha_{2,S} \gets \alpha_{1,S}$
\EndFor
\end{algorithmic}
\end{algorithm}

Given any data-driven collection $\widehat{\cS}$, denote by $T^{B}_{\widehat{\cS}}$ the multiple testing procedure calibrated by Bonferroni thesholds $\widehat{\alpha}^B$ \eqref{condition_bonf}. 

\begin{proposition}[Size of $T^{B}_{\widehat{\cS}}$]\label{prop:sizeB}
 The test function $T^{B}_{\widehat{\cS}}$ satisfies  $\mathbb{P}_{\hyp_0}[T^{B}_{\widehat{\cS}}= 1]\leq \alpha$. 
\end{proposition}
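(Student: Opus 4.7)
The plan is to reduce the data-driven problem to a union bound over the deterministic collection $\cS$ and then invoke the $p$-value properties of the three statistics.

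First, since $\widehat{\cS}\subset\cS$ deterministically, the event $\{T^{B}_{\widehat{\cS}}=1\}$ is contained in $\bigcup_{S\in\cS}\bigcup_{i\in\{V,1,2\}}\{\tilde{q}_{i,S}\leq \alpha_{i,S}\}$. This crucial step converts the random collection $\widehat{\cS}$ into a sum over the \emph{fixed} collection $\cS$, which is what makes the calibration condition \eqref{condition_bonf} well-adapted to a data-driven selection. A straightforward union bound then gives
\begin{equation*}
\mathbb{P}_{\hyp_0}\bigl[T^{B}_{\widehat{\cS}}=1\bigr] \;\leq\; \sum_{S\in\cS}\sum_{i\in\{V,1,2\}} \mathbb{P}_{\hyp_0}\bigl[\tilde{q}_{i,S}\leq \alpha_{i,S}\bigr].
\end{equation*}

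Next I would bound each summand by $\alpha_{i,S}$. By Lemma \ref{lemma_equivalence_hypothese}, $\hyp_0$ implies $\hyp_{0,S}$ for every $S$, so Proposition \ref{prte:distribution_parametrique} applies. For $i=V$, the conditional distribution of $F_{S,V}$ given $\X_S$ is parameter-free and continuous, so $\tilde{q}_{V,S}=\overline{Q}_{V,|S|}(F_{S,V}|\X_S)$ is, conditionally on $\X_S$, uniformly distributed on $[0,1]$; hence $\mathbb{P}_{\hyp_0}[\tilde{q}_{V,S}\leq \alpha_{V,S}\mid \X_S]\leq \alpha_{V,S}$, and integrating over $\X_S$ preserves the bound. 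For $i=1,2$, the statistic $\tilde{q}_{i,S}=\widetilde{Q}_{i,|S|}(F_{S,i}|\X_S)$ is only an \emph{upper bound} on the exact $p$-value $\overline{Q}_{i,|S|}(F_{S,i}|\X_S)$ (Proposition \ref{prte:majoration_Q}), but this works in our favor: $\{\tilde{q}_{i,S}\leq \alpha_{i,S}\}\subset\{\overline{Q}_{i,|S|}(F_{S,i}|\X_S)\leq \alpha_{i,S}\}$, and the exact $p$-value is conditionally uniform on $[0,1]$, so again the conditional probability is at most $\alpha_{i,S}$.

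Combining these bounds and invoking the calibration condition \eqref{condition_bonf} yields
\begin{equation*}
\mathbb{P}_{\hyp_0}\bigl[T^{B}_{\widehat{\cS}}=1\bigr] \;\leq\; \sum_{S\in\cS}\sum_{i\in\{V,1,2\}} \alpha_{i,S} \;\leq\; \alpha,
\end{equation*}
which is the claim. The only subtlety in the argument is the first step: because $\widehat{\cS}$ is built from the same data used to compute the $\tilde{q}_{i,S}$, we cannot exploit any independence between the selection step and the testing step, and the inclusion $\widehat{\cS}\subset\cS$ combined with a worst-case union bound over $\cS$ is precisely what sidesteps this difficulty, at the price of the conservativeness discussed in Section \ref{sec:calibration}.
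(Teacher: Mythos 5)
Your proof is correct and follows essentially the same route as the paper: bound the rejection event for the random collection $\widehat{\cS}$ by a union over the deterministic collection $\cS$, bound each term conditionally on $\X_S$ by $\alpha_{i,S}$ (exactness of $\tilde{q}_{V,S}$ and the upper-bound property of $\tilde{q}_{1,S},\tilde{q}_{2,S}$ from Proposition \ref{prte:majoration_Q}), integrate over $\X_S$, and conclude via the calibration condition \eqref{condition_bonf}. Your explicit remark that the worst-case bound over $\cS$ is what neutralizes the data-driven selection matches the paper's own discussion in Remark \ref{rem:bonf_corr}.
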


\begin{remark}[Bonferroni correction on $\cS$ and not on $\widehat{\cS}$]\label{rem:bonf_corr}
Note that even though we only compute the statistics $F_{S,i}$ for models $S\in \widehat{\cS}$, the Bonferroni correction  \eqref{condition_bonf} must be applied to the initial deterministic collection $\cS$ including $\widehat{\cS}$. Indeed, if we replace the condition (\ref{condition_bonf}) by the condition $\sum_{S \in \widehat{\cS} } \sum_{i=1}^3\alpha_{i,S} \leq  \alpha$, then the size of the corresponding is not constrained anymore to be smaller than $\alpha$. This is due to the fact that we use the {\it same} data set to select $\widehat{\cS}\subset\cS$ and to perform the multiple testing procedure. As a simple example, consider any deterministic collection $\cS$ and the data-driven collection
\[\widehat{\cS}=\left\{\arg\min_{S\in \cS}\min_{i=V,1,2} \tilde{q}_{i,S}\right\}\ , \]
meaning that $\widehat{\cS}$ only contains the subset $S$  that minimizes the $p$-values of the parametric tests. Thus, computing $T_{\widehat{\cS}}^B$ for this particular collection $\widehat{\cS}$ is equivalent to performing a multiple testing procedure on $\cS$. 
\end{remark}

Although procedure $T^{B}_{\widehat{\cS}}$ is computationally and conceptually simple, the size of the corresponding test can be much lower than $\alpha$ because of three difficulties:
\begin{enumerate}
\item Independently from our problem, Bonferroni corrections are known to be too conservative under dependence of the test statistics.
\item As emphasized by Remark \ref{rem:bonf_corr}, whereas the Bonferroni correction needs to be based on the whole collection $\cS$, only the subsets $S\in\widehat{\cS}$ are considered. Provided we could afford the computational cost of testing all subsets within $\cS$, this loss cannot be compensated for if we use the Bonferroni correction.
\item As underlined in the above subsection, for computational reasons we do not consider the exact $p$-values of $F_{S,1}$ and $F_{S,2}$ but only upper bounds $\tilde{q}_{1,S}$ and $\tilde{q}_{2,S}$  of them. We therefore overestimate the type I error due to $F_{S,1}$ and $F_{S,2}$.
\end{enumerate}

In fact, the three aforementionned issues are addressed by the permutation  approach.

\subsubsection{Calibration by permutation (P).}  The collection of thresholds $\widehat{\alpha}^P = \{\alpha_{i,S},\
  S\in\widehat{\cS}\}$ is chosen such that each $\alpha_{i,S}$ remains
inversely proportional to $\binom{p}{|S|}$ in order to put all subset
sizes at equal footage. 
In other words, we choose a collection of thresholds of the form
\begin{equation} \label{condition_perm}
\alpha_{i,S} = \widehat{C}_i \binom{p}{|S|}^{-1} \ ,
\end{equation}
where $\widehat{C}_i$'s are  calibrated by permutation to control the type I error of the global test.

Given a permutation $\pi$ of the set $\{1,\ldots, n_1+n_2\}$,  one gets $\Y^{\pi}$ and  $\X^{\pi}$ by permuting the components of $\Y$ and the rows of $\X$. This allows us to get a new sample ($\Y^{\pi,(1)}$, $\Y^{\pi,(2)}$, $\X^{\pi,(1)}$, $\X^{\pi,(2)})$. Using this new sample, we compute a new collection $\widehat{\cS}^{\pi}$,  parametric statistics $(F_{S,i}^{\pi})_{i=V,1,2}$ and $p$-values $(\tilde{q}_{i,S})_{i=V,1,2}$.  Denote $\mathcal{P}$ the uniform distribution over the permutations of size $n_1+n_2$.

We define $\widehat{C}_{V}$   as the $\alpha/2$-quantiles  with respect to  $\mathcal{P}$ of
\begin{equation}\label{eq:defCV}
\min_{S\in\widehat{\cS}^{\pi}}\left[\tilde{q}_{V,S}\binom{p}{|S|}\right]\ .
\end{equation}
Similarly, $\widehat{C}_{1}= \widehat{C}_{2}$ are the $\alpha/2$-quantiles with respect to $\mathcal{P}$ of 
\begin{equation}\label{eq:defC1}
\min_{S\in\widehat{\cS}^{\pi}}\left[\left(\tilde{q}_{1,S}\wedge \tilde{q}_{2,S}\right)\binom{p}{|S|}\right]\ .
\end{equation}
In practice, the quantiles $\widehat{C}_{i}$ are estimated by sampling a large number $B$ of permutations. The permutation calibration procedure for the Lasso collection $\SLasso$ is summarized in Algorithm \ref{algo:permutations}.

\begin{algorithm}
\caption{Calibration by Permutation for $\SLasso$}\label{algo:permutations}
\begin{algorithmic}
\Require{Data $\mathbf{X}^{(1)}$,$\mathbf{X}^{(2)}$,$\mathbf{Y}^{(1)}$,$\mathbf{Y}^{(2)}$, maximum model dimension $D_{\max}$, number $B$ of permutations, desired level $\alpha$}
\For{b = 1,\dots, B}
	\State Draw $\pi$ a random permutation of $\{1,\ldots, n_1+n_2\}$
	\State ${{\bf X}^{(b)},\bf{Y}}^{(b)} \gets $  $\pi$-permutation of $({\bf X},\bf{Y})$
	\Procedure{LassoModelChoice}{$\mathbf{X}^{(1,b)}$,$\mathbf{X}^{(2,b)}$,$\mathbf{Y}^{(1,b)}$,$\mathbf{Y}^{(2,b)}$,$\cS_{\leq D_{\max}}$}
		\State Define $\SLasso^{(b)}$ (as in Algorithm \ref{algo:SLasso})
	\EndProcedure
	\Procedure{Test}{$\mathbf{X}^{(1,b)}$,$\mathbf{X}^{(2,b)}$,$\mathbf{Y}^{(1,b)}$,$\mathbf{Y}^{(2,b)}$,$\SLasso^{(b)}$}
		\For{each subset $S$ in $\SLasso^{(b)}$}
			\State Compute the $p$-values $\tilde{q}_{i,S}^{(b)}$ for $i=V,1,2$.
		\EndFor
		\State  $C_{V}^{(b)}\gets  \min_{S \in \SLasso^{(b)}} \tilde{q}_{V,S}^{(b)}\binom{p}{|S|}$
		\State  $C_{1}^{(b)}\gets \min_{S \in \SLasso^{(b)}} \left(\tilde{q}_{1,S}^{(b)}\wedge \tilde{q}_{2,S}^{(b)}\right)\binom{p}{|S|}$    
	\EndProcedure
\EndFor
\State Define $\widehat{C}_{V}$ as the $\alpha/2$-quantile of the $(C_V^{(1)},\dots,C_V^{(B)})$ distribution
\State Define $\widehat{C}_{1}=\widehat{C}_{2}$ as the $\alpha/2$-quantile of the $(C_1^{(1)},\dots,C_1^{(B)})$ distribution
\For{each subset $S$ in $\SLasso$, each $i=V,1,2$,}
\State  $\alpha_{i,S} \gets \widehat{C}_i \binom{p}{|S|}^{-1}$
\EndFor
\end{algorithmic}
\end{algorithm}

Given any data-driven collection $\widehat{\cS}$, denote by $T^{P}_{\widehat{\cS}}$ the multiple testing procedure calibrated by  the permutation method \eqref{condition_perm}.


\begin{proposition}[Size of $T^P_{\widehat{\cS}}$] \label{prop:sizeP}
The test function $T^{P}_{\widehat{\cS}}$ satisfies  
\[\alpha/2\leq \mathbb{P}_{\hyp_0}\left[T^{P}_{\widehat{\cS}}=1\right]\leq \alpha\ .\] 
\end{proposition}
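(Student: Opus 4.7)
The proof is a standard exchangeability/permutation-test argument, made clean by the fact that the calibration treats the variance part and the coefficient part symmetrically and allocates level $\alpha/2$ to each. The starting point is that under $\hyp_0$ one has $\beta^{(1)}=\beta^{(2)}$, $\sigma^{(1)}=\sigma^{(2)}$ and $\Sigma^{(1)}=\Sigma^{(2)}$, so the $n_1+n_2$ rows $(X_k,Y_k)$ of $(\X,\Y)$ are i.i.d. Consequently, for every deterministic permutation $\pi$ of $\{1,\dots,n_1+n_2\}$ the joint distribution is invariant: $(\X^\pi,\Y^\pi)\stackrel{d}{=}(\X,\Y)$. Equivalently, conditionally on the unordered sample, the ordered sample is uniformly distributed over its permutations.

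Introduce the two summary statistics
\[
T_V:=\min_{S\in\widehat{\cS}}\tilde{q}_{V,S}\binom{p}{|S|},\qquad T_{12}:=\min_{S\in\widehat{\cS}}\bigl(\tilde{q}_{1,S}\wedge\tilde{q}_{2,S}\bigr)\binom{p}{|S|}.
\]
Because $\alpha_{i,S}=\widehat{C}_i\binom{p}{|S|}^{-1}$ with $\widehat{C}_1=\widehat{C}_2$, the rejection event $\{T^P_{\widehat{\cS}}=1\}$ coincides with $\{T_V\leq \widehat{C}_V\}\cup\{T_{12}\leq \widehat{C}_1\}$. Both $T_V$ and $T_{12}$ are deterministic measurable functions of $(\X,\Y)$ (model selection via Lasso, then the explicit $p$-value formulas of Section~\ref{section:pvalues}), so applying a permutation $\pi$ to the data produces $T_V(\X^\pi,\Y^\pi)$ and $T_{12}(\X^\pi,\Y^\pi)$, which are precisely the quantities used to form the permutation distribution in \eqref{eq:defCV}--\eqref{eq:defC1}.

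\textbf{Upper bound.} By definition of $\widehat{C}_V$ as the $\alpha/2$-quantile with respect to $\mathcal P$ of $T_V(\X^\pi,\Y^\pi)$, we have pointwise in $(\X,\Y)$
\[
\mathbb{P}_{\pi\sim\mathcal P}\bigl[T_V(\X^\pi,\Y^\pi)\leq \widehat{C}_V(\X,\Y)\bigr]\leq \alpha/2.
\]
Integrating over the data under $\hyp_0$ and using the exchangeability identity $T_V(\X,\Y)\stackrel{d}{=}T_V(\X^\pi,\Y^\pi)$ (jointly with $\widehat{C}_V$, which only depends on the unordered data) gives $\mathbb{P}_{\hyp_0}[T_V\leq \widehat{C}_V]\leq \alpha/2$. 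The identical argument applied to $T_{12}$ and $\widehat{C}_1$ yields $\mathbb{P}_{\hyp_0}[T_{12}\leq \widehat{C}_1]\leq \alpha/2$, and the union bound concludes $\mathbb{P}_{\hyp_0}[T^P_{\widehat{\cS}}=1]\leq\alpha$.

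\textbf{Lower bound.} By monotonicity, $\mathbb{P}_{\hyp_0}[T^P_{\widehat{\cS}}=1]\geq \mathbb{P}_{\hyp_0}[T_V\leq\widehat{C}_V]$. The quantile convention ensures $\mathbb{P}_{\pi\sim\mathcal P}[T_V(\X^\pi,\Y^\pi)\leq \widehat{C}_V(\X,\Y)]\geq \alpha/2$ pointwise, so the same exchangeability argument gives $\mathbb{P}_{\hyp_0}[T_V\leq \widehat{C}_V]\geq \alpha/2$, which finishes the proof.

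\textbf{Main obstacle.} The only delicate point is the rigorous exchange-of-integration in the upper bound: the quantile $\widehat{C}_V(\X,\Y)$ is itself a random variable depending on the \emph{same} data as $T_V(\X,\Y)$, so one must carefully argue that $(T_V(\X,\Y),\widehat{C}_V(\X,\Y))\stackrel{d}{=}(T_V(\X^\pi,\Y^\pi),\widehat{C}_V(\X^\pi,\Y^\pi))$ under $\hyp_0$, using that $\widehat{C}_V$ is a function only of the unordered sample (being an $\alpha/2$-quantile under the uniform permutation measure), and that the unordered sample is a sufficient statistic for the exchangeable distribution. Once this is granted, the argument reduces to the familiar Romano--Wolf style bound, and the symmetric treatment of the three $p$-values in the calibration makes the $\alpha/2+\alpha/2$ split immediate.
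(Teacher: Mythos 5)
Your proof is correct and follows essentially the same route as the paper: permutation invariance of the two min-$p$-value statistics under $\hyp_0$, the $\alpha/2$-quantile definition of $\widehat{C}_V$ and $\widehat{C}_1=\widehat{C}_2$ giving (at most, resp.\ at least) $\alpha/2$ for each part, and a union bound for the upper bound. If anything, you are slightly more explicit than the paper about why the data-dependence of $\widehat{C}_V$ is harmless (it is a function of the unordered sample only), a point the paper compresses into the statement that the permuted statistics' distributions are invariant under $\pi$.
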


\begin{remark}
Through the three constants $\widehat{C}_{V}$, $\widehat{C}_{1}$ and $\widehat{C}_{2}$ (Eq. (\ref{eq:defCV},\ref{eq:defC1})), the permutation approach corrects simultaneously for the losses mentioned earlier due to the Bonferroni correction, in particular the restriction to a data-driven class $\widehat{\cS}$ and the approximate $p$-values $\tilde{q}_{1,S}$ and $\tilde{q}_{2,S}$.
Yet, the level of $T^{P}_{\widehat{\cS}}$ is not exactly $\alpha$ because we treat separately the the statistics $F_{S,V}$ and $(F_{S,1},F_{S,2})$ and apply a Bonferroni correction. It would be possible to calibrate all the statistics simultaneously in order to constrain the size of the corresponding test to be exactly $\alpha$. However, this last approach would favor the statistic $F_{S,1}$ too much, because we would put on the same level the exact $p$-value $\tilde{q}_{V,S}$ and the upper bounds $\tilde{q}_{1,S}$ and $\tilde{q}_{2,S}$.
\end{remark}

\subsection{Interpretation tools}\label{interpretation}

\paragraph{Empirical $p$-value}
When using a calibration by permutations, one can derive an empirical
$p$-value $p^{empirical}$ to assess the global significance of the test.  In contrast
with model  and statistic specific  $p$-values $\tilde{q}_{i,S}$, this
$p$-value provides a nominally accurate estimation of the type-I
error rate associated with the global multiple testing procedure, every
model  in the  collection and test statistic being  considered. It can be
directly compared  to the desired  level $\alpha$ to decide about the
rejection or not of the global null hypothesis.

This empirical p-value is obtained as the fraction of the
permuted values of the statistic that are less than the observed test
statistic. Keeping the notation of Algorithm \ref{algo:permutations},
the empirical p-value for the variance and coefficient parts are given
respectively by : 
\begin{eqnarray*}
p^{empirical}_{V} = &&\frac{1}{B} \sum_{b=1}^B \mathds{1} \left[
  C_{V}^{(b)} <  \min_{S \in \SLasso} \tilde{q}_{V,S} \binom{p}{|S|}
\right]\ , \\
p^{empirical}_{1-2} = &&\frac{1}{B} \sum_{b=1}^B \mathds{1}\left[ C_{1}^{(b)} <
  \min_{S \in \SLasso} \left( \tilde{q}_{1,S} \wedge \tilde{q}_{2,S} \right)\binom{p}{|S|} \right]\ . 
\end{eqnarray*}

The empirical p-value for the global test is then given by the following
equation.
\begin{equation}
\label{eq:neighborhood_pvalue}
p^{empirical} = 2 \min(p^{empirical}_{V},p^{empirical}_{1-2}).
\end{equation}

\paragraph{Rejected model} Moreover, one can keep track of
the model responsible for the rejection, unveiling sensible information on
which particular coefficients most likely differ between samples. The
rejected models for the variance and coefficient parts are given
respectively by : 
\begin{eqnarray*}
S^R_{V} =  &&\arg \min_{S \in \SLasso} \tilde{q}_{V,S}  \binom{p}{|S|}\\
S^R_{1-2} = && \arg \min_{S \in \SLasso}  \left( \tilde{q}_{1,S} \wedge \tilde{q}_{2,S} \right)\binom{p}{|S|} 
\end{eqnarray*}
We define the rejected model $S^R$ as model $S^R_{V}$ or
$S^R_{1-2}$ according to the smallest empirical p-value
$p^{empirical}_{V}$ or $p^{empirical}_{1-2}$. 
\label{eq:rejected_model}

\section{Power and Adaptation to Sparsity}\label{sec:power1}
Let us fix some number $\delta\in(0,1)$. The objective is to investigate the set of parameters $(\beta^{(1)},\sigma^{(1)},\beta^{(2)},\sigma^{(2)})$ that enforce the power of the test to exceed $1-\delta$.  We focus here on the Bonferroni calibration (B) procedure because the analysis is easier. Section  \ref{sec:num} will illustrate that the permutation calibration (P) outperforms the Bonferroni calibration (B) in practice. In the sequel, $A \lesssim B$ (resp. $A\gtrsim B$) means that for some  positive constant $L(\alpha,\delta)$ that only depends on $\alpha$ and $\delta$, $A\leq L(\alpha,\delta)B$ (resp. $A\geq L(\alpha,\delta)B$).

We first define the symmetrized Kullback-Leibler divergence as a way to measure the discrepancies between 
$(\beta^{(1)},\sigma^{(1)})$ and $(\beta^{(2)},\sigma^{(2)})$. Then, we consider tests with deterministic collections
 in Sections \ref{sec:power_Sk}--\ref{sec:power_arbitrary}. We prove that the corresponding tests are minimax adaptive to the sparsity of
  the parameters or to the sparsity of the difference $\beta^{(1)}-\beta^{(2)}$. Sections \ref{sec:power_Slasso}--\ref{sec:power_Slasso2} are
   devoted to the analysis $T_{\SLasso}^B$. Under stronger assumptions on the population covariances than for deterministic collections, we prove that the performances of $T_{\SLasso}^B$ are nearly optimal.

\subsection{Symmetrized Kullback-Leibler divergence}

 Intuitively, the test $T_{\cS}^{B}$ should reject $\hyp_0$ with large probability when $(\beta^{(1)},\sigma^{(1)})$ is far from $(\beta^{(2)},\sigma^{(2)})$ in some sense. A classical way of measuring the divergence between two distributions is the Kullback-Leibler discrepancy. In the sequel, we note 
 $\mathcal{K}\left[\mathbb{P}_{Y^{(1)}|X};\mathbb{P}_{Y^{(2)}|X}\right]$
the Kullback discrepancy  between the conditional distribution of $Y^{(1)}$ given $X^{(1)}=X$ and  conditional distribution of $Y^{(2)}$ given $X^{(2)}=X$. Then, we denote $\mathcal{K}_{1}$ the expectation of this Kullback divergence when  $X\sim \mathcal{N}(0_p,\Sigma^{(1)})$. Exchanging the roles of $\Sigma^{(1)}$ and $\Sigma^{(2)}$, we also define $\mathcal{K}_{2}$:
\begin{equation*}
 \mathcal{K}_{1} := 
\mathbb{E}_{X^{(1)}}\left\{\mathcal{K}\left[\mathbb{P}_{Y^{(1)}|X};\mathbb{P}_{Y^{(2)}|X}\right]\right\}\ , \quad
\mathcal{K}_{2} := 
\mathbb{E}_{X^{(2)}}\left\{\mathcal{K}\left[\mathbb{P}_{Y^{(2)}|X};\mathbb{P}_{Y^{(1)}|X}\right]\right\}\ .
\end{equation*}
The sum $\mathcal{K}_{1}+ \mathcal{K}_{2}$ forms a semidistance with respect to $(\beta^{(1)},\sigma^{(1)})$ and $(\beta^{(2)},\sigma^{(2)})$ as proved by the following decomposition
\begin{equation*}
 2\left(\mathcal{K}_{1}+ \mathcal{K}_{2}\right)= \left(\frac{\sigma^{(1)}}{\sigma^{(2)}}\right)^2+\left(\frac{\sigma^{(2)}}{\sigma^{(1)}}\right)^2-2 + \frac{\|\beta^{(2)}-\beta^{(1)}\|_{\Sigma^{(2)}}^2}{(\sigma^{(1)})^2} + \frac{\|\beta^{(2)}-\beta^{(1)}\|_{\Sigma^{(1)}}^2}{(\sigma^{(2)})^2}.
\end{equation*} 
When $\Sigma^{(1)}\neq \Sigma^{(2)}$, we quantify the discrepancy between these covariance matrices by
\begin{equation*}
\varphi_{\Sigma^{(1)},\Sigma^{(2)}}:= \varphi_{\max}\left\{\sqrt{\Sigma^{(2)}}(\Sigma^{(1)})^{-1}\sqrt{\Sigma^{(2)}}+\sqrt{\Sigma^{(1)}}(\Sigma^{(2)})^{-1}\sqrt{\Sigma^{(1)}}\right\}\ .
\end{equation*}	
Observe that the quantity $\varphi_{\Sigma^{(1)},\Sigma^{(2)}}$ can be considered as a constant if we assume that the smallest and largest eigenvalues of $\Sigma^{(i)}$ are bounded away from zero and infinity.

\subsection{Power of $T_{\cS_{\leq k}}^B$}\label{sec:power_Sk}

First, we control the power of $T_{\cS}^B$ for a deterministic collection $\cS=\cS_{\leq k}$ (with some $k \leq (n_1\wedge n_2)/2$) and the Bonferroni calibration weights $\widehat{\alpha}_{i,S}$ as in (\ref{eq:condition_bonf_classique})).  For any $\beta\in\mathbb{R}^p$,  $|\beta|_0$ refers to the size of its support and $|\beta|$ stands for the vector  $(|\beta_i|), i=1,\ldots, p$. We consider the two following assumptions
\[{\bf A.1}: \hspace{5cm} \log(1/(\alpha\delta))\lesssim n_1\wedge n_2\ .\hspace{5cm}\]
\[{\bf A.2}: \hspace{2cm}|\beta^{(1)}|_0+ |\beta^{(2)}|_0	\lesssim k\wedge \left(\frac{n_1\wedge n_2}{\log(p)}\right)\ , \quad \quad  \log(p)\leq n_1\wedge n_2\ .\hspace{2cm}\]

\begin{remark} Condition {\bf A.1} requires that the type I and type II errors under consideration are not exponentially smaller than the sample size. Condition {\bf A.2} tells us that the number of non-zero components of $\beta^{(1)}$ and $\beta^{(2)}$ has to be smaller than $(n_1\wedge n_2)/\log(p)$. This requirement has been shown~\cite{Vminimax} to be minimal to obtain fast rates of testing of the form (\ref{eq:rates_testing}) in the specific case $\beta^{(2)}=0$, $\sigma^{(1)}=\sigma^{(2)}$ and $n_2=\infty$.
\end{remark}    

\begin{thrm}[Power of $T_{\cS_{\leq k}}^B$]\label{cor:section_complete}
Assuming that {\bf A.1} and {\bf A.2} hold,  $\mathbb{P}[T_{\cS_{\leq k}}^B=1]\geq 1-\delta$ as long as 
\begin{equation}\label{eq:rates_testing}
\mathcal{K}_1+\mathcal{K}_2\gtrsim \varphi_{\Sigma^{(1)},\Sigma^{(2)}} 
\frac{\left\{|\beta^{(1)}|_0\vee |\beta^{(2)}|_0\vee 1\right\}\log\left(p\right)+\log\left(\frac{1}{\alpha\delta}\right)}{n_1\wedge n_2}\ .
\end{equation}
If we further assume that $\Sigma^{(1)}= \Sigma^{(2)}:=\Sigma$, then $\mathbb{P}[T_{\cS_{\leq k}}^B=1]\geq 1-\delta$ as long as 
\begin{equation}\label{eq:rates_testing2}
\frac{\|\beta^{(1)}-\beta^{(2)}\|^2_{\Sigma}}{\var[Y^{(1)}]\wedge \var[Y^{(2)}]} \gtrsim
\frac{|\beta^{(1)}-\beta^{(2)}|_0\log\left(p\right)+\log\left(\frac{1}{\alpha\delta}\right)}{n_1\wedge n_2}\ .
\end{equation}
\end{thrm}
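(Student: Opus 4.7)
The plan is to exhibit, for each configuration of parameters satisfying \eqref{eq:rates_testing}, a deterministic ``oracle'' subset $S^\star \in \cS_{\leq k}$ and to prove that, with probability at least $1-\delta$ under $\hyp_1$, at least one of $F_{S^\star,V}, F_{S^\star,1}, F_{S^\star,2}$ crosses its Bonferroni threshold from \eqref{eq:condition_bonf_classique}. Since $T_{\cS_{\leq k}}^B$ rejects as soon as a single pair $(S,i)$ crosses, this is sufficient. I take $S^\star := \mathrm{supp}(\beta^{(1)}) \cup \mathrm{supp}(\beta^{(2)})$, so that \textbf{A.2} guarantees $|S^\star| \leq k$. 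With this choice the population regression of $Y^{(i)}$ on $X^{(i)}_{S^\star}$ exactly recovers the true parameters, $\beta^{(i)}_{S^\star}=\beta^{(i)}$ and $\sigma^{(i)}_{S^\star}=\sigma^{(i)}$, so that the decomposition of $\mathcal{K}_1+\mathcal{K}_2$ displayed above the theorem splits as a variance piece $g((\sigma^{(1)})^2/(\sigma^{(2)})^2)$ plus two coefficient pieces $\|\beta^{(1)}-\beta^{(2)}\|_{\Sigma^{(i)}}^2/(\sigma^{(i)})^2$. A pigeon-hole argument forces one of the three terms to contribute at least a third of $\mathcal{K}_1+\mathcal{K}_2$, and the three resulting cases are treated separately.

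If the variance piece dominates, I control $F_{S^\star,V}$. Conditionally on $\X_{S^\star}$, the two residual sums of squares are scaled $\chi^2$ variables with $n_i-|S^\star|$ degrees of freedom and expectations $(\sigma^{(i)})^2$, so Laurent--Massart multiplicative concentration keeps $F_{S^\star,V}$ within $O(\sqrt{\log(1/\delta)/(n_1\wedge n_2)})$ of $g((\sigma^{(1)})^2/(\sigma^{(2)})^2)$ with probability $1-\delta/3$. The threshold to beat is the $\alpha_{V,S^\star}$-quantile of $g$ applied to a Fisher variate: using $\log\binom{p}{|S^\star|}\lesssim |S^\star|\log p$, assumption \textbf{A.1}, and the local expansion $g(1+\epsilon)\sim\epsilon^2$, this quantile is of order $(|S^\star|\log p + \log(1/(\alpha\delta)))/(n_1\wedge n_2)$, which is precisely the rate guaranteed by \eqref{eq:rates_testing}.

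If the first coefficient piece dominates ($i=1$; the case $i=2$ is symmetric), I study $F_{S^\star,1}$. Conditional on $\X_{S^\star}$, the numerator is a non-central quadratic form whose non-centrality is proportional to $\|\widehat{\beta}^{(1)}_{S^\star}-\widehat{\beta}^{(2)}_{S^\star}\|_{\widehat{\Sigma}^{(2)}_{S^\star}}^2/(\sigma^{(1)})^2$, where $\widehat{\Sigma}^{(j)}_{S^\star}:=\X^{(j)T}_{S^\star}\X^{(j)}_{S^\star}/n_j$. Davidson--Szarek concentration of $\widehat{\Sigma}^{(j)}_{S^\star}$ around $\Sigma^{(j)}_{S^\star}$, valid because $|S^\star|\log p\lesssim n_1\wedge n_2$ by \textbf{A.2}, shows that this non-centrality is within a constant factor of $n_2\|\beta^{(1)}-\beta^{(2)}\|_{\Sigma^{(2)}}^2/(\sigma^{(1)})^2$. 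To upper-bound the corresponding rejection threshold, I invert the Laplace bound $\widetilde{Q}_{1,|S^\star|}(\cdot|\X_{S^\star})$ of Definition \ref{defi:deviation} at level $\alpha_{1,S^\star}$: solving the saddle-point system \eqref{eq:def_delta}--\eqref{eq:def_lambda} for the critical $u$ yields a quantile of order $\varphi_{\Sigma^{(1)},\Sigma^{(2)}}(|S^\star|\log p + \log(1/(\alpha\delta)))/(n_1\wedge n_2)$, again matching \eqref{eq:rates_testing}.

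For \eqref{eq:rates_testing2}, the improvement comes from choosing the smaller oracle $S^\star := \mathrm{supp}(\beta^{(1)}-\beta^{(2)})$, which satisfies $|S^\star|=|\beta^{(1)}-\beta^{(2)}|_0$. When $\Sigma^{(1)}=\Sigma^{(2)}=\Sigma$, using the closed form $\beta^{(i)}_{S^\star}=\Sigma_{S^\star,S^\star}^{-1}\Sigma_{S^\star,\cdot}\beta^{(i)}$ together with the fact that $\beta^{(1)}-\beta^{(2)}$ vanishes outside $S^\star$ gives the identity $\|\beta^{(1)}_{S^\star}-\beta^{(2)}_{S^\star}\|_{\Sigma_{S^\star,S^\star}}^2 = \|\beta^{(1)}-\beta^{(2)}\|_{\Sigma}^2$, while the conditional variances satisfy $(\sigma^{(i)}_{S^\star})^2\leq \var(Y^{(i)})$. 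Feeding these equalities into the coefficient-dominated argument above yields \eqref{eq:rates_testing2} with sparsity $|\beta^{(1)}-\beta^{(2)}|_0$ replacing $|\beta^{(1)}|_0\vee|\beta^{(2)}|_0$ and with the factor $\varphi_{\Sigma^{(1)},\Sigma^{(2)}}$ collapsing to $2$. The main technical obstacle, throughout, is the inversion of the Laplace bound $\widetilde{Q}_{i,|S|}$: it must be sharp enough that the random eigenvalues of $\X_{S^\star}^{(2)}[(\X^{(1)T}_{S^\star}\X^{(1)}_{S^\star})^{-1}+(\X^{(2)T}_{S^\star}\X^{(2)}_{S^\star})^{-1}]\X^{(2)T}_{S^\star}$ enter at the right order $\varphi_{\Sigma^{(1)},\Sigma^{(2)}}/(n_1\wedge n_2)$, which is exactly how the sparsity--dimension budget from \textbf{A.2} is converted into the minimax-optimal rate.
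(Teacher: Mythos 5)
Your proposal is correct and follows essentially the same route as the paper: the paper proves the theorem by applying Theorem \ref{thrm_puissance} to the oracle sets $S_{\vee}=\mathrm{supp}(\beta^{(1)})\cup\mathrm{supp}(\beta^{(2)})$ and $S_{\Delta}=\mathrm{supp}(\beta^{(1)}-\beta^{(2)})$, and the proof of that theorem consists precisely of the ingredients you inline (inversion of the Fisher quantile and of the Laplace bound $\widetilde{Q}_{i,|S|}$ with Wishart control of $|a|_\infty$, plus deviation lower bounds for $F_{S,V}$, $F_{S,1}$, $F_{S,2}$ under the alternative, combined through the Kullback decomposition). The only cosmetic differences are that you inline these lemmas rather than invoking the intermediate theorem, and you omit the trivial corner cases $S_{\vee}=\emptyset$ or $S_{\Delta}=\emptyset$, which the paper dispatches in one line.
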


\begin{remark} The condition $\Sigma^{(1)}= \Sigma^{(2)}$ is not necessary to control the power of $T_{\cS_{\leq k}}^B$ in terms of $|\beta^{(1)}-\beta^{(2)}|_0$ as in (\ref{eq:rates_testing2}). However, the expression (\ref{eq:rates_testing2}) would become far more involved.
\end{remark}

\begin{remark} Before assessing the optimality of Theorem \ref{cor:section_complete}, let us briefly compare the two rates of detection (\ref{eq:rates_testing}) and (\ref{eq:rates_testing2}). 
According to (\ref{eq:rates_testing}), $T_{\cS_{\leq k}}^B$ is powerful
as soon as the symmetrized Kullback distance is large compared to $\{|\beta^{(1)}|_0\vee |\beta^{(2)}|_0\}\log\left(p\right)/(n_1\wedge n_2)$. In contrast, (\ref{eq:rates_testing2}) tells us that  $T_{\cS_{\leq k}}^B$ is powerful when  $\|\beta^{(1)}-\beta^{(2)}\|^2_{\Sigma}/(\var[Y^{(1)}]\wedge \var[Y^{(2)}])$ is large compared to the sparsity of the difference: $|\beta^{(1)}-\beta^{(2)}|_0\log\left(p\right)/(n_1\wedge n_2)$.

When $\beta^{(1)}$ and $\beta^{(2)}$ have many non-zero coefficients in common, $|\beta^{(1)}-\beta^{(2)}|_0$ is much smaller than $|\beta^{(1)}|_0\vee |\beta^{(2)}|_0$. Furthermore, the left-hand side of \eqref{eq:rates_testing2} is of the same order as $\mathcal{K}_1+\mathcal{K}_2$  when $\Sigma^{(1)}=\Sigma^{(2)}$, $\sigma^{(1)}=\sigma^{(2)}$ and $\|\beta^{(i)}\|_{\Sigma}/\sigma^{(i)}\lesssim 1$ for $i=1,2$, that is when the conditional variances are equal and when the signals $\|\beta^{(i)}\|_{\Sigma}$ are at most at the same order as the noises levels $\sigma^{(i)}$.
In such a case, \eqref{eq:rates_testing2} outperforms \eqref{eq:rates_testing} and only the sparsity of the difference $\beta^{(1)}-\beta^{(2)}$ plays a role in the detection rates. Below, we prove that \eqref{eq:rates_testing} and \eqref{eq:rates_testing2} are both 
optimal from a minimax point of view but on different sets.
\end{remark}

\begin{proposition}[Minimax lower bounds]\label{prte:minoration_minimax}
Assume that   $p\geq 5$, $\Sigma^{(1)}=\Sigma^{(2)}=I_p$, fix some $\gamma>0$, and fix $(\alpha,\delta)$ such that $\alpha+\delta<53\%$.
There exist two constants $L(\alpha,\delta,\gamma)$ and $L'(\alpha,\delta,\gamma)$ such that the following holds. 

\begin{itemize}
 \item 
For all $1\leq s\leq p^{1/2-\gamma}$  
{\bf no} level-$\alpha$ test has a power larger than $1-\delta$ simultaneously over all $s$-sparse vectors $(\beta^{(1)},\beta^{(2)})$ satisfying ${\bf A.2}$ and
\begin{equation}\label{eq:rates_testing_minoration}
\mathcal{K}_1+\mathcal{K}_2 \geq L(\alpha,\delta,\gamma)
\frac{s}{n_1\wedge n_2}\log\left(p\right)\ .
\end{equation}
\item For all $1\leq s\leq p^{1/2-\gamma}$, 
{\bf no} level-$\alpha$ test has a power larger than $1-\delta$ simultaneously over all sparse vectors $(\beta^{(1)},\beta^{(2)})$ satisfying ${\bf A.2}$, $|\beta^{(1)}-\beta^{(2)}|_0\leq s$ and
\begin{equation}\label{eq:rates_testing_minoration2}
\frac{\|\beta^{(1)}-\beta^{(2)}\|^2_{I_p}}{\var[Y^{(1)}]\wedge \var[Y^{(2)}]} \geq L'(\alpha,\delta,\gamma)
\frac{s}{n_1\wedge n_2}\log\left(p\right)\ .
\end{equation}

\end{itemize}

\end{proposition}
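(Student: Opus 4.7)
The proof follows the standard Le Cam / Bayesian strategy for deriving minimax lower bounds in hypothesis testing. Without loss of generality assume $n_1\leq n_2$ (so that $n_1\wedge n_2 = n_1$). The key reduction is to the one-sample sub-problem obtained by restricting attention to parameters where $\sigma^{(1)}=\sigma^{(2)}=1$, $\Sigma^{(1)}=\Sigma^{(2)}=I_p$, and $\beta^{(2)}=0$, with $\beta^{(1)}$ varying over $s$-sparse vectors. In this sub-model the second sample $(\Y^{(2)},\X^{(2)})$ is pure standard Gaussian noise whose distribution does not depend on $\beta^{(1)}$, so any test reduces to a test based only on $(\Y^{(1)},\X^{(1)})$. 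Moreover the decomposition of $\mathcal{K}_1+\mathcal{K}_2$ given in the text collapses to $\mathcal{K}_1+\mathcal{K}_2=\|\beta^{(1)}\|^2$. A lower bound for the one-sample problem with $n_1$ observations therefore transfers directly to the two-sample problem, explaining the $n_1\wedge n_2$ rate.

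To lower-bound the one-sample problem, I would place a prior $\mu_s$ on $\beta^{(1)}$ obtained by drawing a uniformly random subset $S\subset\{1,\ldots,p\}$ of size $s$ and setting $\beta^{(1)}=\frac{\rho}{\sqrt{s}}\sum_{j\in S}e_j$, so that $\|\beta^{(1)}\|^2=\rho^2$. Denote by $\overline{P}_{\rho}$ the resulting mixture distribution of the $n_1$ observations and by $P_0$ the null. A standard two-point reduction (e.g.\ Ingster--Suslina) shows that no level-$\alpha$ test has uniform power at least $1-\delta$ on the support of $\mu_s$ as soon as
\[
\chi^2(\overline{P}_{\rho},P_0)\leq 4(1-\alpha-\delta)^2,
\]
so it suffices to choose $\rho^2\gtrsim s\log(p)/n_1$ keeping the $\chi^2$-distance bounded.

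The $\chi^2$-distance can be written as
\[
1+\chi^2(\overline{P}_{\rho},P_0)=\mathbb{E}_{S,S'}\Bigl[\,\mathbb{E}_0\bigl[L_{\beta}(Y,X)L_{\beta'}(Y,X)\bigr]^{n_1}\Bigr],
\]
where $\beta,\beta'$ correspond to independent draws of the prior and $L_\beta=dP_\beta/dP_0$ is the single-observation likelihood ratio. Under $P_0$, $(Y,X)$ is a standard Gaussian vector, and a direct computation based on the identity $Z_1 Z_2=\frac{1}{4}[(Z_1+Z_2)^2-(Z_1-Z_2)^2]$ yields, using that $\|\beta\|=\|\beta'\|=\rho$,
\[
\mathbb{E}_0\bigl[L_{\beta}L_{\beta'}\bigr] =\bigl(1-2\beta^\top\beta' -(\rho^4-(\beta^\top\beta')^2)\bigr)^{-1/2}.
\]
Since $\beta^\top\beta'=\rho^2|S\cap S'|/s$, applying $(1-x)^{-n_1/2}\leq\exp(n_1 x)$ (valid for $x$ bounded away from $1$) reduces the control of $\chi^2$ to bounding the moment generating function of $|S\cap S'|$, i.e.\ of a hypergeometric law of parameters $(p,s,s)$ with mean $s^2/p$. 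Exploiting the negative association of hypergeometric indicators one obtains
\[
\mathbb{E}\bigl[\exp(t|S\cap S'|)\bigr]\leq\exp\bigl((e^t-1)s^2/p\bigr).
\]

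The main obstacle is the calibration of $\rho^2=L(\alpha,\delta,\gamma)\,s\log(p)/n_1$ so that this MGF remains bounded. Taking $t\asymp n_1\rho^2/s = L\log(p)$ one gets $(e^t-1)s^2/p\leq p^{L}s^2/p\leq p^{L-2\gamma}$ under the constraint $s\leq p^{1/2-\gamma}$, which is $o(1)$ provided $L<2\gamma$. Picking $L=L(\alpha,\delta,\gamma)$ small enough ensures $\chi^2(\overline{P}_\rho,P_0)\leq 4(1-\alpha-\delta)^2$, whence the first lower bound \eqref{eq:rates_testing_minoration}. For the second statement, note that in the same restricted sub-model $\beta^{(1)}-\beta^{(2)}=\beta^{(1)}$, so that $|\beta^{(1)}-\beta^{(2)}|_0\leq s$ and $\|\beta^{(1)}-\beta^{(2)}\|^2=\rho^2$; moreover $\var(Y^{(1)})\wedge\var(Y^{(2)})=1\wedge(1+\rho^2)=1$ for $\rho^2$ small. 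The very same construction therefore proves \eqref{eq:rates_testing_minoration2} with a possibly different constant $L'$.
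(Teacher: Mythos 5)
Your first step coincides with the paper's: reduce to the one-sample sub-problem by fixing $\beta^{(2)}=0$, $\sigma^{(2)}=1$, noting that $(\Y^{(2)},\X^{(2)})$ is then uninformative, so that a lower bound with $n_1=n_1\wedge n_2$ observations transfers to the two-sample problem. Where you diverge is that the paper then simply invokes the known one-sample minimax lower bound (Theorem 4.3 of Verzelen, 2010), with alternatives chosen as $\|\beta^{(1)}\|^2=r^2$, $\sigma^{(1)}=\sqrt{1-r^2}$, so that $\var[Y^{(1)}]=1$ under both hypotheses and $2(\mathcal{K}_1+\mathcal{K}_2)=2r^2/(1-r^2)$, whereas you try to re-derive the one-sample bound from scratch with a prior that keeps $\sigma^{(1)}=1$. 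A self-contained $\chi^2$ argument would be a perfectly acceptable alternative route, but your particular prior has a genuine gap: with $\sigma^{(1)}=1$ the alternative has $\var[Y^{(1)}]=1+\rho^2$ while the null has $\var[Y^{(1)}]=1$, which opens a detection channel through the marginal variance of $\Y^{(1)}$ alone. The test rejecting when $\|\Y^{(1)}\|^2/n_1$ exceeds $1+c\sqrt{\log(1/(\alpha\delta))/n_1}$ separates your null from your alternatives as soon as $\rho^2\gtrsim n_1^{-1/2}$, so at the target separation $\rho^2\asymp s\log(p)/n_1$ your family is in fact distinguishable whenever $s\log p\gtrsim\sqrt{n_1}$ --- a regime compatible with ${\bf A.2}$ and $s\leq p^{1/2-\gamma}$ (take e.g.\ $p=n_1^2$, $s=n_1^{0.6}$, $\gamma$ small). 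Hence no choice of $L(\alpha,\delta,\gamma)$ can make your construction work uniformly over the allowed range of $s$.

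The same defect is visible inside your $\chi^2$ computation. Your formula $\mathbb{E}_0[L_\beta L_{\beta'}]=\bigl(1-2c-(\rho^4-c^2)\bigr)^{-1/2}$ with $c=\beta^{\intercal}\beta'$ is correct, but when you apply $(1-x)^{-n_1/2}\leq e^{n_1x}$ you are left with a factor $\exp(n_1\rho^4)$ that is present even for disjoint supports ($c=0$) and is therefore not controlled by the moment generating function of $|S\cap S'|$; at $\rho^2\asymp s\log(p)/n_1$ it equals $\exp\{L^2s^2\log^2(p)/n_1\}$, which is unbounded unless $s\lesssim\sqrt{n_1}/\log p$. This is not a fixable slack in the bound: as explained above, the $\chi^2$ divergence genuinely blows up there because the family is detectable. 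The repair is exactly the paper's (and Verzelen's) choice of variance-matched alternatives, $\sigma^{(1)}=\sqrt{1-r^2}$ with $\|\beta^{(1)}\|^2=r^2$, which closes the variance channel; one then either cites the one-sample result as the paper does, or reruns your mixture argument for that calibrated prior, and concludes via $\mathcal{K}_1+\mathcal{K}_2=r^2/(1-r^2)$ and $\|\beta^{(1)}-\beta^{(2)}\|^2_{I_p}/(\var[Y^{(1)}]\wedge\var[Y^{(2)}])=r^2\geq\tfrac12 r^2/(1-r^2)$ for both bullets. Your other ingredients (the reduction of the two-sample problem to the one-sample problem, the total-variation/$\chi^2$ testing criterion, and the hypergeometric MGF bound) are sound.
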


The proof (in Section \ref{sec:proofs}) is a straightforward application of minimax lower bounds obtained for the one-sample testing problem~\cite{2010_AS_Verzelen,2011_AS_Arias-Castro}.

\begin{remark} Equation (\ref{eq:rates_testing}) together with \eqref{eq:rates_testing_minoration} tell us that $T_{\cS_{\leq k}}^B$ simultaneously achieves (up to a constant) the optimal rates of detection over $s$-sparse vectors $\beta^{(1)}$ and $\beta^{(2)}$ for all 
\[s\lesssim k \wedge p^{1/2-\gamma}\wedge \frac{n_1\wedge n_2}{\log(p)}\ ,\]
for any $\gamma>0$. Nevertheless, we only managed to prove the minimax lower bound for $\Sigma^{(1)}=\Sigma^{(2)}=I_p$, implying that, even though the detection rate \eqref{eq:rates_testing} is unimprovable uniformly over all $(\Sigma^{(1)},\Sigma^{(2)})$, some improvement is perhaps possible for specific covariance matrices. Up to our knowledge, there exist no such results of adaptation to the population covariance of the design even in the one sample problem. 
\end{remark}

\begin{remark} Equation \eqref{eq:rates_testing2} together with \eqref{eq:rates_testing_minoration2} tells us that $T_{\cS_{\leq k}}^B$ simultaneously achieves (up to a constant)  the
optimal rates of detection over $s$-sparse differences $\beta^{(1)}-\beta^{(2)}$ satisfying $\frac{\|\beta^{(1)}\|_{\Sigma}}{\sigma^{(1)}}\vee \frac{\|\beta^{(2)}\|_{\Sigma}}{\sigma^{(2)}}\leq 1$  for all $s\lesssim k \wedge p^{1/2-\gamma}\wedge \frac{n_1\wedge n_2}{\log(p)}$.
\end{remark}

\begin{remark}[Informal justification of the introduction of the collection $\SLasso$] If we look at the proof of Theorem \ref{cor:section_complete}, we observe that the power \eqref{eq:rates_testing} is achieved by the statistics $(F_{S_{\vee},V},F_{S_{\vee},1},F_{S_{\vee},2})$ where $S_\vee$ is the union of the support of $\beta^{(1)}$ and $\beta^{(2)}$. In contrast, 
\eqref{eq:rates_testing2} is achieved by the statistics $(F_{S_{\Delta},V},F_{S_{\Delta},1},F_{S_{\Delta},2})$ where $S_\Delta$ is the support of $\beta^{(1)}-\beta^{(2)}$. Intuitively, the idea underlying the  collection $\widehat{\cS}_L^{(1)}$ in the definition \eqref{eq:definition_family_lasso} of $\SLasso$ is to estimate $S_{\vee}$, while the idea underlying the collection $\widehat{\cS}_L^{(2)}$ is to estimate $S_{\Delta}$.
\end{remark}

\subsection{Power of  $T_{\cS}^B$ for any deterministic $\cS$}\label{sec:power_arbitrary}

 Theorem \ref{thrm_puissance} belows extends Theorem
\ref{cor:section_complete} from deterministic collections of the form $\cS_{\leq k}$ to any
deterministic collection $\cS$, unveiling a bias/variance-like
trade-off linked to the cardinality of subsets $S$ of collection $\cS$.  
To do so, we need to consider the Kullback discrepancy between the conditional distribution of $Y^{(1)}$ given  $X_S^{(1)}=X_S$ and the conditional distribution of $Y^{(2)}$ given $X_S^{(2)}=X_S$, which we denote $\mathcal{K}\left[\mathbb{P}_{Y^{(1)}|X_S};\mathbb{P}_{Y^{(2)}|X_S}\right]$. For short, we respectively note $\K_1(S)$ and $\K_2(S)$ \begin{eqnarray*}
 \mathcal{K}_1(S) &:= &
\mathbb{E}_{X_S^{(1)}}\left\{\mathcal{K}\left[\mathbb{P}_{Y^{(1)}|X_S};\mathbb{P}_{Y^{(2)}|X_S}\right]\right\}\ ,\\
\mathcal{K}_2(S) &:= &
\mathbb{E}_{X_S^{(2)}}\left\{\mathcal{K}\left[\mathbb{P}_{Y^{(2)}|X_S};\mathbb{P}_{Y^{(1)}|X_S}\right]\right\}\ .
\end{eqnarray*}
Intuitively, $\mathcal{K}_1(S)+\mathcal{K}_2(S)$ corresponds to some distance between the regression of $Y^{(1)}$ given $X_S^{(1)}$ and of $Y^{(2)}$ given $X_S^{(2)}$. Noting $\Sigma_S^{(1)}$ (resp. $\Sigma_S^{(2)}$) the restriction of $\Sigma^{(1)}$ (resp. $\Sigma^{(2)}$) to indices in $S$, we define
\begin{equation}\label{eq:definition_varphiS}
\varphi_{S}:= \varphi_{\max}\left\{\sqrt{\Sigma_S^{(2)}}(\Sigma_S^{(1)})^{-1}\sqrt{\Sigma_S^{(2)}}+\sqrt{\Sigma_S^{(1)}}(\Sigma_S^{(2)})^{-1}\sqrt{\Sigma_S^{(1)}}\right\}\ .
\end{equation}

\begin{thrm}[Power of $T_{\cS}^B$ for any deterministic $\cS$]\label{thrm_puissance} For any $S\in\cS$, we note $\alpha_S=\min_{i=V,1,2}\alpha_{i,S}$.
The power of $T_{\cS}^B$ is larger than $1-\delta$ as long as there exists $S\in\cS$ such that $|S|\lesssim n_1\wedge n_2$ and 
\begin{eqnarray}\label{eq:H1}
1+ \log[1/(\delta \alpha_S)] &\lesssim&n_1\wedge n_2\ ,
\end{eqnarray}
and 
\begin{equation} 
\mathcal{K}_1(S)+\mathcal{K}_2(S)\gtrsim \varphi_S\left(\frac{1}{n_1}+ \frac{1}{n_2}\right)\left[|S|+ \log\left(\frac{1}{\alpha_S\delta}\right)\right]\ .
\label{inegalite_puissance_hm}
\end{equation} 
\end{thrm}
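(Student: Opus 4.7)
\medskip

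\noindent\textbf{Plan of proof.} Fix $S\in\cS$ satisfying $|S|\lesssim n_1\wedge n_2$, \eqref{eq:H1} and \eqref{inegalite_puissance_hm}. Since $T_{\cS}^B$ rejects as soon as one of the $p$-values is below its threshold, a union bound on the complement gives
\[
\mathbb{P}_{\hyp_1}\!\big[T_{\cS}^B=0\big]\;\leq\;\mathbb{P}_{\hyp_1}\!\big[\tilde q_{V,S}>\alpha_{V,S},\ \tilde q_{1,S}>\alpha_{1,S},\ \tilde q_{2,S}>\alpha_{2,S}\big]\ .
\]
Using the explicit decomposition $2(\mathcal{K}_1(S)+\mathcal{K}_2(S))=V(S)+B_1(S)+B_2(S)$, where
\[
V(S):=\tfrac{(\sigma_S^{(1)})^2}{(\sigma_S^{(2)})^2}+\tfrac{(\sigma_S^{(2)})^2}{(\sigma_S^{(1)})^2}-2,\qquad B_i(S):=\tfrac{\|\beta_S^{(1)}-\beta_S^{(2)}\|_{\Sigma_S^{(i)}}^2}{(\sigma_S^{(i')})^2}\quad(i'\neq i),
\]
I would case-split: if $V(S)$ accounts for at least a third of $\mathcal{K}_1(S)+\mathcal{K}_2(S)$, bound $\mathbb{P}[\tilde q_{V,S}>\alpha_{V,S}]$; otherwise argue through $\tilde q_{1,S}$ or $\tilde q_{2,S}$, matching the side that dominates.

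\medskip

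\noindent\textbf{Variance part.} Exploiting the fact that $\Y^{(i)}-\X^{(i)}\widehat\beta_S^{(i)}=(I-P_{\X_S^{(i)}})\eps_S^{(i)}$, the residual norms $\|\Y^{(i)}-\X^{(i)}\widehat\beta_S^{(i)}\|^2$ are, conditionally on $\X_S$, distributed as $(\sigma_S^{(i)})^2\chi^2_{n_i-|S|}$, \emph{independently} of the regression coefficients. Hence $F_{S,V}$ conditional on $\X_S$ is distributed as $g\bigl((\sigma_S^{(1)}/\sigma_S^{(2)})^2\,n_2\chi^2_{n_1-|S|}/(n_1\chi^2_{n_2-|S|})\bigr)$. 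Using Laurent–Massart-type deviation bounds on the two chi-squares, one shows that $F_{S,V}\geq cV(S)$ with probability $\geq 1-\delta$, provided $V(S)\gtrsim[1/n_1+1/n_2]\log(1/\delta)$. On the other hand, inverting the closed-form expression \eqref{eq:defQVS} of $\tilde q_{V,S}$ (again via Fisher deviation bounds) gives a threshold of order $[1/n_1+1/n_2]\log(1/\alpha_{V,S})$. Combining, $V(S)\gtrsim [1/n_1+1/n_2]\log(1/(\alpha_{V,S}\delta))$ implies rejection with probability $\geq 1-\delta$, which is \eqref{inegalite_puissance_hm} in this case (the $|S|$ term being absorbed in the constant since $V(S)$ alone is the signal).

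\medskip

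\noindent\textbf{Coefficient part.} Write $\widehat\beta_S^{(i)}=\beta_S^{(i)}+(\X_S^{(i)T}\X_S^{(i)})^{-1}\X_S^{(i)T}\eps_S^{(i)}$. Then, conditionally on $\X_S$, the random vector $\X_S^{(2)}(\widehat\beta_S^{(1)}-\widehat\beta_S^{(2)})$ is Gaussian with mean $\X_S^{(2)}(\beta_S^{(1)}-\beta_S^{(2)})$ and covariance proportional to the matrix that defines the eigenvalues $a=(a_i)$ of Definition~\ref{defi:deviation}; its squared norm (divided by the independent $\chi^2$ denominator) behaves, up to fluctuation, like
\[
F_{S,1}\;\approx\;\frac{\|\beta_S^{(1)}-\beta_S^{(2)}\|_{\Sigma_S^{(2)}}^2}{(\sigma_S^{(1)})^2}\;+\;|a|_1\ .
\]
Using standard concentration of Wishart eigenvalues under Gaussian design and the bound $\mathrm{tr}[\Sigma_S^{(2)}(\Sigma_S^{(1)})^{-1}]\leq |S|\varphi_S/2$, one shows $|a|_1\lesssim \varphi_S|S|(1/n_1+1/n_2)$ with high probability. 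Inverting the Laplace-method upper bound \eqref{definition_Q1,S}, the threshold $t_{1,S}$ such that $\widetilde Q_{1,|S|}(t_{1,S}|\X_S)\leq\alpha_{1,S}$ is of order $|a|_1+\sqrt{\|a\|^2\log(1/\alpha_{1,S})}+|a|_\infty\log(1/\alpha_{1,S})\lesssim \varphi_S[|S|+\log(1/\alpha_{1,S})](1/n_1+1/n_2)$. A non-central weighted chi-square deviation argument (with an analogous Laplace transform) yields $F_{S,1}\geq t_{1,S}$ with probability $\geq 1-\delta$ as soon as $B_1(S)$ exceeds the right-hand side of \eqref{inegalite_puissance_hm}. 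A symmetric argument handles $F_{S,2}$.

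\medskip

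\noindent\textbf{Main obstacle.} The delicate step is bounding the non-central weighted chi-square in $F_{S,1}$ from below with explicit dependence on $\varphi_S$: one needs both (i)~a matching lower deviation inequality (mirroring the Laplace upper bound of Proposition~\ref{prte:majoration_Q}) and (ii)~control of the random spectrum $a$ via the Gaussian design, which requires $|S|\lesssim n_1\wedge n_2$ to ensure $\X_S^{(i)T}\X_S^{(i)}/n_i$ is close to $\Sigma_S^{(i)}$ in operator norm. The factor $\varphi_S$ enters precisely because $\|a\|^2$, $|a|_1$ and $|a|_\infty$ all involve mixed expressions of $(\Sigma_S^{(1)})^{-1}$ and $\Sigma_S^{(2)}$, and the only uniform control available is through the largest eigenvalue of their symmetrized quotient, i.e.\ $\varphi_S$.
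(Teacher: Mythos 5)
Your overall strategy coincides with the paper's proof: the same decomposition of $2(\mathcal{K}_1(S)+\mathcal{K}_2(S))$ into a variance term and two coefficient terms, a case split according to which term dominates, and, for each statistic, the combination of (a) an inversion of the explicit $p$-value bounds (this is the content of Lemmas \ref{lemma_fisher} and \ref{lemma:upper_bound_q2}), (b) a lower deviation bound on $F_{S,V}$, $F_{S,1}$, $F_{S,2}$ under the alternative (Lemmas \ref{lemma_fisher2} and \ref{lemma_puissance_deuxieme}), and (c) Wishart concentration to express the random-design quantities through $\varphi_S$ (Lemmas \ref{lemma:upper_bound2_q2} and \ref{lemma:concentration_vp_wishart}). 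So the architecture is the right one.

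The step you flag as the main obstacle is precisely where the paper does something simpler than a matching lower Laplace bound for a non-central weighted chi-square, and this is worth knowing. In Lemma \ref{lemma_puissance_deuxieme} the numerator of $F_{S,1}$ is written as $\|\X_S^{(2)}(\beta^{(2)}_S-\beta^{(1)}_S)+W\|^2/n_2$ with $W$ Gaussian given the designs, and is lower bounded by the squared projection on the unit vector $u$ along $\X_S^{(2)}(\beta^{(2)}_S-\beta^{(1)}_S)$, giving $\tfrac12\|\X_S^{(2)}(\beta^{(2)}_S-\beta^{(1)}_S)\|^2-\langle W,u\rangle^2$ up to the factor $1/n_2$. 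This reduces the lower tail to a central $\chi^2_{n_2}$ (signal norm) plus a single $\chi^2_1$ whose scale is $(\sigma^{(2)}_S)^2+\varphi_{\max}\bigl[\X_S^{(2)}(\X_S^{(1)\intercal}\X_S^{(1)})^{-1}\X_S^{(2)\intercal}\bigr](\sigma^{(1)}_S)^2$, the latter being bounded by $L\varphi_S\,n_2/n_1$ via Wishart eigenvalue concentration; no general non-central weighted chi-square lower bound is needed. Two further caveats on your sketch. First, your approximation $F_{S,1}\approx B_1(S)+|a|_1$ hides a remainder of order $(\sigma^{(2)}_S/\sigma^{(1)}_S)^2\log(1/\delta)/n_2$ (the numerator noise is partly at scale $\sigma^{(2)}_S$ while the denominator is at scale $\sigma^{(1)}_S$); this term is not controlled by $B_1(S)$ alone, and the paper disposes of it only at the final combination step, using \eqref{eq:H1} together with the fact that a large variance ratio inflates the variance part of the Kullback divergence — your three-way case split can absorb it for the same reason, but it must be argued. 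Second, in the variance part the quantile and deviation bounds each carry a bias term of order $|S|^2(1/n_1^2+1/n_2^2)$ coming from the mismatch of degrees of freedom when $n_1\neq n_2$; it is dominated by $|S|(1/n_1+1/n_2)$ only because $|S|\lesssim n_1\wedge n_2$, so the $|S|$ term in \eqref{inegalite_puissance_hm} is genuinely used there, not merely ``absorbed in the constant since $V(S)$ alone is the signal''. With these two points made explicit, your plan matches the paper's proof.
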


\begin{remark}
Let us note $\Delta(S)$ the right hand side of
\eqref{inegalite_puissance_hm}. According to Theorem
\ref{thrm_puissance},  The term $\Delta(S)$ plays the role of a variance
term and therefore increases with the cardinality of $S$. Furthermore, the term $\mathcal{K}_1-\mathcal{K}_1(S)+\mathcal{K}_2-\mathcal{K}_2(S)$ plays the role of a bias. Let us note $\mathcal{S}^*$ the subcollection of $\mathcal{S}$ made of sets $S$ satisfying \eqref{eq:H1}.
According to theorem \ref{thrm_puissance}, $T_{\cS}^B$ is powerful as
long as $\mathcal{K}_1+\mathcal{K}_2$ is larger (up to constants) to
\begin{equation}\label{eq:tradeoff}
\inf_{S\in\mathcal{S}^*}\left\{\mathcal{K}_1-\mathcal{K}_1(S)+\mathcal{K}_2-\mathcal{K}_2(S)\right\}+ \Delta(S)
\end{equation}
Such a result is comparable to oracle inequalities obtained in estimation since the test $T_{\cS}^B$ is powerful when the Kullback loss $\mathcal{K}_1+\mathcal{K}_2$
is larger than the  trade-off \eqref{eq:tradeoff} between a bias-like term and a variance-like term without requiring the knowledge of this trade-off in advance. We refer to \cite{2003_AS_Baraud} for a thorough comparison between oracle inequalities in model selection and second type error terms of this form.
\end{remark}

\subsection{Power of $T_{\SLasso}^B$}\label{sec:power_Slasso}

For the sake of simplicity, we restrict in this subsection to the case
$n_1=n_2:=n$, more general results being postponed to the next subsection. 
The test $T_{\cS_{\leq n/2}}^{B}$ is computationally expensive (non polynomial with respect to $p$). The collection $\SLasso$  has been introduced to fix this burden. We consider $T_{\SLasso}^B$ with  the prescribed Bonferroni calibration weights $\widehat{\alpha}_{i,S}$ (as in (\ref{eq:condition_bonf_classique}) with $k$ replaced by $\lfloor (n_1\wedge n_2)/2\rfloor$.
In the statements below, $\psi^{(1)}_{\Sigma^{(1)},\Sigma^{(2)}}$,
$\psi^{(2)}_{\Sigma^{(1)},\Sigma^{(2)}}$,\dots\ refer to  positive quantities that only depend on the largest and the smallest eigenvalues of $\Sigma^{(1)}$ and $\Sigma^{(2)}$. Consider the additional assumptions
\[ {\bf A.3}: \hspace{3cm}|\beta^{(1)}|_0\vee |\beta^{(2)}|_0	\lesssim \psi^{(1)}_{\Sigma^{(1)},\Sigma^{(2)}} \frac{n}{\log(p)}\ . \hspace{4.5cm}\]
\[ {\bf A.4}: \hspace{3cm}|\beta^{(1)}|_0\vee |\beta^{(2)}|_0	\lesssim \psi^{(2)}_{\Sigma^{(1)},\Sigma^{(2)}} \sqrt{\frac{n}{\log(p)}}\ . \hspace{4cm}\]

\begin{thrm}\label{thrm:lasso}
Assuming that {\bf A.1} and {\bf A.3} hold, we have  $\mathbb{P}[T_{\SLasso}^B=1]\geq 1-\delta$ as long as 
\begin{equation}\label{eq:rates_test_lasso}
 \mathcal{K}_1+\mathcal{K}_2\gtrsim \psi^{(3)}_{\Sigma^{(1)},\Sigma^{(2)}} \frac{\left\{|\beta^{(1)}|_0\vee |\beta^{(2)}|_0\vee 1\right\}\log\left(p\right)+\log\left(\frac{1}{\alpha\delta}\right)}{n}\ .
\end{equation}
If $\Sigma^{(1)}=\Sigma^{(2)}=\Sigma$ and if ${\bf A.1}$ and ${\bf A.4}$ hold, then $\mathbb{P}[T_{\SLasso}^B=1]\geq 1-\delta$ as long as 
\begin{equation}\label{eq:rates_test_lasso2}
\frac{\|\beta^{(1)}-\beta^{(2)}\|^2_{\Sigma}}{\var[Y^{(1)}]\wedge \var[Y^{(2)}]} \gtrsim \psi^{(4)}_{\Sigma,\Sigma}
\frac{|\beta^{(1)}-\beta^{(2)}|_0\log\left(p\right)+\log\left(\frac{1}{\alpha\delta}\right)}{n}\ .
\end{equation}

\end{thrm}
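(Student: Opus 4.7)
The plan is to reduce Theorem \ref{thrm:lasso} to Theorem \ref{thrm_puissance} applied to a single well-chosen subset $S$ that belongs, with high probability, to the data-driven collection $\SLasso$. The crucial observation is that the Bonferroni thresholds $\alpha_{i,S}$ depend only on $|S|$ and on the ambient deterministic collection $\cS_{\leq D_{\max}}$, not on the realisation of $\widehat{\cS}$. Hence, on the event that a suitable candidate $S$ lies in $\SLasso$, the test $T_{\SLasso}^B$ rejects at least as often as the single-model test based on $S$. The candidate will be a superset of $S_\vee^{*}:=\mathrm{supp}(\beta^{(1)})\cup\mathrm{supp}(\beta^{(2)})$ for \eqref{eq:rates_test_lasso} and a superset of $S_\Delta^{*}:=\mathrm{supp}(\beta^{(1)}-\beta^{(2)})$ for \eqref{eq:rates_test_lasso2}.

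Working in the reparametrization \eqref{eq:reparam}, the vectors $\theta_*^{(1)}$ and $\theta_*^{(2)}$ satisfy $\mathrm{supp}(\theta_*^{(1)})\cup\mathrm{supp}(\theta_*^{(2)})=S_\vee^{*}$ and $\mathrm{supp}(\theta_*^{(2)})=S_\Delta^{*}$. First, Gaussian concentration for sparse submatrices (combined with a union bound over supports of size $\lesssim |\beta^{(1)}|_0\vee|\beta^{(2)}|_0$) shows that the block design $\mathbf{W}$ inherits a restricted-eigenvalue property from the extreme eigenvalues of $\Sigma^{(1)}$ and $\Sigma^{(2)}$, on an event of probability at least $1-\delta/4$; the constant $\psi^{(1)}_{\Sigma^{(1)},\Sigma^{(2)}}$ in assumption \textbf{A.3} (resp.\ $\psi^{(2)}_{\Sigma^{(1)},\Sigma^{(2)}}$ in \textbf{A.4}) is precisely tuned so that this concentration holds. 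Second, standard Lasso screening arguments applied to the joint Lasso \eqref{eq:reparamLasso}, in the spirit of \cite{2009_AS_Bickel,2008_AS_Meinshausen}, produce on an event of probability at least $1-\delta/2$ some $\lambda_k$ on the Lars-Lasso path for which $\hat V^{(1)}_{\lambda_k}\cup\hat V^{(2)}_{\lambda_k}\supseteq S_\vee^{*}$ with cardinality $\lesssim |\beta^{(1)}|_0\vee|\beta^{(2)}|_0$. By construction, this subset belongs to $\widehat{\mathcal{S}}^{(1)}_L\subset\SLasso$.

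Conditioning on this screening event, we apply Theorem \ref{thrm_puissance} to $S=\hat V^{(1)}_{\lambda_k}\cup\hat V^{(2)}_{\lambda_k}$. Since $S\supseteq S_\vee^{*}$, the marginal regression parameters $\beta_S^{(i)}$ coincide with $\beta^{(i)}$, so $\mathcal{K}_1(S)+\mathcal{K}_2(S)=\mathcal{K}_1+\mathcal{K}_2$. The variance-like quantity $\varphi_S$ defined in \eqref{eq:definition_varphiS} is controlled by a function of the extreme eigenvalues of $\Sigma^{(1)},\Sigma^{(2)}$, which we absorb into $\psi^{(3)}_{\Sigma^{(1)},\Sigma^{(2)}}$. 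For this $S$, the Bonferroni weight gives $\log(1/\alpha_{i,S})\lesssim |S|\log p+\log(1/\alpha)$, and \textbf{A.1} together with $|S|\lesssim n_1\wedge n_2$ ensures that \eqref{eq:H1} is met; plugging into \eqref{inegalite_puissance_hm} yields exactly \eqref{eq:rates_test_lasso}.

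For \eqref{eq:rates_test_lasso2}, we target instead a superset of $S_\Delta^{*}$ within $\widehat{\mathcal{S}}^{(2)}_L$. The same reduction applies once we exhibit, with high probability, a $\lambda_k$ such that $\hat V^{(2)}_{\lambda_k}\supseteq S_\Delta^{*}$ with $|\hat V^{(2)}_{\lambda_k}|\lesssim |S_\Delta^{*}|$. This is the main obstacle of the proof: in the joint Lasso, the $\theta_*^{(2)}$-block is coupled to the $\theta_*^{(1)}$-block through the off-diagonal design of $\mathbf{W}$, so screening of $\mathrm{supp}(\theta_*^{(2)})$ alone requires finer control of the cross-terms in the KKT optimality conditions than the screening of the full support $\mathrm{supp}(\theta_*)$. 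This is precisely the reason why \textbf{A.4} imposes a $\sqrt{n/\log p}$ scaling, stricter than the $n/\log p$ scaling of \textbf{A.3}: the extra margin is needed to uniformly control the contribution of $\theta_*^{(1)}$ when restricting the analysis to the $\theta_*^{(2)}$-block. Once such a screening event is established, the endgame is identical to the previous one: using $\Sigma^{(1)}=\Sigma^{(2)}=\Sigma$ one checks that $\mathcal{K}_1(S)+\mathcal{K}_2(S)$ rewrites, up to a multiplicative constant, as $\|\beta^{(1)}-\beta^{(2)}\|_\Sigma^2/(\var[Y^{(1)}]\wedge\var[Y^{(2)}])$ whenever $S\supseteq S_\Delta^{*}$, and Theorem \ref{thrm_puissance} then delivers \eqref{eq:rates_test_lasso2}.
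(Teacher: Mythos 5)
There is a genuine gap at the heart of your reduction: both parts of your argument rest on a \emph{screening} property of the Lasso path --- that with high probability some $\lambda_k$ yields $\hat V^{(1)}_{\lambda_k}\cup\hat V^{(2)}_{\lambda_k}\supseteq S_\vee^{*}$ (resp.\ $\hat V^{(2)}_{\lambda_k}\supseteq S_\Delta^{*}$) with cardinality of the order of the true sparsity. No such property can be established under the hypotheses of Theorem \ref{thrm:lasso}: screening the full support requires a minimal-signal ("beta-min") condition, and neither \textbf{A.3} nor \textbf{A.4} (which restrict only the \emph{sparsity}, not the magnitude, of the coefficients) supplies one. Arbitrarily small nonzero coefficients of $\beta^{(1)}$, $\beta^{(2)}$ or of their difference will be missed by every point of the regularization path, so the identity $\mathcal{K}_1(S)+\mathcal{K}_2(S)=\mathcal{K}_1+\mathcal{K}_2$ that you invoke after "since $S\supseteq S_\vee^{*}$" is unavailable. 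The paper's proof deliberately avoids support recovery: it fixes one value of $\lambda$, takes $\widehat S_\lambda=\mathrm{supp}(\widehat\theta^{(1)}_\lambda)\cup\mathrm{supp}(\widehat\theta^{(2)}_\lambda)$ (resp.\ $\widehat S^{(2)}_\lambda=\mathrm{supp}(\widehat\theta^{(2)}_\lambda)$), and uses only a \emph{prediction-error} bound for the Lasso (Lemma \ref{lem:cardinal-lasso}) to show that the bias of restricting to $\widehat S_\lambda$ is negligible, i.e.\ $\mathcal{K}_1(\widehat S_\lambda)+\mathcal{K}_2(\widehat S_\lambda)$ is, up to constants, at least $\mathcal{K}_1+\mathcal{K}_2$ minus a term of order $|S_\vee|\log(p)/n$ (Lemma \ref{lemma:controle_biais_kullback}), and similarly via a Pythagorean decomposition in Proposition \ref{prte:lasso_precis2}. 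Relatedly, your interpretation of \textbf{A.4} is off: the $\sqrt{n/\log p}$ scaling is not a margin that rescues screening of the $\theta_*^{(2)}$-block; in the paper it (together with the $(n_1\wedge n_2)/|n_1-n_2|$ constraint) serves to make the cross-term $|{\bf W}^{(2)\intercal}{\bf W}^{(1)}(\theta_*^{(1)}-\widehat\theta^{(1)}_\lambda)|_\infty$ small compared to $\lambda$ (Lemma \ref{lemma_correlation_petite}), so that $\widehat\theta^{(2)}_\lambda$ obeys a prediction bound as if the design were block-diagonal.

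A second, smaller gap: even granting a good candidate set, you cannot apply Theorem \ref{thrm_puissance} directly to $S=\widehat S_\lambda$, because that theorem's deviation bounds for $F_{S,V},F_{S,1},F_{S,2}$ are proved for a \emph{fixed} $S$, whereas $\widehat S_\lambda$ is selected from the same data that enter the statistics. The paper closes this by a union bound over all sets of size at most $k_*$ with $\delta_S=\delta\,(2\binom{|S|}{p}k_*)^{-1}$ (Lemma \ref{lemma:controle_puissance_aleatoire}); the extra $|S|\log p$ cost is harmless because it matches the $\log\binom{p}{|S|}$ already present in the Bonferroni weights. Your observation that the thresholds $\alpha_{i,S}$ depend only on $|S|$ is correct but does not by itself justify conditioning on the selection event.
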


\begin{remark} 
 The rates of detection \eqref{eq:rates_test_lasso} and the sparsity condition {\bf A.3} are analogous to \eqref{eq:rates_testing} and Condition {\bf A.2} in Theorem  \ref{cor:section_complete} for $T_{\cS_{\leq (n_1\wedge n_2)/2}}^{B}$. The second result \eqref{eq:rates_test_lasso2} is also similar to \eqref{eq:rates_testing2}. As a consequence, $T_{\SLasso}^B$ is minimax adaptive to the sparsity of $(\beta^{(1)},\beta^{(2)})$ and of $\beta^{(1)}-\beta^{(2)}$.
\end{remark}

\begin{remark}
Dependencies of {\bf A.3}, {\bf A.4},  \eqref{eq:rates_test_lasso} and \eqref{eq:rates_test_lasso2} on $\Sigma^{(1)}$ and $\Sigma^{(2)}$ are unavoidable because the collection $\SLasso$ is based on the Lasso estimator which require design assumptions to work well \cite{2007_AS_Candes}. Nevertheless, one can improve all these dependencies  using restricted eigenvalues instead of largest eigenvalues. This and other extensions are considered in next subsection. 
\end{remark}

\subsection{Sharper analysis of  $T^B_{\SLasso}$}\label{sec:power_Slasso2}

Given a matrix $\X$, an integer $k$, and a number $M$, one respectively defines  the largest
and smallest eigenvalues of order $k$,  the compatibility constants
$\kappa[M,k,\X ]$ and $\eta[M,k,\X]$ (see \cite{2009_EJS_Geer})  by
\begin{eqnarray}\nonumber
\Phi_{k,+}(\X)&=&\sup_{\theta, 1\leq |\theta|_0\leq k}\frac{\|\X\theta\|^2}{\|\theta\|^2}\ , \quad  \Phi_{k,-}(\X)=\inf_{\theta, 1\leq |\theta|_0\leq k}\frac{\|\X\theta\|^2}{\|\theta\|^2}\ ,\\ \nonumber
 \kappa[M,k,\X]&=& \min_{T,\theta:\ |T|\leq k,\ \theta\in \mathcal{C}(M,T)} \left\{\frac{\|\X \theta\|}{\|\theta\|}\right\}\ , \\ \label{eq:definition_eta}
\eta[M,k,\X] & =& \min_{T,\theta:\ |T|\leq k,\ \theta\in \mathcal{C}(M,T)} \left\{\sqrt{k}\frac{\|\X \theta\|}{|\theta|_1}\right\}\ ,
\end{eqnarray}
where $\mathcal{C}(M,T)=\{\theta:\ |\theta_{T^c}|_1< M |\theta_T|_1\}$. 
Given an integer $k$, define 
\begin{eqnarray*}
\gamma_{\Sigma^{(1)},\Sigma^{(2)},k}&:=&\frac{\bigwedge_{i=1,2} \kappa^2\left[6,k_* ,\sqrt{\Sigma^{(i)}}\right] }{\bigvee_{i=1,2}\Phi_{k_*,+}(\sqrt{\Sigma^{(i)}})}\ , \\
\gamma'_{\Sigma^{(1)},\Sigma^{(2)},k}&:= &\frac{\bigvee_{i=1,2}\Phi^2_{k,+}(\sqrt{\Sigma^{(i)}})}{\bigwedge_{i=1,2}\Phi_{k,-}(\sqrt{\Sigma^{(i)}}) \bigwedge_{i=1,2}\kappa^2[6,k,\sqrt{\Sigma^{(i)}]} }\ ,
\end{eqnarray*}
that measure the closeness to orthogonality of $\Sigma^{(1)}$ and $\Sigma^{(2)}$. 
Theorem \ref{thrm:lasso} is straightforward consequence of the two following results.

\begin{proposition}\label{prte:lasso_precis}
There exist four positive constants $L^*$, $L^*_1$, $L^*_2$, and $L^*_3$ such that following holds.
Define $k_*$ as the largest integer that satisfies
\begin{equation}\label{eq:definition_k_*}
 (k_*+1)\log(p)\leq  L^*(n_1\wedge n_2)\ ,
\end{equation}
and assume that 
\begin{equation}\label{eq:lasso_hypothese_alpha_delta}
 1+\log\left[1/(\alpha\delta)\right]< L^*_1(n_1\wedge n_2)\ .
\end{equation}
 The hypothesis  $\hyp_0$ is rejected by $T_{\SLasso}^B$ with probability larger than $1-\delta$ for  any $(\beta^{(1)},\beta^{(2)})$ satisfying
\begin{equation}
 |\beta^{(1)}|_0+|\beta^{(2)}|_0 \leq L^*_2 \gamma_{\Sigma^{(1)},\Sigma^{(2)},k} k_* \left(\frac{n_1}{n_2}\wedge \frac{n_2}{n_1}\right)
\ .\label{eq:hypothese_sparsite} 
\end{equation}
 and  
\begin{eqnarray*}
 \mathcal{K}_1+\mathcal{K}_2\geq L^*_3 \gamma'_{\Sigma^{(1)},\Sigma^{(2)},k_*} \frac{\left(|\beta^{(1)}|_0\vee |\beta^{(2)}|_0\vee 1\right)\log(p)+ \log\{1/(\alpha\delta)\}}{n_1\wedge n_2}\left(\frac{n_1}{n_2}\vee \frac{n_2}{n_1}\right)\ .
\end{eqnarray*}
\end{proposition}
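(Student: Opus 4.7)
The plan is to reduce the analysis of $T_{\SLasso}^B$ to an application of Theorem~\ref{thrm_puissance}, by exhibiting a single well-chosen subset $S^\star\in\SLasso$ on which all the hypotheses of Theorem~\ref{thrm_puissance} are satisfied. Since the test rejects as soon as \emph{any} subset in $\SLasso$ produces a small $p$-value, the proof splits into two parts: (i) a probabilistic analysis of the Lars-Lasso regularization path on the reparametrized model~\eqref{eq:reparam}, showing that with probability at least $1-\delta/2$ the collection $\SLasso$ contains a subset $S^\star$ that covers $S^*_\vee := \mathrm{supp}(\beta^{(1)})\cup\mathrm{supp}(\beta^{(2)})$ and has cardinality of the same order as $|\beta^{(1)}|_0\vee|\beta^{(2)}|_0$; and (ii) a conditional application of Theorem~\ref{thrm_puissance} to $S^\star$ to conclude that $T_{\SLasso}^B$ rejects with probability at least $1-\delta/2$ more.

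The target $S^\star\supseteq S^*_\vee$ is natural: for such a subset, $\beta^{(i)}_{S^\star}=\beta^{(i)}$, so both bias terms $\K_i-\K_i(S^\star)$ vanish and the detection condition \eqref{inegalite_puissance_hm} reduces to a pure variance-type bound, namely $\K_1+\K_2\gtrsim \varphi_{S^\star}\,(n_1^{-1}+n_2^{-1})[|S^\star|+\log(1/(\alpha_{S^\star}\delta))]$. Plugging in the Bonferroni weight $\alpha_{S^\star}\asymp \alpha/(D_{\max}\binom{p}{|S^\star|})$ yields $\log(1/\alpha_{S^\star})\lesssim |S^\star|\log p+\log(D_{\max}/\alpha)$; bounding $\varphi_{S^\star}$ via the eigenvalues of $\Sigma^{(i)}_{S^\star}$ (which is in turn controlled by $\gamma'_{\Sigma^{(1)},\Sigma^{(2)},k_\star}$) and combining the two sample factors via the factor $n_1/n_2\vee n_2/n_1$ reproduces the detection rate announced in the proposition. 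The restriction $|S^\star|\lesssim k_\star$ coming from step (i) and \eqref{eq:hypothese_sparsite}, together with \eqref{eq:lasso_hypothese_alpha_delta}, ensures that the admissibility condition \eqref{eq:H1} of Theorem~\ref{thrm_puissance} is met with room to spare.

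The main difficulty is step (i). The design appearing in \eqref{eq:reparam} is the block matrix $\mathbf{W}=\bigl[\begin{smallmatrix}\X^{(1)}&\X^{(1)}\\ \X^{(2)}&-\X^{(2)}\end{smallmatrix}\bigr]$, so its compatibility and restricted-eigenvalue constants must be compared to those of $\sqrt{\Sigma^{(1)}}$ and $\sqrt{\Sigma^{(2)}}$. I would first establish, via standard Gaussian concentration of empirical covariance matrices on sparse cones (see \cite{2009_EJS_Geer}), that under condition \eqref{eq:definition_k_*} on $k_\star$ one has simultaneously $\Phi_{k_\star,\pm}(\X^{(i)}/\sqrt{n_i})\asymp \Phi_{k_\star,\pm}(\sqrt{\Sigma^{(i)}})$ and $\kappa[6,k_\star,\X^{(i)}/\sqrt{n_i}]\asymp \kappa[6,k_\star,\sqrt{\Sigma^{(i)}}]$ with probability at least $1-\delta/4$. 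The block structure of $\mathbf{W}$ then converts these into lower bounds for $\kappa[6,k_\star,\mathbf{W}/\sqrt{n_1+n_2}]$ with a loss of a factor $(n_1/n_2)\wedge(n_2/n_1)$, which is precisely where this factor enters the sparsity condition \eqref{eq:hypothese_sparsite}.

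Armed with compatibility for $\mathbf{W}$, a classical Lasso support bound applied to $\theta_\star$ shows that for the choice $\lambda\asymp \sigma\sqrt{(n_1+n_2)\log p}$ (which corresponds to one of the breakpoints $\lambda_k$ of the Lars path, up to the next piecewise-linear segment), the resulting support $\hat V_\lambda=\hat V_\lambda^{(1)}\cup\hat V_\lambda^{(2)}$ both contains $\mathrm{supp}(\theta_\star)$ and has cardinality $\lesssim |\theta_\star|_0\leq |\beta^{(1)}|_0+|\beta^{(2)}|_0$, so that $\hat V_{\lambda_k}^{(1)}\cup\hat V_{\lambda_k}^{(2)}\in \widehat{\cS}_L^{(1)}\subset\SLasso$ and this subset contains $S^*_\vee$ as desired. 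Taking $S^\star$ equal to this subset finishes the argument. The delicate technical points to monitor are the two probabilistic events (compatibility of $\mathbf{W}$ and deviation of the Lasso residual) which must each hold with probability at least $1-\delta/4$, using the budget provided by assumption~\eqref{eq:lasso_hypothese_alpha_delta}, and the careful propagation of the ratio $n_1/n_2\vee n_2/n_1$ through the block structure, which is what distinguishes Proposition~\ref{prte:lasso_precis} from the balanced case $n_1=n_2$ treated in Theorem~\ref{thrm:lasso}.
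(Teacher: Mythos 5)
There is a genuine gap in step (i): you claim that for a suitable $\lambda$ on the Lars path the Lasso support $\hat V_\lambda$ \emph{contains} $\mathrm{supp}(\theta_*)$, so that $S^\star\supseteq S^*_\vee$ and the bias terms $\K_i-\K_i(S^\star)$ vanish. A screening property of this kind is not a consequence of compatibility/restricted-eigenvalue conditions alone; it requires a beta-min (minimal signal strength) assumption on the nonzero coefficients, which Proposition \ref{prte:lasso_precis} does not make. Its hypotheses only bound the sparsity and the Kullback separation $\mathcal{K}_1+\mathcal{K}_2$, a regime that explicitly allows $\beta^{(1)},\beta^{(2)}$ to have many individually small nonzero entries that the Lasso will miss at the level $\lambda\asymp\sigma\sqrt{(n_1+n_2)\log p}$. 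The paper's proof deliberately avoids this: it takes $\widehat S_\lambda=\mathrm{supp}(\widehat\theta^{(1)}_\lambda)\cup\mathrm{supp}(\widehat\theta^{(2)}_\lambda)$ without asking it to cover $S^*_\vee$, and instead uses the Lasso \emph{prediction} bound (Lemma \ref{lem:cardinal-lasso}, which holds with no beta-min condition) to show in Lemma \ref{lemma:controle_biais_kullback} that the Kullback bias satisfies $\mathcal{K}_1(\widehat S_\lambda)+\mathcal{K}_2(\widehat S_\lambda)\gtrsim 1\wedge\bigl[\mathcal{K}_1+\mathcal{K}_2-L'R_{\Sigma^{(1)},\Sigma^{(2)}}|S_\vee|(n_1\vee n_2)(n_1\wedge n_2)^{-2}\log p\bigr]$; the missed coefficients cost little in prediction norm, hence little in Kullback divergence. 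Your argument as written would fail on any instance with small-but-numerous coefficient differences, which the proposition is meant to cover.

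A second, smaller issue is your "conditional application of Theorem \ref{thrm_puissance} to $S^\star$". Theorem \ref{thrm_puissance} and the deviation bounds behind it (Lemmas \ref{lemma_fisher2} and \ref{lemma_puissance_deuxieme}) are proved for a \emph{fixed} subset $S$, whereas $S^\star$ (or $\widehat S_\lambda$) is selected from the same data used to form $F_{S,i}$, so the selection event is correlated with the test statistics and a direct conditional invocation is not justified. The paper handles this in Lemma \ref{lemma:controle_puissance_aleatoire} by a union bound over \emph{all} subsets of size at most $k_*$, with levels $\delta_S=\delta\bigl(2\binom{|S|}{p}k_*\bigr)^{-1}$; this is also why the $|S|\log p$ term appears in the detection rate and why the Bonferroni weights are taken over $\cS_{\leq D_{\max}}$ rather than over $\SLasso$. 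Your bookkeeping of the Bonferroni weights and of the factor $n_1/n_2\vee n_2/n_1$ through the block structure of $\mathbf{W}$ is in the right spirit (it matches Lemmas \ref{lemma:controle_W} and \ref{lem:cardinal-lasso}), but the proof cannot be completed along the route you propose without either adding a beta-min hypothesis (changing the statement) or replacing the support-recovery step by a prediction-error control of the Kullback bias as the paper does.
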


This proposition tells us that $T_{\SLasso}^B$ behaves nearly as well as what has been obtained in \eqref{eq:rates_testing} for $T^B_{\cS_{\leq (n_1\wedge n_2)/2}}$, at least when $n_1$ and $n_2$ are of the same order.

In the next proposition, we assume that $\Sigma^{(1)}=\Sigma^{(2)}:=\Sigma$. Given an integer $k$, define 
\[ \tilde{\gamma}_{\Sigma, k} := \frac{\kappa[6,k,\sqrt{\Sigma}] \Phi^{1/2}_{k,-}(\sqrt{\Sigma})} {\Phi_{1,+}(\sqrt{\Sigma})}\ ,\quad   \tilde{\gamma}^{(2)}_{\Sigma, k} :=\frac{ \kappa^2\left[6,k ,\sqrt{\Sigma}\right] }{\Phi_{k,+}(\sqrt{\Sigma})}\ , \quad \tilde{\gamma}^{(3)}_{\Sigma, k} :=\frac{\Phi^2_{1,+}(\sqrt{\Sigma})}{ \kappa^2[6,k,\sqrt{\Sigma}]}\ .  \ \]

\begin{proposition}\label{prte:lasso_precis2}
Let us assume that $\Sigma^{(1)}=\Sigma^{(2)}:=\Sigma$. There exist five positive constants $L^*$, $\tilde{L}^{*}$,  $L^*_1$, $L^*_2$, and $L^*_3$ such that following holds.
 Define $k_*$ and $\tilde{k}_*$ as the largest positive integers that satisfy
\begin{eqnarray}\nonumber 
 (k_*+1)\log(p)&\leq&  L^*(n_1\wedge n_2)\ , \\
\label{eq:definition_k'*}
 \tilde{k}_* &\leq &\tilde{L}^{*} \tilde{\gamma}_{\Sigma, k_*} \left[\frac{n_1\wedge n_2}{|n_1-n_2|} \wedge  \sqrt{\frac{n_1\wedge n_2}{\log(p)}}\right] \ ,
\end{eqnarray}
with the convention $x/0=\infty$. Assume that 
\begin{equation*}
 1+\log\left[1/(\alpha\delta)\right]< L^*_1(n_1\wedge n_2)\ .
\end{equation*}
 The hypothesis  $\hyp_0$ is rejected by $T_{\SLasso}^B$ with probability larger than $1-\delta$ for  any $(\beta^{(1)},\beta^{(2)})$ satisfying
\begin{equation}
 |\beta^{(1)}|_0+|\beta^{(2)}|_0 \leq L^*_2 \tilde{\gamma}^{(2)}_{\Sigma, \tilde{k}_*} \tilde{k}_* 
\ .\label{eq:hypothese_sparsite2} 
\end{equation}
 and  
\begin{eqnarray*}
 \frac{\|\beta^{(1)}-\beta^{(2)}\|_{\Sigma}^2}{\var(Y^{(1)})\wedge \var(Y^{(2)})}\geq L^*_3 \tilde{\gamma}^{(3)}_{\Sigma, k_*} \left[\left(|\beta^{(1)}-\beta^{(2)}|_0\vee 1\right)\log(p)+ \log\{1/(\alpha\delta)\}\right]\ .
\end{eqnarray*}
\end{proposition}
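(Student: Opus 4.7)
The plan is to parallel the argument underlying Proposition~\ref{prte:lasso_precis} (the general-covariance case) but to leverage $\Sigma^{(1)}=\Sigma^{(2)}=\Sigma$ in order to replace $|\beta^{(1)}|_0\vee|\beta^{(2)}|_0$ by $|\beta^{(1)}-\beta^{(2)}|_0$ in the detection rate. I work in the reparametrization~\eqref{eq:reparam} with $\theta_*^{(1)}=(\beta^{(1)}+\beta^{(2)})/2$ and $\theta_*^{(2)}=(\beta^{(1)}-\beta^{(2)})/2$, so that $\mathrm{supp}(\theta_*^{(2)})=S^*_\Delta$ has cardinality at most $|\beta^{(1)}-\beta^{(2)}|_0$. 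The goal is to exhibit, on an event of probability at least $1-\delta/2$, some $\lambda_k$ along the Lars-Lasso regularization path of~\eqref{eq:reparamLasso} for which $S:=\widehat V^{(2)}_{\lambda_k}\in\widehat{\cS}^{(2)}_L\subset\SLasso$ satisfies the assumptions of Theorem~\ref{thrm_puissance}. The conclusion then follows from that theorem applied to this (random) subset, since the Bonferroni weights $\alpha_{i,S}$ of~\eqref{eq:condition_bonf_classique} depend on $S$ only through $|S|$, and Proposition~\ref{prop:sizeB} already ensures level control.

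The first step is an oracle inequality for the reparametrized Lasso: for $\lambda \asymp (\sigma^{(1)}\vee\sigma^{(2)})\sqrt{(n_1\wedge n_2)\log p}$, a Gaussian tail bound on $\|{\bf W}^\intercal\eps\|_\infty$ combined with a compatibility-type argument for ${\bf W}$ yields, with the desired probability,
\[
\|\widehat\theta^{(2)}_\lambda-\theta_*^{(2)}\|_1 \;\lesssim\; \frac{|\beta^{(1)}-\beta^{(2)}|_0}{\kappa^2[6,k_*,\sqrt\Sigma]}\sqrt{\frac{\log p}{n_1\wedge n_2}} \quad\text{and}\quad |\mathrm{supp}(\widehat\theta_\lambda)|\;\lesssim\; \frac{(|\beta^{(1)}|_0+|\beta^{(2)}|_0)\,\Phi_{k_*,+}(\sqrt\Sigma)}{\kappa^2[6,k_*,\sqrt\Sigma]}.
\]
Arguing along the regularization path as in the proof of Proposition~\ref{prte:lasso_precis}, I then extract a $\lambda_k$ for which $S=\widehat V^{(2)}_{\lambda_k}$ essentially captures $S^*_\Delta$. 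Because $\Sigma^{(1)}=\Sigma^{(2)}=\Sigma$, the Kullback decomposition displayed in Section~\ref{sec:power1} specialized to the sub-model gives $\mathcal{K}_1(S)+\mathcal{K}_2(S)\gtrsim \|\beta^{(1)}_S-\beta^{(2)}_S\|_\Sigma^2/(\mathrm{var}(Y^{(1)})\wedge \mathrm{var}(Y^{(2)}))$, up to a bias term on $S^c$ of order $\Phi_{1,+}(\sqrt\Sigma)\,\|\widehat\theta^{(2)}_\lambda-\theta_*^{(2)}\|_1^2$ controlled by Step~1. Since $\log(1/\alpha_S)\lesssim |S|\log p+\log(1/\alpha)$, condition~\eqref{inegalite_puissance_hm} of Theorem~\ref{thrm_puissance} is satisfied under the stated signal-strength hypothesis, concluding the proof.

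The main obstacle is the oracle inequality in Step~1 when $n_1\ne n_2$. The Gram matrix of ${\bf W}$ is block-diagonal only when $n_1=n_2$; otherwise the off-diagonal block $\X^{(1)\intercal}\X^{(1)}-\X^{(2)\intercal}\X^{(2)}$ injects a bias from $\theta^{(1)}$ into the estimate of $\theta^{(2)}$. After normalization this cross term has operator norm controlled by $(|n_1-n_2|/(n_1\wedge n_2))\,\Phi_{\tilde k_*,+}(\sqrt\Sigma)$, and one must verify that the compatibility condition of ${\bf W}$ survives this perturbation. Tracking how much this perturbation may be absorbed without destroying compatibility is precisely what yields the ceiling $\tilde k_*$ in~\eqref{eq:definition_k'*}: the factor $(n_1\wedge n_2)/|n_1-n_2|$ comes from the block-mismatch just described, while the complementary factor $\sqrt{(n_1\wedge n_2)/\log p}$ is the classical Lasso sparsity upper bound. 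Everything else is bookkeeping combining the deviations above with Theorem~\ref{thrm_puissance}.
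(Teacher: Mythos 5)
Your overall plan — follow the path of Proposition \ref{prte:lasso_precis}, pick $\widehat{V}^{(2)}_{\lambda}\in\widehat{\cS}^{(2)}_L$, show it essentially captures $S^*_\Delta$, and invoke the power analysis — is indeed the route the paper takes, and you correctly identify the cross-block term $\X^{(1)\intercal}\X^{(1)}-\X^{(2)\intercal}\X^{(2)}$ as the crux. But the technical core is asserted rather than proved, and two of your assertions would not deliver the stated rate. First, the block-wise oracle inequality of your Step~1, with error for $\widehat{\theta}^{(2)}_\lambda$ driven by $|\theta_*^{(2)}|_0=|S^*_\Delta|$ alone, does not follow from ``a Gaussian tail bound on $\|{\bf W}^\intercal\eps\|_\infty$ combined with a compatibility-type argument for ${\bf W}$'': the joint-Lasso analysis only yields bounds scaling with the \emph{total} sparsity $|\theta_*|_0$, which would reintroduce $|\beta^{(1)}|_0\vee|\beta^{(2)}|_0$ and defeat the purpose. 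The paper's mechanism is different: it re-reads $\widehat{\theta}^{(2)}_\lambda$ as the solution of the Lasso program \eqref{eq:definition_lasso_alternative} with design ${\bf W}^{(2)}$ and additional ``noise'' ${\bf W}^{(1)}(\theta_*^{(1)}-\widehat{\theta}^{(1)}_\lambda)$, and Lemma \ref{lemma_correlation_petite} shows $|{\bf W}^{(2)\intercal}{\bf W}^{(1)}(\theta_*^{(1)}-\widehat{\theta}^{(1)}_\lambda)|_\infty\le\lambda/8$, using $\Sigma^{(1)}=\Sigma^{(2)}$ so that ${\bf W}^{(2)\intercal}{\bf W}^{(1)}$ concentrates around $(n_1-n_2)\Sigma$. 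Note also that your claim that the Gram matrix of ${\bf W}$ is block-diagonal when $n_1=n_2$ is false: it is block-diagonal only in expectation, and the entrywise fluctuation of order $\sqrt{(n_1\vee n_2)\log p}$ is precisely what produces the $\sqrt{(n_1\wedge n_2)/\log p}$ ceiling in \eqref{eq:definition_k'*}; it is not ``the classical Lasso sparsity upper bound''. Framing the issue as a perturbation of the compatibility constant of ${\bf W}$ misses both points.

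Second, your bias control $\|\theta_*^{(2)}-\widehat{\theta}^{(2)}_\lambda\|^2_\Sigma\lesssim\Phi_{1,+}(\sqrt\Sigma)\,\|\widehat{\theta}^{(2)}_\lambda-\theta_*^{(2)}\|_1^2$ is too lossy: with $\|\cdot\|_1\lesssim s\sqrt{\log p/(n_1\wedge n_2)}$ it gives a bias of order $s^2\log p/(n_1\wedge n_2)$, hence a detection threshold quadratic in $s=|\beta^{(1)}-\beta^{(2)}|_0$ instead of the linear dependence claimed in the proposition. The paper instead bounds $\|\theta_*^{(2)}-\widehat{\theta}^{(2)}_\lambda\|^2_\Sigma$ by $\|{\bf W}^{(2)}(\theta_*^{(2)}-\widehat{\theta}^{(2)}_\lambda)\|^2/(n_1\wedge n_2)$ on the sparse-eigenvalue event $\mathcal{A}$ (legitimate because the error vector is $k_*$-sparse), then applies the prediction bound \eqref{eq:upper_prediction_theta2} and a Pythagorean identity, which keeps the bias linear in $|\theta_*^{(2)}|_0$. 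Finally, you cannot apply Theorem \ref{thrm_puissance} directly to the data-dependent set $\widehat{V}^{(2)}_{\lambda_k}$: its deviation bounds are for a fixed $S$, and the set here is selected with the same data. The paper resolves this with Lemma \ref{lemma:controle_puissance_aleatoire}, a union bound over all sets of size at most $k_*$ with $\delta_S=\delta\bigl(2\binom{|S|}{p}k_*\bigr)^{-1}$, whose extra $|S|\log p$ cost is absorbed in the rate; your remark about Bonferroni weights depending on $S$ only through $|S|$ addresses the thresholds but not this uniformity issue.
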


\begin{remark}
The definition \eqref{eq:definition_k'*} of $\tilde{k}_*$ together with Condition \eqref{eq:hypothese_sparsite2} restrict the number of non-zero components $|\beta^{(1)}|_0+|\beta^{(2)}|_0$ to be  small  in front of $(n_1\wedge n_2)/|n_1-n_2|$. This technical assumption enforces the design matrix in the reparametrized model \eqref{eq:reparam} to be almost block-diagonal and allows us to control efficiently the Lasso estimator $\widehat{\theta}^{(2)}_{\lambda}$ of $\theta_*^{(2)}$ for some $\lambda>0$ (see the proof in Section \ref{sec:proofs} for further details). Still, this is not clear to what extent this assumption is necessary.
\end{remark}

\section{Numerical Experiments}\label{sec:num}

This section evaluates the performances of the suggested test
statistics along with aforementioned test collections and calibrations
on simulated linear regression datasets. 


\subsection{Synthetic Linear Regression Data}


In order to calibrate the difficulty of the testing task, we simulate
our data according to the rare and weak parametrization adopted in \cite{2011_AS_Arias-Castro}. We choose a large but
still reasonable number of variables $p= 200$, and restrict ourselves
to cases where the number of observations $n=n_1=n_2$ in each sample remains smaller than
$p$. The sparsity of sample-specific coefficients $\beta^{(1)}$ and $\beta^{(2)}$ is parametrized by the
number of non zero common coefficients $p^{1-\eta}$ and the number of
non zero coefficients $p^{1-\eta_2}$ which are specific to $\beta^{(2)}$. The magnitude $\mu_r$ of all  non zero
coefficients is set to a common value of $\sqrt{2 r \log{p}}$, where
we let the magnitude parameter range from $r=0$ to $r=0.5$:
\[
\begin{array}{rccc}
\beta^{(1)} =& (\mu_r\ \mu_r \dots \mu_r & 0  \dots  0 &0 \dots 0)\\ 
\beta^{(2)} =& \underbrace{(\mu_r\ \mu_r \dots \mu_r}_{p^{1-\eta}
  \text{ common coefficients}}&
\underbrace{\mu_r \dots  \mu_r}_{p^{1-\eta_2} \text{ sample-2-specific coefficients }} & 0 \dots  0 )\\ 
\end{array}
\]

We consider three sample sizes 
$n=25,\ 50,\ 100$, and generate two sub-samples of equal size
$n_1=n_2=n$ according to the following sample specific linear
regression models:
\[
\left\{
\begin{array}{rcl}
\mathbf{Y}^{(1)} &=& \X^{(1)} \beta^{(1)} + \varepsilon^{(1)},\\
\mathbf{Y}^{(2)} &=& \X^{(2)} \beta^{(2)} + \varepsilon^{(2)}.
\end{array}
\right.
\]

Design matrices $\X^{(1)}$ and
$\X^{(2)}$ are generated by multivariate Gaussian distributions,
$\X_i^{(j)} \sim \mathcal{N}(0,\Sigma^{(j)})$ with varying choices of
$\Sigma^{(j)}$, as detailed below. Noise components $\varepsilon_i^{(1)}$ and
$\varepsilon_i^{(2)}$ are generated independantly from $\X^{(1)}$ and
$\X^{(2)}$ according to a standard centered Gaussian distribution. 

The next two paragraphs detail the different design scenarios under
study as well as test statistics, collections and calibrations in competition. Each experiment is repeated 1000 times.

\paragraph{Design Scenarios Under Study.}

\subparagraph{Sparsity Patterns.}
We study six different sparsity patterns as summarized in Table
\ref{tab:scen}. The first two are meant to validate type I error control. The last
four allow us to compare the performances of the various test
statistics, collections and calibrations 
under different sparsity levels and proportions of shared
coefficients. In all cases, the choices of sparsity parameters $\eta$ and
$\eta_2$  lead to strong to very strong levels of sparsity. 
 The last column of Table \ref{tab:scen} illustrates the signal sparsity patterns of
$\beta^{(1)}$ and $\beta^{(2)}$ associated with each scenario. In scenarios 1 and 2,
sample-specific signals share little, if not none, non zero
coefficient. In scenarios 3 and 4, sample-specific coefficients
show some overlap. Scenario~4 is the most difficult one since the number
of sample-2-specific coefficients is much smaller than the number
of common non zero coefficients: the sparsity of the difference
between $\beta^{(1)}$ and $\beta^{(2)}$ is
much smaller than the global sparsity of $\beta^{(2)}$. This explains why the illustration in
the last column might be misleading: the two patterns are not equal
but do actually differ by only one covariate.

\medskip

Beyond those six varying sparsity patterns, we consider three
different correlation structures $\Sigma^{(1)}$ and $\Sigma^{(2)}$ for
the generation of the design matrix. In all three cases, we assume
that $\Sigma^{(1)} = \Sigma^{(2)} = \Sigma$. On top of the basic orthogonal
matrix $\Sigma^{(1)}=\Sigma^{(2)} = I_p$, we investigate two randomly
generated correlation structures.

\subparagraph{Power Decay Correlation Structure.}First, we consider a power
decay correlation structure such that $\Sigma_{i,j} =
\rho^{|i-j|}$. Since the sparsity pattern of $\beta^{(1)}$ and
$\beta^{(2)}$ is linked to the order of the covariates, we randomly
permute at each run the columns and rows of $\Sigma$ in order to make sure that the
correlation structure is independent from the sparsity pattern.

\subparagraph{Gaussian Graphical Model Structure.} Second, we simulate
correlation structures with the \texttt{R} package
\texttt{GGMselect}. The function \texttt{simulateGraph}  generates
covariance matrices corresponding to  Gaussian graphical model
structure made of clusters with some intra-cluster and extra-cluster
connectivity coefficients. See Section 4 of \cite{2009_Giraud_Huet} for more
details. A new structure is generated at each run. 


Both random correlation structures are calibrated such that, on
average, each covariate is correlated with 10 other covariates with
correlations above 0.2 in absolute value. This corresponds to fixing
$\rho$ at a value of $0.75$ in the power decay correlation structure and the
intra-cluster connectivity coefficient to 5\% in the Gaussian
graphical model structure. With the default option of the function
\texttt{simulateGraph} the extra-cluster connectivity coefficient is taken
five times smaller.

 \begin{table}
 \centering
 \begin{tabular}{cccccc}
 \hline
 Setting & $\eta$ & $\sharp$ common & $\eta_2$ & $\sharp$ $\beta^{(2)}$
 specific & Signals\\
 \hline
 $\hyp_{00}$ & - & 0 & - & 0
 & \includegraphics[width=.1\textwidth]{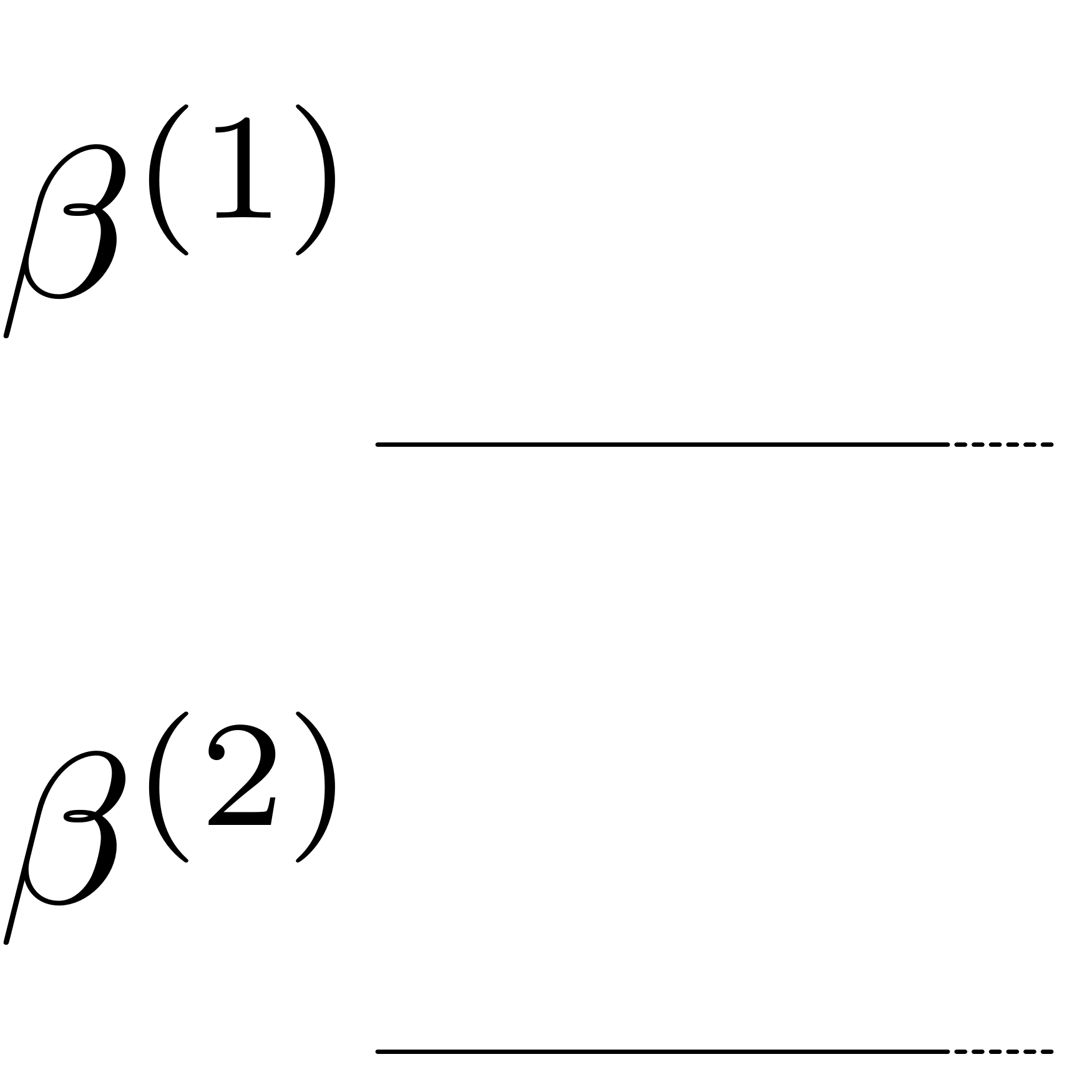}\\[-2ex] \\
 $\hyp_{0}$ & 5/8 & 7 & -& 0
 & \includegraphics[width=.1\textwidth]{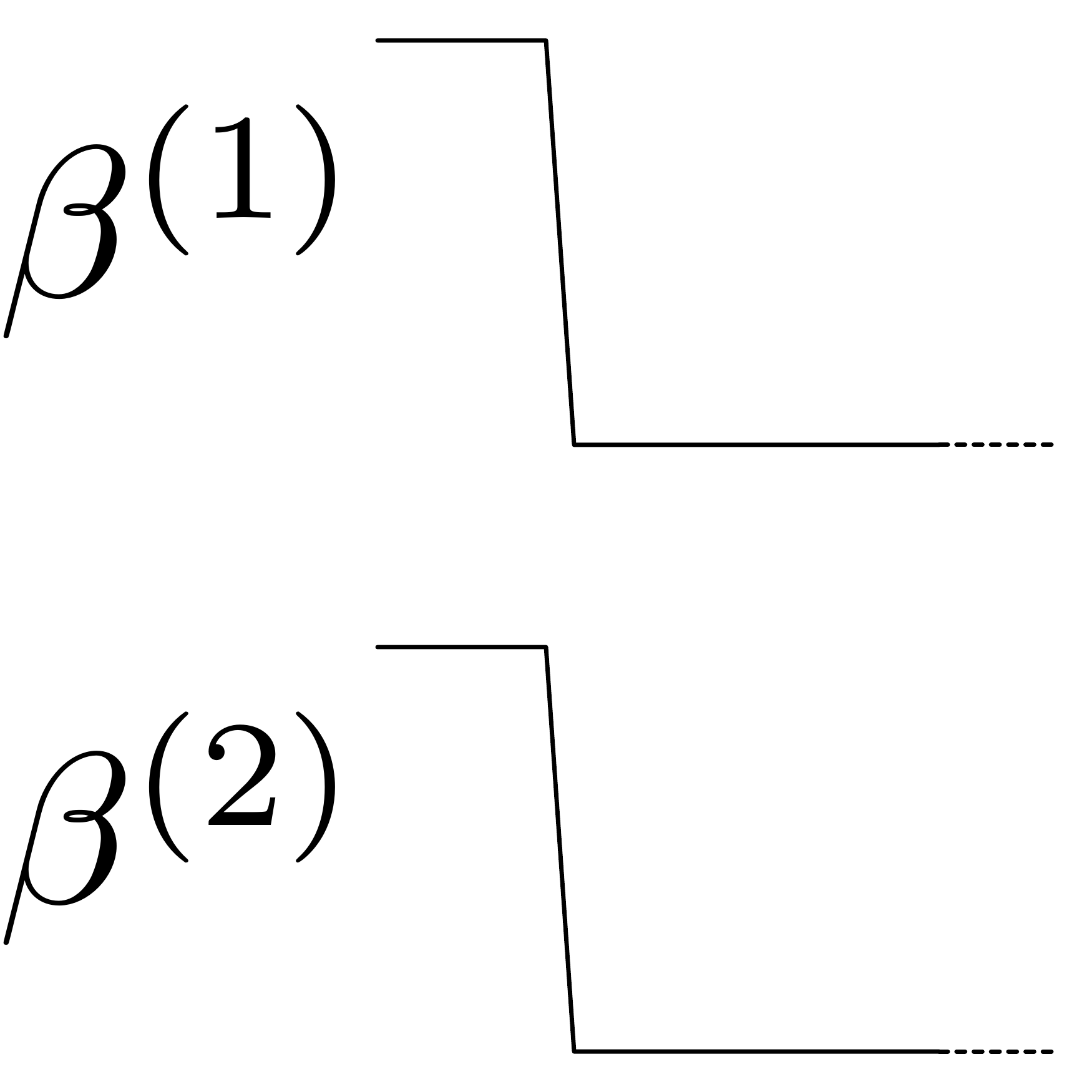}\\[-2ex]\\
 \hline
 1 &  - & 0 & 5/8 & 7 & \includegraphics[width=.1\textwidth]{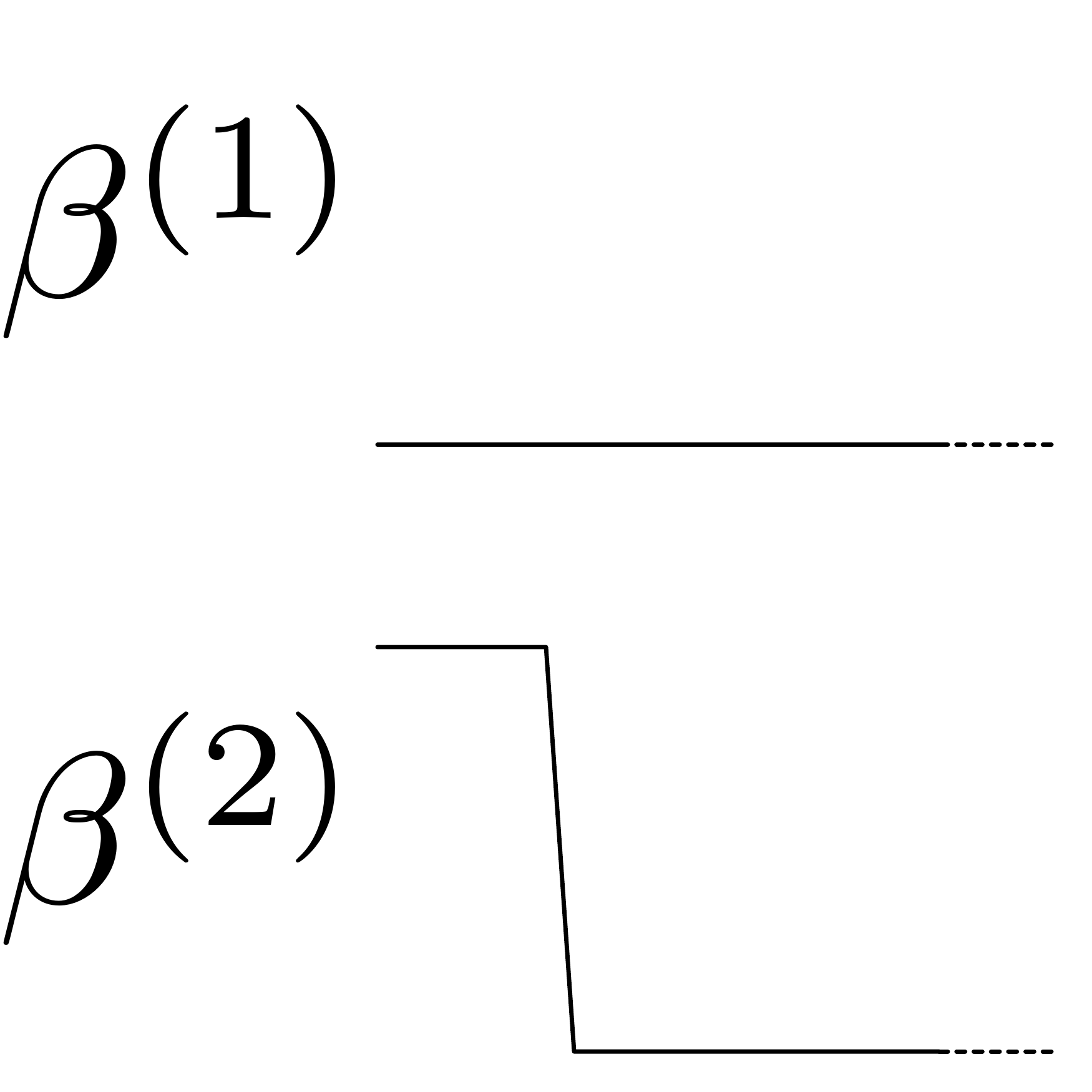}\\[-2ex]\\
 \hline 
 2 &  7/8 & 1 & 5/8 & 7 & \includegraphics[width=.1\textwidth]{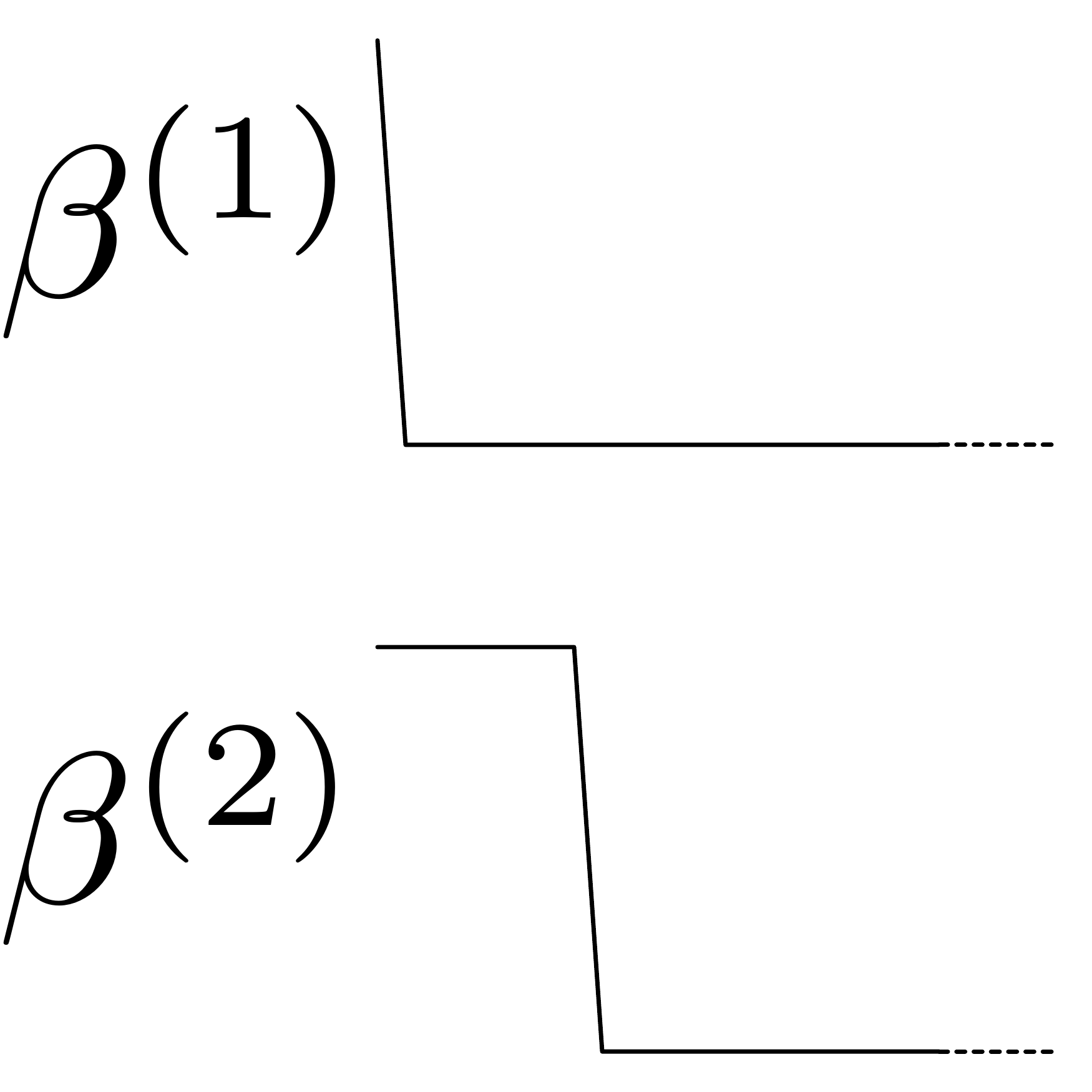}\\[-2ex]\\
 \hline 
 3 &  5/8 & 7 & 5/8 & 7 & \includegraphics[width=.1\textwidth]{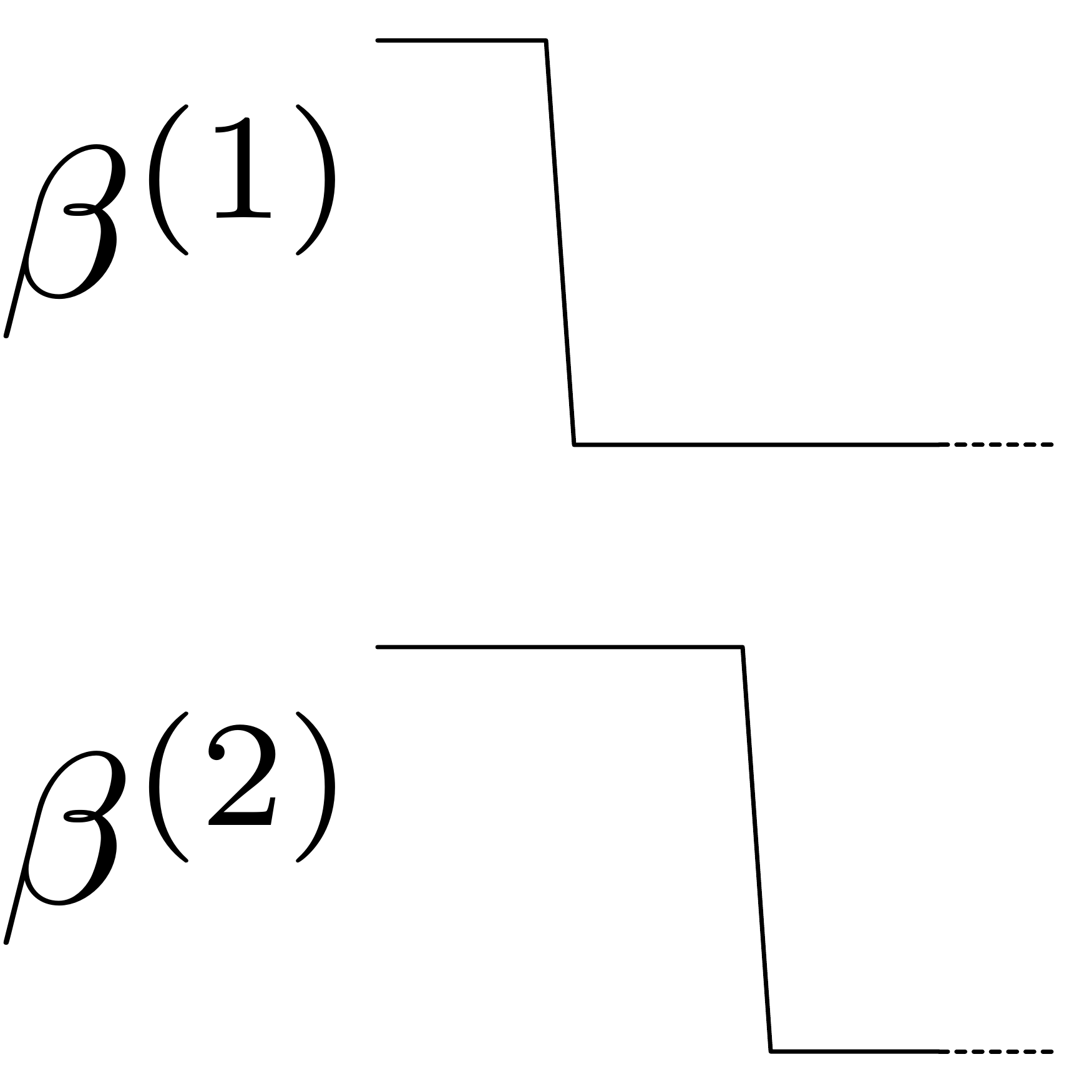}\\[-2ex]\\
 \hline 
 4 &  5/8 & 7 & 7/8 & 1 & \includegraphics[width=.1\textwidth]{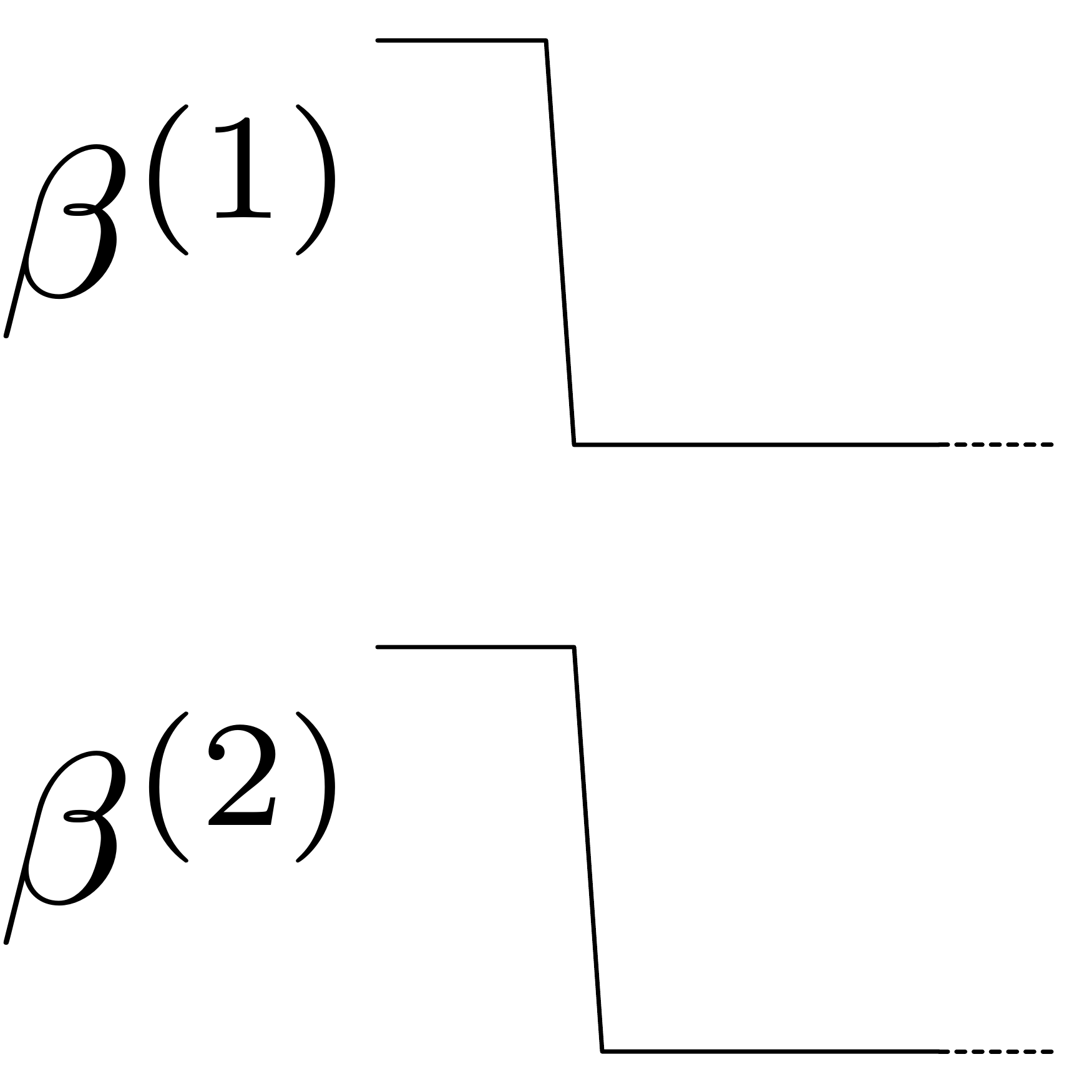}\\[-2ex]\\
  \hline
 \end{tabular}
 \caption{\label{tab:scen} Summary of the six different sparsity
   patterns under study.}
 \end{table}

\paragraph{Test statistics, collections and calibrations in competition}

In the following, we present the results of the proposed test statistics
combined with two test collections, namely a deterministic and data-driven
model collection, respectively $\cS_1$ and
$\SLasso$, as well as with a Bonferroni ${\bf (B)}$ or
Permutation ${\bf (P)}$ calibration (computed with 100 random permutations). 

Furthermore, to put those results in perspective, we compare 
our suggested test statistic to the usual Fisher statistic and we
compare our approach with the parallel work of  \cite{2013_ArXiv_Stadler}.

 \textit{Fisher statistic}. 
For a given support $|S|$ of reduced dimension the usual likelihood
ratio statistic 
for the equality of $\beta_S^{(1)}$ and $\beta_S^{(2)}$ follows a
Fisher distribution with $|S|$ and $n_1+n_2 -2 |S|$ degrees of
freedom:
\begin{equation}
\mathrm{Fi}_S = \frac{\|\Y-\X_S\widehat{\beta}_S\|^2 - \|\Y^{(1)}-\X_S^{(1)}\widehat{\beta}^{(1)}_S\|^2-\|\Y^{(2)}-\X_S^{(2)}\widehat{\beta}^{(2)}_S\|^2}{\|\Y^{(1)}-\X_S^{(1)}\widehat{\beta}^{(1)}_S\|^2+\|\Y^{(2)}-\X_S^{(2)}\widehat{\beta}^{(2)}_S\|^2} \frac{n_1+n_2-2|S|}{|S|},
\end{equation}
where $\widehat{\beta}_S$ is the maximum likelihood estimator
restricted to covariates in support $S$ on the
concatenated sample $(\X,\Y)$. While this statistic $\mathrm{Fi}_S$ is able to detect differences between $\beta^{(1)}$ and $\beta^{(2)}$, it is not really suited for detecting differences between the standard deviations $\sigma^{(1)}$ and $\sigma^{(2)}$.

The Fisher statistic $\mathrm{Fi}_S$ is adapted to the high-dimensional framework
similarly as the suggested statistics $(F_{S,V},F_{S,1},F_{S,2}) $, except that exact $p$-values are
available. The corresponding test with a collection
$\widehat{\mathcal{S}}$ and a Bonferroni (resp. permutation) calibration
is denotes $T^{B,\mathrm{Fisher}}_{\widehat{\mathcal{S}}}$
($T^{P,\mathrm{Fisher}}_{\widehat{\mathcal{S}}}$).

 \textit{Procedure of St\"adler and Mukherjee \cite{2013_ArXiv_Stadler}}.
The DiffRegr procedure of St\"adler and Mukherjee performs two-sample
testing between high-dimensional regression models.  The procedure is
based on sample-splitting: the data is split into two parts, the
first one allowing to reduce dimensionality (\emph{screening} step) and the second being used  to compute p-values based on
a restricted log-likelihood-ratio statistic (\emph{cleaning} step). To increase the stability of
the results the splitting step is repeated multiple times and the
resulting p-values must be aggregated.
We choose to use the p-value calculations based on permutations as it
remains computationally reasonable for the regression case, see
\cite{2013_ArXiv_Stadler}.  
The single-splitting and multi-splitting procedures are denoted
respectively SS(perm) and MS(perm).


\paragraph{Validation of Type I Error Control}
\subparagraph{Control Under the Global Null Hypothesis $\hyp_{00}$.}
Table \ref{tab:level} presents estimated 
type I error rates, that is the percentage of simulations for which the null
hypothesis is rejected,   based upon $1000$ simulations under the
restricted null hypothesis $\hyp_{00}$, where $\beta^{(1)} = \beta^{(2)}
= 0$ and under orthogonal correlation structure. The
desired level is $\alpha=5\%$, and the estimated levels are given with  
a $95\%$ Gaussian confidence interval. 

As expected under independence, the combination of the $\cS_1$ collection with Bonferroni
correction gives accurate alpha-level  when applied  to the usual
Fisher  statistic. On the contrary when applied to the
suggested statistics,  the use  of upper bounds  on p-values  leads to
a strong decrease in observed type-I error. This decrease is exacerbated when using the
$\SLasso$ collection,  since we are  accounting for many more models
than the number actually tested in order to prevent overfitting. This
effect can be seen both on the Fisher statistic and our suggested statistic.
Even with the usual Fisher statistic, for which we know the
exact $p$-value, it is unthinkable to use Bonferroni calibration as
soon as we adopt data-driven collections instead of deterministic
ones.

On the contrary, a calibration by permutations restores a control of
type-I error at the desired nominal level, whatever the test statistic
or model collection.

As noted by \cite{2013_ArXiv_Stadler}, the multi-splitting procedure
yields conservative results in terms of type I error control at level $5\%$.

\begin{table}[!ht]
\begin{center}
\subtable[Tests  $T^{*}_{\widehat{\mathcal{S}}}$]{
\begin{tabular}{l*{5}{c}} 
\hline
Model collection &\multicolumn{2}{c}{$\mathcal{S}_1$} & \multicolumn{2}{c}{$\widehat{\mathcal{S}}_{Lasso}$} \\ 
Calibration  &({\bf B}) & ({\bf P}) &({\bf B}) & ({\bf P}) \\ 
n= 25 &$ 1 \pm 0.6 $&$ 6.9 \pm 1.6 $&$ 0 \pm 0 $&$ 6.9 \pm 1.6 $\\
n= 50 &$ 1.8 \pm 0.8 $&$ 5.8 \pm 1.4 $&$ 0 \pm 0 $&$ 6 \pm 1.5 $\\
n= 100 &$ 1 \pm 0.6 $&$ 7.4 \pm 1.6 $&$ 0.1 \pm 0.2 $&$ 7.3 \pm 1.6 $\\
\hline
\end{tabular} 
}
\subtable[Tests  $T^{*,\mathrm{Fisher}}_{\widehat{\mathcal{S}}}$]{
\begin{tabular}{l*{4}{c}} 
\hline
Model collection &\multicolumn{2}{c}{$\mathcal{S}_1$} & \multicolumn{2}{c}{$\widehat{\mathcal{S}}_{Lasso}$} \\ 
Calibration  &({\bf B}) & ({\bf P}) &({\bf B}) & ({\bf P}) \\ 
n= 25 &$ 5.5 \pm 1.4 $&$ 6.8 \pm 1.6 $&$ 0.5 \pm 0.4 $&$ 6.5 \pm 1.5 $\\
n= 50 &$ 4.5 \pm 1.3 $&$ 5.5 \pm 1.4 $&$ 0.1 \pm 0.2 $&$ 5.3 \pm 1.4 $\\
n= 100 &$ 4.8 \pm 1.3 $&$ 6.6 \pm 1.5 $&$ 0.1 \pm 0.2 $&$ 6.5 \pm 1.5
$\\
\hline
\end{tabular} 
}
\subtable[ DiffRegr procedure]{
\begin{tabular}{l*{2}{c}} 
\hline
Model collection &\multicolumn{1}{c}{SS (perm)} & \multicolumn{1}{c}{MS (perm)} \\ 
n= 25 &$ 4.3 \pm 1.3 $&$ 0.1 \pm 0.2 $\\
n= 50 &$ 4.1 \pm 1.2 $&$ 0.2 \pm 0.3 $\\
n= 100 &$ 3.5 \pm 1.1 $&$ 0.1 \pm 0.2 $\\
\hline
\end{tabular} 
}
\caption{\label{tab:level}Estimated test levels in percentage along
  with 95\% Gaussian confidence interval (in percentage) under $\hyp_{00}$ based upon 1000 simulations. }
\end{center}
\end{table}

\subparagraph{Control Under the Global Equality of Non Null
  Coefficients $\hyp_{0}$.} Figures \ref{fig:level} and \ref{fig:levelcorr}
present level checks under $\hyp_0$ but with non null $\beta^{(1)}=
\beta^{(2)}\neq 0$, under respectively orthogonal and non-orthogonal
correlation structures. Conclusions are perfectly similar to the case
$\hyp_{00}$: all methods behave well, except the multi-split DiffRegr
procedure and the Bonferroni calibration-based procedures
 $T^{B}_{\widehat{\mathcal{S}}}$       (for       any       collection
 $\widehat{\mathcal{S}}$)                                           and
 $T^{B,\mathrm{Fisher}}_{\mathcal{\SLasso}}$.  In particular, the Fisher
 statistic combined  with $\cS_1$  and Bonferroni calibration  is more
 conservative than the desired nominal level under correlated designs.

\begin{figure}[!ht]
\centering
\begin{tabular}{ccc}
$n=25$ & $n=50$ & $n=100$ \\
\includegraphics[width=.3\textwidth]{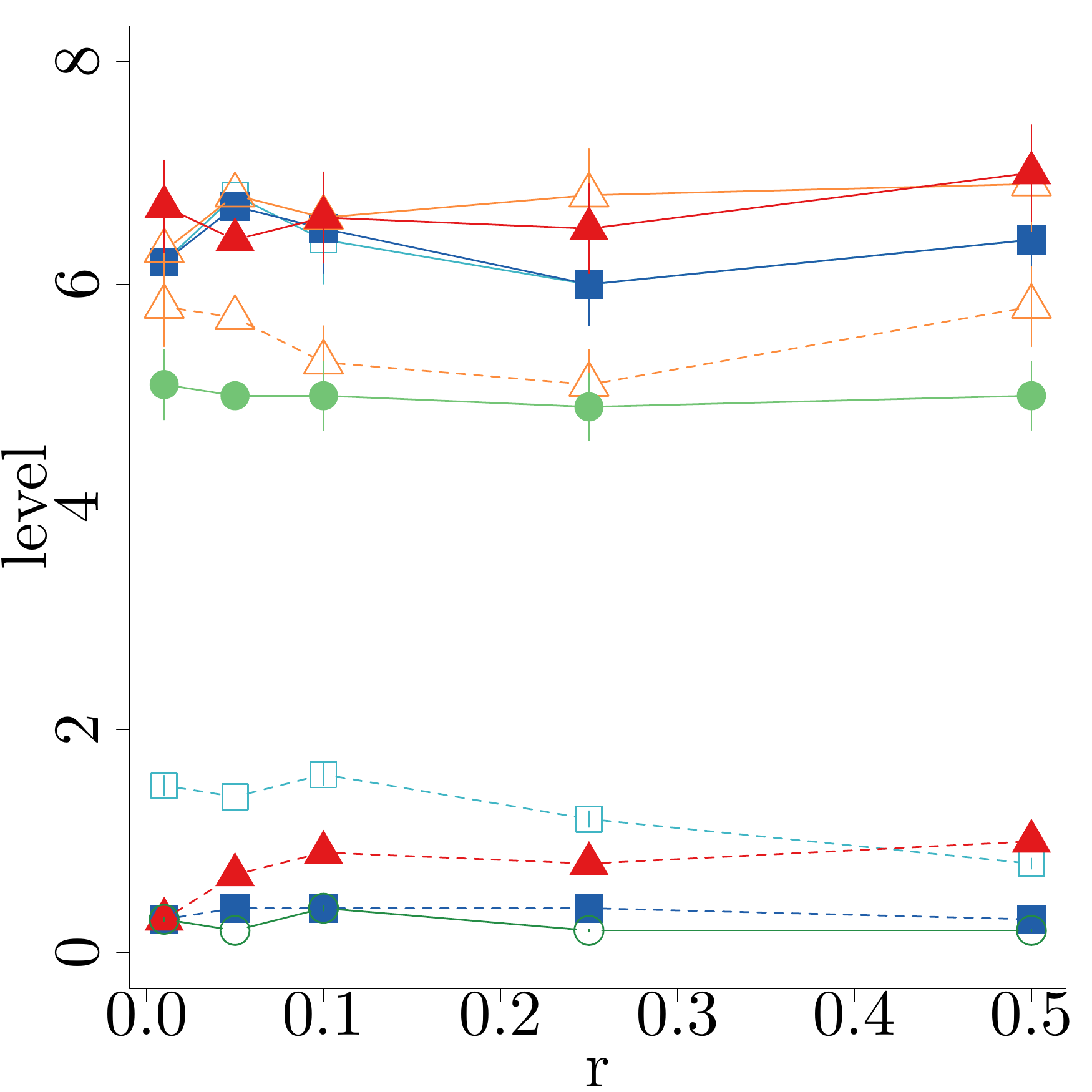}
&
\includegraphics[width=.3\textwidth]{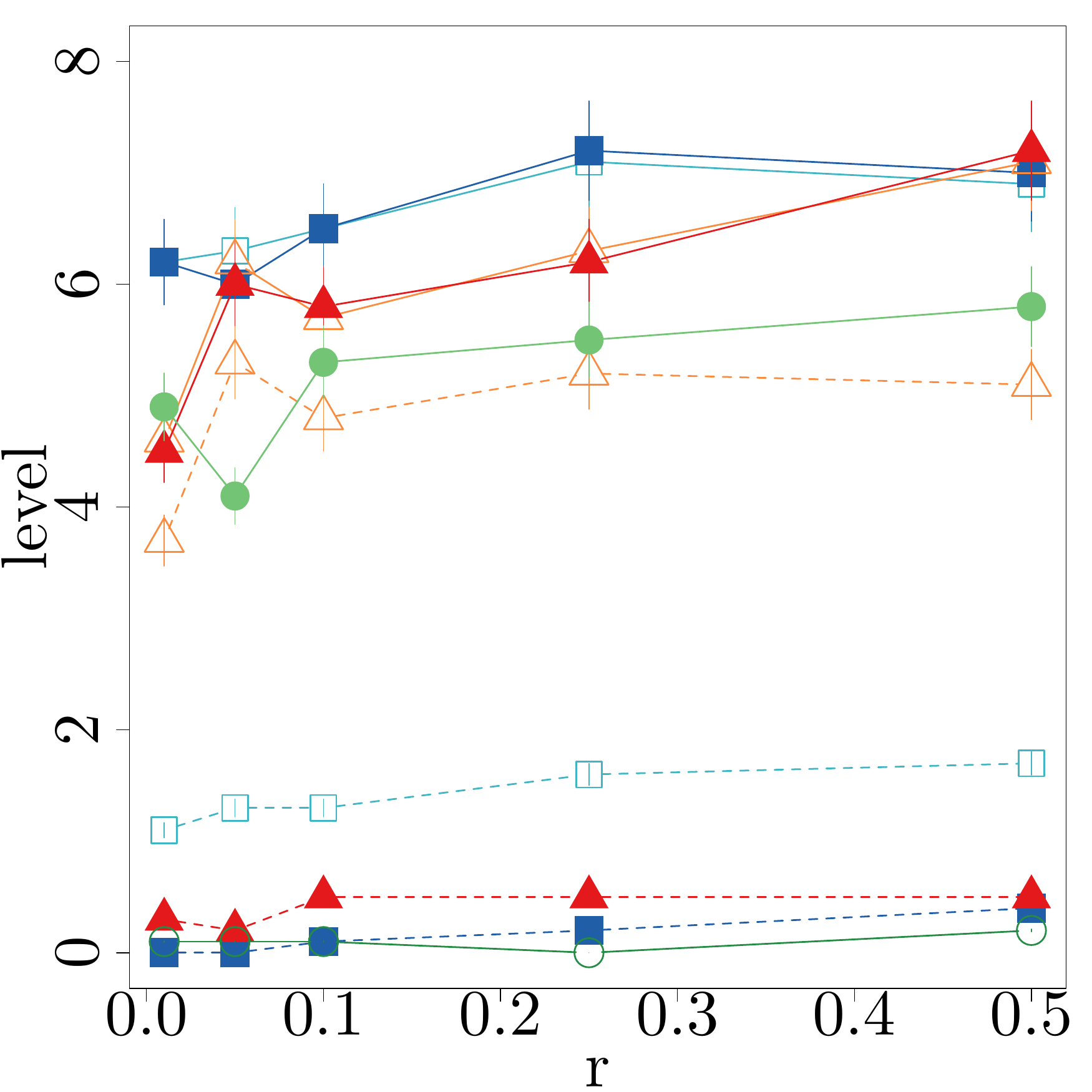}
&
\includegraphics[width=.3\textwidth]{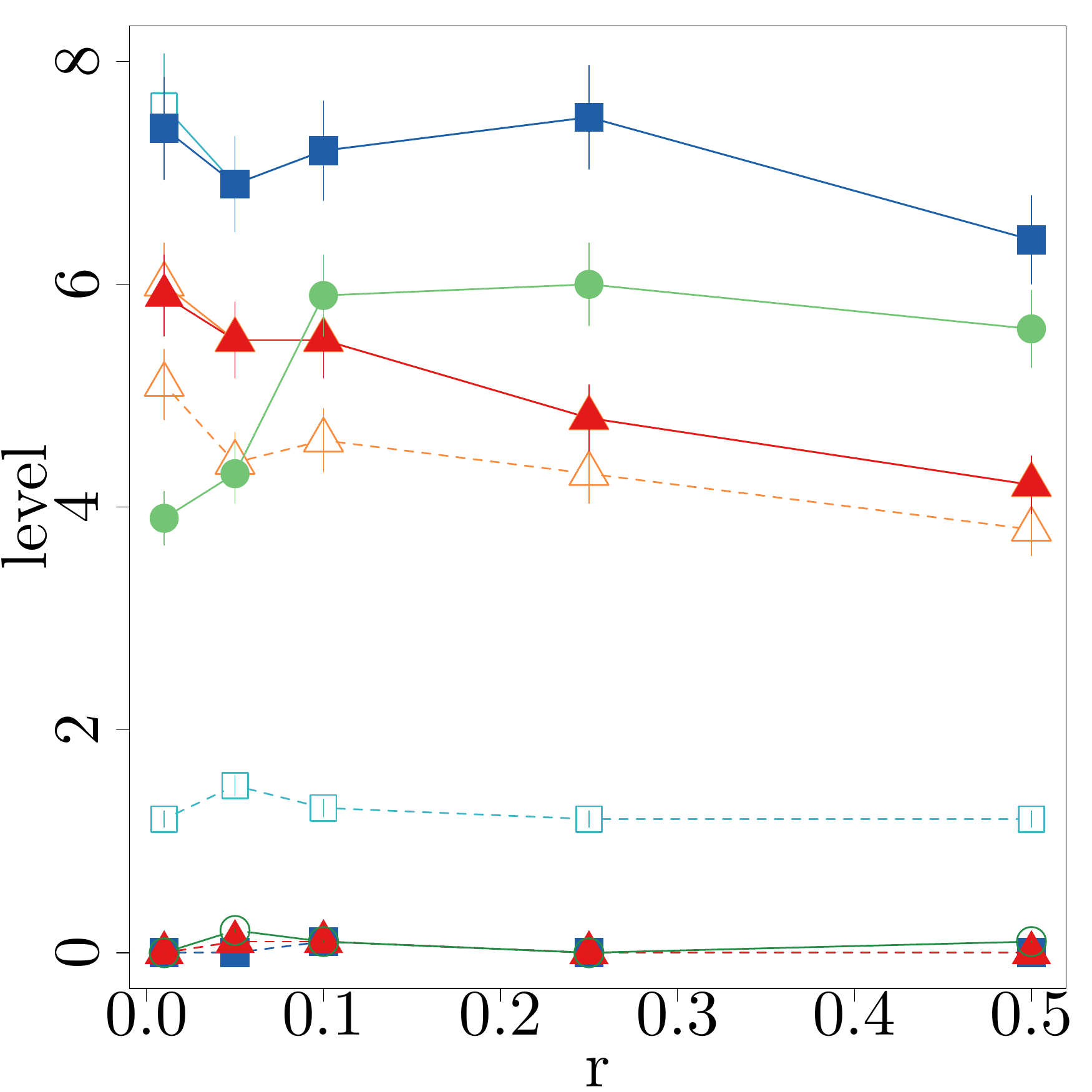} \\
\end{tabular}
\caption{\label{fig:level}Estimated test levels in percentage under $\hyp_{0}$ 
  for varying magnitudes of common non
  null coefficients, based upon 1000 simulations. Bonferroni
  calibration in dotted lines, calibration by permutation in plain
  lines. Blue squares represent the suggested test
  $T^{*}_{\widehat{\mathcal{S}}}$, red triangles
  stand for the Fisher test  $T^{*,\mathrm{Fisher}}_{\widehat{\mathcal{S}}}$. The deterministic collection
  $\cS_1$ is drawn in empty points, while the data-driven collection
  $\SLasso$ is in plain points. Green circles represent the
  DiffRegr procedure, respectively plain and empty for  single-splitting and
multi-splitting. }
\end{figure}

\begin{figure}[!ht]
\centering
\includegraphics[width=.3\textwidth]{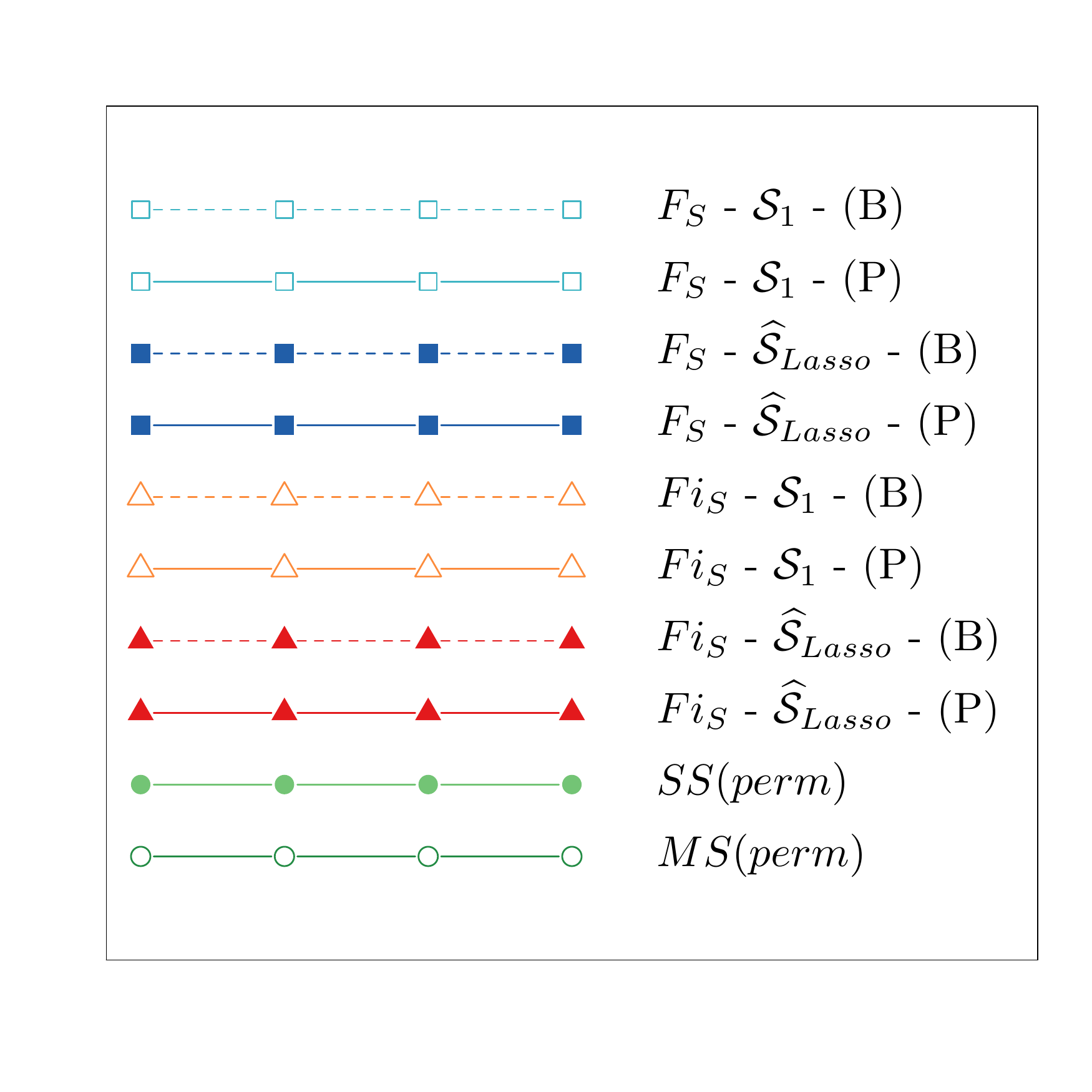}
\caption{Legend of the procedures under study}
\end{figure}

\begin{figure}[!ht]
\centering
\begin{tabular}{cc}
Power decay & GGM \\
\includegraphics[width=.3\textwidth]{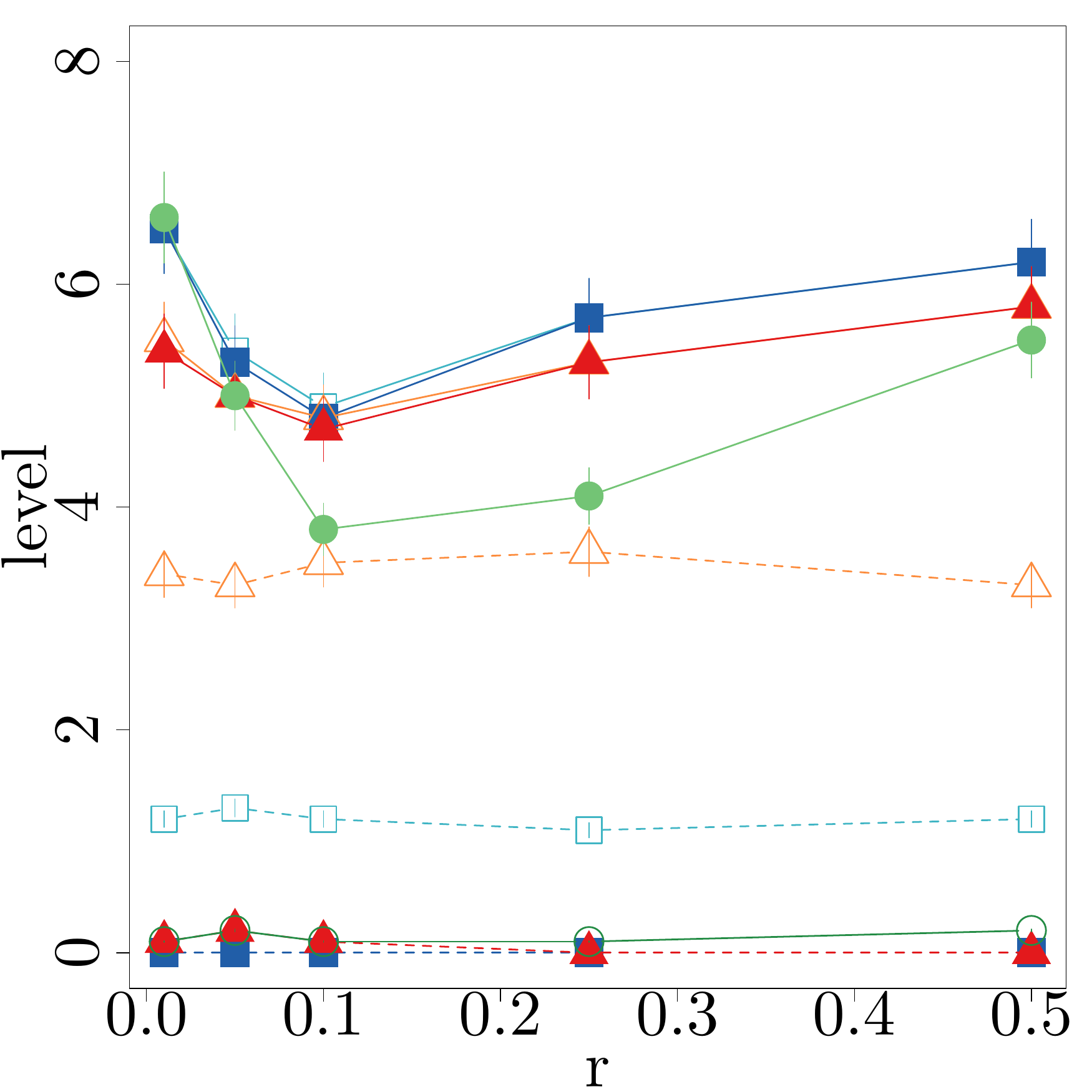}
&
\includegraphics[width=.3\textwidth]{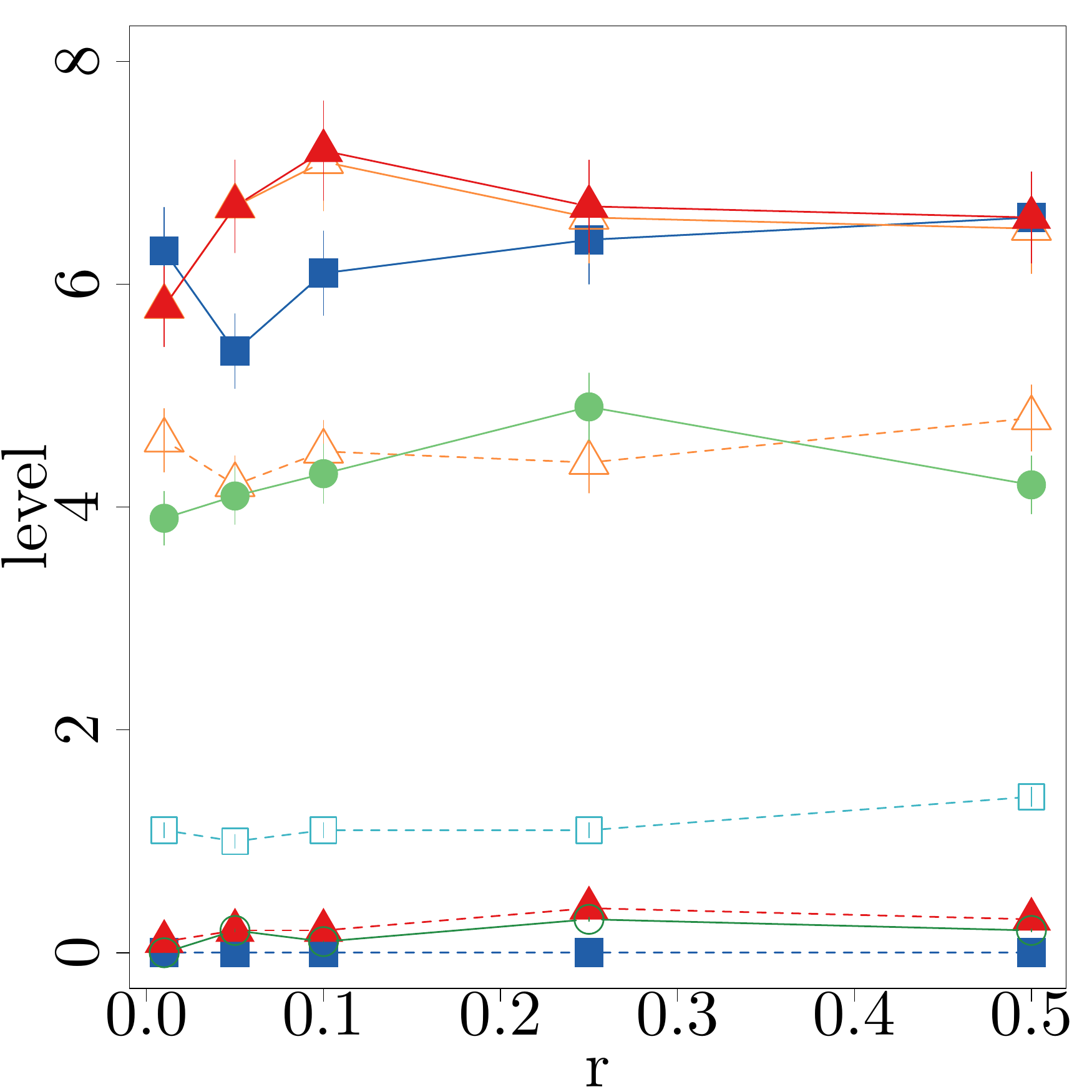} \\
\end{tabular}
\caption{\label{fig:levelcorr}Estimated test levels in percentage under
  $\hyp_{0}$ for varying magnitudes of common non
  null coefficients, based upon 1000 simulations, under power decay
  and GGM correlation structures when $n=50$.  Bonferroni
  calibration in dotted lines, calibration by permutation in plain
  lines. Blue squares represent the suggested test
  $T^{*}_{\widehat{\mathcal{S}}}$, red triangles
  stand for the Fisher test  $T^{*,\mathrm{Fisher}}_{\widehat{\mathcal{S}}}$. The deterministic collection
  $\cS_1$ is drawn in empty points, while the data-driven collection
  $\SLasso$ is in plain points. Green circles represent the
  DiffRegr procedure, respectively plain and empty for  single-splitting and
multi-splitting. }
\end{figure}

\paragraph{Power Analysis.} We do not investigate the power of the Bonferroni-based procedures $T^{B}_{\widehat{\cS}}$ and $T^{B,\mathrm{Fisher}}_{\widehat{\cS}}$ as they have been shown to be too conservative in the above Type I error analysis. Figure \ref{fig:powerCLRvsFisher}
represents power performances for the  test
$T^{P}_{\widehat{\mathcal{S}}}$ and 
the usual likelihood ratio test $T^{P,\mathrm{Fisher}}_{\widehat{\mathcal{S}}}$ combined with either $\cS_1$
or $\SLasso$ test collections using a calibration by permutation under
an orthogonal covariance matrix $\Sigma$, as well as power performance
for the DiffRegr procedure. Figure \ref{fig:correlated} represents
equivalent results for power decay and GGM 
covariance structures when $n=50$. 

In the absence of common coefficients (scenarios 1 and 2), the test $T^{P}_{\widehat{\mathcal{S}}}$
reaches 100\% power from very low signal magnitudes and small sample
sizes. Compared to the test based on usual likelihood ratio statistics, which
does not reach more than 40\% power when $n=25$ given the signal
magnitudes under consideration, the suggested statistics proves itself
extremely efficient. Under these settings as well, any subset of
size 1 containing one of the variables activated in only $\beta^{(2)}$
can suffice to reject the null, which is why collection $\cS_1$
performs actually very well when associated with $(F_{S,V},F_{S,1},F_{S,2})$ and not so
badly when associated with $\mathrm{Fi}_S$.

However, in more complex settings 3 and 4, where larger subsets are
required to correct for strong and numerous common effects,
subset collection $\SLasso$ yields a higher power than
the collection $\cS_1$. 

For small $n$, the test $T^{P}_{\SLasso}$
outperforms the procedure DiffRegr, whose limitation likely stems from the half sampling step. This limitation of sample
splitting approaches has already been noticed by \cite{2013_ArXiv_Stadler}.
However, for $n=100$ the  procedure DiffRegr performs better than our procedure
in the highly challenging setting  4.

\begin{figure}[!htbp]
 \begin{adjustwidth}{-1in}{-1in}
 \centering
 \begin{tabular}{m{1.5cm}ccc}
 Settings &$n=25$ & $n=50$ & $n=100$ \\
 \raisebox{.3\textwidth}{1. \includegraphics[width=.1\textwidth]{data/settings_1.pdf}}& \includegraphics[width=.3\textwidth]{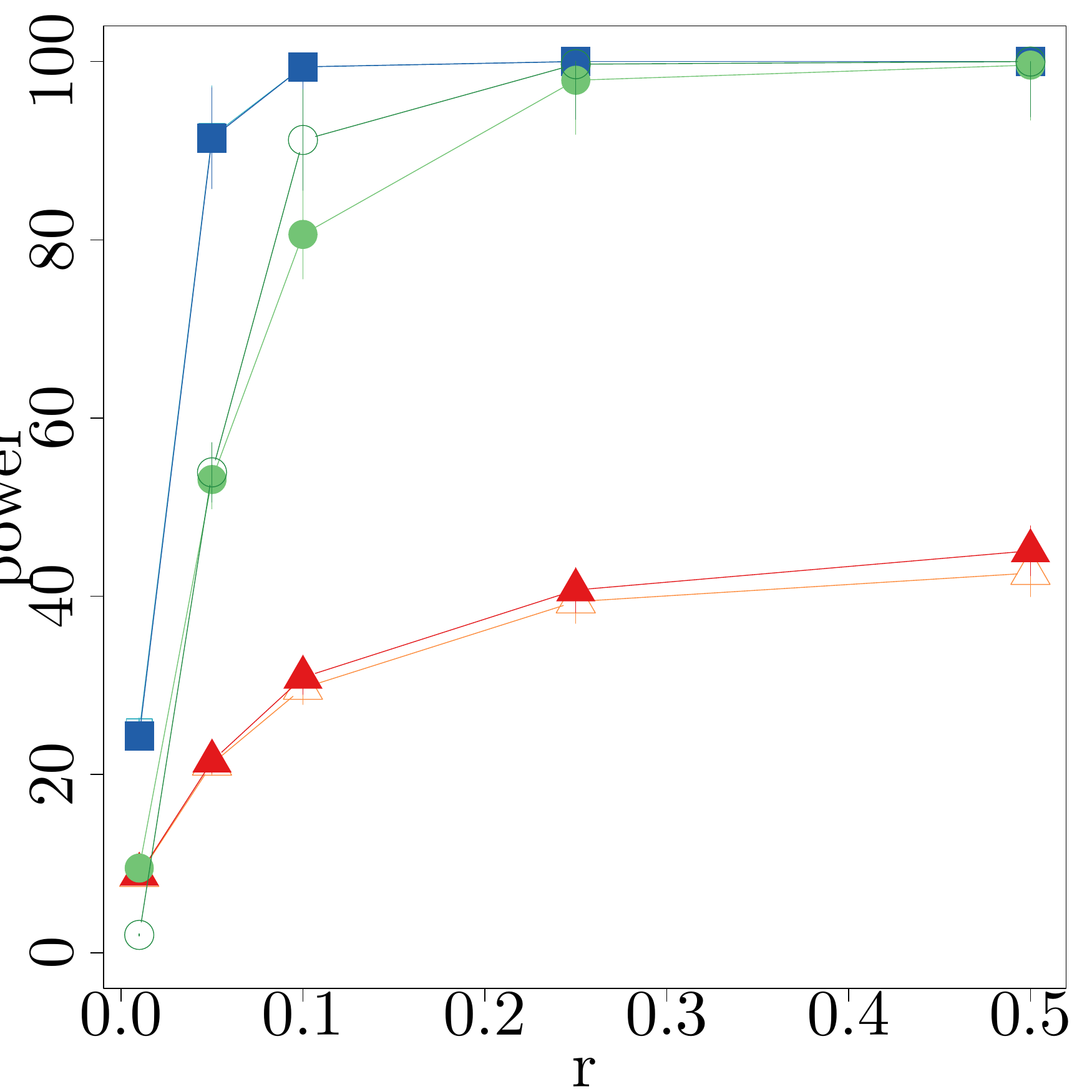}
 &
 \includegraphics[width=.3\textwidth]{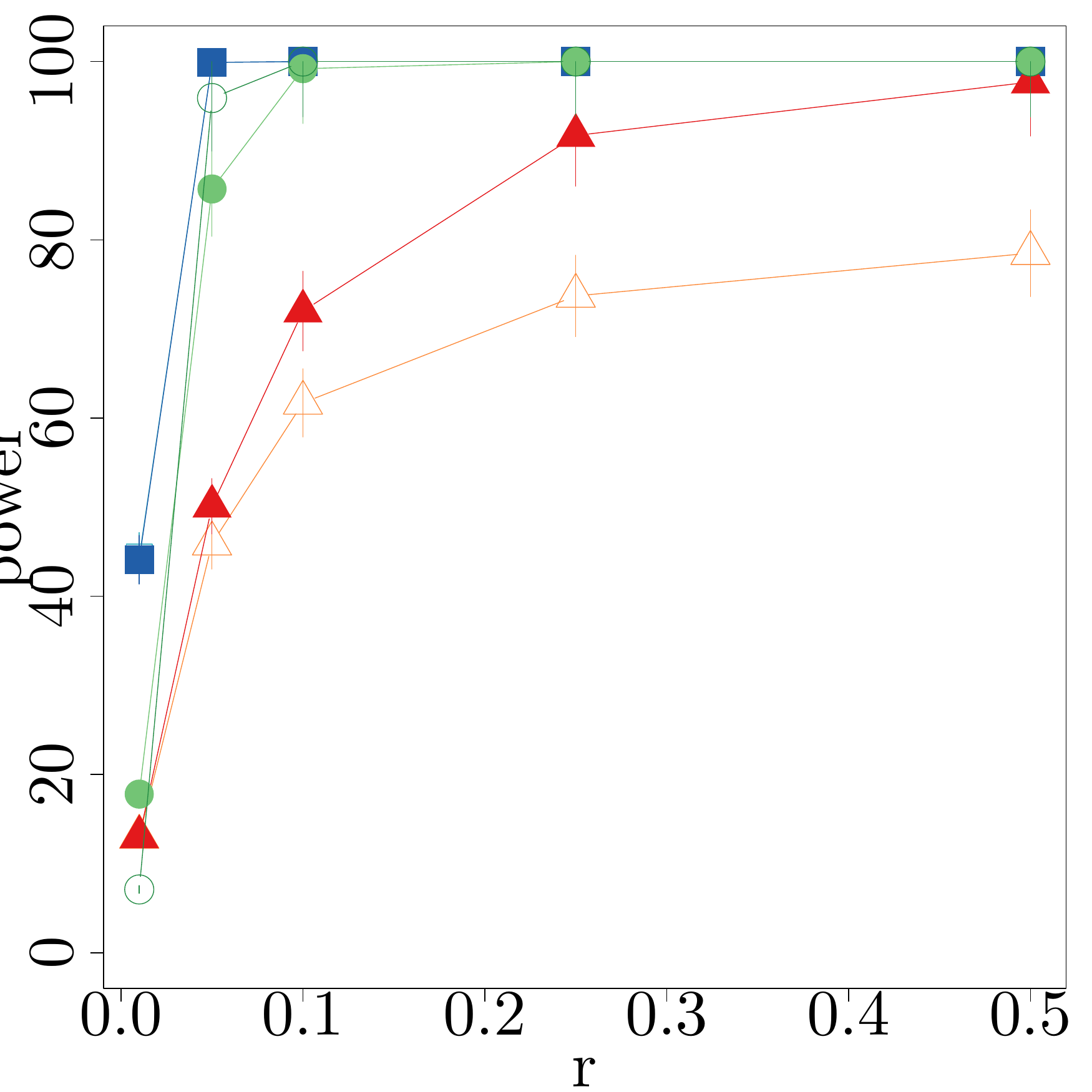}
 &
 \includegraphics[width=.3\textwidth]{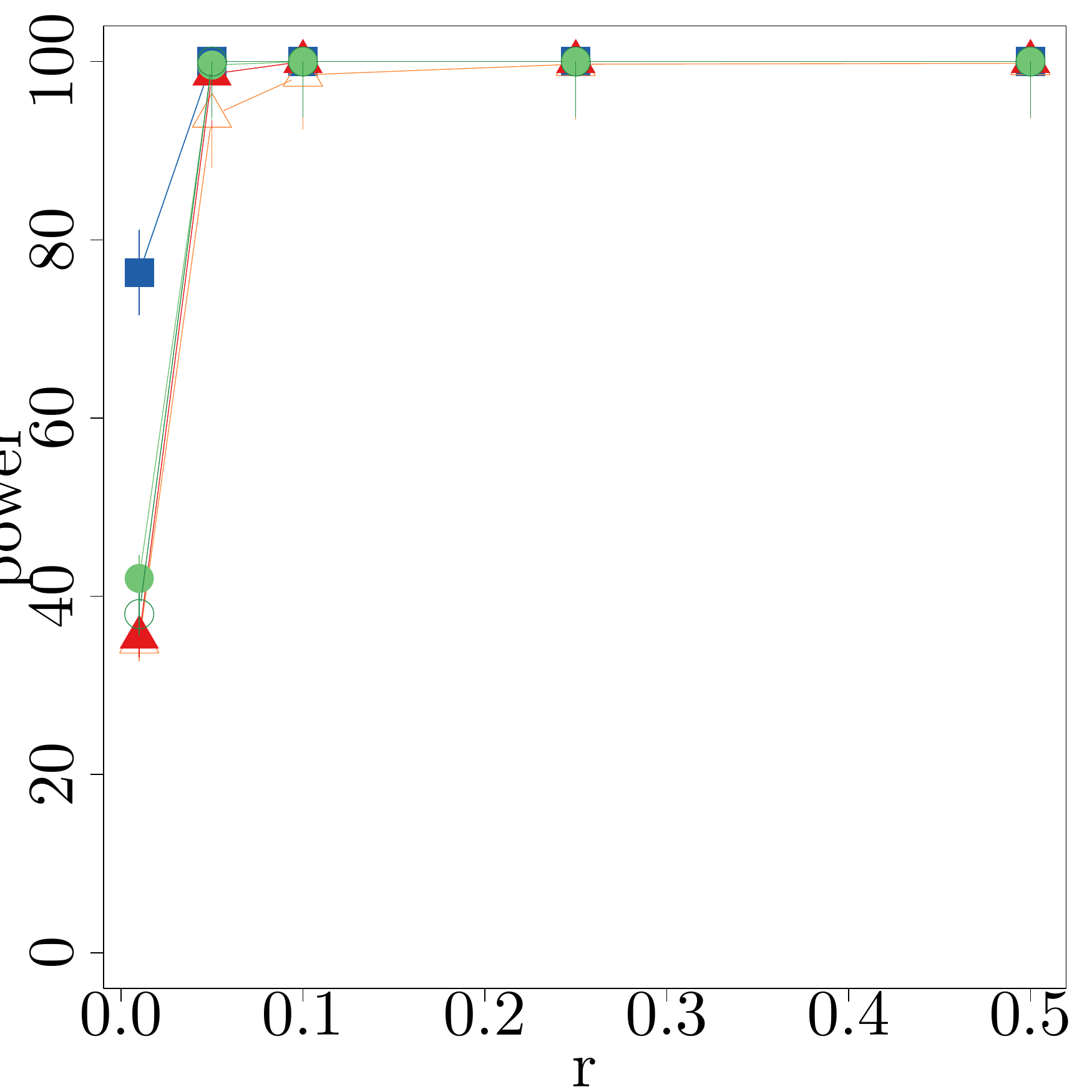} \\[-20ex]
 \raisebox{.3\textwidth}{2. \includegraphics[width=.1\textwidth]{data/settings_2.pdf}}&\includegraphics[width=.3\textwidth]{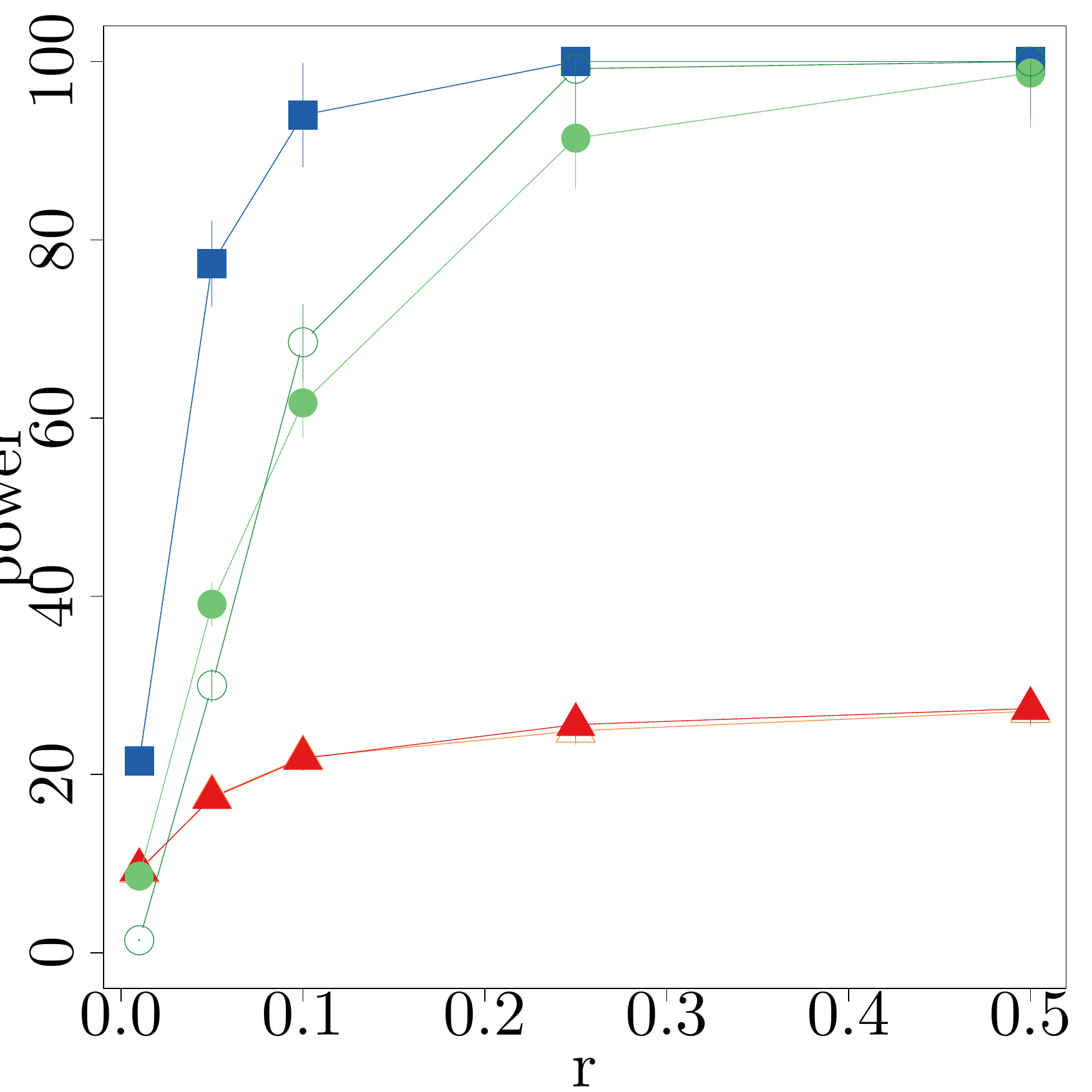}
 &
 \includegraphics[width=.3\textwidth]{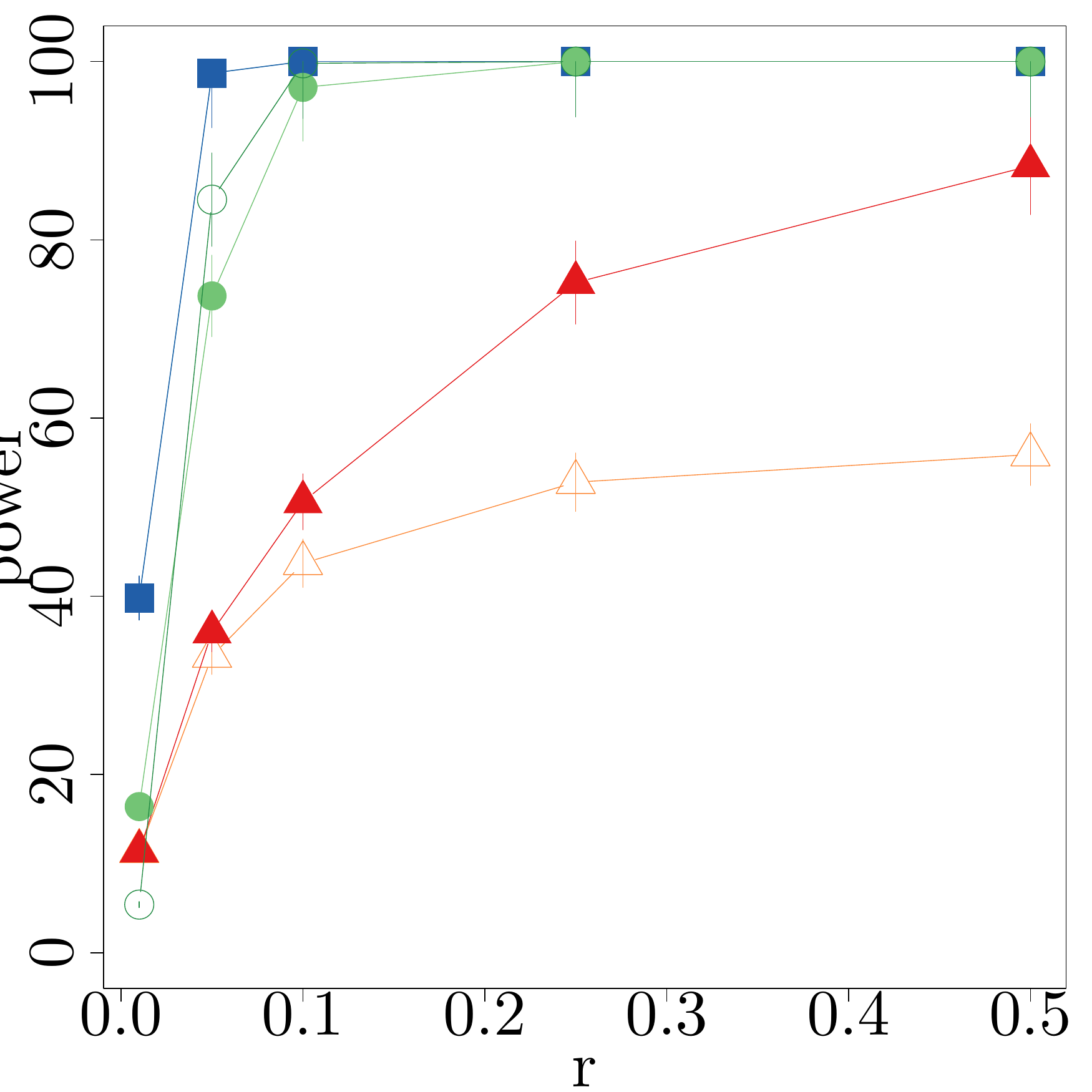}
 &
 \includegraphics[width=.3\textwidth]{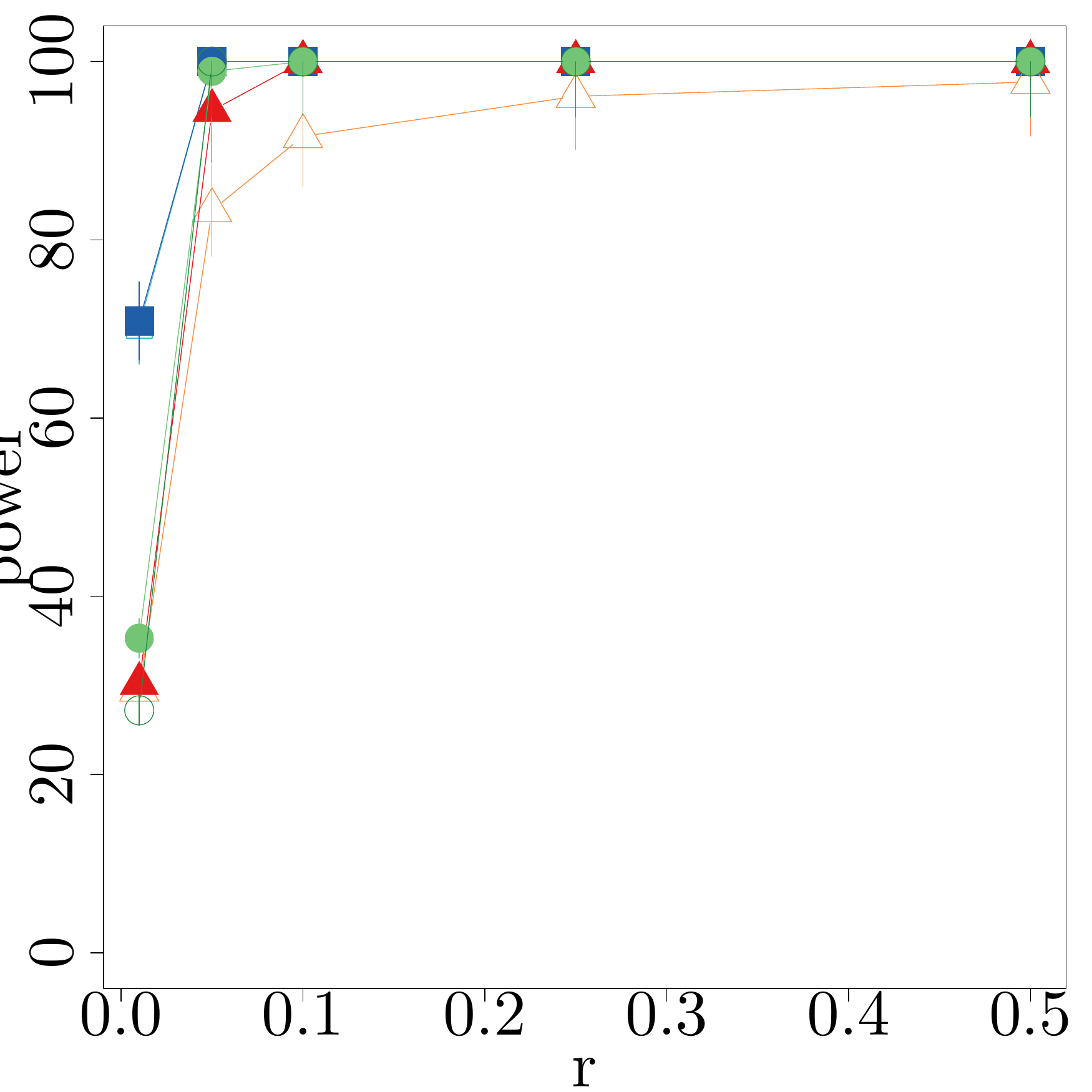} \\[-20ex]
 \raisebox{.3\textwidth}{3. \includegraphics[width=.1\textwidth]{data/settings_3.pdf}}&\includegraphics[width=.3\textwidth]{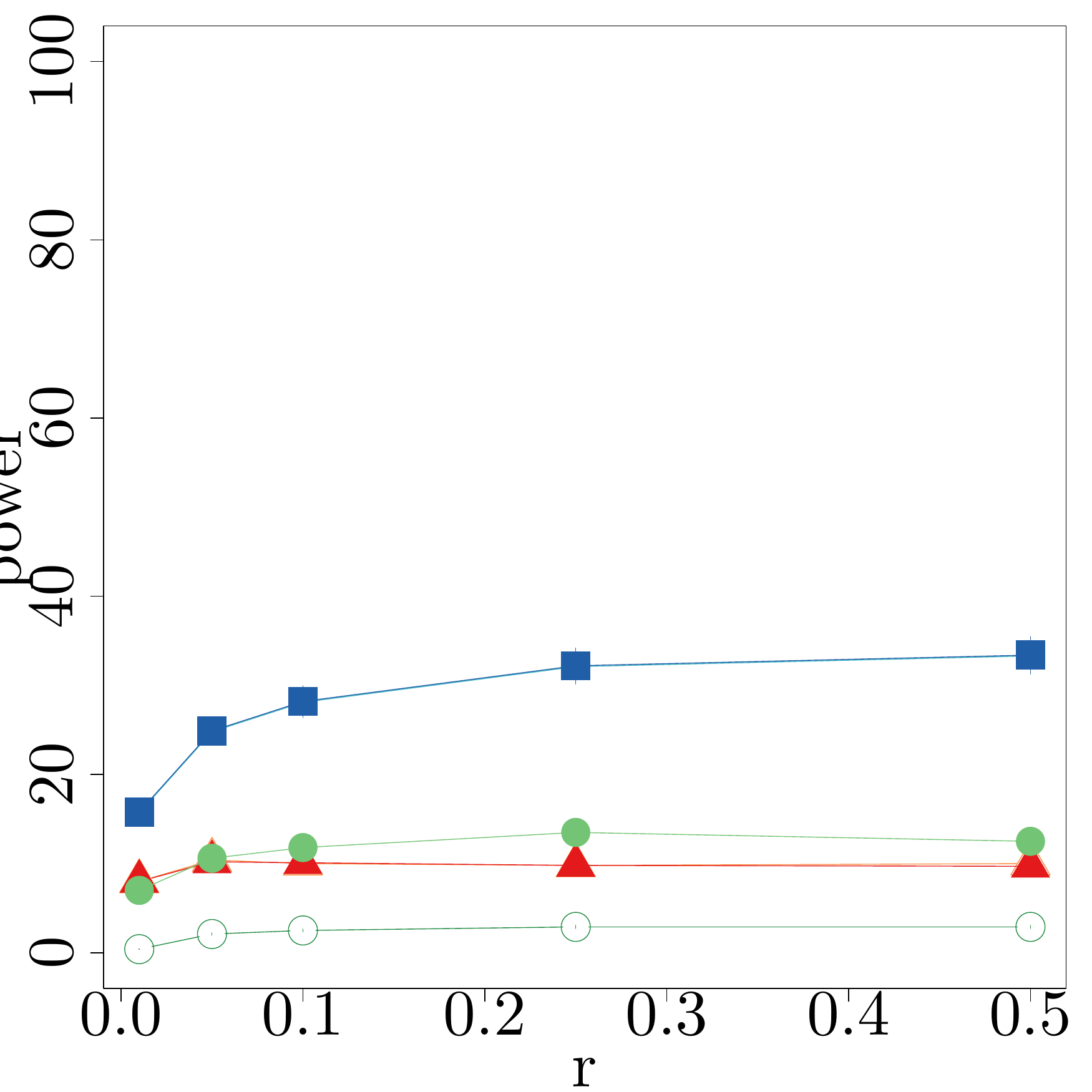}
 &
 \includegraphics[width=.3\textwidth]{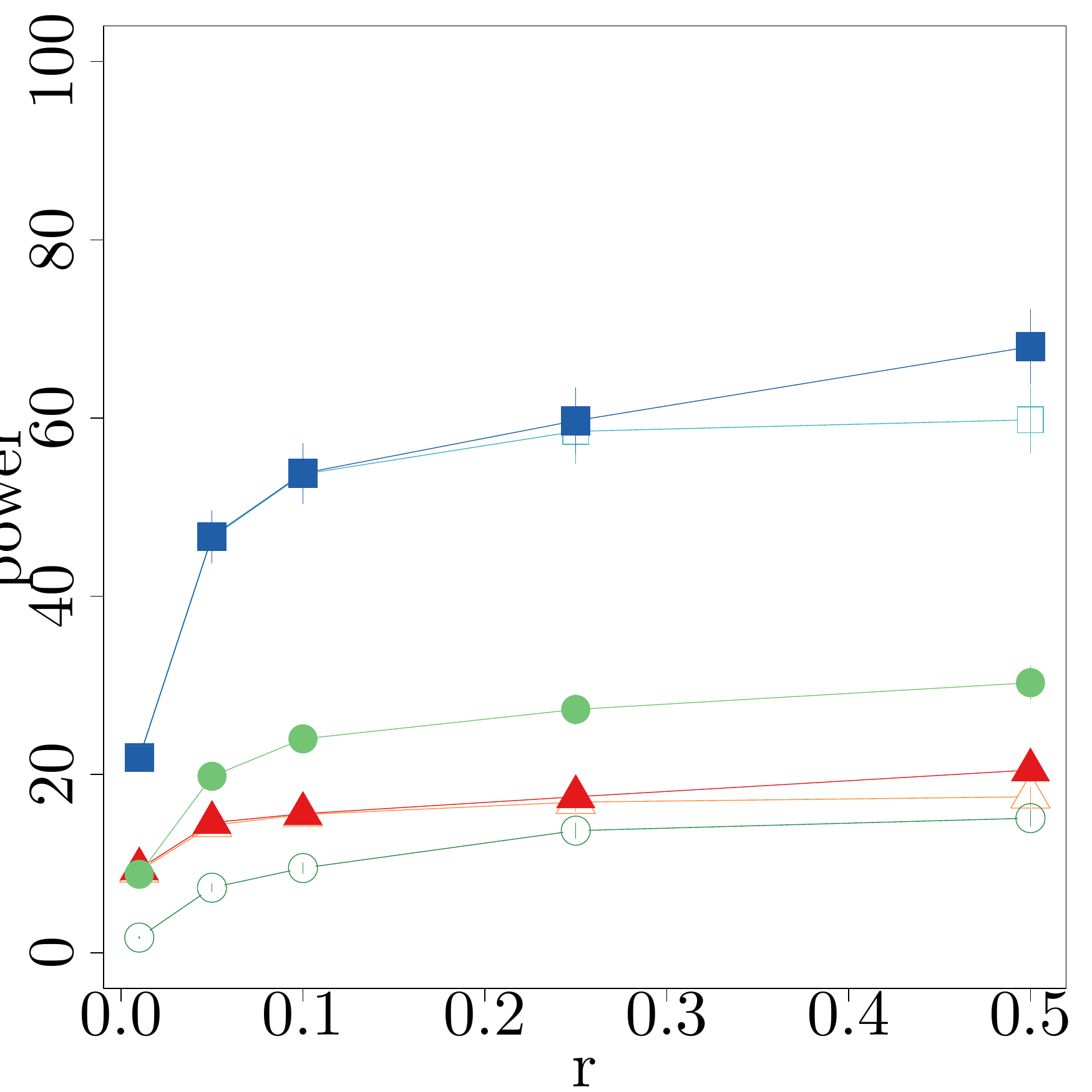}
  &
\includegraphics[width=.3\textwidth]{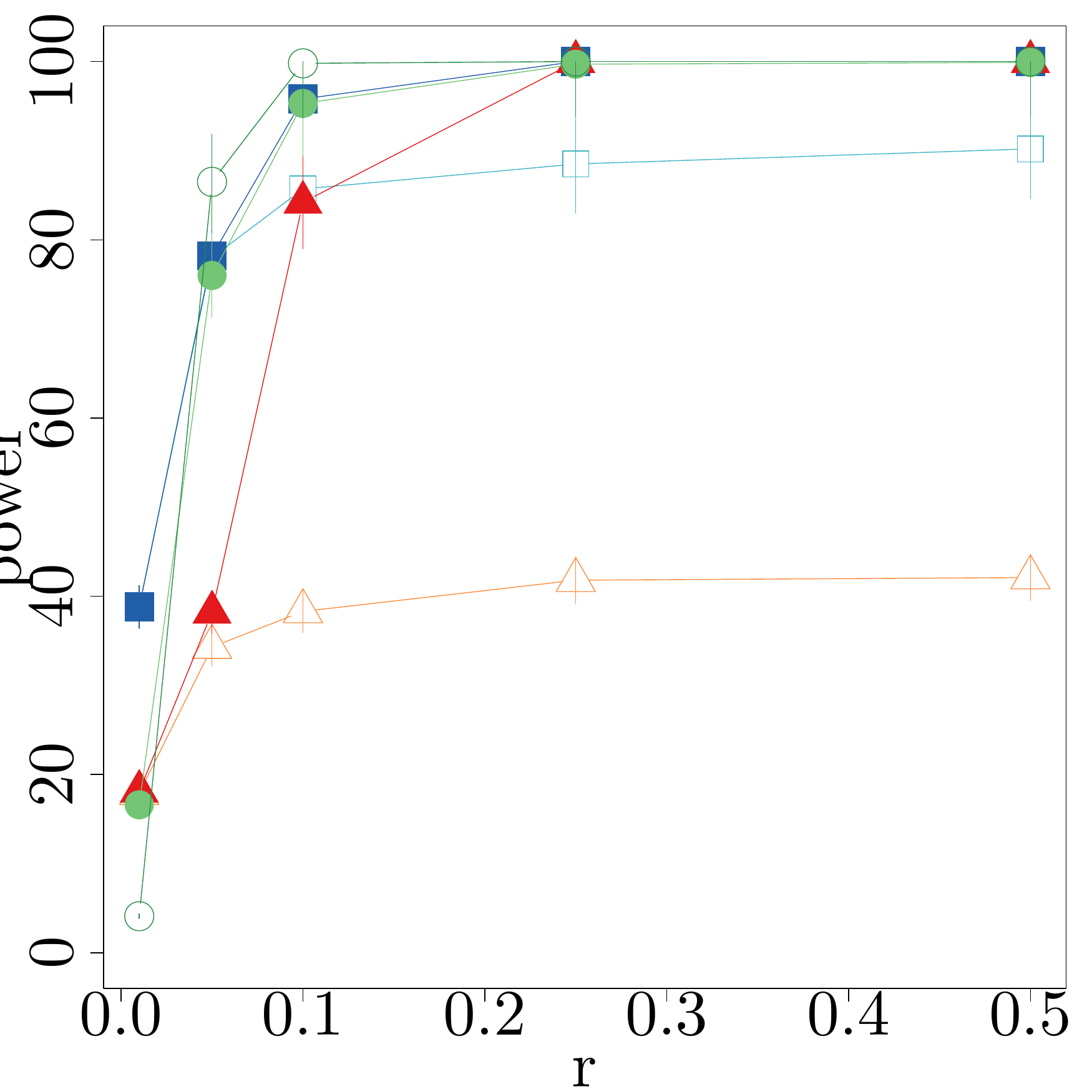} \\[-20ex]
 \raisebox{.3\textwidth}{4. \includegraphics[width=.1\textwidth]{data/settings_4.pdf}}&\includegraphics[width=.3\textwidth]{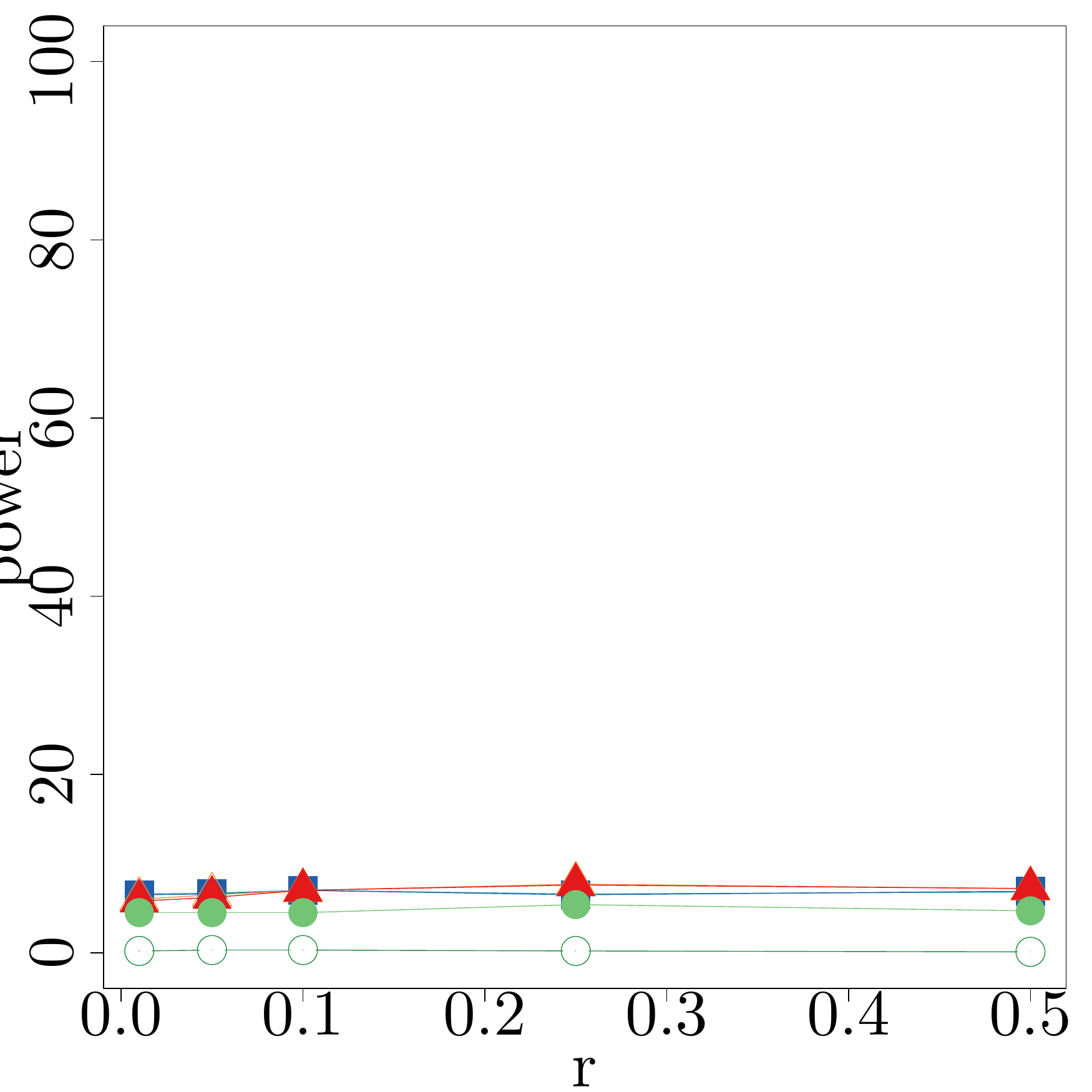}
 &
 \includegraphics[width=.3\textwidth]{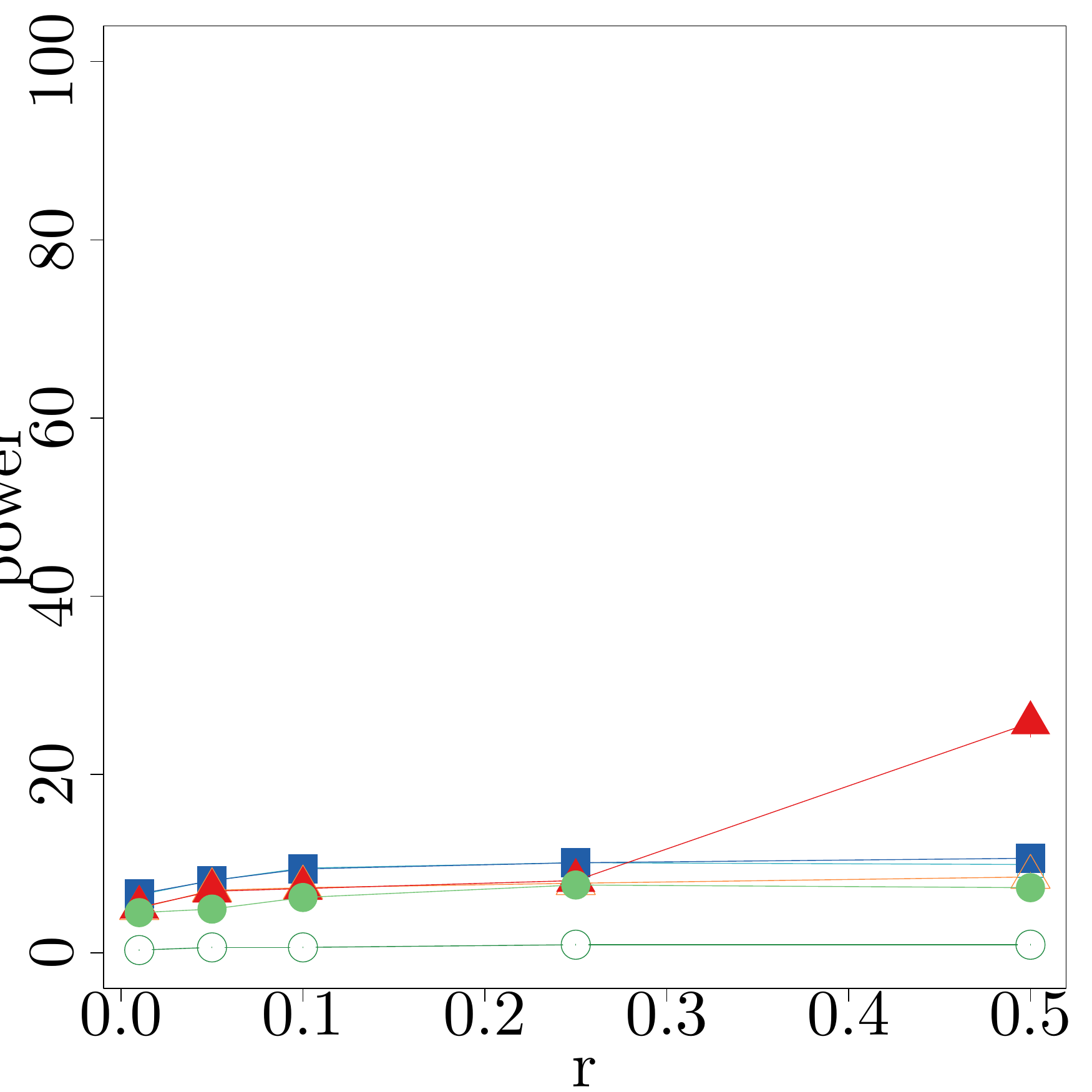}
 &
 \includegraphics[width=.3\textwidth]{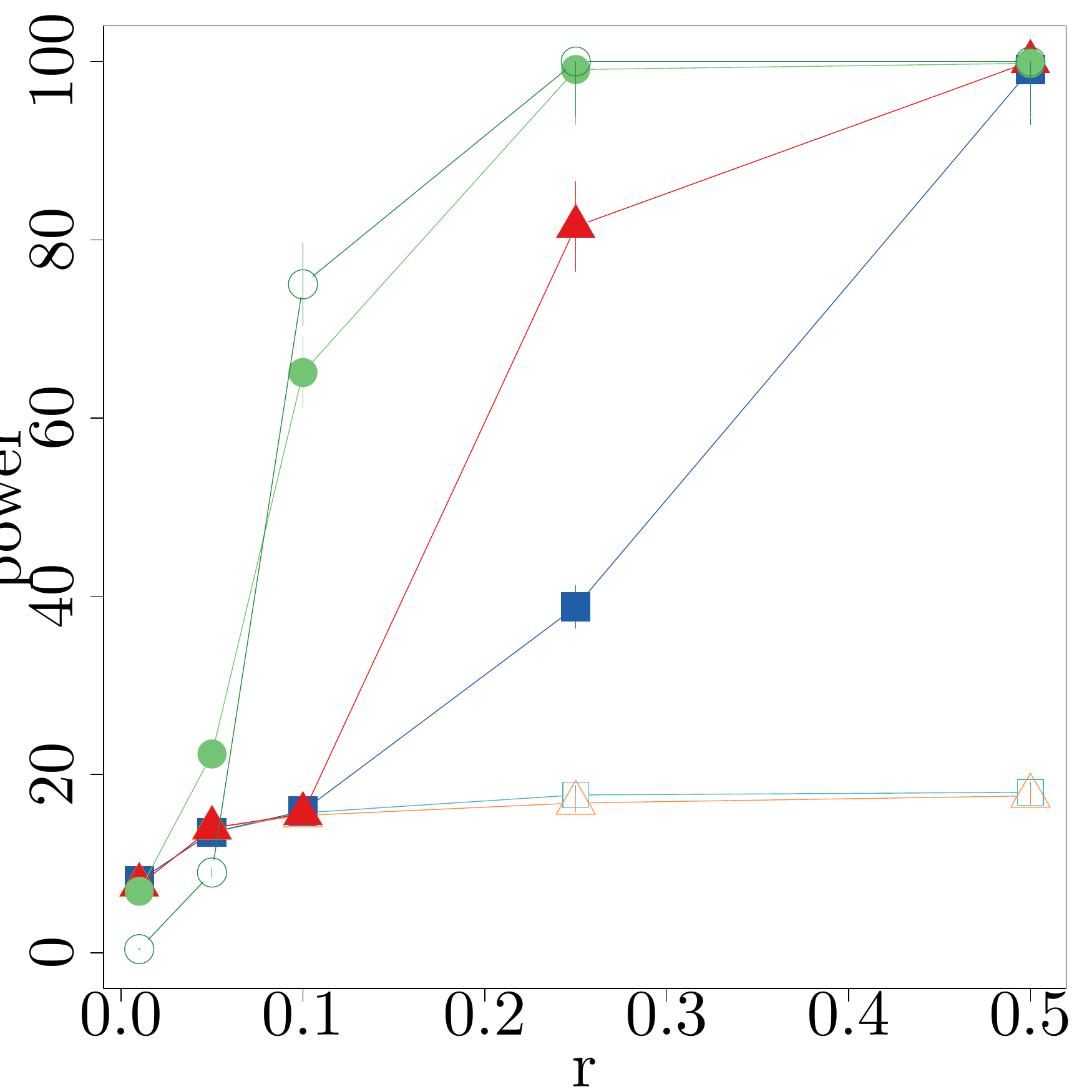}
 \\[-20ex]
 \end{tabular}\end{adjustwidth}
\caption{\label{fig:powerCLRvsFisher}Power (in percentage) as a
  function of signal magnitude parameter $r$ for
  various sparsity pattern under the assumption of uncorrelated
  designs $\Sigma^{(1)} = \Sigma^{(2)} = I_p$.
 Results for the suggested test  $T^{P}_{\widehat{\mathcal{S}}}$ and
  the test $T^{P,\mathrm{Fisher}}_{\widehat{\mathcal{S}}}$
, combined with $\cS_1$  or $\SLasso$ test collections.  Blue squares represent the suggested test
  $T^{P}_{\widehat{\mathcal{S}}}$, red triangles  stand for the Fisher test  $T^{P,\mathrm{Fisher}}_{\widehat{\mathcal{S}}}$. The deterministic collection
  $\cS_1$ is drawn in empty points, while the data-driven collection
  $\SLasso$ is in  plain points. 
 Results for the DiffRegr procedure are represented by green circles,
 respectively plain and empty for  single-splitting and 
multi-splitting approaches.}
\end{figure}

Figure \ref{fig:correlated} provide similar results
under respectively power decay correlated designs and GGM-like
correlated designs for a sample size of $n=50$, leading to similar
conclusions as in the uncorrelated case. 

\begin{figure}[!htbp]
 \begin{adjustwidth}{-1in}{-1in}
 \centering
 \begin{tabular}{m{1.5cm}ccc}
 Settings & Power decay & GGM \\
 \raisebox{.3\textwidth}{1. \includegraphics[width=.1\textwidth]{data/settings_1.pdf}}& \includegraphics[width=.3\textwidth]{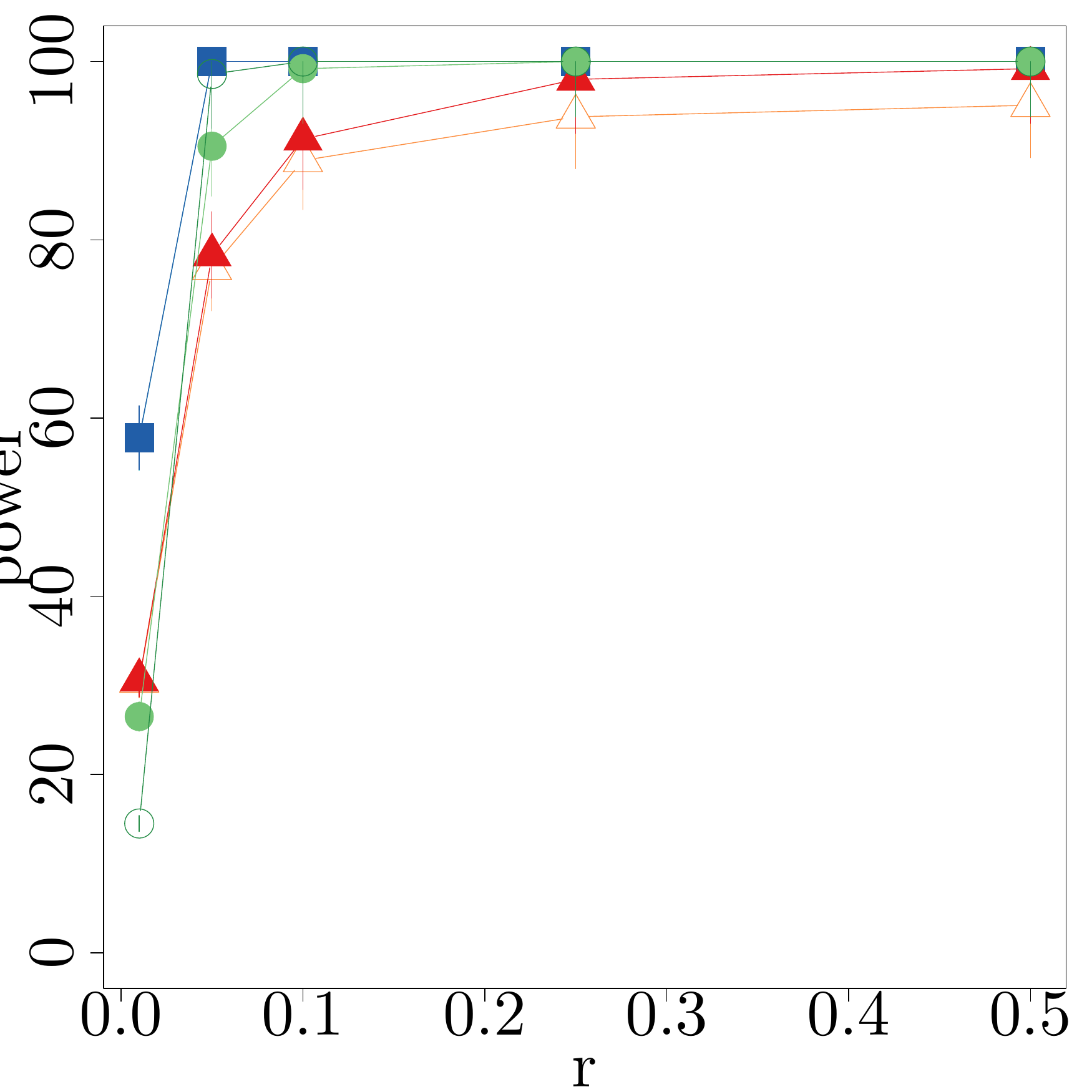}
 &
 \includegraphics[width=.3\textwidth]{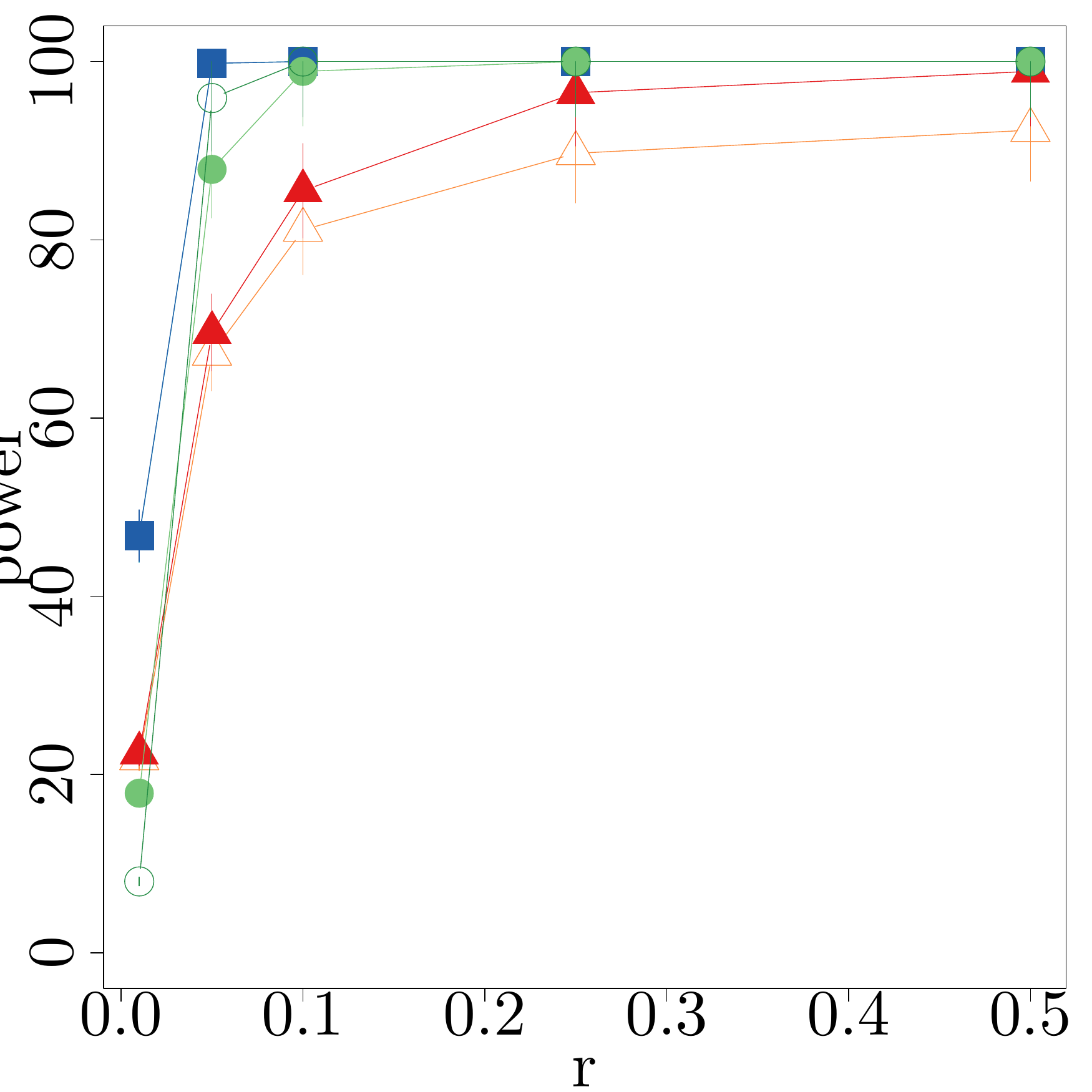} \\[-20ex]
 \raisebox{.3\textwidth}{2. \includegraphics[width=.1\textwidth]{data/settings_2.pdf}}& \includegraphics[width=.3\textwidth]{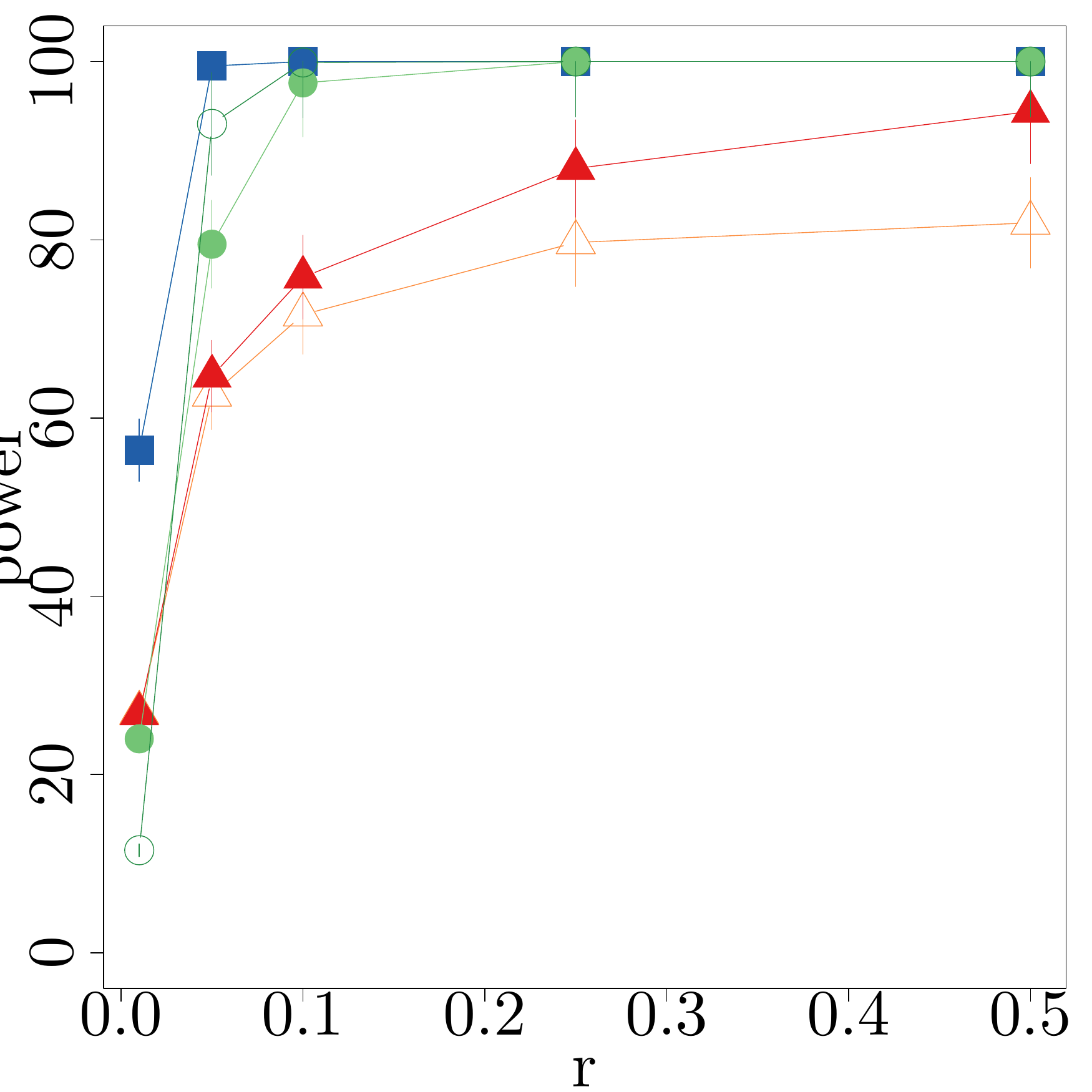}
 &
 \includegraphics[width=.3\textwidth]{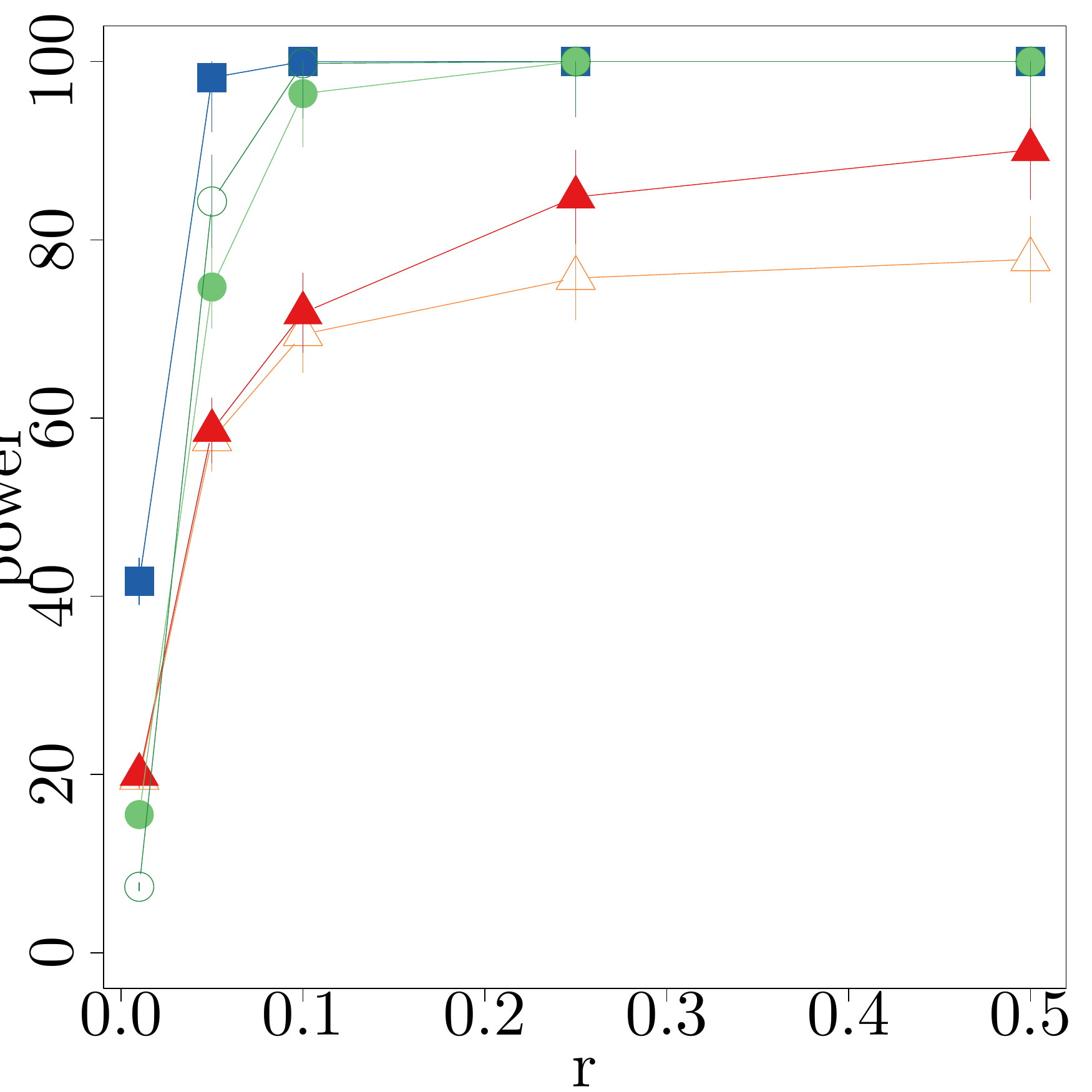} \\[-20ex]
 \raisebox{.3\textwidth}{3. \includegraphics[width=.1\textwidth]{data/settings_3.pdf}}& \includegraphics[width=.3\textwidth]{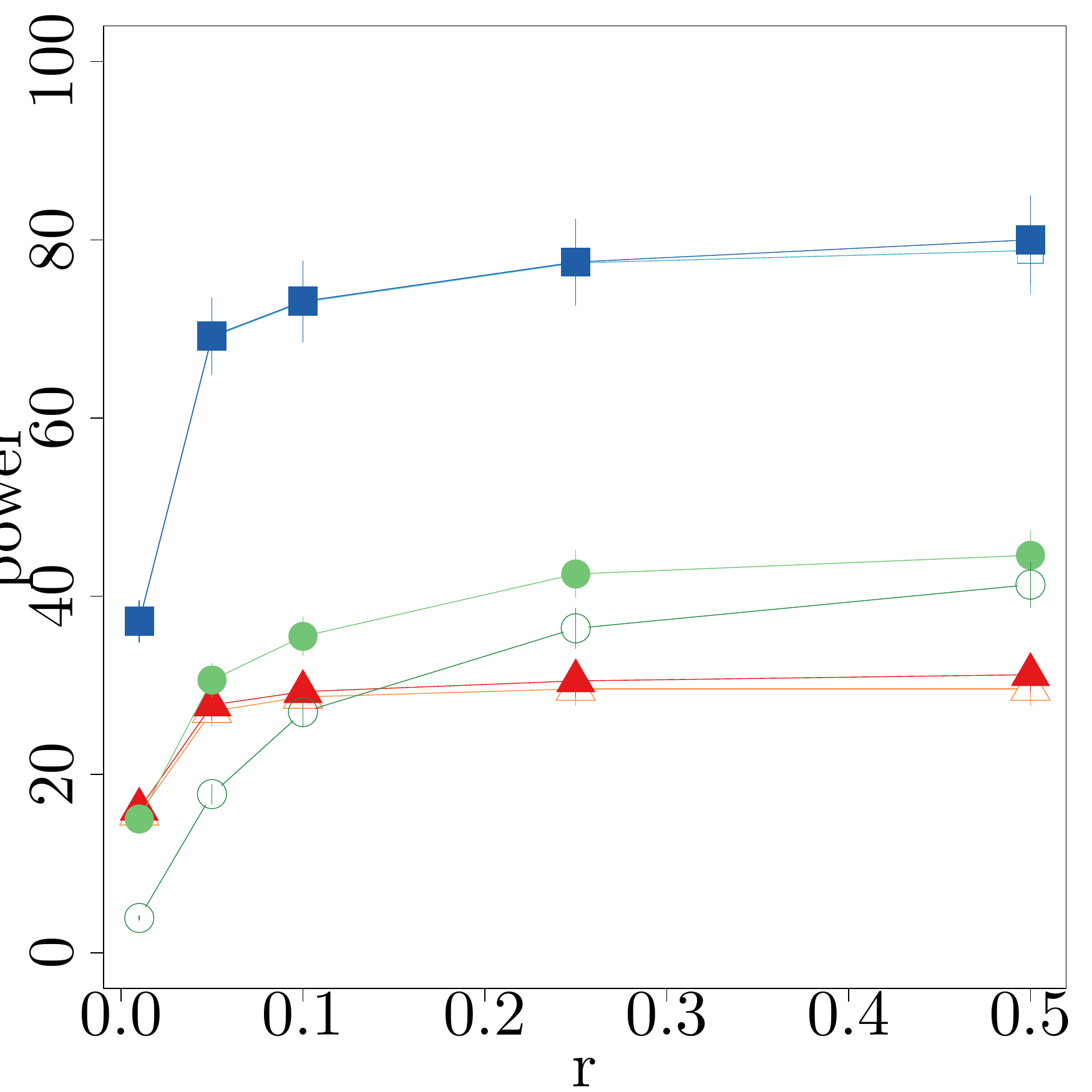}
 &
 \includegraphics[width=.3\textwidth]{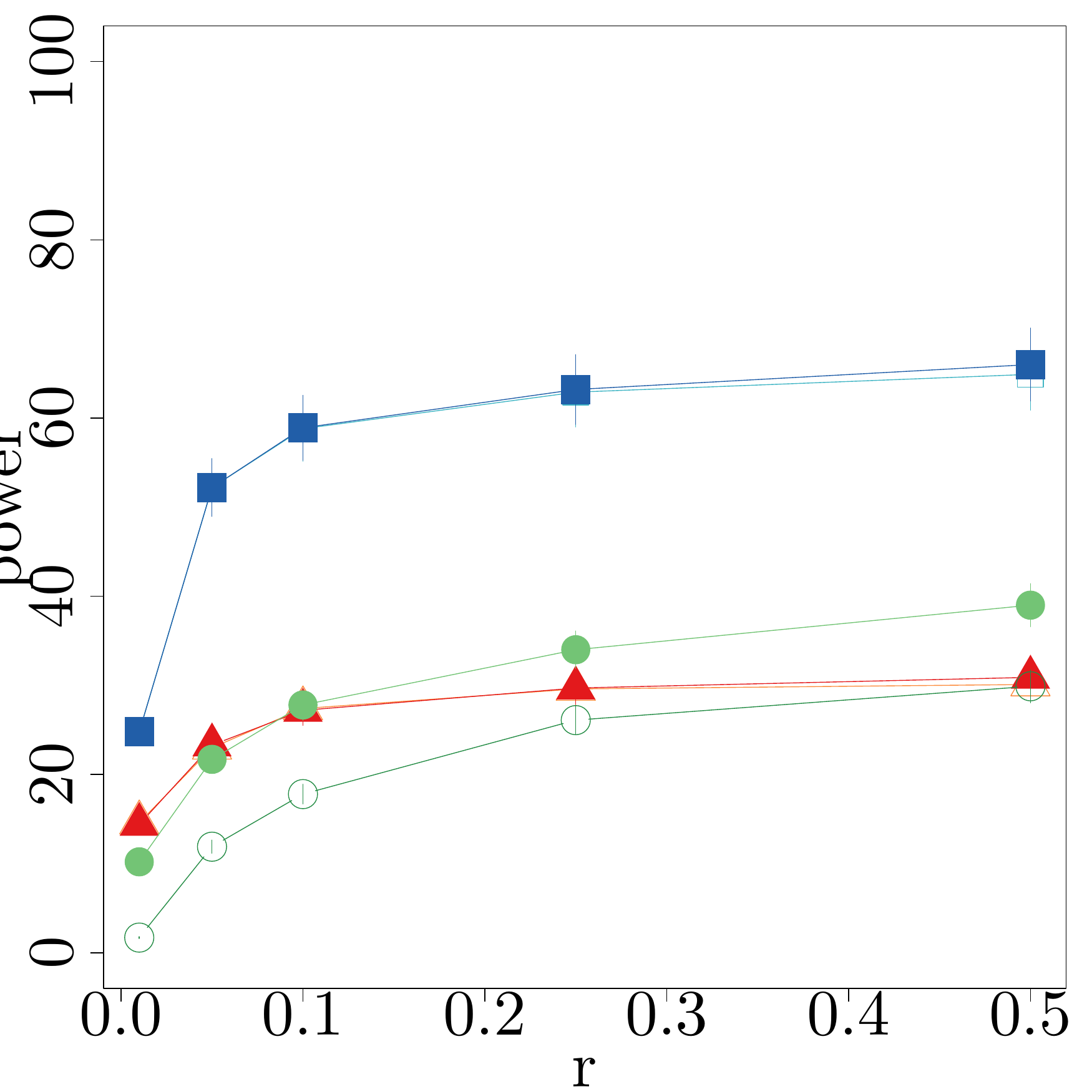} \\[-20ex]
 \raisebox{.3\textwidth}{4. \includegraphics[width=.1\textwidth]{data/settings_4.pdf}}& \includegraphics[width=.3\textwidth]{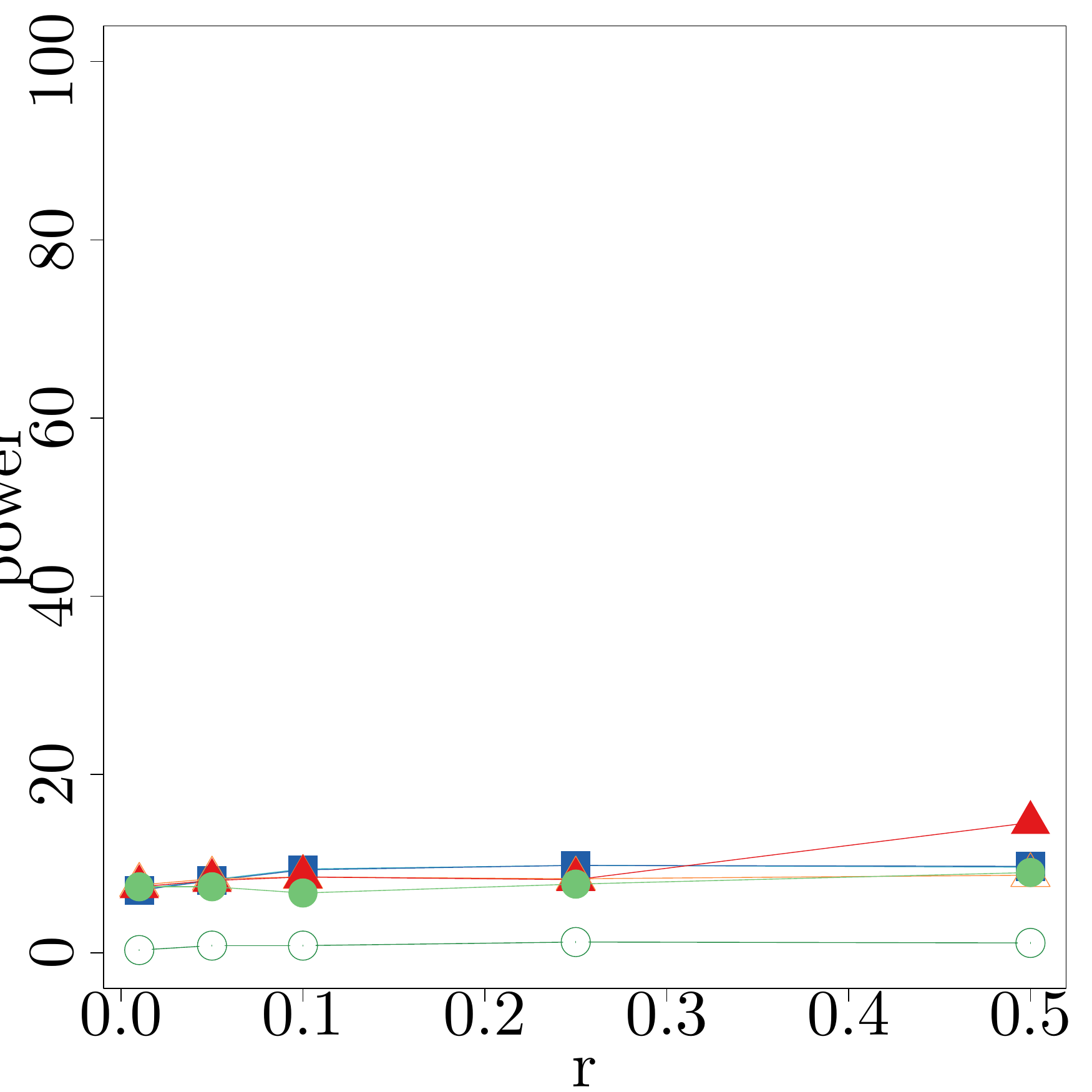}
 &
 \includegraphics[width=.3\textwidth]{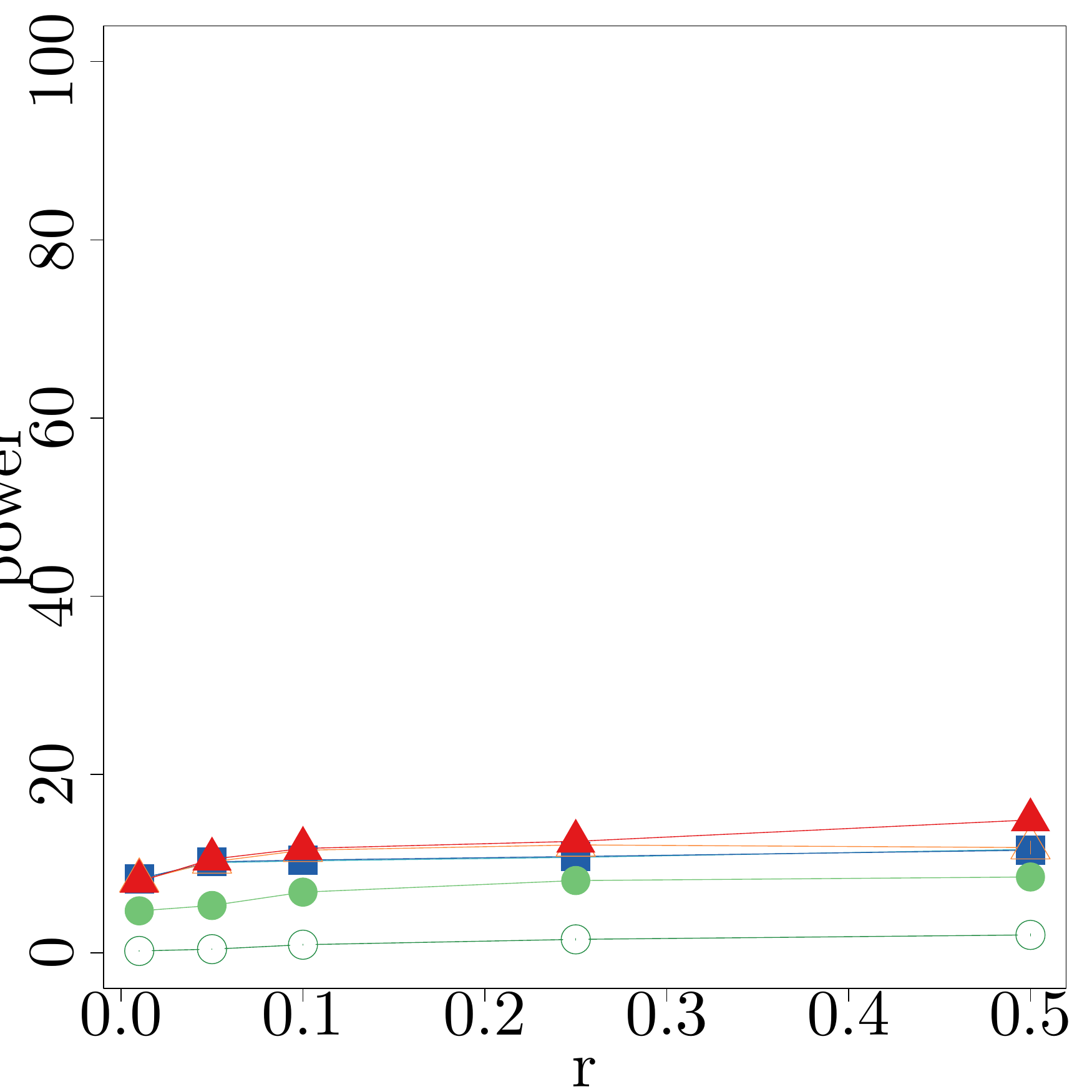} \\[-20ex]
 \end{tabular}\end{adjustwidth}
\caption{\label{fig:correlated}Power (in percentage) as a
  function of signal magnitude parameter $r$ for
  various sparsity patterns under power decay and GGM correlated
  designs, at $n=50$ observations. 
  Results for the suggested  test  $T^{P}_{\widehat{\mathcal{S}}}$ and
  the test $T^{P,\mathrm{Fisher}}_{\widehat{\mathcal{S}}}$
, combined with $\cS_1$  or $\SLasso$ test collections and a calibration by permutation. Blue squares represent the suggested test
  $T^{P}_{\widehat{\mathcal{S}}}$, red triangles
  stand for the Fisher test  $T^{P,\mathrm{Fisher}}_{\widehat{\mathcal{S}}}$. The deterministic collection
  $\cS_1$ is drawn in empty points, while the data-driven collection
  $\SLasso$ is in plain points. Results for the DiffRegr procedure are
  represented by green circles, 
 respectively plain and empty for  single-splitting and 
multi-splitting approaches.}
\end{figure}

\section{Application to GGM}\label{sec:GGM}

The following section explicits the extension of the two-sample linear regression testing framework to the two-sample Gaussian graphical model testing framework. 
We describe the tools and guidelines for a correct interpretation of the results and illustrate the approach on a typical two-sample transcriptomic data-set.

\subsection{How to Apply this Strategy to GGM Testing}

\paragraph{Neighborhood Selection Approach}
The procedure developed in Section \ref{sec:test} can
be adapted to the case of Gaussian graphical models as in
\cite{2009_CSDA_Verzelen}. We quickly recall why
estimation of the Gaussian graphical model amounts to the estimation of
$p$ independent linear regressions when adopting a neighborhood selection approach \cite{2006_AS_Meinshausen}.

Consider two Gaussian
random vectors $ {Z}^{(1)} \sim \mathcal{N}(0,\left[\Omega^{(1)}\right]^{-1})$ and $
{Z}^{(2)} \sim \mathcal{N}(0,\left[\Omega^{(2)}\right]^{-1})$. 
Their respective
conditional independence structures are represented by the graphs
$\mathcal{G}^{(1)}$ and $\mathcal{G}^{(2)}$, which consist of a common set of nodes
$\Gamma=\{1, \dots, p\}$ and their respective sets of edges
$\mathcal{E}^{(1)}$ and $\mathcal{E}^{(2)}$. When speaking of gene regulation networks, each node represents a gene, and edges between genes are indicative of potential regulations. 
In contrast with gene co-expression networks, edges in Gaussian graphical models do not reflect correlations but partial correlations between gene expression profiles. 

Formally, an edge $(i,j)$ belongs
to the edge set $\mathcal{E}^{(1)}$ (resp. $\mathcal{E}^{(2)}$) if $Z^{(1)}_i$
  (resp. $Z^{(2)}_i$) is independent from  $Z^{(1)}_j$
  (resp. $Z^{(2)}_j$) conditional on all other variables
  $Z^{(1)}_{\backslash i,j}$ (resp. $Z^{(2)}_{\backslash i,j}$). When
  the precision matrix $\Omega^{(k)}$ is nonsingular, the edges are characterized by its non zero
  entries. 

The  idea of  neighborhood selection  is to  circumvent  the intricate
issue of estimating the precision matrix by recovering the sets of edges
neighborhood by neighborhood, through the conditional distribution of
$Z^{(k)}_i$ given all remaining variables $Z^{(k)}_{\backslash
  i}$. Indeed, this distribution is again a Gaussian distribution, whose mean
is a linear combination of $Z^{(k)}_{\backslash i}$ while its variance
is independent from  $Z^{(k)}_{\backslash i}$. Hence, $Z^{(k)}_i$ can
be decomposed into the following linear regression:
\begin{equation}\label{eq:NS}
Z^{(k)}_i = \sum_{j \neq i} Z_j^{(k)} \beta_{ij}^{(k)} + \varepsilon_{i}^{(k)} = Z_{\backslash i}^{(k)} \beta_{i}^{(k)} + \varepsilon_{i}^{(k)}\ ,
\end{equation} 
where $\beta_{ij}^{(k)} =-\Omega_{ij}^{(k)}/\Omega_{ii}^{(k)}$ and  $\var[\varepsilon_{i}^{(k)}]=(\Omega_{ii}^{(k)})^{-1}$.
\medskip

Given an  $n_1$-sample of $Z^{(1)}$ and an  $n_2$-sample of $Z^{(2)}$,
we recall that our objective is to test as formalized in \eqref{eq:hypothese_GGM}
\[\hyp^{G}_0:\  \Omega^{(1)}=\Omega^{(2)}\quad \text{ versus }\quad 
\hyp^{G}_1:\  \Omega^{(1)}\neq\Omega^{(2)}\ .\]
As a result of Equation \eqref{eq:NS}, testing for the equality of the matrix rows $\Omega^{(1)}_{i.}=\Omega^{(2)}_{i.}$
 is equivalent to testing for   $\beta_{i}^{(1)}=  \beta_{i}^{(2)}$ and $\var[\varepsilon_{i}^{(1)}]=\var[\varepsilon_{i}^{(2)}]$.
Denote   by   $\Sigma^{(k)}_{\backslash    i}$   the   covariance   of
$Z^{(k)}_{\backslash i}$.  Under the  null $\hyp^{G}_0$, we  have that
for  any  $i$,  $\Sigma^{(1)}_{\backslash  i}=\Sigma^{(2)}_{\backslash
  i}$.  Consequently, we  can translate  the GGM  hypotheses  given in
Equation \eqref{eq:hypothese_GGM} 
into a conjunction of two-sample linear regression tests:
\begin{eqnarray}\label{eq:hypothese_NS}
\hyp^{G}_0:&&\  \bigcap_{i} \left[\beta_{i}^{(1)}= \beta_{i}^{(2)},\  \Omega_{ii}^{(1)}= \Omega_{ii}^{(2)},\ \Sigma^{(1)}_{\backslash i}=\Sigma^{(2)}_{\backslash i} \right]\\
\hyp^{G}_1:&&\  \bigcup_{i} \left[\beta_{i}^{(1)} \neq \beta_{i}^{(2)}\right]\cup\left[\Omega_{ii}^{(1)} \neq \Omega_{ii}^{(2)}\right] \ .\nonumber
\end{eqnarray}

Concretely, we apply the previous two-sample
linear regression model with $\mathbf{X}^{(1)} = \mathbf{Z}^{(1)}_{,\backslash i}$,$\mathbf{X}^{(2)}= \mathbf{Z}^{(2)}_{,\backslash i}$,
$\mathbf{Y}^{(1)}  = \mathbf{Z}^{(1)}_{,i}$,  and  $\mathbf{Y}^{(2)} =
\mathbf{Z}^{(2)}_{,i}$  for  every   gene  $i$  and  combine  multiple
neighborhood  tests using  a  Bonferroni calibration  as presented  in
Algorithm \ref{algo:GGMtesting}.  The equality of $\sigma^{(k)}$'s in $\hyp_0$
models the  equality of $\Omega_{ii}^{(k)}$'s in  $\hyp_0^G$ while the
equality   of   $\Sigma^{(k)}$'s   accounts   for  the   equality   of
$\Sigma_{\backslash i}^{(k)}$'s.

\begin{algorithm}
\caption{Gaussian Graphical Model Testing Strategy}\label{algo:GGMtesting}
\begin{algorithmic}
\Require{Data $\mathbf{Z}^{(1)}$,$\mathbf{Z}^{(2)}$, maximum model dimension $D_{max}$ and desired level $\alpha$}
\For{each gene $i=1,\dots,p$}
\Procedure{Neighborhood Test}{}
\State Define $\mathbf{X}^{(1)} = \mathbf{Z}^{(1)}_{,\backslash i},\quad \mathbf{X}^{(2)}= \mathbf{Z}^{(2)}_{,\backslash i}$
\State Define $\mathbf{Y}^{(1)}  = \mathbf{Z}^{(1)}_{,i}, \quad \mathbf{Y}^{(2)}  = \mathbf{Z}^{(2)}_{,i} $
\State Apply the Adaptive Testing Strategy of Algorithm
\ref{algo:overall} to $\mathbf{X}^{(1)}$,$\mathbf{X}^{(2)}$,$\mathbf{Y}^{(1)}$,$\mathbf{Y}^{(2)}$
\EndProcedure
\EndFor
\State Reject the global null hypothesis if at least one Neighborhood Test is rejected  at level $\alpha/p$
\end{algorithmic}
\end{algorithm}

\paragraph{Interpretation.}
Because    we   need   $\Omega_{ii}^{(1)}=    \Omega_{ii}^{(2)}$   and
$\Sigma_{\backslash  i}^{(1)}= \Sigma_{\backslash i}^{(2)}$  for every
neighborhood in the two-sample GGM null hypothesis $\hyp^N_0$ \eqref{eq:hypothese_NS}, 
the      assumptions     that      $\sigma^{(1)}=\sigma^{(2)}$     and
$\Sigma^{(1)}=\Sigma^{(2)}$ in  the two-sample linear  regression null
hypothesis $\hyp_0$ \eqref{eq:hypothese} are crucial for each neighorbood test to be interpreted correctly. 
As a result, only the global test can be strictly speaking interpreted in a statistically correct sense. 

However in practice, when the global null hypothesis is rejected, our construction of neighborhood tests provides helpful clues on the location of disruptive regulations. In particular, for each rejected neighorhood test $i$, one can keep track of
the rejected model $S_R^i$, retaining sensible information on
which particular regulations are most likely altered between samples.



\subsection{Illustration on Real Transcriptomic Breast Cancer Data}

We apply this strategy to the full (training and validation) breast cancer dataset studied by
\cite{2006_JCO_Hess} and \cite{2008_BMC_Natowicz}, whose training
subset was originally published in \cite{2005_CCR_Rouzier}. The full dataset
consists of microarray gene expression profiles from 133 patients with stage I-III breast cancer undergoing
preoperative chemotherapy. A majority of patients
(n=99) presented residual disease (RD), while 34 patients demonstrated a pathologic
complete response (pCR). The common objective of \cite{2006_JCO_Hess} and
\cite{2008_BMC_Natowicz} was to develop a predictor of complete
response to treatment from gene expression profiling.
In particular, \cite{2006_JCO_Hess} identified an optimal predictive subset of 30
probes, mapping to 26 distinct genes. 

\cite{2009_EJS_Ambroise} inferred
Gaussian graphical models among those 26 genes on each patient class
using weighted neighborhood selection. The corresponding graphs of
conditional dependencies for medium regularization are presented in
Figure \ref{fig:graphEJS2009}. Those two graphs happen to differ
dramatically from one another. 
The question we tackle is whether those differences
remain when taking into account estimation uncertainties.

\begin{figure}[!ht]
\centering
\begin{tabular}{cc}
Pathologic Complete Response (pCR) & Residual Disease (RD) \\[2ex]
\includegraphics[width=.4\textwidth]{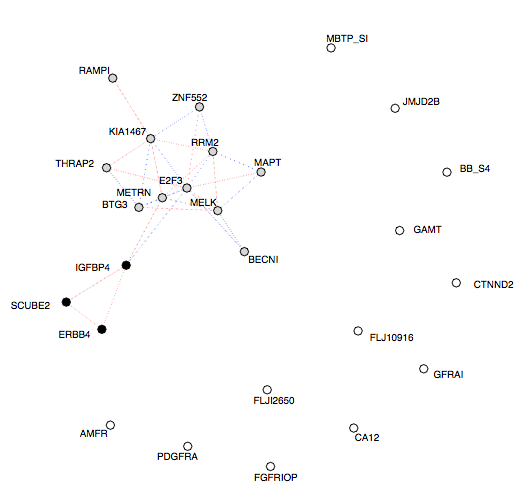}
& \includegraphics[width=.4\textwidth]{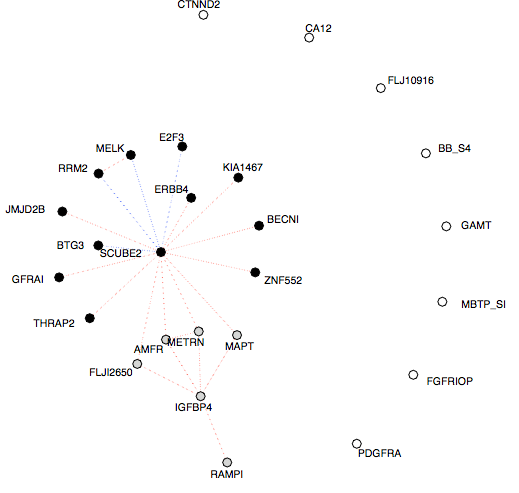}\\
\end{tabular}
\caption{\label{fig:graphEJS2009}Graphs of
conditional dependencies among the 26 genes selected by
\cite{2006_JCO_Hess} on patients with pathologic complete response or
residual disease with medium regularization as presented in
Figure 3 of \cite{2009_EJS_Ambroise}.}
\end{figure}

We run for each of the $p=26$ genes a neighborhood test $T_{\hat{S}_{Lasso}}^P$ at level
$0.05/26$. We associate to each neighborhood test the empirical p-value
defined in \ref{eq:neighborhood_pvalue} that has to be be compared to $\alpha/p$.

Most of the graph estimation methods proposed in the literature, such
as the procedure of \cite{2009_EJS_Ambroise} leading to
Figure  \ref{fig:graphEJS2009}, rely on the assumption that
observations are i.i.d. Yet the training and validation datasets have
been collected and analysed separately by two different clinical centers. We
therefore start by checking whether the pooled sample can be
considered as homogeneous. Within each group of patients (RD and pCR), we lead a test for the
homogeneity of Gaussian graphical models between the training and
validation subsets.

Within pCR patients (\ref{tab:Homo test pCR}), two
neighborhood tests corresponding to 
CA12 and PDGFRA are rejected at level
$0.05/26$. Within RD patients (\ref{tab:Homo test RD}), half of the
neighborhoods happen to differ significantly between the training and
validation datasets. Genes CA12 and JMJD2B are responsible for the
rejection of respectively seven and six neighborhoods.

\begin{table}[!ht]
\begin{center}
\begin{tabular}{l*{8}{c}} 
&             AMFR     &   BB\_S4    &   BECNI   &    BTG3  &      CA12
&CTNND2   &   E2F3 \\
\hline 
decision   &  0     &  0 &    0  &   0    & 1  &   0   &  0  \\   
$p^{empirical}$ & 0.0492 & 0.0072& 0.1972& 1& 0.0018& 0.0100& 0.1080 \\[2ex]

&ERBB4 &      FGFRIOP  &   FLJ10916   &FLJI2650 &   GAMT  &      GFRAI
&     IGFBP4  \\
\hline 
decision     0 &   0&     0   & 0  &   0   &  0   &  0    \\    
$p^{empirical}$ &0.5610 &0.0242& 0.2542& 0.0312& 0.1158& 0.5318& 0.0458 \\[2ex]
   
&JMJD2B    &   KIA1467 &    MAPT&        MBTP\_SI&     MELK    &
METRN     &PDGFRA\\ 
\hline 
decision   &0&  0&     0  &   0     &0   &  0  &  1   \\   
$p^{empirical}$ &  0.0128 &0.0272 &0.0178 &0.0062 &0.5602& 1& 0.0012  \\[2ex]

              &   RAMPI     &  RRM2  &      SCUBE2   &   THRAP2
              &ZNF552\\
\hline 
decision  &    0&  0 &    0&     0 &   0\\
$p^{empirical}$ & 0.0444& 0.0022& 0.2372& 0.0228 &0.0028\\[2ex]
\end{tabular}
\end{center}
\caption{\label{tab:Homo test pCR}Homogeneity test between training and test samples among
  pCR patients. Summary of test decisions after Bonferroni multiple testing correction
  and empirical p-values for each neighborhood test as defined in Section
  \ref{interpretation}.}
\end{table}

\begin{table}[!ht]
\begin{center}
\begin{tabular}{l*{8}{c}} 

  &AMFR     &   BB\_S4   &    BECNI &      BTG3     &   CA12
  &    CTNND2   &   E2F3   \\  
\hline
  decision    & 0 & 1  & 1 & 0 & 1 &  0 & 0 \\
  $p^{empirical}$  & 0.0046 & $<$0.0001  &  $<$0.0001& 0.0202 &   $<$0.0001 &
  0.0684 &    0.0428 \\ [2ex] 
  
  & ERBB4 &   FGFRIOP &    FLJ10916   & FLJI2650 &   GAMT      &
  GFRAI  &     IGFBP4 \\
\hline
  decision    & 0  &   1  &    1  &   1  &    1   &   0  &   0 \\
  $p^{empirical}$   &   0.26 & $<$0.0001   &  $<$0.0001 & 0.002   &  $<$0.0001   &  0.3606& 0.389 \\[2ex]

  & JMJD2B & KIA1467   &  MAPT&  MBTP\_SI   &  MELK  &
  METRN & PDGFRA \\
\hline
  decision   &      1 &    1 &     0 &    1 &     0 &    0 &    1\\
  $p^{empirical}$  & $<$0.0001 & 2e-04 & 0.006  & 6e-04 & 0.1556 & 0.1054  & $<$0.0001 \\[2ex]
  
  &   RAMPI  &     RRM2   &     SCUBE2    &  THRAP2   &    ZNF552 \\     
\hline
  decision   &   0  &   0&     0  &   1 &    1 \\
  $p^{empirical}$    &  0.2288  &  0.2988 &   0.3552   & $<$0.0001  &   $<$0.0001   \\
\end{tabular}
\end{center}
\caption{\label{tab:Homo test RD}Homogeneity test between training and test samples among
  RD patients. Summary of test decisions after Bonferroni multiple testing correction
  and empirical p-values for each neighborhood test as defined in Section
  \ref{interpretation} 
}
\end{table}

Because of these surprisingly significant divergences between training
and validation subsets, we restrict the subsequent analysis to the
training set (n=82 patients, among which 61 RD and 21 pCR patients).

To roughly check that we got rid of the underlying heterogeneity, we create an artificial dataset under $H_0$ by permutation of the
 patients, regardless of their class. No neighborhood test is rejected
 at a level corrected for multiple testing. We also cut the
group of patients with residual disease artificially in half. When testing for the difference between the two
halves, no significant heterogeneity remains, whatever the neighborhood.

Within the training set, the comparison of Gaussial graphical
structures between pCR and RD patients
leads to the rejection of all neighborhood tests after Bonferroni
correction for multiple testing of the 26 neighborhoods, as summarized
in Table \ref{tab:significant}. RRM2, MAPT
and MELK genes appear as responsible for the rejection of respectively
nine, nine and four of these neighborhood tests. Quite interestingly, these
three genes have all been described in clinical literature as new promising drug targets.
\cite{2007_CCR_Heidel} exhibited inhibitors of RRM2 expression,
which reduced \emph{in vitro} and \emph{in vivo} cell
proliferation. \cite{2005_PNAS_Rouzier} led functional biology experiments
validating the relationship between MAPT expression levels and
response to therapy, suggesting to inhibit its expression to increase
sensivity to treatment. More recently, \cite{2012_O_Chung} developed a therapeutic candidate
inhibiting MELK expression that was proved to suppress the growth of
tumour-initiating cells in mice with various cancer types, including breast cancer.

\begin{table}[!ht]
\begin{center}
  \begin{tabular}{l*{8}{c}} 
    &    AMFR & BB\_S4   &BECNI   &BTG3   &     CA12   & CTNND2&
    E2F3   \\
    \hline
    decision &    1 & 1 &    1 &    1 &        1    & 1 &   1 \\   
    $p^{empirical}$  &   $<$ 0.0001 &  $<$0.0001 &    4e-04 & $<$0.0001 &2e-04 &$<$0.0001 &   $<$0.0001    \\
    rejected model & RRM2 & RRM2  &  MAPT &   MAPT   &     MAPT  &  RRM2 &  MAPT   \\[2ex]
    
    &   ERBB4   &    FGFRIOP& FLJ10916&    FLJI2650& GAMT&
    GFRAI     & IGFBP4 \\
    \hline
    decision   &  1      &   1   &  1 &        1   &   1 &        1& 1\\       
    $p^{empirical}$   &  $<$0.0001    &    4e-04 & 4e-04 & $<$0.0001 & $<$0.0001
    & $<$0.0001  &$<$0.0001     \\
    rejected model & MELK  &      MAPT  &  RRM2     &   MAPT   &  RRM2  &
    BTG3 &MELK \\ [2ex]        
    
    & JMJD2B  & KIA1467& MAPT &MBTP\_SI &    MELK &
    METRN   &    PDGFRA    \\
    \hline
    decision   &  1  &  1&     1 & 1     &    1  &      1  &       1  \\
    $p^{empirical}$  &  $<$0.0001  &$<$0.0001 &$<$0.0001& $<$0.0001 & $<$0.0001 &$<$0.0001 & $<$0.0001 \\
    rejected model &        MAPT   &     MELK&    RRM2 &E2F3  &      MAPT&
MELK    &    RRM2    \\[2ex]

&     RAMPI &  RRM2 &SCUBE2 &     THRAP2 &ZNF552 \\
\hline
decision  &     1 &    1&  1 &    1&    1 \\    
$p^{empirical}$   &   $<$0.0001 &    $<$0.0001&  $<$0.0001     &
$<$0.0001  &2e-04 \\
rejected model&  RRM2  &  MAPT &BTG3&  E2F3 & RRM2  \\  
\end{tabular}
\end{center}
\caption{\label{tab:significant}Summary of neighborhood tests between RD and pCR patients within the
  training set (n=82). Decision is made at level $0.05/26$ to correct
  for multiple testing. The empirical $p$-value and the rejected model
  are defined in Section  \ref{interpretation}.}
\end{table}

For comprehensiveness, we add that similar analysis of the validation
set (n=51 patients, among which 38 RD
and 13 pCR patients) leads to the identification of only 9 significantly
altered neighborhoods between pCR and RD patients \ref{tab:significantTestSet}. This difference
in the number of significantly altered neighborhoods can be explained
by the reduced size of the sample. Yet, genes responsible for the
rejection of the tests differ from those identified on the training
set. In particular, five of the significant tests are rejected because of SCUBE2, which has been
recently recognised as a novel tumor suppressor gene \cite{2011_JBC_Lin}.

\begin{table}[!ht]
\begin{center}
  \begin{tabular}{l*{8}{c}} 
       &     AMFR     &   BB\_S4    &   BECNI  &     BTG3    &    CA12
       &CTNND2  &    E2F3  \\    
\hline
decision    & 0&     0&     0&     1&      0&    0&     1\\
 $p^{empirical}$   & 0.0024  &  0.0028 &   0.0048 &  0.0018 & 0.0028 &   0.0082  &  $<$0.0001 \\  
rejected model & - & - & - & SCUBE2   &     - &  - & METRN\\[2ex]

          &  ERBB4 & FGFRIOP & FLJ10916 & FLJI2650 & GAMT & GFRAI& IGFBP4 \\
\hline
decision  &        1&      0&     1&     0&     0&     1&      1\\    
 $p^{empirical}$  & 0.0014 & 0.0072 &   8e-04 &
0.0142 & 0.0046 &   8e-04 &2e-04\\
rejected model &SCUBE2 & - & SCUBE2 & - & - & E2F3 &SCUBE2 \\[2ex]
           &JMJD2B& KIA1467 &    MAPT  &      MBTP\_SI  &   MELK   &
           METRN   &   PDGFRA\\
\hline 
decision    &0 & 1&      0&     0&     0&     1&     0\\
 $p^{empirical}$   &0.0054 & 0.0018&    0.0032  &0.0078 &0.0036 &4e-04& 0.0104\\
rejected model &-  &SCUBE2& - & - & - & E2F3 & - \\        [2ex]
           &  RAMPI &      RRM2   &     SCUBE2&  THRAP2 &  ZNF552 \\
\hline       
decision   &    0&     0& 1  &   0& 0 \\    
 $p^{empirical}$   &  0.0056&  0.0034 &   2e-04 & 0.0024& 0.006 \\   
rejected model & - & - & FLJ10916    &- & -\\
\end{tabular}
\end{center}
\caption{\label{tab:significantTestSet}Summary of neighborhood tests between RD and pCR patients within the
  validation set (n=51). Decision is made at level $0.05/26$ to correct
  for multiple testing. The empirical $p$-value and the rejected model
  are defined in Section  \ref{interpretation}. }
\end{table}





\section{Discussion}\label{sec:discussion}

\paragraph{Design hypotheses.} In this work, we have made two main assumptions on the design matrices:
\begin{enumerate}
\item[(i)] The design matrices ${\bf X}^{(1)}$ and ${\bf X}^{(2)}$ are random. 
\item[(ii)] Under the null hypothesis \eqref{eq:hypothese}, we further suppose that the population covariances $\Sigma^{(1)}$ and $\Sigma^{(2)}$ are equal.
\end{enumerate}
Although this setting is particularly suited to consider the two-sample GGM testing (Section \ref{sec:GGM}), one may wonder whether one can circumvent these two restrictions. We doubt that this is possible without making the testing problem much more difficult.

First, the formulation \eqref{eq:hypothese} allows the null hypothesis to be interpreted as a relevant
intermediary case between two extreme fixed design settings:
design equality (${\bf X}^{(1)}={\bf X}^{(2)}$) and arbitrary
different design (${\bf X}^{(1)}\neq {\bf X}^{(2)}$).  In the first
case, the two-sample problem amounts to a one-sample problem by
considering $\tilde{\bf Y}={\bf Y}^{(1)}-{\bf Y}^{(2)}$ and it has
therefore already been thoroughly studied. The second case is on the
contrary extremely difficult as illustrated by the proposition below.
\begin{proposition}\label{prte:two_sample_fixed}
Consider the design matrices ${\bf X}^{(1)}$ and ${\bf X}^{(2)}$ as fixed and assume that $\sigma^{(1)}=\sigma^{(2)}=1$. If the  $(n_1+n_2)\times p$ matrix formed by ${\bf X}^{(1)}$ and ${\bf X}^{(2)}$ has rank $n_1+n_2$, then any test $T$ of $\beta^{(1)}=\beta^{(2)}$ vs $\beta^{(1)}\neq \beta^{(2)}$ based on the data $({\bf Y},{\bf X})$ satisfies:
\[\sup_{\beta\in\mathbb{R}^p}\mathbb{P}_{\beta,\beta}[T=1]+ \inf_{\beta^{(1)}\neq \beta^{(2)}\in\mathbb{R}^p}\mathbb{P}_{\beta^{(1)},\beta^{(2)}}[T=0]\geq 1\ ,\]
where $\mathbb{P}_{\beta^{(1)},\beta^{(2)}}(.)$ denotes the distribution of $({\bf Y}^{(1)},{\bf Y}^{(2)})$.
In other words, any level-$\alpha$ test $T$ has a type II error larger than $1-\alpha$, and this uniformly over $\beta^{(1)}$ and $\beta^{(2)}$. Consequently, any test in this setting cannot perform better than complete random guess. 
\end{proposition}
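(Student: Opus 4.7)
The plan is to exploit the rank hypothesis to show that for any alternative parameter pair $(\beta^{(1)},\beta^{(2)})$ with $\beta^{(1)}\neq\beta^{(2)}$, one can construct a single vector $\beta\in\mathbb{R}^p$ that generates \emph{exactly the same} joint distribution of $(\mathbf{Y}^{(1)},\mathbf{Y}^{(2)})$ under the null model $(\beta,\beta)$. Two indistinguishable distributions cannot be separated by any test, and from this we will deduce the desired lower bound on the sum of errors.

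First, I would unpack the rank condition. Let $\mathbf{X}$ be the $(n_1+n_2)\times p$ block matrix obtained by vertically stacking $\mathbf{X}^{(1)}$ and $\mathbf{X}^{(2)}$. The assumption $\mathrm{rank}(\mathbf{X})=n_1+n_2$ means that the linear map $\beta\mapsto (\mathbf{X}^{(1)}\beta,\mathbf{X}^{(2)}\beta)$ from $\mathbb{R}^p$ to $\mathbb{R}^{n_1+n_2}$ is surjective. Consequently, given any alternative pair $(\beta^{(1)},\beta^{(2)})\in\mathbb{R}^p\times\mathbb{R}^p$ with $\beta^{(1)}\neq\beta^{(2)}$, there exists $\beta\in\mathbb{R}^p$ such that
\begin{equation*}
\mathbf{X}^{(1)}\beta=\mathbf{X}^{(1)}\beta^{(1)}\quad\text{and}\quad \mathbf{X}^{(2)}\beta=\mathbf{X}^{(2)}\beta^{(2)}.
\end{equation*}

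Second, I would compare the two sampling distributions. Since $\sigma^{(1)}=\sigma^{(2)}=1$ and the designs are fixed, both $\mathbb{P}_{\beta,\beta}$ and $\mathbb{P}_{\beta^{(1)},\beta^{(2)}}$ are Gaussian laws on $\mathbb{R}^{n_1+n_2}$ with identity covariance; their means are $(\mathbf{X}^{(1)}\beta,\mathbf{X}^{(2)}\beta)$ and $(\mathbf{X}^{(1)}\beta^{(1)},\mathbf{X}^{(2)}\beta^{(2)})$ respectively, which coincide by the previous step. Hence $\mathbb{P}_{\beta,\beta}=\mathbb{P}_{\beta^{(1)},\beta^{(2)}}$, and for \emph{any} test $T$ based on $(\mathbf{Y},\mathbf{X})$,
\begin{equation*}
\mathbb{P}_{\beta,\beta}[T=1]+\mathbb{P}_{\beta^{(1)},\beta^{(2)}}[T=0]=\mathbb{P}_{\beta^{(1)},\beta^{(2)}}[T=1]+\mathbb{P}_{\beta^{(1)},\beta^{(2)}}[T=0]=1.
\end{equation*}

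Finally, I would take suprema and infima. Since $\mathbb{P}_{\beta,\beta}[T=1]\leq\sup_{\beta'\in\mathbb{R}^p}\mathbb{P}_{\beta',\beta'}[T=1]$, the displayed identity yields
\begin{equation*}
\sup_{\beta'\in\mathbb{R}^p}\mathbb{P}_{\beta',\beta'}[T=1]+\mathbb{P}_{\beta^{(1)},\beta^{(2)}}[T=0]\geq 1,
\end{equation*}
and this holds for every admissible alternative $(\beta^{(1)},\beta^{(2)})$. Taking the infimum over alternatives on the left gives the claimed inequality, and the final clause about random guessing follows immediately: if $T$ has level $\alpha$, then its type II error is at least $1-\alpha$ for some alternative. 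No step is genuinely delicate here; the only subtlety worth flagging is that the rank condition forces $p\geq n_1+n_2$, which is precisely the high-dimensional regime where identifiability of $\beta^{(1)}-\beta^{(2)}$ breaks down in the fixed-design setting.
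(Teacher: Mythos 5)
Your proposal is correct and follows essentially the same route as the paper: the rank condition makes $\beta\mapsto(\mathbf{X}^{(1)}\beta,\mathbf{X}^{(2)}\beta)$ surjective, so any alternative mean pair is realized by some null parameter, the corresponding Gaussian laws coincide, and the sum-of-errors bound follows by the sup/inf argument you spell out (which the paper leaves implicit with ``the result follows''). No gaps.
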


Furthermore, if $\Sigma^{(1)}\neq \Sigma^{(2)}$ is allowed  in the null \eqref{eq:hypothese}, then the two-sample testing problem becomes much more difficult  in the sense that it is impossible to reformulate the null hypothesis into a conjonction of low-dimensional hypotheses as done in 
 Lemma \ref{lemma_equivalence_hypothese}. Indeed, consider the following toy example: $\sigma^{(1)}=\sigma^{(2)}=1$, $\beta^{(1)}=\beta^{(2)}=(a,0,0,\ldots)^T$ for some $a>0$,  $\Sigma^{(1)}=I_p$ and $\Sigma^{(2)}=(\rho+ \mathbf{1}_{i=j})_{1\leq i,j\leq p}$ for some $\rho>0$. Then, for any subset $S$ that does not contain the first component, the parameters $\beta^{(1)}_S$ and $\beta^{(2)}_S$ are different. Consequently,  $\beta^{(1)}_S\neq \beta^{(2)}_S$  does not imply that $\beta^{(1)}\neq\beta^{(2)}$ and one should not rule out the parameter equality hypothesis relying on some low-dimensional regressions.

\paragraph{Comparison with related
  work~\cite{2013_ArXiv_Stadler,2014_ArXiv_Stadler}} 
St\"adler and Mukherjee propose a very general approach to
high-dimensional two-sample testing, being applicable to a wide range
of  models.  In  particular   this  approach  allows  for  the  direct
comparison of two-sample Gaussian graphical models without adopting a neighborhood selection
approach.  This  avoids the  burden  of  multiple neighborhood  linear
regression and the multiple testing correction
which follows. 

Because they estimate the supports of
sample-specific estimators and joint estimator separately in the
screening step, they resort
to an elegant estimation of the $p$-values for the non-nested likelihood
ratio test in the cleaning step. Yet, they do not provide any
theoretical controls on type I 
error rate or power for their overall testing strategy.

Finally, as it appears in the numerical experiments, their approach is
based on half-sampling and can  thus suffer from an acute reduction of
power on small samples. On  the bright side, the multi-split procedure
shows stable results and performs well even in difficult scenarios as soon as $n$ is sufficiently large.


\paragraph{Non asymptotic bounds and constants.}
In the spirit of \cite{2003_AS_Baraud}, our type II error analysis is completely non-asymptotic. However, the numerical constants involved in the bounds are clearly not optimal. Another line of work initiated by \cite{2004_AS_Donoho} considers an asymptotic but high-dimensional framework and aims to provide detection rates with optimal constants. For instance  \cite{2011_AS_Arias-Castro,2010_EJS_Ingster} have derived such results in the one-sample high-dimensional linear regression testing problem under strong assumptions on the design matrices. In our opinion, both analyses are complementary. While deriving sharp detection rates (under perhaps stronger assumptions on the covariance) is a stimulating open problem, it is beyond the scope of our paper.

\paragraph{Loss functions and interpretation.} The Kullback discrepancies considered in the power analysis of the
test depend on $\beta^{(1)}$ and $\beta^{(2)}$ through the prediction
distances $\|\beta^{(1)}~-~\beta^{(2)}\|_{\Sigma^{i}}$, $i=1,2$ rather
than the $l_2$ distance $\|\beta^{(1)}~-~\beta^{(2)}\|$. On the one hand, such a dependency on the prediction abilities is natural, as our testing procedures relies on the likelihood ratio.
 On the other hand, it is possible to characterize the power of our
 testing procedures as in Theorems \ref{cor:section_complete} and
 \ref{thrm:lasso} in terms of the distance
 $\|\beta^{(1)}-\beta^{(2)}\|$ by inverting $\Sigma^{(1)}$ and
 $\Sigma^{(2)}$ at $\beta^{(1)}-\beta^{(2)}$. However, the inversion
 would lead to an additional factor of the form $\Phi^{-1}_{|\beta^{(1)}-\beta^{(2)}|_0,-}(\sqrt{\Sigma^{(i)}})$ in the testing rates.

In terms of interpretation, even though our procedure adopts a global testing approach
through prediction distances, our real dataset example illustrates
that identifying which subset in the collection is responsible for
rejecting the null hypothesis provides clues into which specific
coefficients are most likely to differ between samples.

\paragraph{Gene network inference.}
Thinking of gene network inference by Gaussian graphical modeling, the high levels of correlations encountered within transcriptomic
datasets and the potential number of missing variables result in highly
unstable graphical estimations. Our global testing approach provides a way to
validate whether sample-specific graphs eventually share comparable
predictive abilities or disclose genuine structural changes. Such a statistical validation is obviously crucial before
translating any graphical analysis into further biological
experiments. Interestingly, the three main genes pointed out by our testing strategy
have been validated as promising therapeutic targets by functional
biology experiments. 

Finally, this test should also facilitate the validation of the fundamental
i.i.d. assumption across multiple samples, paving the way to pooled
analyses when possible. In that respect, we draw attention to the
significant heterogeneity detected between the
training and validation subsets of the
well-known Hess \emph{et al} dataset, suggesting that these samples
should be used separately as originally intended. Methods which
require i.i.d. observations should only be applied with caution to
this dataset if considered as a single large and homogeneous sample.

\section{Proofs}\label{sec:proofs}
\subsection{Two-sample testing for fixed and differents designs}

\begin{proof}[Proofs of Proposition \ref{prte:two_sample_fixed}]
Using the rank condition, we derive that for any vector $(a,b)$ in $\mathbb{R}^{n_1}\times \mathbb{R}^{n_2}$, there exists $\beta\in\mathbb{R}^p$ such that ${\bf X}^{(1)}\beta=a$ and ${\bf X}^{(2)}\beta=b$. Consequently, under the null hypothesis, $({\bf Y}^{(1)},{\bf Y}^{(2)})$ follows any distributions $\mathcal{N}(a,I_{n_1})\otimes \mathcal{N}(b,I_{n_2})$ with $(a,b)$ arbritary in $\mathbb{R}^{n_1}\times \mathbb{R}^{n_2}$. Hence, for any $\beta^{(1)}\neq \beta^{(2)}\in\mathbb{R}^p$, the distribution $\mathbb{P}_{\beta^{(1)},\beta^{(2)}}$ of $({\bf Y}^{(1)},{\bf Y}^{(2)})$ is not distinguishable from the null hypothesis. The result follows.
 
\end{proof}

\subsection{Upper bounds of the quantiles}

\begin{proof}[Proof of Proposition \ref{prte:majoration_Q}]
For the sake of simplicity, we note $N=n_1-|S|$, $(Z_1,\ldots,
Z_{|S|})$ a standard Gaussian random vector and $W_N$ a $\chi^2$
random variable with $N$ degrees of freedom. We apply Laplace method
to upper bound $ \mathbb{P}[F_{S,1}\geq u]$:
\begin{eqnarray*}
 \mathbb{P}[F_{S,1}\geq u]&= &\mathbb{P}\left[\sum_{i=1}^{|S|} a_iZ_i^2\geq u W_N/N \right]
\leq \inf_{\lambda>0}\mathbb{E}\exp\left[\lambda\sum_{i=1}^{|S|} a_iZ_i^2- \lambda uW_N/N\right]\\
&\leq & \inf_{0<\lambda<|a|_{\infty}/2}\exp\left[\psi_{u}(\lambda)\right]\ ,
\end{eqnarray*}
where
\[\psi_{u}(\lambda)= -\frac{1}{2}\sum_{i=1}^{|S|}\log(1-2\lambda a_i)-\frac{N}{2}\log\left(1+\frac{2\lambda u}{N}\right)\ .\]
The sharpest upper-bound is given by the value $\lambda^*$  which
minimizes $\psi_{u}(\lambda)$. We obtain an approximation of
$\lambda^*$ by cancelling the second-order approximation of its
derivative. Deriving $\psi_{u}$ gives
\[\psi'_{u}(\lambda)=\sum_{i=1}^{|S|}\frac{a_i}{1-2\lambda a_i} -\frac{u}{1+\frac{2\lambda u}{N}}\ ,\]
which admits the following second order approximation :
\begin{equation}\label{eq:derive_def_lambda}
|a|_1 + \frac{2\lambda \|a\|^2}{1-2|a|_{\infty}\lambda}-\frac{u}{1+\frac{2\lambda u}{N}}\ .
\end{equation} 

Cancelling this quantity amounts to solving a polynomial equation of the second degree.  The smallest solution of this equation leads to the desired $\lambda^*$.
\end{proof}

{\bf Additional Notations}. Given a subset $S$, $\Pi^{(1)}_{S}$ (resp. $\Pi^{(2)}_S$) stands for the orthogonal projection onto the space spanned by the rows of $\X^{(1)}_S$ (resp. $\X^{(2)}_S$). Moreover,  $\Pi^{(1)}_{S^{\perp}}$ denotes the orthogonal projection along the space spanned by the rows of $\X^{(1)}_{S}$.

\subsection{Distributions of $F_{S,V}, F_{S,1}$ and $F_{S,2}$ (Proposition \ref{prte:distribution_parametrique})}

Let us consider the regression of $Y^{(1)}$ (resp. $Y^{(2)}$) with respect to $X_{S}^{(1)}$ (resp. $X_{S}^{(2)}$):
\[
 { Y}^{(1)} =  {X}_S^{(1)}\beta_S^{(1)}+ \epsilon_S^{(1)}\ ,\quad \quad 
 {Y}^{(2)} =  {X}_S^{(2)}\beta_S^{(2)}+\epsilon_S^{(2)}\ .\]
Under the null hypothesis $\hyp_{0,S}$, we have $\beta_S^{(1)}=\beta_S^{(2)}$ and $\sigma^{(1)}_{S}=\sigma^{(2)}_{S}$. For the sake of simplicity, we write
$\beta_S$ and  $\sigma_{S}$ for these two quantities. Define the random variable $T_1$ and $T_2$ as 
\begin{equation}
T_1  =  \frac{\|\Pi^{(1)}_{S^{\perp}}
\eps^{(1)}_S\|^2}{(n_1-|S|)\sigma_{S}^{2}}\ ,\hspace{2cm}
T_2  =  \frac{\|\Pi^{(2)}_{S^{\perp}}
\eps^{(2)}_S\|^2}{(n_2-|S|)\sigma^{2}_{S}}\ .\label{eq:defi_T1}
\end{equation}
Given $\X$, $T_1/T_2$ follows a Fisher distribution with $(n_1~-~|S|,n_2~-~|S|)$ degrees of freedom. Observing that under the null hypothesis
\begin{eqnarray*}
 F_{S,V} = -2 +\frac{T_1}{T_2}\frac{n_2(n_1-|S|)}{n_1(n_2-|S|)}+ \frac{T_2}{T_1}\frac{n_1(n_2-|S|)}{n_2(n_1-|S|)}
\end{eqnarray*}
allows us to prove the first assertion of Proposition
\ref{prte:distribution_parametrique}. 
Let us turn to the second statistic:
\begin{eqnarray*}
 F_{S,1} = \frac{n_1}{n_2(n_1-|S|)}\frac{U}{T_1}\ ,
\end{eqnarray*}
where 
\begin{equation*}
U  =  \frac{\|\X_S^{(2)}(\X_S^{(2)\intercal}\X_S^{(2)})^{-1}\X_S^{(2)\intercal}\eps^{(2)}_S-
\X_S^{(2)}(\X_S^{(1)\intercal}\X_S^{(1)})^{-1}\X_S^{(1)\intercal}\eps^{(1)}_S\|^2}{\sigma_S^2}.
\end{equation*}
Given $\X$, $U$ is independent from $T_1$ since $T_{1}$ is a function of $\Pi^{(1)}_{S^{\perp}}
\eps^{(1)}_S$ while $U$ is a function of $(\eps_S^{(2)}, \Pi^{(1)}_{S} \eps^{(1)}_S)$. Furthermore, $U$ is the squared norm of a centered Gaussian vector with covariance 
\[ \X_S^{(2)}\left[(\X_S^{(1)\intercal}\X_S^{(1)})^{-1}+(\X_S^{(2)\intercal}\X_S^{(2)})^{-1}\right]\X_S^{(2)\intercal}\ .\]

\subsection{Calibrations}
\begin{proof}[Proof of Proposition \ref{prop:sizeB}]
By definition of the $p$-values $\widetilde{Q}_{i,|S|}$, we have under $\hyp_0$ for each $S\in\cS$ and each $i\in\{V,1,2\}$
\begin{eqnarray*}
\mathbb{P}_{\hyp_0}\left[\widetilde{Q}_{i,|S|}\left(F_{S,i}|\X_{S}\}\right)\leq \alpha_{i,S}|\X_S\right]\leq \alpha_{i,S}.
\end{eqnarray*}
Applying a union bound and integrating with respect to $\X$ allows us to control the type~I error:
\begin{eqnarray*}
 \mathbb{P}_{\hyp_0}[T^{B}_{\widehat{\cS}}= 1]
& = & \mathbb{E} \left[\sum_{S\in\widehat{\cS}}\sum_{i=V,1,2}\mathbb{P}\left[\widetilde{Q}_{i,|S|}\left(F_{S,i}|\X_{S}\}\right) <\alpha_{i,S}\right]\right]\\
&\leq& \sum_{S\in\cS}\sum_{i=V,1,2}\mathbb{P}\left[\widetilde{Q}_{i,|S|}\left(F_{S,i}|\X_{S}\}\right) <\alpha_{i,S}\right] \\
&\leq& \sum_{S\in\cS}\sum_{i=V,1,2}\mathbb{E}_{\X_S}\left[\mathbb{P}\left[ \widetilde{Q}_{i,|S|}\left(F_{S,i}|\X_{S}\}\right)<\alpha_{i,S}\right]\right]\leq \sum_{S\in\cS} \alpha_{i,S}\leq \alpha\ ,
\end{eqnarray*} 
where we have upper bounded the sum over the random collection $\mathcal{\cS}$ by the sum over $\cS$.
\end{proof}

\begin{proof}[Proof of Proposition \ref{prop:sizeP}]

Consider $i\in\{V,1,2\}$. Under $\hyp_0$, the distributions of 
\begin{eqnarray*}
\min_{S\in\widehat{\cS}_{\pi}}\left\{\widetilde{Q}_{V,|S|}\left(F_{S,V}(\pi)|\X_{S}^{\pi}\right)\binom{p}{|S|}\right\}\ , \\
 \min_{S\in\widehat{\cS}_{\pi}}\left\{\left(\widetilde{Q}_{1,|S|}\left(F_{S,1}(\pi)|\X_{S}^{\pi}\right)\bigwedge \widetilde{Q}_{2,|S|}\left(F_{S,2}(\pi)|\X_{S}^{\pi}\right)\right)\binom{p}{|S|}\right\}\ 
\end{eqnarray*}
are invariant with respect to the permutation $\pi$. Hence, we derive 
\begin{eqnarray*}
\mathbb{P}_{\hyp_0}\left[\left. \min_{S\in\widehat{\cS}}\overline{Q}_{V,|S|}\left(F_{S,V}|\X_{S}\right)\binom{p}{|S|} \leq \widehat{C}_{1}\right|\X_{S} \right] =\alpha/2\ ,\\
\mathbb{P}_{\hyp_0}\left[\left. \min_{S\in\widehat{\cS}_{\pi}}\left\{\left(\widetilde{Q}_{1,|S|}\left(F_{S,1}(\pi)|\X_{S}^{\pi}\right)\bigwedge \widetilde{Q}_{2,|S|}\left(F_{S,2}(\pi)|\X_{S}^{\pi}\right)\right)\binom{p}{|S|}\right\}\leq \widehat{C}_{2}\right|\X_{S} \right] =\alpha/2\ .
\end{eqnarray*}
Applying a union bound and integrating with respect to $\X$ allows us to conclude.
\end{proof}

\subsection{Proof of Theorem \ref{thrm_puissance}}

The objective is to exhibit a subset for which the power of
$T_\cS^B$ is larger than $1-\delta$. This subset is such that the
distance between the two sample-specific distributions is large enough
that we can actually reject the null hypothesis with large
probability. As exposed in Theorem \ref{thrm_puissance}, we rely on the semi-distances $\mathcal{K}_1(S)+ \mathcal{K}_2(S)$ for $S\in\cS$:
\begin{align}
\label{distance_decomp} 2(\mathcal{K}_1(S)+ \mathcal{K}_2(S))  = &
 \left(\frac{\sigma^{(1)}_{S}}{\sigma^{(2)}_{S}}\right)^2+
 \left(\frac{\sigma^{(2)}_{S}}{\sigma^{(1)}_{S}}\right)^2- 2   +
 \frac{\|\beta_S^{(2)}-\beta_S^{(1)}\|_{\Sigma^{(2)}}^2}{(\sigma_{S}^{(2)})^2}
  + \frac{\|\beta_S^{(2)}-\beta_S^{(1)}\|_{\Sigma^{(1)}}^2}{(\sigma_{S}^{(1)})^2}\ .
\end{align}

The proof is split into five main lemmas. First, we upper bound
$\widetilde{Q}_{V,|S|}^{-1}(x|{\bf X}_S)$,
$\widetilde{Q}_{1,|S|}^{-1}(x|{\bf X}_S)$, and
$\widetilde{Q}_{2,|S|}^{-1}(x|{\bf X}_S)$ in Lemmas
\ref{lemma_fisher}, \ref{lemma:upper_bound_q2}
and \ref{lemma:upper_bound2_q2}. Then, we control the deviations of
$F_{S,V}$, $F_{S,1}$, and $F_{S,2}$ under $\hyp_{1,S}$ in Lemmas
\ref{lemma_fisher2} and \ref{lemma_puissance_deuxieme}.
In the sequel, we call $\S'$ the subcollection of $\S$ made of subsets $S$ satisfying $|S|\leq (n_1\wedge n_2)/2$ and 
\begin{equation}\label{eq:thrm_puissance_bonne_hypothese}
 \log(12/\delta)<L_1^\bullet (n_1\wedge n_2),\quad   \log(1/\alpha_S)\leq L_2^{\bullet}(n_1\wedge n_2)\ ,\quad  |S|\leq L_3^{\bullet}
\end{equation}
where the numerical constants $L_1^\bullet$, $L_2^\bullet$, and $L_3^\bullet$ only depend on $L^*_2$ in \eqref{eq:lower_bound} and on the constants introduced in Lemmas  \ref{lemma_fisher}--\ref{lemma_puissance_deuxieme}.
These conditions allow us to fix the constants  in the statement (\ref{eq:H1}) of Theorem \ref{thrm_puissance}.

\begin{lemma}[Upper-bound of $\widetilde{Q}_{V,|S|}^{-1}(x|{\bf X}_S)$ ]\label{lemma_fisher}
There exists a positive universal constant $L$ such that the following holds.
Consider some $0<x<1$ such that $ 16\log(2/x)\leq n_1 \wedge n_2 \
.$ For any subset $S$ of size smaller than $(n_1\wedge n_2)/2$, we have
\begin{eqnarray}\label{Majoration_premier_terme}
\widetilde{Q}_{V,|S|}^{-1}(x|{\bf X}_S)\leq  L
\left\{\left(\frac{|S|(n_1-n_2)}{n_1n_2}\right)^2 +
\log(2/x)\left(\frac{1}{n_1}+
\frac{1}{n_2}\right)\right\} \ .
\end{eqnarray}
\end{lemma}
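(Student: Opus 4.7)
The plan leverages the explicit conditional distribution of $F_{S,V}$ given by Proposition~\ref{prte:distribution_parametrique}: under $\hyp_{0,S}$, conditionally on $\X_S$,
\[
F_{S,V}\ \sim\ g(y),\qquad y:=\frac{U/n_1}{V/n_2},
\]
where $U\sim\chi^2_{n_1-|S|}$ and $V\sim\chi^2_{n_2-|S|}$ are independent (and $g(t)=t+1/t-2$). Since $\widetilde{Q}_{V,|S|}(\cdot|\X_S)$ is the conditional survival function of $F_{S,V}$, the quantity $\widetilde{Q}_{V,|S|}^{-1}(x|\X_S)$ is the $(1-x)$-quantile of $g(y)$, and it suffices to exhibit a $u^\star$ of the announced form such that $\mathbb{P}(g(y)\geq u^\star)\leq x$.

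The factorisation $g(t)=(t-1)^2/t$ reduces the task to controlling the deviations of $y$ from $1$, which decompose into a deterministic bias and a stochastic fluctuation. Setting $\mu:=n_2(n_1-|S|)/(n_1(n_2-|S|))$, the bias $\mu-1=|S|(n_2-n_1)/(n_1(n_2-|S|))$ is at most $2|S||n_1-n_2|/(n_1n_2)$ under the condition $|S|\leq (n_1\wedge n_2)/2$. For the fluctuation, I apply the standard Laurent--Massart tail bounds
\[
\mathbb{P}\bigl(W\geq k+2\sqrt{kt}+2t\bigr)\vee\mathbb{P}\bigl(W\leq k-2\sqrt{kt}\bigr)\leq e^{-t},\qquad W\sim\chi^2_k,
\]
to both $U$ and $V$ with $t:=\log(8/x)$. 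A union bound over the four resulting events yields, with probability at least $1-x$,
\[
\Bigl|\tfrac{U}{n_1}-\tfrac{n_1-|S|}{n_1}\Bigr|+\Bigl|\tfrac{V}{n_2}-\tfrac{n_2-|S|}{n_2}\Bigr|\ \leq\ L\Bigl(\sqrt{\tfrac{t}{n_1}}+\sqrt{\tfrac{t}{n_2}}+\tfrac{t}{n_1\wedge n_2}\Bigr).
\]

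The twin hypotheses $|S|\leq(n_1\wedge n_2)/2$ and $16\log(2/x)\leq n_1\wedge n_2$ ensure that the right-hand side is strictly smaller than one, so that $U/n_1$ and $V/n_2$ both lie in a fixed compact subset of $(0,+\infty)$ on this event. A direct manipulation of $y-\mu$ then gives $|y-\mu|\lesssim \sqrt{t/n_1}+\sqrt{t/n_2}$, and combined with the bias estimate one obtains
\[
(y-1)^2\ \lesssim\ \Bigl(\tfrac{|S|(n_1-n_2)}{n_1n_2}\Bigr)^2+\log(2/x)\Bigl(\tfrac{1}{n_1}+\tfrac{1}{n_2}\Bigr).
\]
Because $y$ is bounded away from $0$, $g(y)\leq L'(y-1)^2$ on this event, which yields the announced bound. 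The only mildly delicate point is to absorb the residual $t/(n_1\wedge n_2)$ contribution and the cross terms into the target expression using the sample-size condition; otherwise the argument reduces to standard $\chi^2$ concentration.
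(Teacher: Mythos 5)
Your strategy is essentially the one used in the paper: identify the conditional law of $F_{S,V}$ as $g(y)$ with $y=(U/n_1)/(V/n_2)$ a rescaled Fisher variable, split $y-1$ into the deterministic bias $\mu-1$ (of order $|S||n_1-n_2|/(n_1n_2)$ since $n_2-|S|\ge n_2/2$) and a stochastic fluctuation, and convert back through $g(t)=(t-1)^2/t$; note also that your reading of $\widetilde{Q}_{V,|S|}$ as the exact conditional survival function \eqref{eq:defQVS} is correct, so exhibiting $u^\star$ with $\mathbb{P}(g(y)\ge u^\star\mid\X_S)\le x$ does bound the quantile. The only methodological difference is the concentration tool: the paper controls the two ratios $T_1/T_2$ and $T_2/T_1$ at level $x/2$ each via the packaged Fisher-quantile bound of \cite{2003_AS_Baraud}, whereas you apply Laurent--Massart bounds to numerator and denominator chi-squares separately.

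There is, however, one step that does not hold as written: the claim that the hypotheses $|S|\le(n_1\wedge n_2)/2$ and $16\log(2/x)\le n_1\wedge n_2$ ensure that, on your event, $U/n_1$ and $V/n_2$ lie in a fixed compact subset of $(0,+\infty)$. To bound $g(y)\le L'(y-1)^2$ you need $y$ bounded away from $0$ (and, symmetrically, from $+\infty$), which requires the \emph{lower}-tail bounds $U\ge (n_1-|S|)-2\sqrt{(n_1-|S|)t}$ and $V\ge (n_2-|S|)-2\sqrt{(n_2-|S|)t}$ to be non-vacuous, i.e. $4t<n_i-|S|$. With your allocation $t=\log(8/x)$ this reads roughly $4\log(2/x)+4\log 4<(n_1\wedge n_2)/2$, while the hypothesis only guarantees $8\log(2/x)\le(n_1\wedge n_2)/2$; when $x$ is near its maximal admissible value and $n_1\wedge n_2$ near its minimum (e.g. $n_1\wedge n_2\approx 13$, $x\approx 0.9$, $|S|=6$, so $n_2-|S|=7$ while $4t\approx 8.7$) the lower bound is negative, $y$ is not bounded below on the event, and since $g(y)\sim 1/y$ as $y\to 0$ the final inequality cannot be concluded from what you have proved. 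The paper sidesteps this because the quoted quantile bound for the Fisher distribution is valid without side conditions (its exponential term absorbs the denominator's lower tail), and the hypothesis $16\log(2/x)\le n_1\wedge n_2$ is exactly what is needed to linearize that exponential. Your argument is repairable --- for instance by treating separately the corner where $n_1\wedge n_2$ is below a fixed numerical constant (there $x$ is bounded away from $0$, the right-hand side of \eqref{Majoration_premier_terme} is bounded below by a constant times $L$, and a crude bound on the median of the Fisher ratio suffices), or simply by invoking the Fisher-quantile bound as the paper does --- but some such additional device is needed; it is not just ``standard $\chi^2$ concentration'' plus bookkeeping.
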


We recall that  $a=(a_1,\ldots , a_{|S|})$ denotes the positive eigenvalues of \[\frac{n_1}{n_2(n_1-|S|)}\X_S^{(2)}\left[(\X_S^{(1)\intercal}\X_S^{(1)})^{-1}+(\X_S^{(2)\intercal}\X_S^{(2)})^{-1}\right]\X_S^{(2)\intercal}\ .\]

\begin{lemma}[Upper-bound of $\widetilde{Q}_{1,|S|}^{-1}(x|{\bf X}_S)$ ]\label{lemma:upper_bound_q2}
There exist two positive universal constants $L_1$ and $L_2$ such that the following holds.
If $|a|_1 < u\leq (n_1-|S|)|a|_{\infty}$ and if $|S|\leq L_1n_1$,
\begin{eqnarray*}
 \log\left[\widetilde{Q}_{1,|S|}(u|\X_S)\right]\leq -\frac{(u-|a|_1)^2}{4\left[|a|_{\infty}(u-|a|_1)+\|a\|^2\right]}+  \frac{(u-|a|_1)u^3}{2(n_1-|S|)\left[|a|_{\infty}(u-|a|_1)+\|a\|^2\right]^2}\ .
\end{eqnarray*}
For any $0<x<1$, satisfying 
\begin{equation}\label{eq:condition_majoration_q2-1}
L_2\log(1/x)\leq n_1-|S|\ , 
\end{equation}
we have the following upper bound 
\begin{equation}\label{eq:upper_bound_q2-1}
\widetilde{Q}_{1,|S|}^{-1}(x|{\bf X}_S)\leq |a|_{\infty}\left[2|S|+2\sqrt{2|S|\log(1/x)}+8\log(1/x)\right]\ . 
\end{equation}
\end{lemma}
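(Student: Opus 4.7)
My plan is to first establish the non-asymptotic Chernoff-type bound on $\log\widetilde{Q}_{1,|S|}(u|\X_S)$ by a careful two-term expansion of the Laplace transform evaluated at the closed-form $\lambda^*$, and then to invert the resulting quadratic inequality in $u-|a|_1$, controlling the extra cubic correction along the way. For the first inequality I will start from
\[
\log\widetilde{Q}_{1,|S|}(u|\X_S)=-\tfrac{1}{2}\sum_{i=1}^{|S|}\log(1-2\lambda^* a_i)-\tfrac{n_1-|S|}{2}\log\!\Bigl(1+\tfrac{2\lambda^* u}{n_1-|S|}\Bigr)
\]
and apply the elementary inequalities $-\log(1-y)\leq y+y^2/[2(1-y_{\max})]$ on $[0,y_{\max}]$ with $y_{\max}=2\lambda^*|a|_\infty$, together with $\log(1+z)\geq z-z^2/2$ for $z\geq0$. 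The hypotheses $|S|\leq L_1 n_1$ and $u\leq(n_1-|S|)|a|_\infty$ are precisely what is needed to keep $y_{\max}$ bounded away from $1$ and $2\lambda^* u/(n_1-|S|)$ of moderate size, so that these expansions are legitimate and reasonably tight. This yields an upper bound of the form $\lambda^*|a|_1-\lambda^* u+\lambda^{*2}\|a\|^2/(1-2\lambda^*|a|_\infty)+\lambda^{*2}u^2/(n_1-|S|)$.

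The next observation is that the formula \eqref{eq:def_lambda} for $\lambda^*$ is the positive root of the truncated first-order optimality condition \eqref{eq:derive_def_lambda}. I will exploit this identity to replace $\lambda^{*2}\|a\|^2/(1-2\lambda^*|a|_\infty)$ by $\tfrac12\lambda^*(u-|a|_1)$ plus an explicit higher-order term that reorganises into the cubic remainder $(u-|a|_1)u^3/[2(n_1-|S|)(|a|_\infty(u-|a|_1)+\|a\|^2)^2]$ appearing in the lemma, while the main contribution simplifies to $-(u-|a|_1)^2/[4(|a|_\infty(u-|a|_1)+\|a\|^2)]$ after substituting the closed form for $\lambda^*$. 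For the inversion step I will set $t:=u-|a|_1$ and $c:=\log(1/x)$; by the first bound, $\widetilde{Q}_{1,|S|}(u|\X_S)\leq x$ holds as soon as (i) $t^2/[4(|a|_\infty t+\|a\|^2)]\geq 2c$ and (ii) the cubic remainder is at most $c$. Condition (i) is a quadratic inequality in $t$ whose positive solution is $t\geq 4c|a|_\infty+\sqrt{16c^2|a|_\infty^2+8c\|a\|^2}$, and $\sqrt{A+B}\leq\sqrt{A}+\sqrt{B}$ together with $\|a\|^2\leq|a|_\infty|a|_1\leq|S||a|_\infty^2$ shows that $t=8c|a|_\infty+2|a|_\infty\sqrt{2|S|c}$ already suffices; adding $|a|_1\leq|S||a|_\infty$ yields exactly the bound \eqref{eq:upper_bound_q2-1}.

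For condition (ii), I will estimate the cubic remainder crudely by $u^3/[2(n_1-|S|)|a|_\infty^2 t]$ using $|a|_\infty t+\|a\|^2\geq|a|_\infty t$, then distinguish the two cases $c\leq|S|$ and $c>|S|$ and in each case use the hypothesis $L_2 c\leq n_1-|S|$ (with $L_2$ sufficiently large, say $L_2\geq 108$) together with $|S|\leq L_1 n_1$ (with $L_1$ small enough) to bound this remainder by $c$, which is negligible compared to the main term. The main obstacle in the argument will be the substitution step for the first inequality: the closed form \eqref{eq:def_lambda} involves the square root of the discriminant $\Delta$ defined in \eqref{eq:def_delta}, and one needs to unwind these algebraic expressions carefully without losing too much in order to arrive at the clean form of the leading term. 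Once that first bound is secured, the inversion and the control of the cubic correction are essentially bookkeeping.
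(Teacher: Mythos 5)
The first half of your plan is fine and is essentially the paper's own route: the paper also bounds $\psi_u(\lambda^*)$ by the elementary logarithm inequalities and then sandwiches $\lambda^*$ between explicit values $\lambda_-\leq \lambda^*\leq \lambda_+:=\frac{u-|a|_1}{2[|a|_\infty(u-|a|_1)+\|a\|^2]}$ (the lower bound coming from $1-\sqrt{1-x}\geq x/2$ applied to $\sqrt{\Delta}$), identifies the leading term as the value of the linearised expression at $\lambda_+$, and absorbs $(\lambda_+-\lambda_-)u$ plus the $\lambda_+^2u^2/N$ term into the cubic remainder, with $N=n_1-|S|$. Note that ``substituting the closed form for $\lambda^*$'' is not quite what happens: the main term needs a \emph{lower} bound on $\lambda^*$, and the gap $\lambda_+-\lambda_-$ is precisely what generates the $u^3$ factor; but this bookkeeping is completable and is what the paper does.

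The genuine gap is in your inversion step (ii), in the regime $c:=\log(1/x)\lesssim|S|$. At your choice $u=|a|_1+t$ with $t=8c|a|_\infty+2|a|_\infty\sqrt{2|S|c}$, the cubic remainder is \emph{not} of order $c$: take $a_i\equiv 1$ (so $|a|_\infty=1$, $|a|_1=\|a\|^2=|S|$), $c=1$ and $|S|=n_1/100$; then $t\approx 2\sqrt{2|S|}$, $u\approx|S|$, and $\frac{tu^3}{2N[|a|_\infty t+\|a\|^2]^2}\approx \sqrt{2}\,|S|^{3/2}/N$, which grows like $\sqrt{n_1}$ and in particular exceeds $c$; your cruder bound $u^3/[2N|a|_\infty^2 t]$ is even larger. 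No choice of $L_1$ small and $L_2\geq 108$ can fix this, since the remainder scales like $|S|^{3/2}\sqrt{c}/N$ and is only dominated by $c$ when $|S|\lesssim (N\sqrt{c})^{2/3}$, a much stronger restriction than $|S|\leq L_1 n_1$. This is exactly why the paper does not invert its first inequality at the natural threshold $u_x$ but at $u_x\vee 2|a|_1$: in the branch $u_x\leq 2|a|_1$ it evaluates at $u=2|a|_1$, where $u\leq 2(u-|a|_1)$ makes the remainder at most a fraction $\lesssim|S|/N$ of the main term (so smallness is relative, not absolute), and this is also the source of the factor $2|S|$ in \eqref{eq:upper_bound_q2-1} — the sharper constant $|S|$ your route would deliver is a symptom of the missing case, since the first inequality of the lemma is genuinely loose when $u-|a|_1\ll|a|_1$. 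To repair the argument, add the paper's second branch (or otherwise show $\psi_u(\lambda^*)\leq -c$ in that regime without passing through the first inequality).
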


\begin{lemma}[Upper-bound of $|a|_{\infty}$]\label{lemma:upper_bound2_q2}
There exist two positive universal constants $L_1$ and $L_2$ such that the following holds.
Consider $\delta$ a positive number sastifying $L_1\log(4/\delta) <n_1\wedge n_2$.
With probability larger than $1-\delta/2$, we have 
\begin{eqnarray*}
 |a|_{\infty}&\leq& L_2\left[\frac{1}{n_2}+ \frac{\varphi_{\max}\left\{\sqrt{\Sigma_S^{(2)}}(\Sigma_S^{(1)})^{-1}\sqrt{\Sigma_S^{(2)}}\right\}}{n_1}\right]\ .
\end{eqnarray*}
\end{lemma}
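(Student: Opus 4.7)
The plan is to reduce $|a|_{\infty}$ to extreme singular values of standard Gaussian matrices via the whitening $\X_S^{(i)} = Z_i \sqrt{\Sigma_S^{(i)}}$, and then invoke a Davidson--Szarek type deviation bound. By subadditivity of the largest eigenvalue applied to the symmetric matrix defining $a$,
\begin{equation*}
|a|_{\infty} \leq \frac{n_1}{n_2(n_1-|S|)}\left[\varphi_{\max}\left(\X_S^{(2)}(\X_S^{(1)T}\X_S^{(1)})^{-1}\X_S^{(2)T}\right)+\varphi_{\max}\left(\X_S^{(2)}(\X_S^{(2)T}\X_S^{(2)})^{-1}\X_S^{(2)T}\right)\right].
\end{equation*}
The second summand is the orthogonal projection onto the column space of $\X_S^{(2)}$ in $\mathbb{R}^{n_2}$, hence equals $1$; combined with the constraint $|S|\leq (n_1\wedge n_2)/2$ in the enveloping proof of Theorem \ref{thrm_puissance}, the corresponding contribution is at most $2/n_2$, matching the first term of the target bound.

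For the cross-term, write $\X_S^{(i)} = Z_i \sqrt{\Sigma_S^{(i)}}$ where $Z_1 \in \mathbb{R}^{n_1\times|S|}$ and $Z_2\in\mathbb{R}^{n_2\times|S|}$ have i.i.d.\ standard normal entries. The Wishart inverse satisfies
\begin{equation*}
(\X_S^{(1)T}\X_S^{(1)})^{-1} = (\sqrt{\Sigma_S^{(1)}})^{-1}(Z_1^T Z_1)^{-1}(\sqrt{\Sigma_S^{(1)}})^{-1} \preceq \sigma_{\min}^{-2}(Z_1)\,(\Sigma_S^{(1)})^{-1},
\end{equation*}
so that, using the cyclic invariance of nonzero eigenvalues applied to $Z_2 \sqrt{\Sigma_S^{(2)}}(\Sigma_S^{(1)})^{-1}\sqrt{\Sigma_S^{(2)}}Z_2^T$,
\begin{equation*}
\varphi_{\max}\left(\X_S^{(2)}(\X_S^{(1)T}\X_S^{(1)})^{-1}\X_S^{(2)T}\right) \leq \frac{\sigma_{\max}^2(Z_2)}{\sigma_{\min}^2(Z_1)}\,\varphi_{\max}\left(\sqrt{\Sigma_S^{(2)}}(\Sigma_S^{(1)})^{-1}\sqrt{\Sigma_S^{(2)}}\right).
\end{equation*}

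It then remains to control the two extreme singular values. By the standard Davidson--Szarek concentration inequality for Gaussian matrices, for each $i\in\{1,2\}$ and every $t>0$, with probability at least $1-e^{-t^2/2}$ one has
\begin{equation*}
\sqrt{n_i}-\sqrt{|S|}-t \leq \sigma_{\min}(Z_i)\leq \sigma_{\max}(Z_i)\leq \sqrt{n_i}+\sqrt{|S|}+t.
\end{equation*}
Choosing $t = \sqrt{2\log(8/\delta)}$ and taking $L_1$ large enough so that $L_1\log(4/\delta)<n_1\wedge n_2$ together with the implicit constraint $|S|\lesssim n_1\wedge n_2$ (inherited from the enveloping proof) force $\sqrt{|S|}+t \leq \sqrt{n_i}/2$, a union bound yields, with probability at least $1-\delta/2$, the simultaneous bounds $\sigma_{\min}^2(Z_1)\geq n_1/4$ and $\sigma_{\max}^2(Z_2)\leq 4n_2$. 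Plugging back, the cross-term is at most $16\,(n_2/n_1)\,\varphi_{\max}(\sqrt{\Sigma_S^{(2)}}(\Sigma_S^{(1)})^{-1}\sqrt{\Sigma_S^{(2)}})$, and the full bound on $|a|_{\infty}$ takes the announced form with an absolute constant $L_2$. The only subtle point is the calibration of $L_1$, which must be chosen compatibly with the $|S|$-constraint carried over from the enveloping proof; once this is done, the rest is standard Gaussian concentration.
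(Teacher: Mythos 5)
Your proof follows essentially the same route as the paper's: split the matrix defining $a$ into the projection part (bounded by $1$) and the cross term, whiten the designs, and control the extreme singular values of the resulting standard Gaussian matrices via a Davidson--Szarek bound (the paper's Lemma \ref{lemma:concentration_vp_wishart}). One quantitative quibble: under the inherited constraint $|S|\leq (n_1\wedge n_2)/2$ you cannot force $\sqrt{|S|}+t\leq \sqrt{n_1}/2$, so the claim $\sigma_{\min}^2(Z_1)\geq n_1/4$ should be relaxed to $\sigma_{\min}^2(Z_1)\geq c\,n_1$ for a smaller universal constant $c$, which only inflates $L_2$ and does not affect the argument.
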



\begin{lemma}[Deviations of $F_{S,V}$]\label{lemma_fisher2}
There exist three positive universal constants $L_1$, $L_2$ and $L_3$ such that the following holds.
Assume that $L_1\log(1/\delta)\leq n_1\wedge n_2$. With probability larger than $1-\delta$, we have
\begin{eqnarray}\label{Sajoration_puissance_fisher}
F_{S,V} \geq
L_2\left(\frac{(\sigma_{S}^{(1)})^2-(\sigma^{(2)}_{S})^{2}}{\sigma_{S}^{(1)}\sigma^{(2)}_{S}}\right)^2
-L_3\left[|S|^2\left(\frac{1}{n_1^2}+\frac{1}{n_2^2} \right)+ \log\left(\frac{1}{\delta}\right)\left(\frac{1}{n_1}
+\frac{1}{n_2}\right)\right]\ .
\end{eqnarray}

\end{lemma}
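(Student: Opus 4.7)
The plan is to express $F_{S,V}$ as a (squared) perturbation of the target quantity $f(\rho):=\rho-1/\rho$, where $\rho:=\sigma_S^{(1)}/\sigma_S^{(2)}$, and then combine chi-squared concentration with a Peter--Paul inequality to extract the signal. Using $g(x)=(\sqrt{x}-1/\sqrt{x})^2$, I first write $F_{S,V}=g(R_1/R_2)$ with residuals $R_i:=\|\Y^{(i)}-\X^{(i)}\widehat{\beta}_S^{(i)}\|^2/n_i$; since $R_i=(\sigma_S^{(i)})^2(n_i-|S|)T_i/n_i$ where $T_i:=\|\Pi^{(i)}_{S^{\perp}}\eps^{(i)}_S\|^2/[(n_i-|S|)(\sigma_S^{(i)})^2]$ is a normalised $\chi^2_{n_i-|S|}$ conditional on $\X$, setting $c:=n_2(n_1-|S|)/[n_1(n_2-|S|)]$ and $V:=cT_1/T_2$ gives the clean identity $F_{S,V}=\bigl(\rho\sqrt{V}-1/(\rho\sqrt{V})\bigr)^2$.

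The decisive algebraic step is the exact identity
\[
\rho\sqrt{V}-\tfrac{1}{\rho\sqrt{V}}=f(\rho)+(\sqrt{V}-1)\Bigl(\rho+\tfrac{1}{\rho\sqrt{V}}\Bigr),
\]
combined with the Peter--Paul inequality $(a+b)^2\geq a^2/2-b^2$ and the identity $(\rho+1/\rho)^2=f(\rho)^2+4$. On an event where $\sqrt{V}\in[1/2,3/2]$ one bounds $(\rho+1/(\rho\sqrt{V}))^2\leq 2\rho^2+8/\rho^2\leq 8f(\rho)^2+16$, yielding
\[
F_{S,V}\ \geq\ \Bigl(\tfrac12-8\eta\Bigr)f(\rho)^2-16\eta,\qquad \eta:=(\sqrt{V}-1)^2.
\]
It then remains to bound $\eta$ on a high-probability event. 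Laurent--Massart deviations applied to $T_1,T_2$ (conditional on $\X$) give $|\sqrt{T_i}-1|^2\lesssim\log(1/\delta)/n_i$, and the elementary estimate $|\sqrt{c}-1|\lesssim|c-1|=|S||n_1-n_2|/[n_1(n_2-|S|)]$ together with $(n_1-n_2)^2\leq n_1^2+n_2^2$ yields, with probability at least $1-\delta$,
\[
\eta\ \lesssim\ |S|^2\Bigl(\tfrac{1}{n_1^2}+\tfrac{1}{n_2^2}\Bigr)+\log(1/\delta)\Bigl(\tfrac{1}{n_1}+\tfrac{1}{n_2}\Bigr).
\]
Choosing $L_1$ large enough that $\eta\leq 1/32$ under the standing constraints (and in particular that $|S|/(n_1\wedge n_2)$ is small, which will hold within the applications of the lemma in the proof of Theorem~\ref{thrm_puissance}) forces $1/2-8\eta\geq 1/4$, so the lemma follows with $L_2=1/4$ and $L_3$ absorbing the implicit absolute constants.

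The main obstacle is precisely the coupling between the signal $f(\rho)^2$ and the multiplicative noise $V$: a naive first-order expansion of $g(\rho^2\cdot)$ produces an error proportional to $(\rho+1/\rho)^2$, which blows up when $\rho$ is far from $1$ and would leave no chance of proving a $\rho$-uniform bound. The Peter--Paul decomposition above is what allows us to reabsorb this blow-up into a controlled fraction of the leading $f(\rho)^2$ term, leaving only the uniform $O(\eta)$ residual needed to match the statement of the lemma.
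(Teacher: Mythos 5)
Your reduction of $F_{S,V}$ to $\bigl(\rho\sqrt{V}-1/(\rho\sqrt{V})\bigr)^2$, the exact perturbation identity, and the Peter--Paul step are all correct, and this is a genuinely more compact algebra than the paper's two-case analysis; the probabilistic ingredients (chi-squared deviations for $T_1,T_2$ and $|\sqrt{c}-1|\le|c-1|$) are the same. The gap is in the last step. Your bound $F_{S,V}\ge(1/2-8\eta)f(\rho)^2-16\eta$ is only useful if $8\eta<1/2$, but the hypothesis of the lemma, $L_1\log(1/\delta)\le n_1\wedge n_2$, controls only the stochastic part of $\eta$. The other part is the \emph{deterministic} distortion $(\sqrt{c}-1)^2$ with $c=n_2(n_1-|S|)/[n_1(n_2-|S|)]$, and under the only standing constraint $2|S|\le n_1\wedge n_2$ it can be a fixed constant: for $|S|=n_1/2$ and $n_2\gg n_1$ one gets $c\approx 1/2$, hence $(\sqrt{c}-1)^2\approx 0.086$ and $8\eta>1/2$, so the coefficient in front of $f(\rho)^2$ is negative and your inequality says nothing when $\rho$ is far from $1$. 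Choosing $L_1$ large cannot repair this, and appealing to ``$|S|/(n_1\wedge n_2)$ small in the applications'' imports an assumption that is not in the statement of Lemma \ref{lemma_fisher2}, which is asserted with universal constants for every admissible $S$.

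The regime you are missing is exactly the one the paper closes with its indicator/trivialization argument: either $L_2f(\rho)^2\le L_3\bigl[|S|^2(1/n_1^2+1/n_2^2)+\log(1/\delta)(1/n_1+1/n_2)\bigr]$, in which case the right-hand side of \eqref{Sajoration_puissance_fisher} is nonpositive and there is nothing to prove, or the signal dominates the distortion-plus-deviation terms, in which case the event $\mathbf{1}\{\rho^2-1\ge 2(T_2/T_1-1)\}$ holds and the identity $x+1/x-2=(x-1)^2/x$ gives $F_{S,V}\gtrsim f(\rho)^2$ directly. You can keep your decomposition and simply add this dichotomy: note that $(\sqrt{c}-1)^2\lesssim |S|^2(1/n_1^2+1/n_2^2)$, so if $\eta$ exceeds a small constant the subtracted term in \eqref{Sajoration_puissance_fisher} is of constant order; then either $f(\rho)^2$ is bounded by a multiple of that term (trivial case, absorbed by taking $L_3$ large relative to $L_2$), or $\rho^2$ exceeds a large constant, in which case $c\in[1/2,2]$ and the deviation control give $\rho^2V\ge\rho^2/8$ with high probability, and monotonicity of $g$ yields $F_{S,V}\ge f(\rho)^2/8-2$, the $-2$ being again absorbed into the noise term. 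With that extra case your proof is complete and matches the stated generality of the lemma.
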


\begin{lemma}[Deviations of $F_{S,1}$]\label{lemma_puissance_deuxieme}
There exist two positive universal constants $L_1$ and  $L_2$ such that the following holds.
Assume that 
\begin{equation}\label{eq:condition_lemma_puissance_deuxieme}
L_1\log(12/\delta)<n_1\wedge n_2\ . 
\end{equation}
With probability larger than $1-\delta/2$, we have
\begin{eqnarray}\label{Sajoration_puissance_deuxieme_terme}
F_{S,1}\geq
\frac{\|\beta_S^{(2)}-\beta_S^{(1)}\|_{\Sigma^{(2)}}^2}{8(\sigma_{S}^{(1)})^2} - \log\left(6/\delta\right)L_2\left[\frac{1}{n_2}\frac{(\sigma^{(2)}_{S})^2}{(\sigma^{(1)}_{S})^2}+\frac{\varphi_{S}}{n_1}\right]\ ,
\end{eqnarray}
where $\varphi_S$ is defined in (\ref{eq:definition_varphiS}).
\end{lemma}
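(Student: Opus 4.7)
The plan is to exhibit a deterministic lower bound on $F_{S,1}$ by separately controlling numerator and denominator on a good event of probability at least $1-\delta/2$. First, I will decompose the numerator by plugging in the explicit form of the least-squares estimators. Writing $\widehat{\beta}^{(i)}_S = \beta^{(i)}_S + (\X_S^{(i)\intercal}\X_S^{(i)})^{-1}\X_S^{(i)\intercal}\eps^{(i)}_S$ and setting $\Delta := \beta^{(2)}_S - \beta^{(1)}_S$, one gets
\[
\X^{(2)}_S\bigl(\widehat{\beta}^{(1)}_S-\widehat{\beta}^{(2)}_S\bigr) = -\X^{(2)}_S\Delta + B - C,
\]
with $B := \X^{(2)}_S(\X_S^{(1)\intercal}\X_S^{(1)})^{-1}\X_S^{(1)\intercal}\eps^{(1)}_S$ and $C := \Pi^{(2)}_S\eps^{(2)}_S$. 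Using the elementary inequality $\|A+D\|^2\geq \tfrac{1}{2}\|A\|^2-\|D\|^2$ with $A=-\X^{(2)}_S\Delta$ and $D=B-C$, together with $\|D\|^2\leq 2\|B\|^2+2\|C\|^2$, reduces the task to lower-bounding $\|\X^{(2)}_S\Delta\|^2$ and upper-bounding $\|B\|^2$, $\|C\|^2$ and the denominator.

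For the signal term, conditionally on $\X^{(2)}_S$, the vector $\X^{(2)}_S\Delta$ has i.i.d.\ $\mathcal{N}(0,\|\Delta\|_{\Sigma_S^{(2)}}^2)$ coordinates, so $\|\X^{(2)}_S\Delta\|^2/\|\Delta\|_{\Sigma_S^{(2)}}^2\sim \chi^2_{n_2}$ and a one-sided Laurent-Massart tail gives $\|\X^{(2)}_S\Delta\|^2\geq \tfrac{n_2}{2}\|\Delta\|_{\Sigma_S^{(2)}}^2$ with probability $\geq 1-\delta/12$ provided $n_2\gtrsim \log(12/\delta)$. For $C$, since $\Pi^{(2)}_S$ has rank $|S|$ and is $\X^{(2)}$-measurable while $\eps^{(2)}_S$ is an independent centered Gaussian with variance $(\sigma^{(2)}_S)^2$, conditionally on $\X$ one has $\|C\|^2\sim(\sigma^{(2)}_S)^2\chi^2_{|S|}$, so $\|C\|^2\lesssim (\sigma^{(2)}_S)^2\log(12/\delta)$ with probability $\geq 1-\delta/12$ (using $|S|\lesssim\log(12/\delta)$ from \eqref{eq:thrm_puissance_bonne_hypothese}). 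For the denominator, $\|\Y^{(1)}-\X^{(1)}\widehat{\beta}^{(1)}_S\|^2 = \|\Pi^{(1)}_{S^\perp}\eps^{(1)}_S\|^2\sim(\sigma^{(1)}_S)^2\chi^2_{n_1-|S|}$, which gives $\|\Pi^{(1)}_{S^\perp}\eps^{(1)}_S\|^2/n_1\leq 2(\sigma^{(1)}_S)^2$ on an event of probability $\geq 1-\delta/12$ under \eqref{eq:condition_lemma_puissance_deuxieme}.

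The main obstacle is controlling $\|B\|^2$, which couples the design $\X^{(2)}_S$ with the inverse Wishart-type matrix $(\X_S^{(1)\intercal}\X_S^{(1)})^{-1}$. Conditionally on $\X$, $B$ is Gaussian with covariance $(\sigma^{(1)}_S)^2\Lambda$, where $\Lambda := \X^{(2)}_S(\X_S^{(1)\intercal}\X_S^{(1)})^{-1}\X_S^{(2)\intercal}$, so by Laurent-Massart
\[
\|B\|^2 \leq (\sigma^{(1)}_S)^2\Bigl[\Tr(\Lambda) + 2\sqrt{\Tr(\Lambda^2)\log(12/\delta)} + 2\varphi_{\max}(\Lambda)\log(12/\delta)\Bigr]
\]
on an $\X$-conditional event of probability $\geq 1-\delta/12$. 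It remains to bound $\Tr(\Lambda)$ and $\varphi_{\max}(\Lambda)$ in terms of $\varphi_S$. On a high-probability event analogous to the one built in Lemma \ref{lemma:upper_bound2_q2}, standard non-asymptotic concentration of the Wishart matrices $\X_S^{(1)\intercal}\X_S^{(1)}/n_1$ and $\X_S^{(2)\intercal}\X_S^{(2)}/n_2$ around their means (Davidson-Szarek) yields $\Tr(\Lambda)\lesssim \tfrac{n_2}{n_1}\Tr\!\bigl(\sqrt{\Sigma_S^{(2)}}(\Sigma_S^{(1)})^{-1}\sqrt{\Sigma_S^{(2)}}\bigr)\leq \tfrac{n_2|S|}{n_1}\varphi_S$ and $\varphi_{\max}(\Lambda)\lesssim \varphi_S/n_1$, valid provided $|S|\lesssim n_1\wedge n_2$ and $\log(12/\delta)\lesssim n_1\wedge n_2$. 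Plugging these into the Gaussian tail gives $\|B\|^2\lesssim (\sigma^{(1)}_S)^2 \tfrac{n_2}{n_1}\varphi_S\log(12/\delta)$. Assembling the four events via a union bound, the numerator satisfies
\[
\frac{1}{n_2}\|\X^{(2)}_S(\widehat{\beta}^{(1)}_S-\widehat{\beta}^{(2)}_S)\|^2 \geq \tfrac{1}{4}\|\Delta\|_{\Sigma_S^{(2)}}^2 - L\log(6/\delta)\Bigl[\tfrac{(\sigma^{(2)}_S)^2}{n_2}+\tfrac{(\sigma^{(1)}_S)^2\varphi_S}{n_1}\Bigr],
\]
and dividing by the denominator upper bound $2(\sigma^{(1)}_S)^2$ yields exactly \eqref{Sajoration_puissance_deuxieme_terme}, with the factor $1/8$ arising from the combined $1/2\times 1/2$ losses. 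The only delicate bookkeeping is to verify that the Wishart concentration regime used for $\Lambda$ is guaranteed by the smallness conditions \eqref{eq:thrm_puissance_bonne_hypothese} collected for $\cS'$.
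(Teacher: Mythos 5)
Your overall architecture (good event, $\chi^2_{n_2}$ lower bound for the signal term $\|\X^{(2)}_S(\beta^{(2)}_S-\beta^{(1)}_S)\|^2$, Wishart control of $\X_S^{(2)}(\X_S^{(1)\intercal}\X_S^{(1)})^{-1}\X_S^{(2)\intercal}$, and the bound $T_1(n_1-|S|)/n_1\leq 2$ for the denominator) matches the paper's proof, but your treatment of the noise in the numerator has a genuine gap. By using $\|A+D\|^2\geq \tfrac12\|A\|^2-2\|B\|^2-2\|C\|^2$ you are forced to control the \emph{full} squared norms of $C=\Pi^{(2)}_S\eps^{(2)}_S$ and $B=\X^{(2)}_S(\X_S^{(1)\intercal}\X_S^{(1)})^{-1}\X_S^{(1)\intercal}\eps^{(1)}_S$, and these concentrate around their traces: $\|C\|^2\approx(\sigma^{(2)}_S)^2\,|S|$ and, by your own estimate $\Tr(\Lambda)\lesssim \tfrac{n_2}{n_1}|S|\varphi_S$, $\|B\|^2\approx(\sigma^{(1)}_S)^2\tfrac{n_2}{n_1}|S|\varphi_S$. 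After dividing by $n_2$ and by the denominator, this produces loss terms of order $\tfrac{|S|}{n_2}\tfrac{(\sigma^{(2)}_S)^2}{(\sigma^{(1)}_S)^2}+\tfrac{|S|\varphi_S}{n_1}$, i.e.\ a bound with an $|S|$ factor in place of $\log(6/\delta)$, which is strictly weaker than \eqref{Sajoration_puissance_deuxieme_terme}. The step where you discard these trace terms by invoking ``$|S|\lesssim\log(12/\delta)$ from \eqref{eq:thrm_puissance_bonne_hypothese}'' is not valid: that display bounds $\log(12/\delta)$ and $\log(1/\alpha_S)$ by multiples of $n_1\wedge n_2$ and bounds $|S|$ by a multiple of $n_1\wedge n_2$, but nowhere relates $|S|$ to $\log(1/\delta)$; moreover the lemma is stated under the sole hypothesis \eqref{eq:condition_lemma_puissance_deuxieme}, and in the intended applications $|S|$ can be of order $(n_1\wedge n_2)/\log p$, far larger than $\log(1/\delta)$. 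Your final assembled inequality therefore does not follow from your intermediate bounds.

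The missing idea, which is how the paper proceeds, is to avoid paying for the dimension of the noise by projecting it onto the signal direction: writing the numerator as $\|\X_S^{(2)}\Delta+W\|^2/n_2$ with $W\sim\N\bigl(0,(\sigma^{(2)}_S)^2\Pi^{(2)}_S+(\sigma^{(1)}_S)^2\Lambda\bigr)$, one uses $\|\X_S^{(2)}\Delta+W\|\geq \bigl|\,\|\X_S^{(2)}\Delta\|+\langle W,\X_S^{(2)}\Delta/\|\X_S^{(2)}\Delta\|\rangle\,\bigr|$, so the noise enters only through a one-dimensional Gaussian whose variance is at most $(\sigma^{(2)}_S)^2+(\sigma^{(1)}_S)^2\varphi_{\max}(\Lambda)$. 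A $\chi^2_1$ deviation bound then yields a loss of order $\log(1/\delta)\bigl[(\sigma^{(2)}_S)^2/n_2+(\sigma^{(1)}_S)^2\varphi_S/n_1\bigr]$ with no $|S|$ factor, after which your Wishart step and denominator bound give exactly \eqref{Sajoration_puissance_deuxieme_terme}. If you replace your norm-splitting step by this projection argument, the rest of your proof goes through.
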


Consider some $S\in\S'$. Combining Lemmas \ref{lemma_fisher} and
\ref{lemma_fisher2},  we derive that $\widetilde{Q}_{V,|S|}(F_{S,V}|{\bf X}_S)\leq \alpha_S$ holds with probability larger than $1-\delta$ if 
\begin{eqnarray*}
\frac{\left[(\sigma_{S}^{(1)})^2-(\sigma^{(2)}_{S})^{2}\right]^2}{(\sigma_{S}^{(1)})^2(\sigma^{(2)}_{S})^{2}}\geq  L \left[|S|^2\left(\frac{1}{n_1^2}+\frac{1}{n_2^2} \right)+ \log[1/(\alpha_S\delta)]\left(\frac{1}{n_1}
+\frac{1}{n_2}\right)\right]\ . 
\end{eqnarray*}
Similarly, combining Lemmas \ref{lemma:upper_bound_q2},
\ref{lemma:upper_bound2_q2}, and \ref{lemma_puissance_deuxieme}, we derive that $\widetilde{Q}_{1,|S|}(F_{S,1}|{\bf X}_S)\leq \alpha_S$ with probability larger than $1-\delta$ if 
\begin{align*}
\frac{\|\beta_S^{(2)}-\beta_S^{(1)}\|_{\Sigma^{(2)}}^2}{(\sigma_{S}^{(1)})^2}\geq &
L'_1\left(\varphi_S+1\right)\left(\frac{1}{n_1}+\frac{1}{n_2}\right)\left[|S|+
  \log\left(\frac{6}{\delta\alpha_S}\right)\right]  + \frac{L'_2}{n_2} \left(\frac{\sigma^{(2)}_{S}}{\sigma^{(1)}_{S}}\right)^2\log\left(\frac{6}{\delta}\right)\ .
\end{align*}
A symmetric result holds for  $\widetilde{Q}_{2,|S|}(F_{S,2}|{\bf X}_S)$.

Consequently, $\widetilde{Q}_{V,|S|}(F_{S,V}|{\bf X}_S)\wedge \widetilde{Q}_{1,|S|}(F_{S,1}|{\bf X}_S)\wedge \widetilde{Q}_{2,|S|}(F_{S,2}|{\bf X}_S)\leq \alpha_S$ with probability larger than $1-\delta$ if 
\begin{eqnarray}\nonumber 
\mathcal{K}_1(S)+ \mathcal{K}_2(S)&\geq& L^*_1\varphi_{S} \left(\frac{1}{n_1}+ \frac{1}{n_2}\right)\left[|S|+\log\left(\frac{6}{\alpha_S\delta}\right)\right]
\\ &&+
 L_2^*\log(6/\delta)\left(\frac{1}{n_1}+\frac{1}{n_2}\right)\left[\left(\frac{\sigma^{(2)}_{S}}{\sigma^{(1)}_{S}}\right)^2+  \left(\frac{\sigma^{(1)}_{S}}{\sigma^{(2)}_{S}}\right)^2\right]\ .\label{eq:lower_bound}
\end{eqnarray}
Since we assume that $4L^*_2\log(6/\delta)\leq n_1\wedge n_2$ in \eqref{eq:thrm_puissance_bonne_hypothese}, the last condition is fulfilled if 
\[ \mathcal{K}_1(S)+ \mathcal{K}_2(S)\geq L^*\varphi_{S} \left(\frac{1}{n_1}+ \frac{1}{n_2}\right)\left[|S|+\log\{6/(\alpha_S\delta)\}\right]\ .\]

We now proceed to the proof of the five previous lemmas.

\begin{proof}[Proof of Lemma \ref{lemma_fisher}]
Let $u\in (0,1)$ and $\bar{F}^{-1}_{D,N}(u)$ be the $1-u$ quantile of a Fisher
random variable with $D$ and $N$ degrees of freedom.
 According to \cite{2003_AS_Baraud}, we have
\begin{eqnarray*}
\bar{F}^{-1}_{D,N}(u)&\leq& 1 + 2\sqrt{\left(\frac{1}{D}+
\frac{1}{N}\right)\log\left(\frac{1}{u}\right)}+
\left(\frac{N}{2D}+1\right)\left[\exp\left(\frac{4}{N}
\log\left(\frac{1}{u}\right)\right)-1\right]\ .
\end{eqnarray*}
Let us assume that $8/N\log(1/u)\leq 1$. By convexity of the exponential
function it holds that 
\begin{eqnarray*}
\bar{F}^{-1}_{D,N}(u)&\leq& 1 + 2\sqrt{\left(\frac{1}{D}+
\frac{1}{N}\right)\log\left(\frac{1}{u}\right)} +
\left(\frac{4}{D}+\frac{8}{N}\right)
\log\left(\frac{1}{u}\right)\ .
\end{eqnarray*}

Recall $T_1$ and $T_2$ defined in (\ref{eq:defi_T1}).
Under hypothesis $\hyp_0$, $$\frac{T_1}{T_2}\sim
\mathrm{Fisher}(n_1-|S|,n_2-|S|)\ .$$ 
Consider some  $x>0$ such that  $[8/(n_1-|S|)\vee 8/(n_2-|S|)]\log(2/x)\leq 1$.
Then, with probability larger than $1-x/2$ we have,
\begin{eqnarray}
\frac{T_1}{T_2} \frac{n_2(n_1-|S|)}{n_1(n_2-|S|)}&\leq&
\left(1+\frac{|S|(n_1-n_2)}{n_1(n_2-|S|)}\right)\left(1+
8\sqrt{\frac{\log(2/x)}{n_1-|S|}} +8 \sqrt{\frac{\log(2/x)}{n_2-|S|}}\right)
\nonumber\\
&\leq & \left(1+\frac{|S|(n_1-n_2)}{n_1(n_2-|S|)}\right)\left(1+
12\sqrt{\frac{\log(2/x)}{n_1}}
+12\sqrt{\frac{\log(2/x)}{n_2}}\right)\leq L \nonumber\ , 
\end{eqnarray}
since $|S|\leq (n_1\wedge n_2)/2$. Similarly, with probability at least $1-x/2$, we have
\begin{eqnarray}
\frac{T_2}{T_1}\frac{n_1(n_2-|S|)}{n_2(n_1-|S|)}\leq 
\left[\left(1+\frac{|S|(n_2-n_1)}{n_2(n_1-|S|)}\right)\left(1+
12\sqrt{\frac{\log(2/x)}{n_1}}
+12\sqrt{\frac{\log(2/x)}{n_2}}\right)\right]\wedge L \
.\label{controle_deviation_fisher}
\end{eqnarray}
Depending on the sign of $\frac{T_1}{T_2}\frac{n_2(n_1-|S|)}{n_1(n_2-|S|)}-1$, we apply one the two following
identities:
\begin{eqnarray*}
\frac{T_1}{T_2}\frac{n_2(n_1-|S|)}{n_1(n_2-|S|)}+ \frac{T_2}{T_1}\frac{n_1(n_2-|S|)}{n_2(n_1-|S|)}-2  &=&
\left(\frac{T_1}{T_2}\frac{n_2(n_1-|S|)}{n_1(n_2-|S|)}-1\right)^2\frac{T_2}{T_1}\frac{n_1(n_2-|S|)}{n_2(n_1-|S|)}\ ,\\
\frac{T_1}{T_2}\frac{n_2(n_1-|S|)}{n_1(n_2-|S|)}+ \frac{T_2}{T_1}\frac{n_1(n_2-|S|)}{n_2(n_1-|S|)}-2 &= &\left(\frac{T_2}{T_1}\frac{n_1(n_2-|S|)}{n_2(n_1-|S|)}-1\right)^2
\frac{T_1}{T_2}\frac{n_2(n_1-|S|)}{n_1(n_2-|S|)}\ .
\end{eqnarray*}
Combining the different bounds, we conclude that with probability larger than
$1-x$,
\begin{eqnarray*}
F_{S,V}&:=&\frac{T_1}{T_2}\frac{n_2(n_1-|S|)}{n_1(n_2-|S|)}+ \frac{T_2}{T_1}\frac{n_1(n_2-|S|)}{n_2(n_1-|S|)}-2\\
&\leq& L
\left[\left(\frac{|S|(n_1-n_2)}{n_1n_2}\right)^2 +
\log(2/x)\frac{n_1+n_2}{n_1n_2}\right]\ .
\end{eqnarray*}
\end{proof}

\begin{proof}[Proof of Lemma \ref{lemma:upper_bound_q2}]
As in the proof of Proposition \ref{prte:majoration_Q}, we note
$N=n_1-|S|$.
Recall that $\widetilde{Q}_{1,|S|}(u|{\bf X}_S)$ is defined  as
$\exp\psi_u(\lambda^\star)$ (see Definition \ref{defi:deviation}). We start by
upper-bounding $\psi_u(\lambda^\star)$, which proves the first
upper-bound of the logarithm of the tail probability $\log
\widetilde{Q}_{1,|S|}(u|{\bf X}_S)$. We then exhibit a value $u_x$
such that $\psi_{u_x}(\lambda^\star) \leq \log x$.

\paragraph{Upper-bound of the tail probability.}
Since Equation (\ref{eq:derive_def_lambda}) is increasing with respect to $\lambda$ and with respect to $N$, $\lambda^{*}$ decreases with $N$. Consequently,
\begin{eqnarray*}
  \lambda^*\leq \lambda_{+} := \frac{u-|a|_1}{2\left[|a|_{\infty}(u-|a|_1)+\|a\|^2\right]}\ .
\end{eqnarray*}
By convexity, $1-\sqrt{1-x}\geq x/2$ for any $0\leq x\leq 1$. Applying this inequality, we upper bound $\sqrt{\Delta}$ and derive that
\begin{eqnarray*}
 \lambda^*\geq \lambda_{-} := \frac{u-|a|_1}{2\left[|a|_{\infty}(u-|a|_1)+\|a\|^2+\frac{|a|_1u}{N}\right]}\ .
\end{eqnarray*}

Since  $u\leq N|a|_{\infty}$, $2\lambda^{*}u\leq N$. Observing that $-\log(1-2x)/2\leq x+x^2/(1-2x)$ for any $0<x<1/2$ and that $\log(1+x)\geq x-x^2$ for any $x>0$, we derive
\begin{eqnarray}\nonumber
\psi_u(\lambda^*)&\leq& |a|_1\lambda_{+}+\frac{\lambda^2_{+} \|a\|^2}{1-2|a|_{\infty}\lambda_{+}}-\lambda^*u+2\frac{(\lambda^*)^2u^2}{N}\\ \nonumber
&\leq & -\frac{(u-|a|_1)^2}{4\left[|a|_{\infty}(u-|a|_1)+\|a\|^2\right]}+ \frac{2\lambda_{+}^2u^2}{N}+ (\lambda_{+}-\lambda_{-})u\\ \label{eq:upper_tail}
&\leq & -\frac{(u-|a|_1)^2}{4\left[|a|_{\infty}(u-|a|_1)+\|a\|^2\right]} +  \frac{(u-|a|_1)u^3}{2N\left[|a|_{\infty}(u-|a|_1)+\|a\|^2\right]^2}\ .
 \end{eqnarray}

\paragraph{Upper-bound of the quantile.} Let us turn to the upper bound of $\widetilde{Q}_{1,|S|}^{-1}(x|{\bf X}_S)$. Consider $u_x$ the solution larger than $|a|_1$ of the equation 
\[\frac{(u-|a|_1)^2}{4\left[|a|_{\infty}(u-|a|_1)+\|a\|^2\right]}=2\log(1/x)\ , \]
and observe that 
\begin{eqnarray*}
2\|a\|\sqrt{\log(1/x)}\leq  u_x-|a|_1\leq 2\sqrt{2}\|a\|\sqrt{\log(1/x)}+ 8|a|_{\infty}\log(1/x) \ . 
\end{eqnarray*}
Choosing $L_1$ and $L_2$ large enough  in the condition $|S|\leq L_1n_1$ and  in condition \eqref{eq:condition_majoration_q2-1} leads us to $u_x\leq
N|a|_{\infty}$. We now prove that $\psi_{u_x\vee 2|a|_1}(\lambda^*)
\leq \log{x}$. If $u_x\geq 2|a|_1$, then $u_x^3\leq 8(u_x-|a|_1)^3$ and it follows from \eqref{eq:upper_tail} that 
\begin{eqnarray*}
\psi_{u_x}(\lambda^*)&\leq& \log(1/x)\left[-2 + \frac{2^8\log(1/x)}{N}\right]\leq - \log(1/x)
\end{eqnarray*}
if we take $L_2$ large enough in Condition (\ref{eq:condition_majoration_q2-1}). 
If $u_x\leq 2|a|_1$, then $|a|_1^2/(|a|_\infty|a|_1+ \|a\|^2)\geq 8\log(1/x)$ and 
\begin{eqnarray*}
\psi_{u_x\vee 2|a|_1}(\lambda^*)&\leq& -\frac{|a|_1^2}{4\left[|a|_{\infty}|a|_1+\|a\|^2\right]}\left[1 - \frac{2^4|a|_1^2}{N\left[|a|_{\infty}|a|_1+\|a\|^2\right]}\right]\leq -\log(1/x)\ ,
\end{eqnarray*}
if we take $L_1$ and $L_2$ large enough in the two aforementionned condition. 
since $|S|\leq 2^{-6}n_1$. 
Thus, we conclude that 
\[
\widetilde{Q}_{1,|S|}^{-1}(x|{\bf X}_S) \leq u_x\vee 2|a|_1\leq |a|_1+
\left[2\sqrt{2}\|a\|\sqrt{\log(1/x)}+
  8|a|_{\infty}\log(1/x)\right]\vee |a|_1\ . 
\]
\end{proof}

\begin{proof}[Proof of Lemma \ref{lemma:upper_bound2_q2}]
Upon defining ${\bf Z}_S^{(1)}={\bf X}_S^{(1)}\left(\Sigma_S^{(1)}\right)^{-1/2}$ and ${\bf Z}_S^{(2)}={\bf X}_S^{(2)}\left(\Sigma_S^{(2)}\right)^{-1/2}$, it follows that ${\bf Z}_{S}^{(1)}$ and ${\bf Z}_S^{(2)}$ follow standard Gaussian distributions.

\begin{eqnarray*}
 |a|_{\infty}&\leq & \frac{n_1}{n_2(n_1-|S|)}\left[1+ \varphi_{\max}\left\{{\bf Z}_{S}^{(2)}\sqrt{\Sigma_S^{(2)}(\Sigma_S^{(1)})^{-1}}\left({\bf Z}_{S}^{(1)\intercal}{\bf Z}_{S}^{(1)}\right)^{-1}\sqrt{(\Sigma_S^{(1)})^{-1}\Sigma_S^{(2)}}{\bf Z}_{S}^{(2)\intercal}\right\}\right]\\
&\leq& \frac{2}{n_2}+ 2\frac{\varphi_{\max}[{\bf Z}_{S}^{(2)\intercal}{\bf Z}_{S}^{(2)}]}{n_2\varphi_{\max}[{\bf Z}_{S}^{(1)\intercal}{\bf Z}_{S}^{(1)}]}\varphi_{\max}\left[\sqrt{\Sigma_S^{(2)}}(\Sigma_S^{(1)})^{-1}\sqrt{\Sigma_S^{(2)}}\right]\ .
\end{eqnarray*}
In order to conclude, we control the largest and the smallest eigenvalues of Standard Wishart matrices applying Lemma \ref{lemma:concentration_vp_wishart}.
\end{proof}

\begin{proof}[Proof of Lemma \ref{lemma_fisher2}]
By symmetry, we can assume that  $\sigma_{S}^{(1)}/\sigma^{(2)}_{S}\geq 1$. Recall the definition of $T_1$ and $T_2$ in the proof of Proposition \ref{prte:distribution_parametrique}\\

\noindent {\bf CASE 1}. Suppose that $T_1/T_2\geq 1$.\\
\begin{eqnarray}
-2 +\frac{(\sigma_{S}^{(1)})^2}{(\sigma^{(2)}_{S})^{2}}\frac{T_1}{T_2}+
\frac{(\sigma^{(2)}_{S})^{2}}{(\sigma_{S}^{(1)})^2}\frac{T_2}{T_1}& \geq &
\frac{[(\sigma_{S}^{(1)})^2-(\sigma^{(2)}_{S})^{2}]^2}{(\sigma_{S}^{(1)})^2(\sigma^{(2)}_{S})^{2}}+
\frac{(\sigma_{S}^{(1)})^2}{(\sigma^{(2)}_{S})^{2}}\left(\frac{T_1}{T_2}-1\right)
+\frac{(\sigma^{(2)}_{S})^{2}}{(\sigma_{S}^{(1)})^2}\left(\frac{T_2}{T_1}-1\right)\nonumber\\ 
  &\geq &
\frac{[(\sigma_{S}^{(1)})^2-(\sigma^{(2)}_{S})^{2}]^2}{(\sigma_{S}^{(1)})^2(\sigma^{(2)}_{S})^{2}}\
.\label{Sinoration_fisher_puissance1}
\end{eqnarray}

\noindent {\bf CASE 2}. Suppose that $T_1/T_2\leq  1$.
\begin{eqnarray*}
-2 +\frac{(\sigma_{S}^{(1)})^2}{(\sigma^{(2)}_{S})^{2}}\frac{T_1}{T_2}+
\frac{(\sigma^{(2)}_{S})^{2}}{(\sigma_{S}^{(1)})^2}\frac{T_2}{T_1} &=&
\left(\frac{(\sigma_{S}^{(1)})^2}{(\sigma^{(2)}_{S})^{2}}-\frac{T_2}{T_1}\right)^2
\frac{ (\sigma^{(2)}_{S})^{2} } {(\sigma_{S}^{(1)})^2 }\frac { T_1 } { T_2 } \nonumber\\
&\geq  &\frac{T_1}{T_2}
\frac{[(\sigma_{S}^{(1)})^2-(\sigma^{(2)}_{S})^{2}]^2}{4(\sigma_{S}^{(1)})^2(\sigma^{(2)}_{S})^{2}}\mathbf{1
}_{ \frac{
(\sigma_{S}^{(1)})^2}{(\sigma^{(2)}_{S})^{2}}-1\geq 2\left(\frac{T_2}{T_1}-1\right)}\ .
\end{eqnarray*}
We need to control the deviations of $T_2/T_1$. Using bound 
\eqref{controle_deviation_fisher}, we get
\begin{eqnarray*}
\frac{T_2}{T_1}\leq  \left(1+\frac{|S|(n_2-n_1)}{n_2(n_1-|S|)}\right)\left(1+
12\sqrt{\frac{\log(1/\delta)}{n_1}}
+12\sqrt{\frac{\log(1/\delta)}{n_2}}\right)\ ,
\end{eqnarray*}
 with probability larger than $1-\delta$. Since $|S|\leq (n_1\wedge n_2)/2$, we derive
that 
$$\frac{T_2}{T_1}-1\leq  \frac{2|S|}{n_1}+
24\sqrt{\frac{\log(1/\delta)}{n_1}}
+24\sqrt{\frac{\log(1/\delta)}{n_2}}\leq 3 \ ,$$
for $L_1$ large enough in the statement of the lemma.
In conclusion, we have
\begin{eqnarray}
-2 +\frac{(\sigma_{S}^{(1)})^2}{(\sigma^{(2)}_{S})^{2}}\frac{T_1}{T_2}+
\frac{(\sigma^{(2)}_{S})^{2}}{(\sigma_{S}^{(1)})^2}\frac{T_2}{T_1} \geq
\frac{[(\sigma_{S}^{(1)})^2-(\sigma^{(2)}_{S})^{2}]^2}{16(\sigma_{S}^{(1)})^2(\sigma^{(2)}_{S})^{2}}\ ,
\label{Sinoration_fisher_puissance2}
\end{eqnarray}
with probability larger than $1-\delta$, as long as 
\begin{eqnarray}\label{Sinoration_fisher_puissance3}
\frac{[(\sigma_{S}^{(1)})^2-(\sigma^{(2)}_{S})^{2}]^2}{(\sigma_{S}^{(1)})^2(\sigma^{(2)}_{S})^{2}}\geq
L\left[\frac{|S|^2}{n_1^2}+\frac{|S|^2}{n_2^2}+ \log(1/\delta)\left(\frac{1}{n_1}
+\frac{1}{n_2}\right)\right]\ .
\end{eqnarray}
Combining (\ref{Sinoration_fisher_puissance1}),
(\ref{Sinoration_fisher_puissance2}), and (\ref{Sinoration_fisher_puissance3}),
we derive
\begin{eqnarray*}
-2 +\frac{(\sigma_{S}^{(1)})^2}{(\sigma^{(2)}_{S})^{2}}\frac{T_1}{T_2}+
\frac{(\sigma^{(2)}_{S})^{2}}{(\sigma_{S}^{(1)})^2}\frac{T_2}{T_1} \geq
\frac{[(\sigma_{S}^{(1)})^2-(\sigma^{(2)}_{S})^{2}]^2}{16(\sigma_{S}^{(1)})^2(\sigma^{(2)}_{S})^{2}}-
L\left[\frac{|S|^2}{n_1^2}+ \frac{|S|^2}{n_2^2}+ \log(1/\delta)\left(\frac{1}{n_1}
+\frac{1}{n_2}\right)\right]\ ,
\end{eqnarray*}
with probability larger than $1-\delta$.
\end{proof}

\begin{proof}[Proof of Lemma \ref{lemma_puissance_deuxieme}]

We  want to lower bound the random variable $F_{S,1}=  \frac{R n_1}{(\sigma^{(1)}_{S})^2T_1(n_1-|S|)}$ where $R$ is defined by
$$R:= \|\X_S^{(2)}(\beta_S^{(2)}-\beta_S^{(1)})+\Pi_S^{(2)}
\eps_S^{(2)}-\X_S^{(2)}(\X_S^{(1)\intercal
} \X_S^{
(1)})^{(-1)}\X_{S}^{(1)\intercal}\eps^{(1)}_S \|^2/n_2\ .$$
Let us first work conditionally to  ${\bf X}^{(1)}_S$ and $\X_S^{(2)}$. Upon
defining the Gaussian vector $W$ by
 $$W\sim
\mathcal{N}\left[0,(\sigma^{(2)}_{S})^{2}\Pi_S^{(2)}+(\sigma_{S}^{(1)})^2\X_S^{(2)}(\X_S^{(1)\intercal
} \X_S^{
(1)})^{(-1)}\X_{S}^{(2)\intercal}\right]\ ,$$
we get $R= \|\X_S^{(2)}(\beta_S^{(2)}-\beta_S^{(1)})+W\|^2/n_2$. We have the
following lower bound:
\begin{eqnarray*}
R &\geq& \left(\|\X_S^{(2)}(\beta_S^{(2)}-\beta_S^{(1)})\|+\left\langle
W,\frac{\X_S^{(2)}(\beta_S^{(2)}-\beta_S^{(1)})}{\|\X_S^{(2)}
(\beta_S^ { (2) } -\beta_S^{(1)})\|}\right\rangle\right)^2/n_2\\
&\geq&\frac{\|\X_S^{(2)}(\beta_S^{(2)}-\beta_S^{(1)})\|^2}{2n_2}-\frac{1}{n_2}\left\langle
W,\frac{\X_S^{(2)}(\beta_S^{(2)}-\beta_S^{(1)})}{\|\X_S^{(2)}
(\beta_S^ { (2) } -\beta_S^{(1)})\|}\right\rangle^2
\end{eqnarray*}
The random variable
$\|\X_S^{(2)}(\beta_S^{(2)}-\beta_S^{(1)})\|^2/\|\beta_S^{(2)}-\beta_S^{(1)}\|^2_{\Sigma^{(2)}}$
follows a $\chi^2$ distribution with $n_2$ degrees of freedom. Given $(\X^{(1)}_S, \X^{(2)}_S)$, $\left\langle
W,\frac{\X_S^{(2)}(\beta_S^{(2)}-\beta_S^{(1)})}{\|\X_S^{(2)}
(\beta_S^ { (2) } -\beta_S^{(1)})\|}\right\rangle^2$ is proportional to a $\chi^2$ distributed random variable with one degree of freedom and its variance is smaller than $(\sigma_{S}^{(2)})^2+ \varphi_{\max}[\X_S^{(2)}(\X_S^{(1)\intercal
} \X_S^{(1)})^{(-1)}\X_{S}^{(2)\intercal}](\sigma_{S}^{(1)})^2$.
Applying Lemma \ref{lemma:concentration_chi2}, we derive that with probability larger than $1-x/6$, 
\begin{eqnarray*}
 R &\geq & \frac{\|\beta_S^{(2)}-\beta_S^{(1)}\|^2_{\Sigma^{(2)}}}{2}\left[1-2\sqrt{\frac{\log(12/x)}{n_2}}\right]\\ &-& 4 \frac{\log\left(12/x\right)}{n_2}\left[(\sigma_{S}^{(2)})^2+(\sigma_{S}^{(1)})^2\varphi_{\max}\{\X_S^{(2)}(\X_S^{(1)\intercal
} \X_S^{(1)})^{(-1)}\X_{S}^{(2)\intercal}\}\right]\ .
\end{eqnarray*}
Using the upper bound $|S|\leq (n_1\wedge n_2)/2$ and Lemma \ref{lemma:concentration_vp_wishart}, we control  the last term
\begin{eqnarray*}
\varphi_{\max}\left[\X_S^{(2)}(\X_S^{(1)\intercal
} \X_S^{(1)})^{(-1)}\X_{S}^{(2)\intercal}\right]\leq L\varphi_S \frac{n_2}{n_1}\ , 
\end{eqnarray*}
with probability larger than $1-2\exp[-(n_1\wedge n_2)L']$. If we take the constant $L_1$ large enough in condition (\ref{eq:condition_lemma_puissance_deuxieme}), then we get
\begin{eqnarray}\label{Sinoration_numerateur_puissance}
 R &\geq & \frac{\|\beta_S^{(2)}-\beta_S^{(1)}\|^2_{\Sigma^{(2)}}}{4}-  \log\left(12/\delta\right)L\left[\frac{(\sigma_{S}^{(2)})^2}{n_2}+\frac{(\sigma_{S}^{(1)})^2}{n_1}\varphi_{S}\right]\ ,
\end{eqnarray}
with probability larger than $1-\delta/3$.

\medskip

Let us now upper bound the random variable $T_1(n_1-|S|)/n_1$. Since $(n_1-S)T_1$ follows a $\chi^2$ distribution with $n_1-|S|$ degrees of freedom, we derive from Lemma \ref{lemma:concentration_chi2} that 
\begin{eqnarray}\label{Sinoration_denominateur_puissance}
 T_1(n_1-|S|)/n_1& \leq &
1+2\sqrt{\frac{\log(6/\delta)}{n_1}} +
\frac{2}{n_1}\log(6/\delta)\leq 2\ ,
\end{eqnarray}
 with probability  larger than $1-\delta/6$.
Gathering
(\ref{Sinoration_numerateur_puissance}) and
(\ref{Sinoration_denominateur_puissance}), we conclude that
\begin{eqnarray*}
F_{S,1}\geq
\frac{\|\beta_S^{(2)}-\beta_S^{(1)}\|_{\Sigma^{(2)}}^2}{8(\sigma_{S}^{(1)})^2} - \log\left(6/\delta\right)L\left[\left(\frac{\sigma_{S}^{(2)}}{\frac{1}{n_2}\sigma^{(1)}_{S}}\right)^2+\frac{\varphi_{S}}{n_1}\right]\ ,
\end{eqnarray*}
with probability larger than $1-\delta/2$.
 
\end{proof}

\subsection{Proof of Theorem \ref{cor:section_complete}: Power of $T_{\cS_{\leq k}}^B$}
This proposition is a straightforward corollary of Theorem \ref{thrm_puissance}. Consider the subsets $S_{\vee}$ and $S_{\Delta}$ of $\{1,\ldots, p\}$ such that $S_{\vee}$ is the union of the support of $\beta^{(1)}$ and $\beta^{(2)}$ and $S_{\Delta}$ is the supports of $\beta^{(2)}-\beta^{(1)}$. Assume first that $S_{\vee}$ and $S_{\Delta}$ are non empty.
By Definition \eqref{eq:condition_bonf_classique} of the weights, we have
\[\log\left(\frac{1}{\alpha_{i,S_\cup}}\right)\leq \log(4k)+ \log(1/\alpha)+ |S_{\vee}|\log(p)\leq 2|S|_{\cup}\log(p)+ \log(1/\alpha)\ .\]
A similar upper bound holds for $\log(1/\alpha_{i,S_\Delta})$.  If we choose the numerical constants large enough in Conditions ${\bf A.1}$ and ${\bf A.2}$, then the sets $S_{\vee}$ and $S_{\Delta}$
follow the conditions of Theorem  \ref{thrm_puissance}.

Applying Theorem \ref{thrm_puissance}, we derive that $T_{\cS_{\leq k}}^B$ rejects $\hyp_0$ with probability larger than $1-\delta$ when 
\[\mathcal{K}_1(S_{\vee})+\mathcal{K}_2(S_{\vee})\geq \varphi_{S_\cup}\left(\frac{1}{n_1}+ \frac{1}{n_2}\right)\left[|S_{\vee}|+ \log\left(\frac{1}{\alpha_{S_\cup}}\right)\right]\ .\]
Observing that $ \varphi_{S_\cup}\leq \varphi_{\Sigma^{(1)},\Sigma^{(2)}} $,  $\mathcal{K}_1(S_{\vee})=\mathcal{K}_1$, $\mathcal{K}_2(S_{\vee})=\mathcal{K}_2$ and that $|S_{\vee}|\leq |\beta^{(1)}|_0+ |\beta^{(2)}|_0$ allows to prove the first result. 
Let us turn to the second result. According to Theorem \ref{thrm_puissance}, $T_{\cS_{\leq k}}^B$ rejects $\hyp_0$ with probability larger than $1-\delta$ when 
\[\mathcal{K}_1(S_{\Delta})+\mathcal{K}_2(S_{\Delta})\geq \varphi_{S_\Delta}\left(\frac{1}{n_1}+ \frac{1}{n_2}\right)\left[|S_{\Delta}|+ \log\left(\frac{1}{\alpha_{S_\Delta}}\right)\right]\ .\]
Since $\mathcal{K}_1(S_{\Delta})+\mathcal{K}_2(S_{\Delta})\geq \frac{\|\beta^{(1)}-\beta^{(2)}\|_{\Sigma}^2}{2[\var(Y^{(1)})\wedge \var(Y^{(2)})]}$ and since $|S_\Delta|=|\beta^{(1)}-\beta^{(2)}|_0$, the second result follows.

 If $S_{\vee}=\emptyset$, then we can consider any subset of size $1$ to prove the first result.  If $S_{\Delta}=\emptyset$, then $\beta^{(1)}=\beta^{(2)}$ and the second result does not tell us anything.

\subsection{Proof of Proposition \ref{prte:lasso_precis}}\label{section_proof_lasso}
For simplicity, we assume in the sequel that $\beta^{(1)}\neq 0$ or $\beta^{(2)}\neq 0$, the case $\beta^{(1)}=\beta^{(2)}=0$ being handled by any set $S\in\mathcal{S}_1\subset \SLasso$.

This proof  is divided into two main steps. First, we prove that with large probability the collection  $\SLasso$ contains some set $\widehat{S}_{\lambda}$ close to  the union $S_{\vee}$ of the supports of $\beta^{(1)}$ and $\beta^{(2)}$. Then, we show that the statistics $(F_{\widehat{S}_{\lambda},V}, F_{\widehat{S}_{\lambda},1},F_{\widehat{S}_{\lambda},2})$ allow to reject $\hyp_0$ with large probability. \\

Recall that the collection $\SLasso$ is based on the Lasso regularization path of the following heteroscedastic Gaussian linear model,
\begin{eqnarray}\label{eq:definition_modele_regression_lasso}
\left[\begin{array}{c}
       {\bf Y}^{(1)}\\{\bf Y}^{(2)}
      \end{array}
\right] = \left[\begin{array}{cc}
                 \X^{(1)}& \X^{(1)} \\
		\X^{(2)} & -{\bf X}^{(2)}
                \end{array}
\right]\left[
\begin{array}{c}
      \theta_*^{(1)}\\ \theta_*^{(2)}
      \end{array}
\right] + \left[\begin{array}{c}
                 \eps^{(1)} \\ \eps^{(2)}
                \end{array}
\right]
\end{eqnarray}
which we denote for short $\Y =  {\bf W}\theta_* +\eps$.
Given a tuning parameter $\lambda$, $\widehat{\theta}_{\lambda}$ refers to the Lasso estimator of $\theta$:
\[\widehat{\theta}_{\lambda}= \arg\inf_{\theta\in\mathbb{R}^{2p}}\|\Y -{\bf W}\theta\|^2+\lambda|\theta|_1\ .\]
In order to analyze the Lasso solution $\widehat{\theta}_{\lambda}$,
we need to control how ${\bf W}$ acts on sparse vectors.

\begin{lemma}[Control of the design ${\bf W}$]\label{lemma:controle_W}
If we take the constants $L^*$, $L^*_1$, and $L^*_2$  in Proposition \ref{prte:lasso_precis} small enough then the following holds.
The event 
\begin{eqnarray*}
 \mathcal{A}&:=&\left\{\forall \theta\  s.t.\  |\theta|_0\leq k_*,\ 1/2\leq \frac{\|\X^{(1)} \theta\|^2}{n_1\|\theta\|^2_{\Sigma^{(1)}}}\leq 2\text{ and }  1/2\leq \frac{\|\X^{(2)} \theta\|^2}{n_2\|\theta\|^2_{\Sigma^{(2)}}}\leq 2 \right\}\ \\
&&\bigcap \left\{\frac{\kappa\left[6,|\beta^{(1)}|_0+|\beta^{(2)}|_0,\X^{(1)}/\sqrt{n_1}\right]}{\kappa\left[6,|\beta^{(1)}|_0+|\beta^{(2)}|_0,\sqrt{\Sigma^{(1)}}\right]}\bigwedge \frac{\kappa\left[6,|\theta_*|_0,\X^{(2)}/\sqrt{n_1}\right]}{\kappa\left[6,|\theta_*|_0,\sqrt{\Sigma^{(2)}}\right]} \geq 2^{-3} \right\}
\end{eqnarray*}
has probability larger than  $1-\delta/4$. Furthermore, on the event $\mathcal{A}$, 
\begin{eqnarray*}
 \Phi_{k,+}({\bf W})& \leq& 4(n_1+n_2)\left[\Phi_{k,+}(\sqrt{\Sigma^{(1)}})\vee \Phi_{k,+}(\sqrt{\Sigma^{(2)}})\right]\ , \\ \Phi_{k,-}({\bf W}) &\geq& (n_1\wedge n_2)\left[\Phi_{k,-}(\sqrt{\Sigma^{(1)}})\wedge \Phi_{k,-}(\sqrt{\Sigma^{(2)}})\right]\ ,
\end{eqnarray*}
for any $k\leq k_*$. 
\end{lemma}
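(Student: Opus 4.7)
The plan is to separate the probabilistic content of the lemma---showing $\mathbb{P}(\mathcal{A})\geq 1-\delta/4$---from the purely algebraic bounds on the extreme eigenvalues of $\mathbf{W}$, since once we are on $\mathcal{A}$ the latter follows deterministically from the block structure of the reparametrized design \eqref{eq:definition_modele_regression_lasso}.

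For the probability bound, I would treat each sample separately and then take a union bound. Fix $T\subset\{1,\ldots,p\}$ with $|T|=k_{*}$; the rescaled matrix $\X^{(1)}_T(\Sigma^{(1)}_T)^{-1/2}/\sqrt{n_1}$ is a standard $n_1\times k_{*}$ Gaussian matrix whose largest and smallest singular values concentrate around $1$ via a standard Wishart deviation inequality (e.g.\ Lemma \ref{lemma:concentration_vp_wishart}). A union bound over the at most $\binom{p}{k_{*}}\leq e^{k_{*}\log p}$ subsets of size $k_{*}$ upgrades this to the uniform control $1/2\leq \|\X^{(1)}\theta\|^2/(n_1\|\theta\|^2_{\Sigma^{(1)}})\leq 2$ on all $k_{*}$-sparse $\theta$, with exceptional probability below $\delta/16$, provided the constant $L^{*}$ in \eqref{eq:definition_k_*} is small enough that $k_{*}\log p+\log(1/\delta)\lesssim n_1\wedge n_2$; assumption \eqref{eq:lasso_hypothese_alpha_delta} then absorbs the $\log(1/\delta)$ term. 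The analogous event for sample $2$ follows identically. The lower bound on the compatibility constants is then deduced from this restricted isometry by the classical Bickel--Ritov--Tsybakov / Rudelson--Zhou transfer: for $\theta$ in the cone $\mathcal{C}(6,T)$ with $|T|\leq |\beta^{(1)}|_0+|\beta^{(2)}|_0$, one splits the coordinates outside $T$ into blocks of decreasing magnitude, applies the sparse control on each block (whose size is kept below $k_{*}$ by the sparsity calibration), and uses the $\ell_1$ constraint defining the cone to sum the errors geometrically. This produces the factor $2^{-3}$ uniformly over the required cones.

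For the deterministic second part, the key observation is that for any $\theta=(\theta^{(1)},\theta^{(2)})\in\mathbb{R}^{2p}$ one has
\[
\|\mathbf{W}\theta\|^{2} \;=\; \|\X^{(1)}(\theta^{(1)}+\theta^{(2)})\|^{2} + \|\X^{(2)}(\theta^{(1)}-\theta^{(2)})\|^{2},
\]
and $|\theta|_{0}\leq k$ implies $|\theta^{(1)}\pm\theta^{(2)}|_{0}\leq k$. On $\mathcal{A}$, with $k\leq k_{*}$, I would apply the restricted isometry on each summand, combine with the parallelogram identity $\|\theta^{(1)}+\theta^{(2)}\|^{2}+\|\theta^{(1)}-\theta^{(2)}\|^{2}=2\|\theta\|^{2}$, and use the elementary inequalities $n_1\vee n_2\leq n_1+n_2$ and $n_1\wedge n_2\leq n_i$ to obtain the stated bounds on $\Phi_{k,+}(\mathbf{W})$ and $\Phi_{k,-}(\mathbf{W})$.

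The main obstacle I anticipate is the compatibility-constant transfer rather than the eigenvalue bound itself. The cone $\mathcal{C}(6,T)$ contains vectors that are \emph{not} $k_{*}$-sparse, so the sparse restricted isometry does not apply directly; the block-decomposition argument must be calibrated so that each block retains size at most $k_{*}$, and the constant $2^{-3}$ has to carry enough slack to absorb the geometric summation of the block errors. This is precisely what dictates the permitted size of the small constants $L^{*}, L_{1}^{*}, L_{2}^{*}$ appearing in the statement of Proposition \ref{prte:lasso_precis}.
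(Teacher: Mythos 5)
Your proposal is correct in substance, and two of its three ingredients coincide with the paper's proof: the uniform two-sided control of $\|\X^{(i)}\theta\|^2/(n_i\|\theta\|^2_{\Sigma^{(i)}})$ is obtained exactly as you describe, by Lemma \ref{lemma:concentration_vp_wishart} plus a union bound over the $\binom{p}{k_*}$ supports, with conditions \eqref{eq:definition_k_*} and \eqref{eq:lasso_hypothese_alpha_delta} absorbing $k_*\log p$ and $\log(1/\delta)$; and the deterministic eigenvalue bounds for ${\bf W}$ use the same identity $\|{\bf W}\theta\|^2=\|\X^{(1)}(\theta^{(1)}+\theta^{(2)})\|^2+\|\X^{(2)}(\theta^{(1)}-\theta^{(2)})\|^2$ together with the parallelogram law and the observation that $\theta^{(1)}\pm\theta^{(2)}$ is $k$-sparse. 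Where you genuinely diverge is the compatibility-constant part of $\mathcal{A}$: the paper does not re-derive the restricted-eigenvalue transfer at all, but invokes Corollary 1 of \cite{10:RWG_restricted} for correlated Gaussian designs, which directly gives $\kappa[6,\cdot,\X^{(i)}/\sqrt{n_i}]\geq 2^{-3}\kappa[6,\cdot,\sqrt{\Sigma^{(i)}}]$ with probability $1-c_1\exp[-c_2(n_1\wedge n_2)]$ under a sparsity condition guaranteed by \eqref{eq:hypothese_sparsite}; you instead propose the Bickel--Ritov--Tsybakov/Rudelson--Zhou block-decomposition, deducing the cone bound deterministically from the sparse isometry event. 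Your route is more self-contained (no external probabilistic RE result, and the exceptional set is contained in the RIP exceptional set), but note that the constant $2^{-3}$ cannot come from the RIP constants $1/2$ and $2$ alone: the geometric summation over blocks mixes the Euclidean and $\Sigma^{(i)}$-norms, so the admissible cone sparsity must be $k_*$ shrunk by a factor of order $\kappa^2[6,\cdot,\sqrt{\Sigma^{(i)}}]/\Phi_{k_*,+}(\sqrt{\Sigma^{(i)}})$ — precisely the slack that \eqref{eq:hypothese_sparsite} with $L^*_2$ small provides — and this dependence should be made explicit if you carry out the transfer. The paper's citation buys brevity and avoids that bookkeeping; your argument buys transparency at the cost of it.
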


The following property is a slight variation of Lemma 11.2 in \cite{2009_EJS_Geer} and Lemma~3.2 in \cite{GHV:12-sup}.
\begin{lemma}[Behavior of the Lasso estimator $\widehat{\theta}_{\lambda}$]\label{lem:cardinal-lasso}
If we take $L^*_2$ in Proposition \ref{prte:lasso_precis} small enough then the following holds.
The event \[\mathcal{B}= \ac{|{\bf W}^T\eps|_{\infty}\leq 2(\sigma^{(1)}\vee \sigma^{(2)})\sqrt{2\Phi_{1,+}({\bf W})\log(p)}}\] occurs with probability larger than $1-1/p$.
Assume that  \[\lambda\geq 8(\sigma^{(1)}\vee \sigma^{(2)})\sqrt{2\Phi_{1,+}({\bf W})\log(p)}\ .\]
Then, on the event $\mathcal{A}\cap\mathcal{B}$ we have 
\begin{eqnarray}\label{eq:upper_prediction}
\|{\bf W}(\widehat{\theta}_\lambda-\theta_*)\|^2\leq L_1\frac{\lambda^2/(n_1\wedge n_2)}{\kappa^2[6,|\theta_*|_0,\sqrt{\Sigma^{(1)}}]\wedge \kappa^2[6,|\theta_*|_0,\sqrt{\Sigma^{(2)}}] }|\theta_*|_{0}\ ,\\
|\widehat{\theta}_{\lambda}|_0 \leq L_2\frac{n_1\vee n_2}{n_1\wedge n_2} \frac{\Phi_{k_*,+}(\sqrt{\Sigma^{(1)}})\vee \Phi_{k_*,+}(\sqrt{\Sigma^{(2)}})}{ \kappa^2[6,|\theta_*|_0,\sqrt{\Sigma^{(1)}}]\wedge \kappa^2[6,|\theta_*|_0,\sqrt{\Sigma^{(2)}}]} |\theta_*|_0\ \leq k_*/2\ . \label{eq:upper_l0}
\end{eqnarray}
\end{lemma}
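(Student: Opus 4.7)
The plan is to adapt the classical Lasso prediction/selection argument (van de Geer, Bickel--Ritov--Tsybakov) to the heteroscedastic block regression \eqref{eq:definition_modele_regression_lasso}, in three steps. For the event $\mathcal{B}$, conditionally on $\mathbf{W}$, each of the $2p$ coordinates of $\mathbf{W}^T\eps$ is a centered Gaussian with variance at most $(\sigma^{(1)}\vee\sigma^{(2)})^2\|\mathbf{W}_{\cdot,j}\|^2 \leq (\sigma^{(1)}\vee\sigma^{(2)})^2 \Phi_{1,+}(\mathbf{W})$, so a standard Gaussian tail bound combined with a union bound yields $\mathbb{P}(\mathcal{B}^c\mid\mathbf{W}) \leq 4p\exp(-4\log p)\leq 1/p$ for $p\geq 2$, which integrates to the stated bound.

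To prove the prediction inequality \eqref{eq:upper_prediction}, I would start from the Lasso basic inequality
\[
\|\mathbf{W}(\widehat\theta_\lambda-\theta_*)\|^2 \leq 2\langle\eps,\mathbf{W}(\widehat\theta_\lambda-\theta_*)\rangle + \lambda(|\theta_*|_1 - |\widehat\theta_\lambda|_1).
\]
On $\mathcal{B}$ together with the hypothesis on $\lambda$, the first term is at most $(\lambda/2)|\widehat\theta_\lambda - \theta_*|_1$. Splitting $\widehat\theta_\lambda - \theta_*$ along $T:=\mathrm{supp}(\theta_*)$ and $T^c$ and using the reverse triangle inequality on the $\ell_1$ penalty difference produces both the cone membership $\widehat\theta_\lambda-\theta_* \in \mathcal{C}(3,T)\subset\mathcal{C}(6,T)$ and the sharpened bound $\|\mathbf{W}(\widehat\theta_\lambda-\theta_*)\|^2\leq (3\lambda/2)|(\widehat\theta_\lambda-\theta_*)_T|_1$. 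Bounding $|(\widehat\theta_\lambda-\theta_*)_T|_1\leq \sqrt{|\theta_*|_0}\,\|\mathbf{W}(\widehat\theta_\lambda-\theta_*)\|/\kappa[6,|\theta_*|_0,\mathbf{W}]$ and isolating yields
\[
\|\mathbf{W}(\widehat\theta_\lambda-\theta_*)\|^2\leq \frac{9\lambda^2}{4}\cdot\frac{|\theta_*|_0}{\kappa^2[6,|\theta_*|_0,\mathbf{W}]}.
\]
It then remains to lower-bound $\kappa[6,|\theta_*|_0,\mathbf{W}]$ in terms of the population quantities. Using the block identity $\|\mathbf{W}\theta\|^2 = \|\mathbf{X}^{(1)}(\theta^{(1)}+\theta^{(2)})\|^2 + \|\mathbf{X}^{(2)}(\theta^{(1)}-\theta^{(2)})\|^2$ together with the empirical-to-population transfer on $\mathcal{A}$, one should get $\kappa^2[6,|\theta_*|_0,\mathbf{W}] \gtrsim (n_1\wedge n_2)\bigwedge_{i=1,2}\kappa^2[6,|\theta_*|_0,\sqrt{\Sigma^{(i)}}]$, up to a numerical constant, from which \eqref{eq:upper_prediction} follows.

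For the sparsity bound \eqref{eq:upper_l0}, the KKT conditions for the Lasso imply $|(\mathbf{W}^T(\Y - \mathbf{W}\widehat\theta_\lambda))_j| = \lambda/2$ for every $j\in\widehat V := \mathrm{supp}(\widehat\theta_\lambda)$. Substituting $\Y = \mathbf{W}\theta_* + \eps$ and using $|\mathbf{W}^T\eps|_\infty\leq \lambda/4$ on $\mathcal{B}$ gives $|(\mathbf{W}^T\mathbf{W}(\widehat\theta_\lambda-\theta_*))_j| \geq \lambda/4$ on $\widehat V$. Squaring, summing, and using $\|(\mathbf{W}^Tu)_{\widehat V}\|^2\leq\Phi_{|\widehat V|,+}(\mathbf{W})\|u\|^2$ with $u=\mathbf{W}(\widehat\theta_\lambda-\theta_*)$, one obtains $|\widehat V|(\lambda/4)^2\leq\Phi_{|\widehat V|,+}(\mathbf{W})\|\mathbf{W}(\widehat\theta_\lambda-\theta_*)\|^2$. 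Plugging in the bound from the previous step and the upper bound on $\Phi_{k,+}(\mathbf{W})$ from Lemma \ref{lemma:controle_W} gives the desired inequality. Since both steps rely on Lemma \ref{lemma:controle_W}, which only applies when $|\widehat V|\leq k_*$, a self-bounding argument under the sparsity hypothesis \eqref{eq:hypothese_sparsite} and the definition \eqref{eq:definition_k_*} of $k_*$ closes the loop, forcing $|\widehat V|\leq k_*/2$ provided the constant $L_2^*$ is small enough.

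The main obstacle is the restricted-eigenvalue transfer in the second step: the cone in $\mathbb{R}^{2p}$ for $\theta$ with respect to $T$ does not descend cleanly into cones for $\theta^{(1)}\pm\theta^{(2)}$ in $\mathbb{R}^p$, because of possible cancellations between $\theta^{(1)}$ and $\theta^{(2)}$ on $T$. Carefully routing through the block identity, and correctly tracking both the $(n_1\wedge n_2)$-normalization and the resulting $(n_1\vee n_2)/(n_1\wedge n_2)$ prefactor that surfaces in the sparsity bound, is where most of the bookkeeping lies.
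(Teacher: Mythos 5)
Your treatment of the event $\mathcal{B}$ (Gaussian tails plus a union bound over the $2p$ columns of ${\bf W}$) and of the sparsity bound (KKT conditions, $|\widehat V|(\lambda/4)^2\leq \Phi_{|\widehat V|,+}({\bf W})\|{\bf W}(\widehat\theta_\lambda-\theta_*)\|^2$, then the extension $\Phi_{|\widehat V|,+}({\bf W})\leq (1+|\widehat V|/k_*)\Phi_{k_*,+}({\bf W})$ and a self-bounding argument under \eqref{eq:hypothese_sparsite}) is exactly what the paper does. The gap is the step you yourself flag as the "main obstacle" and then leave as ``one should get'': the transfer of the restricted quantity of the $2p$-column design ${\bf W}$ to population quantities. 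As stated, the $\ell_2$-based transfer $\kappa^2[6,|\theta_*|_0,{\bf W}]\gtrsim (n_1\wedge n_2)\bigwedge_{i}\kappa^2[6,|\theta_*|_0,\sqrt{\Sigma^{(i)}}]$ is not something the cone argument can deliver: for $\theta\in\mathcal{C}(3,T)$ in $\mathbb{R}^{2p}$, only \emph{one} of the two combinations $u=\theta^{(1)}+\theta^{(2)}$, $v=\theta^{(1)}-\theta^{(2)}$ is guaranteed to lie in a cone $\mathcal{C}(6,T')$ (with $T'$ the projected support), so only one of the two blocks $\|\X^{(1)}u\|^2$, $\|\X^{(2)}v\|^2$ can be lower bounded via a restricted eigenvalue; since $\|u\|^2+\|v\|^2=2\|\theta\|^2$ and the uncontrolled combination may carry almost all of the $\ell_2$ mass while lying outside any cone, you cannot bound $\|{\bf W}\theta\|/\|\theta\|$ from below this way. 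This is precisely why the paper does not work with $\kappa[\cdot,\cdot,{\bf W}]$ at all but with the compatibility constant $\eta$ of \eqref{eq:definition_eta}, invoking the compatibility-based oracle inequality (van de Geer's Lemma 11.2) $\|{\bf W}(\widehat\theta_\lambda-\theta_*)\|^2\leq L\lambda^2|\theta_*|_0/\eta^2[3,|\theta_*|_0,{\bf W}]$.

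With the $\ell_1$-normalized constant the cancellation problem only costs constants: the paper shows $|u_{T'^c}|_1\vee|v_{T'^c}|_1\leq 6\pa{|u_{T'}|_1\vee|v_{T'}|_1}$, so (say) $u\in\mathcal{C}(6,T')$, and separately $2|u|_1\geq |u_{T'}|_1+|v_{T'}|_1\geq |\theta_T|_1\geq |\theta|_1/4$, because $\ell_1$ norms add up coordinatewise ($|a|+|b|\leq |a+b|+|a-b|$) where $\ell_2$ norms do not localize on the cone component. Dropping the uncontrolled block then gives $k\|{\bf W}\theta\|^2/|\theta|_1^2\gtrsim (n_1\wedge n_2)\bigwedge_i\eta^2[6,k,\X^{(i)}/\sqrt{n_i}]$, and one passes to $\kappa$ via $\eta[M,k,\cdot]\geq \kappa[M,k,\cdot]/(1+M)$ and then to the population matrices through the second part of the event $\mathcal{A}$ (the restricted-eigenvalue comparison of $\X^{(i)}/\sqrt{n_i}$ with $\sqrt{\Sigma^{(i)}}$, which itself rests on the Raskutti--Wainwright--Yu result, not on the elementary sparse-eigenvalue control you invoke). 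So your overall architecture is the right one and two of the three steps are correct, but the missing ingredient is the substitution of the compatibility constant for the restricted eigenvalue in the analysis of ${\bf W}$; without it the chain from the basic inequality to \eqref{eq:upper_prediction}, and hence also to \eqref{eq:upper_l0}, does not close.
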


In the sequel, we fix 
\[\lambda= 16(\sigma^{(1)}\vee \sigma^{(2)})\sqrt{2(n_1+n_2)\left[\Phi_{1,+}(\sqrt{\Sigma^{(1)}})\vee\Phi_{1,+}(\sqrt{\Sigma^{(2)}}) \right]\log(p)}\ .\]
and we consider the set $\widehat{S}_{\lambda}$ defined by the union of the support of $\widehat{\theta}^{(1)}_{\lambda}$ and $\widehat{\theta}^{(2)}_{\lambda}$. 
On the event $\mathcal{A}\cap \mathcal{B}$, Lemma \ref{lem:cardinal-lasso} tells us that $|\widehat{S}_{\lambda}|\leq k_*$. Thus,  $\widehat{S}_{\lambda}$ belongs to the collection $\SLasso$.
We shall prove that 
\begin{eqnarray*}
\min_{i\in\{V,1,2\}}\widetilde{Q}_{i,|\widehat{S}_{\lambda}|}\left(\left. F_{\widehat{S}_{\lambda},i}\right|{\bf X}_{\widehat{S}_{\lambda}} \right)<\alpha_{i,\widehat{S}_{\lambda}}
\end{eqnarray*}
with probability larger than $1-\delta/2$. In the following lemma, we compare $\mathcal{K}_1(\widehat{S}_{\lambda})+ \mathcal{K}_2(\widehat{S}_{\lambda})$ to $\mathcal{K}_1+ \mathcal{K}_2$.
Note $R_{\Sigma^{(1)},\Sigma^{(2)}}= \frac{\bigvee_{i=1,2}\Phi_{k_*,+}(\sqrt{\Sigma^{(i)}})}{\bigwedge_{i=1,2}\Phi_{k_*,-}(\sqrt{\Sigma^{(i)}}) }\frac{\bigvee_{i=1,2}\Phi_{1,+}(\sqrt{\Sigma^{(i)}})}{\bigwedge_{i=1,2}\kappa^2[6,|\theta_*|_0,\sqrt{\Sigma^{(i)}}] }$.
\begin{lemma}\label{lemma:controle_biais_kullback}
 On the event $\mathcal {A}\cap \mathcal{B}$, we have
\begin{eqnarray*}
 L \left[\mathcal{K}_1(\widehat{S}_{\lambda}) + \mathcal{K}_2(\widehat{S}_{\lambda})\right]&\geq& 1\wedge \left[\mathcal{K}_1+\mathcal{K}_2 - L' R_{\Sigma^{(1)},\Sigma^{(2)}}\frac{|S_{\vee}|(n_1\vee n_2)}{(n_1\wedge n_2)^2}\log(p)\right]\ .
\end{eqnarray*}

\end{lemma}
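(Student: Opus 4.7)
The plan is to bound the bias
\[
\Delta \;:=\; \bigl(\mathcal{K}_1 - \mathcal{K}_1(\widehat{S}_\lambda)\bigr) + \bigl(\mathcal{K}_2 - \mathcal{K}_2(\widehat{S}_\lambda)\bigr)
\]
by the Lasso prediction error of Lemma \ref{lem:cardinal-lasso} and then rearrange. Applying the closed-form decomposition of $\mathcal{K}_1 + \mathcal{K}_2$ recorded just before Theorem \ref{thrm_puissance} both with $S=\{1,\dots,p\}$ and with $S = \widehat{S}_\lambda$, and invoking the orthogonal-projection identity $(\sigma^{(i)}_S)^2 - (\sigma^{(i)})^2 = \|\beta^{(i)} - \tilde\beta^{(i)}_S\|_{\Sigma^{(i)}}^2$, where $\tilde\beta^{(i)}_S \in \mathbb{R}^p$ denotes $\beta^{(i)}_S$ extended by zeros outside $S$, the bias $\Delta$ splits into terms of the form $\|\beta^{(i)} - \tilde\beta^{(i)}_{\widehat S_\lambda}\|_{\Sigma^{(i)}}^2/(\sigma^{(j)})^2$ plus cross-terms originating from the mismatch between the denominators $(\sigma^{(j)})^2$ and $(\sigma^{(j)}_{\widehat S_\lambda})^2$. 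Since $(\sigma^{(j)}_{\widehat S_\lambda})^2 \in [(\sigma^{(j)})^2, \mathrm{Var}(Y^{(j)})]$, these cross-terms are absorbed into a bounded multiplicative constant, and the whole bias is dominated by
\[
B \;:=\; \frac{\|\beta^{(1)} - \tilde\beta^{(1)}_{\widehat S_\lambda}\|_{\Sigma^{(1)}}^2 + \|\beta^{(2)} - \tilde\beta^{(2)}_{\widehat S_\lambda}\|_{\Sigma^{(2)}}^2}{(\sigma^{(1)})^2 \wedge (\sigma^{(2)})^2}.
\]

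Next I translate $B$ into the reparametrized prediction error controlled by \eqref{eq:upper_prediction}. Since $\widehat\theta_\lambda$ is supported in $\widehat S_\lambda$, the sample-specific least-squares oracles on $\widehat S_\lambda$ have empirical residual at most that of $\widehat\theta_\lambda$ once the reparametrization is unfolded through the identity
\[
\|\mathbf{W}\theta\|^2 \;=\; \|\mathbf{X}^{(1)}(\theta^{(1)} + \theta^{(2)})\|^2 + \|\mathbf{X}^{(2)}(\theta^{(1)} - \theta^{(2)})\|^2.
\]
On the event $\mathcal{A}$, Lemma \ref{lemma:controle_W} provides a two-sided comparison $\|\mathbf{X}^{(i)} v\|^2 / n_i \asymp \|v\|_{\Sigma^{(i)}}^2$ for every $v$ of sparsity at most $k_*$; combined with an application of Pythagoras in each $\mathbf{X}^{(i)}$-geometry that relates $\|\mathbf{X}^{(i)}(\beta^{(i)} - \tilde\beta^{(i)}_{\widehat S_\lambda})\|^2$ to $\|\mathbf{W}(\widehat\theta_\lambda - \theta_*)\|^2$, and plugging in \eqref{eq:upper_prediction} with the calibrated $\lambda^2 \asymp (\sigma^{(1)} \vee \sigma^{(2)})^2 (n_1+n_2)\Phi_{1,+} \log(p)$ fixed just before the statement of the lemma, this yields
\[
B \;\lesssim\; R_{\Sigma^{(1)},\Sigma^{(2)}} \,\cdot\, \frac{|S_\vee| \, (n_1 \vee n_2) \, \log(p)}{(n_1 \wedge n_2)^2},
\]
the factor $R_{\Sigma^{(1)},\Sigma^{(2)}}$ collecting the sparse-eigenvalue ratios from $\mathcal{A}$ and from the Lasso bound, and the noise-ratio $(\sigma^{(1)} \vee \sigma^{(2)})^2 / [(\sigma^{(1)})^2 \wedge (\sigma^{(2)})^2]$ being absorbed into the universal constant. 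Finally, the claim follows from $\mathcal{K}_1(\widehat S_\lambda) + \mathcal{K}_2(\widehat S_\lambda) = \mathcal{K}_1 + \mathcal{K}_2 - \Delta$; the $\min$ with $1$ on the right-hand side takes care of the regime where the bias upper bound exceeds $\mathcal{K}_1 + \mathcal{K}_2$, in which case the conclusion reduces to the trivial lower bound $\mathcal{K}_1(\widehat S_\lambda) + \mathcal{K}_2(\widehat S_\lambda) \geq 0$ together with the absorbing constant $L$.

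The principal obstacle will be the second paragraph: carefully decoupling the reparametrized Lasso prediction error into separate sample-specific errors $\|\mathbf{X}^{(i)}(\beta^{(i)} - \tilde\beta^{(i)}_{\widehat S_\lambda})\|^2$, and correctly tracking the asymmetric factor $(n_1 \vee n_2)/(n_1 \wedge n_2)^2$ that arises from the mismatch between the total-sample scaling $n_1 + n_2$ built into $\lambda^2$ and the per-sample scaling $1/n_i$ needed to transfer each empirical norm to its $\Sigma^{(i)}$-population counterpart. A secondary subtlety is that the projections $\tilde\beta^{(i)}_{\widehat S_\lambda}$ are defined in the $\Sigma^{(i)}$-inner product, so they are in general not equal to $\widehat\theta_\lambda^{(1)} \pm \widehat\theta_\lambda^{(2)}$ whenever $\Sigma^{(1)} \ne \Sigma^{(2)}$; the cost of switching from one $\Sigma$-geometry to the other is precisely what forces all four sparse-eigenvalue ratios appearing in the definition of $R_{\Sigma^{(1)},\Sigma^{(2)}}$ to surface in the final bound.
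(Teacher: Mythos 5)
Your first step — unfolding $\widehat{\theta}_\lambda$ into $\widehat{\beta}^{(i)}_\lambda=\widehat{\theta}^{(1)}_\lambda\pm\widehat{\theta}^{(2)}_\lambda$, using the identity $\|{\bf W}\theta\|^2=\|\X^{(1)}(\theta^{(1)}+\theta^{(2)})\|^2+\|\X^{(2)}(\theta^{(1)}-\theta^{(2)})\|^2$ together with event $\mathcal{A}$ and the prediction bound \eqref{eq:upper_prediction} to get $\|\beta^{(1)}-\widehat{\beta}^{(1)}_\lambda\|^2_{\Sigma^{(1)}}+\|\beta^{(2)}-\widehat{\beta}^{(2)}_\lambda\|^2_{\Sigma^{(2)}}\lesssim |S_\vee|(n_1\vee n_2)\log(p)(\sigma^{(1)}\vee\sigma^{(2)})^2/(n_1\wedge n_2)^2$, and the remark that cross-geometry norms cost sparse-eigenvalue ratios — is exactly what the paper does, and it correctly explains the factor $R_{\Sigma^{(1)},\Sigma^{(2)}}$.

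The gap is in how you dispose of the denominators. You write that since $(\sigma^{(j)}_{\widehat S_\lambda})^2\in[(\sigma^{(j)})^2,\var(Y^{(j)})]$, the mismatch between $(\sigma^{(j)}_{\widehat S_\lambda})^2$ and $(\sigma^{(j)})^2$ can be ``absorbed into a bounded multiplicative constant''. This is false: $\var(Y^{(j)})/(\sigma^{(j)})^2=1+\|\beta^{(j)}\|^2_{\Sigma^{(j)}}/(\sigma^{(j)})^2$ is unbounded, so the coefficient part $\|\beta^{(1)}_{\widehat S_\lambda}-\beta^{(2)}_{\widehat S_\lambda}\|^2_{\Sigma^{(j)}}/(\sigma^{(i)}_{\widehat S_\lambda})^2$ of $\mathcal{K}_1(\widehat S_\lambda)+\mathcal{K}_2(\widehat S_\lambda)$ may be a vanishing fraction of the corresponding term of $\mathcal{K}_1+\mathcal{K}_2$ even when the Lasso bias is tiny, and no identity $\mathcal{K}_1(\widehat S_\lambda)+\mathcal{K}_2(\widehat S_\lambda)=\mathcal{K}_1+\mathcal{K}_2-\Delta$ with $\Delta\lesssim B$ holds in general. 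What one actually needs is $(\sigma^{(j)}_{\widehat S_\lambda})^2\leq(1+A)(\sigma^{(j)})^2$ with $A$ bounded, which follows from the Lasso error being a small multiple of $(\sigma^{(1)}\vee\sigma^{(2)})^2$ \emph{only when} $\sigma^{(1)}$ and $\sigma^{(2)}$ are within a constant factor of each other. The paper therefore splits into two cases: if $(\sigma^{(1)}\vee\sigma^{(2)})/(\sigma^{(1)}\wedge\sigma^{(2)})\geq\sqrt{2}$ it abandons any comparison with $\mathcal{K}_1+\mathcal{K}_2$ and shows directly that $2[\mathcal{K}_1(\widehat S_\lambda)+\mathcal{K}_2(\widehat S_\lambda)]\geq 1/12$ through the variance-ratio term, which is where the truncation ``$1\wedge$'' and the constant $L$ in the statement are really consumed; otherwise it uses $(\sigma^{(j)}_{\widehat S_\lambda})^2\leq 2(\sigma^{(j)})^2$ to lower bound the variance-ratio and coefficient parts separately, up to multiplicative constants and the additive term $A$. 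Your reading of the ``$1\wedge$'' as only covering the regime where the bias bound exceeds $\mathcal{K}_1+\mathcal{K}_2$ misses this, and without the case analysis on $\sigma^{(1)}/\sigma^{(2)}$ the argument as proposed does not go through.
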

Then, we closely follow the arguments of Theorem \ref{thrm_puissance} to state that $T_{\SLasso}^B$ rejects $\H_0$ with large probability as long as $\mathcal{K}_1(\widehat{S}_{\lambda})+ \mathcal{K}_2(\widehat{S}_{\lambda})$ is large enough.
\begin{lemma}\label{lemma:controle_puissance_aleatoire}
If on the event $\mathcal{A}\cap \mathcal{B}$, we have 
\begin{equation*}
\mathcal{K}_1(\widehat{S}_{\lambda})+ \mathcal{K}_2(\widehat{S}_{\lambda})\geq L \varphi_{\widehat{S}_{\lambda}} \left(\frac{1}{n_1}+ \frac{1}{n_2}\right)  \left[|\widehat{S}_{\lambda}|\log(p)+ \log\left(\frac{1}{\alpha\delta}\right) + \log(p) \right]\ ,
\end{equation*}
then,  $\min_{i\in\{V,1,2\}}\widetilde{Q}_{i,|\widehat{S}_{\lambda}|}( F_{\widehat{S}_{\lambda},i}|{\bf X}_{\widehat{S}_{\lambda}})<\alpha_{i,\widehat{S}_{\lambda}}$ with probability larger than $1-\delta/2$.
\end{lemma}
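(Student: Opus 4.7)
The plan is to upgrade Theorem \ref{thrm_puissance} from a deterministic set $S$ to the random set $\widehat{S}_{\lambda}$ by means of a uniform union bound over $\mathcal{S}_{\leq k_*}$. Indeed, on $\mathcal{A}\cap\mathcal{B}$ the cardinality bound \eqref{eq:upper_l0} of Lemma \ref{lem:cardinal-lasso} forces $\widehat{S}_\lambda\in\mathcal{S}_{\leq k_*/2}\subset \mathcal{S}_{\leq k_*}$. Hence it suffices to prove that, except on an event of probability $\delta/2$, the rejection criterion $\min_{i}\widetilde{Q}_{i,|S|}(F_{S,i}|\X_S)<\alpha_{i,S}$ holds simultaneously for every deterministic $S\in \mathcal{S}_{\leq k_*}$ satisfying the KL hypothesis of the lemma; applied at $S=\widehat{S}_\lambda$ this yields the claim.

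Concretely, fix $S\in\mathcal{S}_{\leq k_*}$ and set $\delta_S:=\delta/\bigl[8(k_*+1)\binom{p}{|S|}\bigr]$. Apply Lemma \ref{lemma_fisher2} to $F_{S,V}$, Lemma \ref{lemma_puissance_deuxieme} to $F_{S,1}$ and $F_{S,2}$, and Lemma \ref{lemma:upper_bound2_q2} to the eigenvector $a$ associated with $S$, each at confidence level $1-\delta_S$. Summing $\delta_S$ over $S\in\mathcal{S}_{\leq k_*}$ and over the four lemmas gives a total failure probability bounded by $\delta/2$. On the complementary event, every $S\in\mathcal{S}_{\leq k_*}$ enjoys the lower bounds \eqref{Sajoration_puissance_fisher}, \eqref{Sajoration_puissance_deuxieme_terme} and its $F_{S,2}$ analogue, with the deviation term $\log(1/\delta)$ replaced by $\log(1/\delta_S)$; moreover the upper bound of Lemma \ref{lemma:upper_bound2_q2} on $|a|_\infty$ holds, which controls the conditional quantile bound \eqref{eq:upper_bound_q2-1} of Lemma \ref{lemma:upper_bound_q2} by a multiple of $\varphi_S(1/n_1+1/n_2)\log(1/\alpha_{i,S})$. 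Similarly Lemma \ref{lemma_fisher} gives the upper bound on $\widetilde{Q}_{V,|S|}^{-1}(\alpha_{V,S}|\X_S)$.

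Plugging these controls into the comparison between the lower bound on $F_{S,i}$ and the upper bound on $\widetilde{Q}_{i,|S|}^{-1}(\alpha_{i,S}|\X_S)$, exactly as in the argument leading to \eqref{eq:lower_bound} in the proof of Theorem \ref{thrm_puissance}, we obtain that $\min_i \widetilde{Q}_{i,|S|}(F_{S,i}|\X_S)<\alpha_{i,S}$ whenever
\[
\mathcal{K}_1(S)+\mathcal{K}_2(S)\gtrsim \varphi_S\Bigl(\tfrac{1}{n_1}+\tfrac{1}{n_2}\Bigr)\Bigl[|S|+\log(1/\alpha_S)+\log(1/\delta_S)\Bigr].
\]
Using $\alpha_S\gtrsim \alpha D_{\max}^{-1}\binom{p}{|S|}^{-1}$, the Bonferroni choice gives $\log(1/\alpha_S)\lesssim |S|\log(p)+\log(D_{\max}/\alpha)$, and by construction of $\delta_S$ we also get $\log(1/\delta_S)\lesssim |S|\log(p)+\log(k_*/\delta)$. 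Since $\log(D_{\max})\vee \log(k_*)\lesssim \log(p)$, the right-hand side above is bounded by $L\varphi_S(1/n_1+1/n_2)\bigl[|S|\log(p)+\log(1/(\alpha\delta))+\log(p)\bigr]$, which is precisely the hypothesis of the lemma with $S=\widehat{S}_\lambda$.

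The only delicate step is the uniform control through the union bound: one must verify that the extra $|S|\log(p)$ produced by setting $\delta_S\propto \binom{p}{|S|}^{-1}$ is indeed absorbed by the $|\widehat{S}_\lambda|\log(p)$ term in the hypothesis, and that the remaining $\log(k_*)$ and $\log(D_{\max})$ contributions are dominated by the additive $\log(p)$. The rest is direct bookkeeping from the deterministic proof of Theorem \ref{thrm_puissance}.
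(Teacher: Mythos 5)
Your proposal is correct and follows essentially the same route as the paper: a per-set confidence level $\delta_S\propto \delta\,\binom{p}{|S|}^{-1}k_*^{-1}$, the deterministic argument of Theorem \ref{thrm_puissance} applied to each $S\in\mathcal{S}_{\leq k_*}$, a union bound summing to $\delta/2$, and the observation that $\widehat{S}_{\lambda}\in\mathcal{S}_{\leq k_*}$ on $\mathcal{A}\cap\mathcal{B}$ so the uniform statement can be evaluated at the random set. The only point the paper makes slightly more explicit is that the constants $L^*$, $L^*_1$ in \eqref{eq:definition_k_*}--\eqref{eq:lasso_hypothese_alpha_delta} guarantee $1+\log[1/(\alpha_S\delta_S)]\lesssim n_1\wedge n_2$, so that the hypotheses of Theorem \ref{thrm_puissance} indeed apply to each $(\alpha_S,\delta_S)$; your bookkeeping implicitly uses this and is otherwise the same.
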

We derive from (\ref{eq:upper_l0}) that on the event $\mathcal{A}\cap\mathcal{B}$, 
\[|\widehat{S}_{\lambda}| \leq L'\frac{n_1\vee n_2}{n_1\wedge n_2} \frac{\bigvee_{i=1,2} \Phi_{k_*,+}(\sqrt{\Sigma^{(i)}})}{ \bigwedge_{i=1,2}\kappa^2[6,|\theta_*|_0,\sqrt{\Sigma^{(i)}}]} |S_{\vee}|\ .\]
Since $|S_{\vee}|\leq |\beta^{(1)}|_0+ |\beta^{(2)}|_0$, it follows from Condition (\ref{eq:hypothese_sparsite}) that $|\widehat{S}_{\lambda}| \leq k_*$.
Gathering Lemmas \ref{lemma:controle_biais_kullback} and \ref{lemma:controle_puissance_aleatoire} allows us to conclude if we take $L^*_3$ in Proposition \ref{prte:lasso_precis} large enough.

\begin{proof}[Proof of Lemma \ref{lemma:controle_W}]
In order to bound $ \mathbb{P}(\mathcal{A})$, we apply Lemma \ref{lemma:concentration_vp_wishart} to simultaneously control $\varphi_{\max}({\bf X}_S^{(1)\intercal}\X_S^{(1)})$, $\varphi_{\max}({\bf X}_S^{(2)\intercal}\X_S^{(2)})$, $\varphi_{\min}({\bf X}_S^{(1)\intercal}\X_S^{(1)})$, and $\varphi_{\min}({\bf X}_S^{(2)\intercal}\X_S^{(2)})$ for all sets $S$ of size  $k_*$. Combining a union bound with Conditions (\ref{eq:definition_k_*}) and (\ref{eq:lasso_hypothese_alpha_delta}) allows us to prove that
\begin{eqnarray*}
\mathbb{P}\left[\left\{\forall \theta\  s.t.\  |\theta|_0\leq k_*,\ 1/2\leq \frac{\|\X^{(1)} \theta\|^2}{n_1\|\theta\|^2_{\Sigma^{(1)}}}\leq 2\text{ and }  1/2\leq \frac{\|\X^{(2)} \theta\|^2}{n_2\|\theta\|^2_{\Sigma^{(2)}}}\leq 2 \right\}\right]\geq 1-\delta/8
\end{eqnarray*}
Applying Corollary 1 in \cite{10:RWG_restricted}, we derive that there exist three positive constant $c_1$, $c_2$ and $c_3$ such that the following holds.
With probability larger than $1-c_1\exp[-c_2(n_1\wedge n_2)]$, we have
\begin{eqnarray*}
\bigwedge_{i=1,2} \frac{\kappa\left[6,|\theta_*|_0,\X^{(i)}/\sqrt{n_i}\right]}{\kappa\left[6,|\theta_*|_0,\sqrt{\Sigma^{(i)}}\right]}\geq 2^{-3}\ ,
\end{eqnarray*}
if $|\theta_*|_0\log(p)< c_3\frac{\vee_{i=1,2}\Phi_{1,+}(\sqrt{\Sigma^{(i)}})}{\wedge_{i=1,2} \kappa^2\left[6,|\theta_*|_0,\sqrt{\Sigma^{(i)}}\right]}(n_1\wedge n_2)$. Hence, we conclude that $\mathbb{P}\left[\mathcal{A}\right]\geq 1-\delta/4$.

Consider an integer $k\leq k_*$ and  a $k$-sparse vector $\theta=\left(\begin{array}{c}
\theta^{(1)}\\ \theta^{(2)}                                                                                                                                                                         \end{array}\right)$ in $\mathbb{R}^{2p}$. Under event $\mathcal{A}$, we have 
\begin{eqnarray*}
 \|{\bf W}\theta\|^2& =& \|\X^{(1)}(\theta^{(1)}+ \theta^{(2)})\|^2+  \|\X^{(2)}(\theta^{(1)}- \theta^{(2)})\|^2\\
&\leq & 2 n_1\|\theta^{(1)}+\theta^{(2)}\|^2_{\Sigma^{(1)}}+ 2n_2\|\theta^{(1)}-\theta^{(2)}\|_{\Sigma^{(2)}}^2\\& \leq& 4(n_1+n_2)\left[\Phi_{k,+}(\sqrt{\Sigma^{(1)}})\vee \Phi_{k,+}(\sqrt{\Sigma^{(2)}})\right]\|\theta\|^2\\
 \|{\bf W}\theta\|^2&\geq & \frac{1}{2}\left[n_1\|(\theta^{(1)}+\theta^{(2)}\|^2_{\Sigma^{(1)}}+ n_2\|\theta^{(1)}-\theta^{(2)}\|_{\Sigma^{(2)}}^2\right]\\
&\geq &(n_1\wedge n_2)\left[\Phi_{k,-}(\sqrt{\Sigma^{(1)}})\wedge \Phi_{k,-}(\sqrt{\Sigma^{(2)}})\right]\|\theta\|^2\ .
\end{eqnarray*}

\end{proof}

\begin{proof}[Proof of Lemma \ref{lem:cardinal-lasso}]
Observe that the variance of $[{\bf W}^{\intercal}\eps]_i$ given ${\bf W}$ is smaller than $\Phi_{1,+}({\bf W})(\sigma^{(1)}\vee \sigma^{(2)})^2$. Using a union bound and the deviations of the Gaussian distribution, it follows that $\mathbb{P}(\mathcal{B})\geq 1-1/p$.

Recall the definition of $\eta[.,.]$ in (\ref{eq:definition_eta}). A slight variation of Lemma 11.2  in \cite{2009_EJS_Geer} ensures that
\begin{equation}\label{KLT}
\|{\bf W}(\widehat{\theta}_{\lambda}-\theta_*)\|^2\leq  L\frac{\lambda^2}{\eta^2[3,|\theta_*|_0,{\bf W}]}|\theta_*|_{0}
\end{equation}
on event $\mathcal{B}$. Fix $k=|\theta_*|_0$ and consider some  $\theta=\left(\begin{array}{c}
\theta^{(1)}\\ \theta^{(2)}                                                                                                                                                                         \end{array}\right)\in \mathcal{C}\left(3,T\right)$ with $|T|=k$. 
Define $T'\subset\{1,\ldots,p\}$ by $i\in T'$ if $i\in T$ or $i+p\in T$. We have
\begin{eqnarray*}
|(\theta^{(1)}+\theta^{(2)})_{T'^c}|_1\vee |(\theta^{(1)}-\theta^{(2)})_{T'^c}|_1&\leq& |\theta^{(1)}_{T'^c}|_1+ |\theta^{(2)}_{T'^c}|_1\leq |\theta_{T^c}|_1\leq 3|\theta_T|_1\\
&\leq & 3\left[|\theta^{(1)}_{T'}|_1+ |\theta^{(2)}_{T'}|_1\right]  \\
&\leq & 6 \left[|(\theta^{(1)}+\theta^{(2)})_{T'}|_1\vee |(\theta^{(1)}-\theta^{(2)})_{T'}|_1 \right]
\end{eqnarray*}
It follows that $\theta^{(1)}+\theta^{(2)}\in\mathcal{C}(6,T')$ or $\theta^{(1)}-\theta^{(2)}\in\mathcal{C}(6,T')$. By symmetry, we assume that $|(\theta^{(1)}+\theta^{(2)})_{T'}|_1\geq |(\theta^{(1)}-\theta^{(2)})_{T'}|_1$. Let us lower bound the $l_1$ norm of $\theta^{(1)}+\theta^{(2)}$ in terms of $\theta$.
\begin{eqnarray*}
 2|\theta^{(1)}+\theta^{(2)}|_1&\geq& \left[ \left|\left(\theta^{(1)}+\theta^{(2)}\right)_{T'}\right|_1+\left|\left(\theta^{(1)}-\theta^{(2)}\right)_{T'} \right|_1\right]
\geq  |\theta_T|_1\geq  \frac{|\theta|_1}{4}\ ,
\end{eqnarray*}
since $\theta$ belongs to $\mathcal{C}(3,T)$. Thus, we derive the lower bound 
\begin{eqnarray*}
\frac{k\|{\bf W}\theta\|^2}{|\theta|_1^2}&\geq &  \frac{k\|\X^{(1)}(\theta^{(2)}+\theta^{(1)})\|^2}{|\theta|_1^2}+   \frac{k\|\X^{(2)}(\theta^{(2)}-\theta^{(1)})\|^2}{|\theta|_1^2}\\
&\geq &\frac{(n_1\wedge n_2)|\theta^{(2)}+\theta^{(1)}|^2_1  }{|\theta|_1^2}\left[\bigwedge_{i=1,2}\eta^2\left(6,k,\X^{(i)}/\sqrt{n_i}\right)\right]\\
&\geq & L (n_1\wedge n_2)\left[\bigwedge_{i=1,2}\kappa^2\left(6,k,\X^{(i)}/\sqrt{n_i}\right)\right]\\
&\geq &L (n_1\wedge n_2) \left[\kappa^2\left(6,k,\sqrt{\Sigma^{(1)}}\right)\wedge \kappa^2\left(6,k,\sqrt{\Sigma^{(2)}}\right)\right]\ ,
\end{eqnarray*}
where the last inequality proceeds from Lemma \ref{lemma:controle_W}. 
We conclude that  
\[L'\kappa^2[3,|\theta_*|_0,{\bf W}]\geq (n_1\wedge n_2)\left[\kappa^2\left(6,k,\sqrt{\Sigma^{(1)}}\right)\wedge \kappa^2\left(6,k,\sqrt{\Sigma^{(2)}}\right)\right]\ .\]
Gathering this bound with (\ref{KLT}), it follows that 
\begin{equation*}
\|{\bf W}(\widehat{\theta}_{\lambda}-\theta_*)\|^2\leq  \frac{L'\lambda^2/(n_1\wedge n_2)}{\kappa^2[6,|\theta_*|_0,\sqrt{\Sigma^{(1)}}]\wedge \kappa^2[6,|\theta_*|_0,\sqrt{\Sigma^{(2)}]} }|\theta_*|_{0}\ ,
\end{equation*}
which allows us to prove (\ref{eq:upper_prediction}). Lemma~3.1 in
\cite{GHV:12-sup} tells us that on event
$\mathcal{B}$, \[\lambda^2|\widehat{\theta}_{\lambda}|_0\leq
16\Phi_{|\widehat{\theta}_{\lambda}|_0,+}({\bf W})\|{\bf
  W}(\widehat{\theta}_{\lambda}-\theta_*)\|^2\ .\] Gathering the last two bounds and Lemma \ref{lemma:controle_W}, we obtain 
\begin{equation}
|\widehat{\theta}_{\lambda}|_0\leq L \,\frac{\Phi_{|\widehat{\theta}_{\lambda}|_0,+}({\bf W})}{ \kappa^2[6,|\theta_*|_0,\sqrt{\Sigma^{(1)}}]\wedge \kappa^2[6,|\theta_*|_0,\sqrt{\Sigma^{(2)}]}}|\theta_*|_{0} .\label{eq:upperbound_norme_l0}
\end{equation}
Recall that $|\theta_*|_0\leq |\beta^{(1)}|_0+ |\beta^{(2)}|_0$.
The upper-bound
$\Phi_{|\widehat{\theta}_{\lambda}|_0,+}({\bf W})\leq (1+ |\widehat{\theta}_{\lambda}|_0/k_*)\Phi_{k_*,+}({\bf W})$ and Lemma \ref{lemma:controle_W}
enforce
\begin{eqnarray}\nonumber
|\widehat{\theta}_{\lambda}|_0&\leq& L\frac{n_1\vee n_2}{n_1\wedge n_2} \frac{\Phi_{k_*,+}(\sqrt{\Sigma^{(1)}})\vee \Phi_{k_*,+}(\sqrt{\Sigma^{(2)}})}{ \kappa^2[6,|\theta_*|_0,\sqrt{\Sigma^{(1)}}]\wedge \kappa^2[6,|\theta_*|_0,\sqrt{\Sigma^{(2)}}]} |\theta_*|_0 \left[1+\frac{|\widehat{\theta}_{\lambda}|_0}{k_*}\right]\\ &\leq&\pa{k_*+|\widehat{\theta}_{\lambda}|_0}/2\ , \nonumber
\end{eqnarray}
where the last inequality holds if we take $L^*_2$ in (\ref{eq:hypothese_sparsite}) small enough. Hence, $|\widehat{\theta}_{\lambda}|_0\leq k_*$. Coming back to (\ref{eq:upperbound_norme_l0}), we prove (\ref{eq:upper_l0}).
\end{proof}

\begin{proof}[Proof of Lemma \ref{lemma:controle_biais_kullback}]
Given the Lasso estimator $\widehat{\theta}_{\lambda}$ of $\theta_*$ in model (\ref{eq:definition_modele_regression_lasso}), we define $\widehat{\beta}^{(1)}_{\lambda}$ and  $\widehat{\beta}^{(2)}_{\lambda}$ by
\begin{eqnarray*}
 \widehat{\beta}^{(1)}_{\lambda}= \widehat{\theta}^{(1)}_{\lambda}+ \widehat{\theta}^{(2)}_{\lambda}\ ,\quad \quad \widehat{\beta}^{(2)}_{\lambda}= \widehat{\theta}^{(1)}_{\lambda}- \widehat{\theta}^{(2)}_{\lambda}\ .
\end{eqnarray*}
On event $\mathcal{A}\cap\mathcal{B}$, we upper bound the difference between $(\beta^{(1)},\beta^{(2)})$ and $(\widehat{\beta}^{(1)}_{\lambda},\widehat{\beta}^{(2)}_{\lambda})$.
\begin{eqnarray*}
 \lefteqn{\|\beta^{(1)}-\widehat{\beta}^{(1)}_{\lambda}\|^2_{\Sigma^{(1)}}+ \|\beta^{(2)}-\widehat{\beta}^{(2)}_{\lambda}\|^2_{\Sigma^{(2)}}}&&\\&\leq& 2\left[\|\frac{\X^{(1)}}{\sqrt{n_1}}(\beta^{(1)}-\widehat{\beta}_{\lambda}^{(1)})\|^2+ \|\frac{\X^{(2)}}{\sqrt{n_2}}(\beta^{(2)}-\widehat{\beta}_{\lambda}^{(2)})\|^2\right]\\
&\leq & \frac{2}{n_1\wedge n_2}\|{\bf W}(\theta_*-\widehat{\theta}_{\lambda})\|^2\\
& \leq & L \frac{\bigvee_{i=1,2}\Phi_{1,+}(\sqrt{\Sigma^{(i)}})}{\bigwedge_{i=1,2}\kappa^2[6,|\theta_*|_0,\sqrt{\Sigma^{(i)}}] } \frac{|S_{\vee}|(n_1\vee n_2)}{(n_1\wedge n_2)^2}\log(p)(\sigma^{(1)}\vee \sigma^{(2)})^2\ ,
\end{eqnarray*}
where the last inequality follows from Lemma \ref{lem:cardinal-lasso}. Let us now lower bound the Kullback discrepancy $2[\mathcal{K}_1(\widehat{S}_{\lambda})+\mathcal{K}_2(\widehat{S}_{\lambda})]$ which equals
\begin{eqnarray*}
  \left(\frac{\sigma^{(1)}_{\widehat{S}_{\lambda}}}{\sigma^{(2)}_{\widehat{S}_{\lambda}}}\right)^2+ \left(\frac{\sigma^{(1)}_{\widehat{S}_{\lambda}}}{\sigma^{(2)}_{\widehat{S}_{\lambda}}}\right)^2 -2 + \frac{\|\beta^{(2)}_{\widehat{S}_{\lambda}}-\beta^{(1)}_{\widehat{S}_{\lambda}}\|^2_{\Sigma^{(2)}}}{(\sigma^{(1)}_{\widehat{S}_{\lambda}})^2}+ 
\frac{\|\beta^{(2)}_{\widehat{S}_{\lambda}}-\beta^{(1)}_{\widehat{S}_{\lambda}}\|^2_{\Sigma^{(1)}}}{(\sigma^{(2)}_{\widehat{S}_{\lambda}})^2}\ .
\end{eqnarray*}

\medskip

\noindent 
{\bf CASE 1}: $\frac{\sigma^{(1)}\vee \sigma^{(2)}}{\sigma^{(1)}\wedge \sigma^{(2)}}\geq \sqrt{2}$. By symmetry, we can assume that $\sigma^{(1)}>\sigma^{(2)}$.
\begin{eqnarray}
(\sigma^{(1)}_{\widehat{S}_{\lambda}})^2&= &(\sigma^{(1)})^2+  \|\beta^{(1)}-\beta^{(1)}_{\widehat{S}_{\lambda}}\|_{\Sigma^{(1)}}^2 \geq (\sigma^{(1)})^2\nonumber \\
(\sigma^{(2)}_{\widehat{S}_{\lambda}})^2&= & (\sigma^{(2)})^2 + \|\beta^{(2)}-\beta^{(2)}_{\widehat{S}_{\lambda}}\|_{\Sigma^{(2)}}^2\leq  (\sigma^{(2)})^2 + \|\beta^{(2)}-\widehat{\beta}^{(2)}_{\lambda}\|_{\Sigma^{(2)}}^2\nonumber \\
&\leq&  (\sigma^{(2)})^2 + L \frac{\bigvee_{i=1,2}\Phi_{1,+}(\sqrt{\Sigma^{(i)}})}{\bigwedge_{i=1,2}\kappa^2[6,|\theta_*|_0,\sqrt{\Sigma^{(i)}}] } \frac{|S_{\vee}|(n_1\vee n_2)}{(n_1\wedge n_2)^2}\log(p)(\sigma^{(1)})^2\label{eq:L}\\
&\leq & (\sigma^{(2)})^2+\frac{(\sigma^{(1)})^2}{4}\ ,\nonumber 
\end{eqnarray}
where we used conditions (\ref{eq:definition_k_*}) and (\ref{eq:hypothese_sparsite}) in the last inequality assuming that we have taken $L^*$ and $L^*_2$ small enough in these two conditions.
This enforces 
\begin{eqnarray*}
  2\left[\mathcal{K}_1\left(\widehat{S}_{\lambda}\right)+\mathcal{K}_2\left(\widehat{S}_{\lambda}\right)\right]\geq \frac{1}{12}\ .
\end{eqnarray*}
\medskip

\noindent 
{\bf CASE 2}: $\frac{\sigma^{(1)}\vee \sigma^{(2)}}{\sigma^{(1)}\wedge \sigma^{(2)}}\leq \sqrt{2}$. Let us note 
\begin{eqnarray*}
 A = 2L \frac{\bigvee_{i=1,2}\Phi_{1,+}(\sqrt{\Sigma^{(i)}})}{\bigwedge_{i=1,2}\kappa^2[6,|\theta_*|_0,\sqrt{\Sigma^{(i)}}] } \frac{|S_{\vee}|(n_1\vee n_2)}{(n_1\wedge n_2)^2}\log(p)\ ,
\end{eqnarray*}
with $L$ as in \eqref{eq:L}.
Arguing as in Case 1, we derive that  
\begin{eqnarray*}
 (\sigma^{(1)}_{\widehat{S}_{\lambda}})^2\leq (\sigma^{(1)})^2\left[1+A\right]\leq 2(\sigma^{(1)})^2 \ , \\
 (\sigma^{(1)}_{\widehat{S}_{\lambda}})^2\leq (\sigma^{(2)})^2\left[1+A\right]\leq 2(\sigma^{(2)})^2\ .
\end{eqnarray*}
Let us lower bound $\mathcal{K}_1(\widehat{S}_{\lambda}) + \mathcal{K}_2(\widehat{S}_{\lambda})$ in terms of $\mathcal{K}_1+\mathcal{K}_2$.
First, we consider the ratio of the variances
\begin{eqnarray}
 \frac{(\sigma^{(1)}_{\widehat{S}_{\lambda}})^2}{(\sigma^{(2)}_{\widehat{S}_{\lambda}})^2}+ \frac{(\sigma^{(2)}_{\widehat{S}_{\lambda}})^2}{(\sigma^{(1)}_{\widehat{S}_{\lambda}})^2}- 2 &\geq& 
\left[\frac{(\sigma^{(1)})^2}{(\sigma^{(2)})^2}+ \frac{(\sigma^{(2)})^2}{(\sigma^{(1)})^2}\right]/(1+ A) -2\nonumber \\ &\geq& \frac{(\sigma^{(1)})^2}{(\sigma^{(2)})^2} + \frac{(\sigma^{(2)})^2}{(\sigma^{(1)})^2} - 2  - \frac{A}{1+A}\left[\frac{(\sigma^{(1)})^2}{(\sigma^{(2)})^2} + \frac{(\sigma^{(2)})^2}{(\sigma^{(1)})^2}\right]\nonumber\\
&\geq & \frac{(\sigma^{(1)})^2}{(\sigma^{(2)})^2} + \frac{(\sigma^{(2)})^2}{(\sigma^{(1)})^2} - 2 - 3A\ .\label{eq:lower_kullback}
\end{eqnarray}
Let us now lower bound the remaining part of $\mathcal{K}_1(\widehat{S}_{\lambda})+\mathcal{K}_2(\widehat{S}_{\lambda})$.
For $i=1,2$, $|\beta^{(i)}-\widehat{\beta}^{(i)}_{\lambda}|_0\leq |\theta_*|_0+ |\widehat{\theta}_{\lambda}|_0\leq k_*$  by Lemma \ref{lem:cardinal-lasso} and Condition \eqref{eq:hypothese_sparsite}.
\begin{eqnarray*}
 \lefteqn{\frac{\|\beta^{(1)}-\beta^{(2)}\|^2_{\Sigma^{(2)}}}{(\sigma^{(1)})^2}+ \frac{\|\beta^{(1)}-\beta^{(2)}\|^2_{\Sigma^{(1)}}}{(\sigma^{(2)})^2}}&&\\ &\leq& \frac{3}{(\sigma^{(1)})^2\wedge (\sigma^{(2)})^2}\sum_{i=1}^2\left[\|\beta^{(1)}-\beta^{(1)}_{\widehat{S}_{\lambda}}\|^2_{\Sigma^{(i)}}+ \|\beta^{(2)}-\beta^{(2)}_{\widehat{S}_{\lambda}}\|^2_{\Sigma^{(i)}}+ \|\beta^{(1)}_{\widehat{S}_{\lambda}}-\beta^{(2)}_{\widehat{S}_{\lambda}}\|^2_{\Sigma^{(i)}}\right]\\
&\leq &  L_1\left[\frac{\|\beta^{(1)}_{\widehat{S}_{\lambda}}-\beta^{(2)}_{\widehat{S}_{\lambda}}\|^2_{\Sigma^{(1)}}}{(\sigma^{(2)})^2}+\frac{\|\beta^{(1)}_{\widehat{S}_{\lambda}}-\beta^{(2)}_{\widehat{S}_{\lambda}}\|^2_{\Sigma^{(2)}}}{(\sigma^{(1)})^2}\right]\\ && + \frac{L_2}{(\sigma^{(1)}\wedge \sigma^{(2)})^2} \frac{\bigvee_{i=1,2}\Phi_{k_*,+}(\sqrt{\Sigma^{(i)}})}{\bigwedge_{i=1,2}\Phi_{k_*,-}(\sqrt{\Sigma^{(i)}})}\left[\sum_{i=1}^2\|\beta^{(i)}-\widehat{\beta}^{(i)}_{\lambda}\|^2_{\Sigma^{(i)}}\right]  \\
&\leq & L_1\left[\frac{\|\beta^{(1)}_{\widehat{S}_{\lambda}}-\beta^{(2)}_{\widehat{S}_{\lambda}}\|^2_{\Sigma^{(1)}}}{(\sigma^{(2)})^2}+\frac{\|\beta^{(1)}_{\widehat{S}_{\lambda}}-\beta^{(2)}_{\widehat{S}_{\lambda}}\|^2_{\Sigma^{(2)}}}{(\sigma^{(1)})^2}\right]+ L_2\frac{\bigvee_{i=1,2}\Phi_{k_*,+}(\sqrt{\Sigma^{(i)}})}{\bigwedge_{i=1,2}\Phi_{k_*,-}(\sqrt{\Sigma^{(i)}}) }A
\end{eqnarray*}
Gathering the last inequality with (\ref{eq:lower_kullback}) yields 
\begin{eqnarray*}
 \mathcal{K}_1(\widehat{S}_{\lambda}) + \mathcal{K}_2(\widehat{S}_{\lambda})\geq L_1\left[\mathcal{K}_1+\mathcal{K}_2\right]- L_2\frac{\bigvee_{i=1,2}\Phi_{k_*,+}(\sqrt{\Sigma^{(i)}})}{\bigwedge_{i=1,2}\Phi_{k_*,-}(\sqrt{\Sigma^{(i)}}) }A\ .
\end{eqnarray*}

\end{proof}

\begin{proof}[Proof of Lemma \ref{lemma:controle_puissance_aleatoire}]
For any non empty set $S$ of size smaller or equal to $k_*$, define $\delta_S=\delta\left(2\binom{|S|}{p}k_*\right)^{-1}$. 
If we take $L^*$ and $L^*_1$ in  (\ref{eq:definition_k_*}-\ref{eq:lasso_hypothese_alpha_delta}) small enough, then $1+\log[1/(\alpha_S\delta_S)]/(n_1\wedge n_2)$ is smaller than 
some constant $L$ small enough so that we can apply Theorem \ref{thrm_puissance}. Arguing as in the proof of this Theorem, we derive that 
 \[\mathbb{P}\left[\min_{i\in\{V,1,2\}}\widetilde{Q}_{i,S}( F_{S,i}|{\bf X}_{S})<\alpha_{S}\right]\geq 1-\delta_S\] if 
\begin{equation*}
\mathcal{K}_1(S)+ \mathcal{K}_2(S)\geq L\varphi_{S} \left(\frac{1}{n_1}+ \frac{1}{n_2}\right)  \left[|S|\log(p)+ \log\left(\frac{1}{\alpha\delta}\right) + \log(p) \right]\ .
\end{equation*}
Applying a union bound over all sets $S$ of size smaller or equal to $k_*$ allows us to prove
 \[\mathbb{P}\left[\min_{i\in\{V,1,2\}}\widetilde{Q}_{i,\widehat{S}_{\lambda}}( F_{\widehat{S}_{\lambda},i}|{\bf X}_{\widehat{S}_{\lambda}})<\alpha_{\widehat{S}_{\lambda}}\right]\geq 1-\delta\ .\]
\end{proof}

\subsection{Proof of Proposition \ref{prte:lasso_precis2}}

This proof follows the same steps as above. Taking $\tilde{L}^{*}$ small enough, we can assume that $n_1\vee n_2\leq 2(n_1\wedge n_2)$.
Rewrite the linear regression model ${\bf Y}= {\bf W}\theta_*+\eps$ as follows:
\begin{equation*}
{\bf Y}= {\bf W}^{(1)}\theta_*^{(1)}+ {\bf W}^{(2)}\theta_*^{(2)}+ \eps\ .
\end{equation*}
From the definition of the Lasso estimator $\widehat{\theta}_{\lambda}=\left(\begin{array}{cc}
                                                                                   \widehat{\theta}^{(1)}_{\lambda}\\\widehat{\theta}^{(2)}_{\lambda}
                                                                                  \end{array}\right)$, we derive that
$\widehat{\theta}^{(2)}_{\lambda}$ is the solution of the following minimization problem:
\begin{equation}\label{eq:definition_lasso_alternative}
  \arg\min_{\theta\in\mathbb{R}^p}\|\eps + {\bf W}^{(2)}\theta_*^{(2)} + {\bf W}^{(1)}\left(\theta_*^{(1)}-\widehat{\theta}^{(1)}_{\lambda} \right) - {\bf W}^{(2)}\theta\|+\lambda |\theta'|_1 \ .
\end{equation}
We fix 
\[\lambda= 16(\sigma^{(1)}\vee \sigma^{(2)})\sqrt{2(n_1+n_2)\Phi_{1,+}(\sqrt{\Sigma})\log(p)}\ .\] 
and we suppose that event $\mathcal{A}\cap \mathcal{B}$ (defined in the last proof) holds. Recall that $\mathbb{P}[\mathcal{A}\cap\mathcal{B}]\geq 1-\delta/4-1/p$.
We consider the set $\widehat{S}^{(2)}_{\lambda}$ defined as the support of $\widehat{\theta}^{(2)}_{\lambda}$. Note that $\widehat{S}^{(2)}_{\lambda}\in \widehat{\cS}_L^{(2)}\subset \SLasso$.

\begin{lemma}\label{lemma_correlation_petite} If we take constants $\tilde{L}^*$ and $L^*_2$ in Proposition \ref{prte:lasso_precis2} small enough, then the following holds.  There exists an $\mathcal{C}$ of probability larger than $1-1/p$ such that, under  $\mathcal{A}\cap\mathcal{B}\cap \mathcal{C}$, we have
\begin{equation}
|{\bf W}^{(2)\intercal}{\bf W}^{(1)}\left(\theta_*^{(1)}-\widehat{\theta}^{(1)}_{\lambda}\right)|_{\infty}\leq \lambda/8 
\end{equation}
\end{lemma}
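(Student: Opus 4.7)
The plan is to decompose $\mathbf{W}^{(2)\intercal}\mathbf{W}^{(1)}=\mathbf{X}^{(1)\intercal}\mathbf{X}^{(1)}-\mathbf{X}^{(2)\intercal}\mathbf{X}^{(2)}$ into its expectation (which is deterministic once $\Sigma^{(1)}=\Sigma^{(2)}=\Sigma$) and a centered fluctuation, bound each piece by coupling the crude estimate $|Mv|_\infty\leq |M|_\infty|v|_1$ with the Lasso $\ell_1$ oracle bound on $v:=\theta_*^{(1)}-\widehat\theta_\lambda^{(1)}$. Since $\mathbb{E}[\mathbf{W}^{(2)\intercal}\mathbf{W}^{(1)}]=(n_1-n_2)\Sigma$, set $E_i:=\mathbf{X}^{(i)\intercal}\mathbf{X}^{(i)}-n_i\Sigma$ and write
\[
|\mathbf{W}^{(2)\intercal}\mathbf{W}^{(1)}v|_\infty\leq\bigl(|n_1-n_2|\,\Phi_{1,+}(\sqrt\Sigma)+|E_1|_\infty+|E_2|_\infty\bigr)\,|v|_1,
\]
using $\max_{j,k}|\Sigma_{jk}|\leq\Phi_{1,+}(\sqrt\Sigma)$ for the deterministic term.

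The event $\mathcal{C}$ is defined as the concentration event for $|E_1|_\infty$ and $|E_2|_\infty$. Each entry $(E_i)_{jk}=\sum_{\ell=1}^{n_i} X^{(i)}_{\ell j}X^{(i)}_{\ell k}-n_i\Sigma_{jk}$ is a centered sub-exponential sum with scale parameter $\lesssim\Phi_{1,+}(\sqrt\Sigma)^2$; Bernstein's inequality followed by a union bound over the $2p^2$ entries gives
\[
|E_i|_\infty\leq L\sqrt{n_i\log p}\,\Phi_{1,+}(\sqrt\Sigma),\qquad i=1,2,
\]
with probability at least $1-1/p$, where the condition $k_*\log p\lesssim n_1\wedge n_2$ (itself implied by \eqref{eq:definition_k_*}) ensures we sit in the sub-Gaussian regime.

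On $\mathcal{A}\cap\mathcal{B}$, the Lasso $\ell_1$ oracle inequality, obtained by combining the prediction bound already used in the proof of Lemma \ref{lem:cardinal-lasso} with $\eta^2[6,\cdot,\mathbf{W}]\gtrsim\kappa^2[6,\cdot,\mathbf{W}]$ and the transfer bound of Lemma \ref{lemma:controle_W}, yields
\[
|v|_1\leq|\widehat\theta_\lambda-\theta_*|_1\lesssim\frac{\lambda\,|\theta_*|_0}{(n_1\wedge n_2)\,\kappa^2[6,k_*,\sqrt\Sigma]}.
\]
Plugging this and the bounds on $|E_i|_\infty$ into the initial display, and using $n_1\vee n_2\leq 2(n_1\wedge n_2)$, gives on $\mathcal{A}\cap\mathcal{B}\cap\mathcal{C}$
\[
|\mathbf{W}^{(2)\intercal}\mathbf{W}^{(1)}v|_\infty\lesssim\lambda\cdot\frac{\Phi_{1,+}(\sqrt\Sigma)\,|\theta_*|_0\bigl[|n_1-n_2|+\sqrt{(n_1\wedge n_2)\log p}\bigr]}{(n_1\wedge n_2)\,\kappa^2[6,k_*,\sqrt\Sigma]}.
\]

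The conclusion $\leq\lambda/8$ therefore reduces to
$|\theta_*|_0\bigl[|n_1-n_2|+\sqrt{(n_1\wedge n_2)\log p}\bigr]\Phi_{1,+}(\sqrt\Sigma)\lesssim(n_1\wedge n_2)\kappa^2[6,k_*,\sqrt\Sigma]$.
Substituting the two structural hypotheses $|\theta_*|_0\leq L_2^*\tilde\gamma^{(2)}_{\Sigma,\tilde k_*}\tilde k_*$ from \eqref{eq:hypothese_sparsite2} and $\tilde k_*\leq\tilde L^*\tilde\gamma_{\Sigma,k_*}\bigl[(n_1\wedge n_2)/|n_1-n_2|\wedge\sqrt{(n_1\wedge n_2)/\log p}\bigr]$ from \eqref{eq:definition_k'*}, and noting that $x/0=\infty$ handles the balanced case $n_1=n_2$ for free, the required inequality reduces to an absolute constant that can be absorbed by choosing $\tilde L^*$ and $L_2^*$ sufficiently small. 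The main obstacle is precisely this last bookkeeping: propagating the definitions of $\tilde\gamma_{\Sigma,k_*}=\kappa[6,k_*,\sqrt\Sigma]\Phi_{k_*,-}^{1/2}(\sqrt\Sigma)/\Phi_{1,+}(\sqrt\Sigma)$ and $\tilde\gamma^{(2)}_{\Sigma,\tilde k_*}=\kappa^2[6,\tilde k_*,\sqrt\Sigma]/\Phi_{\tilde k_*,+}(\sqrt\Sigma)$ so that the ratio of compatibility constants at sizes $\tilde k_*$ and $k_*$ absorbs the $\Phi_{1,+}(\sqrt\Sigma)/\kappa^2[6,k_*,\sqrt\Sigma]$ factor — which is exactly the motivation for the elaborate trade-off encoded in \eqref{eq:definition_k'*} between sparsity and the sample-size imbalance $|n_1-n_2|$.
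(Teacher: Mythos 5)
Your skeleton matches the paper's: you decompose ${\bf W}^{(2)\intercal}{\bf W}^{(1)}=\X^{(1)\intercal}\X^{(1)}-\X^{(2)\intercal}\X^{(2)}$, define $\mathcal{C}$ through an entrywise concentration bound equivalent to \eqref{eq:majoration_difference_covariance}, and combine with a Lasso error bound. The divergence is in how you control $\theta_*^{(1)}-\widehat{\theta}^{(1)}_{\lambda}$: the paper bounds $|Mv|_\infty\leq \|M\|_\infty\sqrt{\tilde k_*}\,\|v\|$ and controls $\|v\|$ through the prediction bound and $\Phi_{k_*,-}({\bf W})$, producing the factor $\tilde k_*\,T_\Sigma$ with $T_\Sigma=\Phi_{1,+}(\sqrt\Sigma)/\bigl(\kappa[6,k_*,\sqrt\Sigma]\,\Phi^{1/2}_{k_*,-}(\sqrt\Sigma)\bigr)=1/\tilde\gamma_{\Sigma,k_*}$, which the definition \eqref{eq:definition_k'*} of $\tilde k_*$ is tailored to cancel exactly. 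You instead use an $\ell_1$ oracle bound in terms of $|\theta_*|_0$, producing the factor $|\theta_*|_0\,\Phi_{1,+}(\sqrt\Sigma)/\kappa^2[6,k_*,\sqrt\Sigma]$.

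This is where the proof breaks. After substituting \eqref{eq:hypothese_sparsite2} and \eqref{eq:definition_k'*} as you propose, the inequality you need is not an absolute-constant statement: it reduces to
\[
\tilde\gamma^{(2)}_{\Sigma,\tilde k_*}\,\tilde\gamma_{\Sigma,k_*}\,\Phi_{1,+}(\sqrt\Sigma)\ \lesssim\ \kappa^2[6,k_*,\sqrt\Sigma],
\qquad\text{i.e.}\qquad
\frac{\kappa^2[6,\tilde k_*,\sqrt\Sigma]\,\Phi^{1/2}_{k_*,-}(\sqrt\Sigma)}{\Phi_{\tilde k_*,+}(\sqrt\Sigma)}\ \lesssim\ \kappa[6,k_*,\sqrt\Sigma]\ .
\]
Since always $\kappa[6,k_*,\sqrt\Sigma]\leq\Phi^{1/2}_{k_*,-}(\sqrt\Sigma)$ and the gap can be arbitrarily large (take $\Sigma=I_p-(1-\epsilon^2)vv^{\intercal}$ with $v$ a flat unit vector supported on $2k_*$ coordinates: then $\kappa[6,k_*,\sqrt\Sigma]\leq\epsilon$ while $\Phi_{k_*,-}(\sqrt\Sigma)\geq 1/2$ and, for $\tilde k_*\ll k_*$, $\kappa^2[6,\tilde k_*,\sqrt\Sigma]$ and $\Phi_{\tilde k_*,+}(\sqrt\Sigma)$ stay of order one), this condition is a genuine $\Sigma$-dependent restriction, not something absorbable into the \emph{universal} constants $\tilde L^*$ and $L^*_2$ of Proposition \ref{prte:lasso_precis2}. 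The mismatch comes precisely from replacing the paper's denominator $\kappa[6,k_*,\sqrt\Sigma]\,\Phi^{1/2}_{k_*,-}(\sqrt\Sigma)$ (one factor from the prediction bound, one from $\Phi_{k_*,-}({\bf W})$) by $\kappa^2[6,k_*,\sqrt\Sigma]$; the hypotheses \eqref{eq:definition_k'*} and \eqref{eq:hypothese_sparsite2} were calibrated against the former, so your route either needs an extra assumption of the type $\Phi^{1/2}_{k_*,-}(\sqrt\Sigma)\lesssim\kappa[6,k_*,\sqrt\Sigma]$ or should revert to the $\sqrt{\tilde k_*}\,\|v\|$ control (using $|v|_0\leq\tilde k_*$ from the sparsity of the Lasso solution) as in the paper.
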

It follows that on $\mathcal{A}\cap\mathcal{B}\cap \mathcal{C}$:
\[\left|{\bf W}^{(2)\intercal}\left[\eps + {\bf W}^{(1)}\left(\theta_*^{(1)}-\widehat{\theta}^{(1)}_{\lambda}\right)\right]\right|_{\infty}\leq \lambda /4\]
 Arguing as in the proof of Lemma \ref{lem:cardinal-lasso} and taking $L_2^*$ small enough, we derive that under $\mathcal{A}\cap \mathcal{B}$, 
\begin{eqnarray}\label{eq:upper_prediction_theta2}
\|{\bf W}^{(2)}(\theta_*^{(2)}-\widehat{\theta}^{(2)}_\lambda)\|^2&\leq& L_1\frac{\lambda^2/(n_1\wedge n_2)}{\kappa^2[6,\tilde{k}_*,\sqrt{\Sigma}] }|\theta_*^{(2)}|_{0}\ ,\\
|\widehat{\theta}^{(2)}_{\lambda}|_0 &\leq& L_2 \frac{\Phi_{k_*,+}(\sqrt{\Sigma})}{  \kappa^2[6,\tilde{k}_*,\sqrt{\Sigma}]} |\theta_*^{(2)}|_0\ \leq \tilde{k}_*/2\leq k_*/2\ . \label{eq:upper_l02}
\end{eqnarray}
This allows us to upper bound  $\|\theta_*^{(2)}-\widehat{\theta}_{\lambda}^{(2)}\|^2_{\Sigma}$ under event $\mathcal{A}$.
\begin{eqnarray*}
 \|\theta_*^{(2)}-\widehat{\theta}_{\lambda}^{(2)}\|^2_{\Sigma} &\leq& \frac{L}{n_1\wedge n_2} \left[\|\X^{(1)}(\theta_*^{(2)}-\widehat{\theta}_{\lambda}^{(2)})\|^2 + \|\X^{(2)}(\theta_*^{(2)}-\widehat{\theta}_{\lambda}^{(2)})\|^2 \right]\\
&\leq & \frac{L}{n_1\wedge n_2} \|{\bf W}^{(2)}(\theta_*^{(2)}-\widehat{\theta}_{\lambda}^{(2)})\|^2\ .
\end{eqnarray*}
Pythagorean inequality then gives 
\begin{eqnarray*}
 \|\beta^{(1)}-\beta^{(2)}\|^2_{\Sigma}&=& \|\beta_{\widehat{S}_{\lambda}^{(2)}}^{(1)}-\beta_{\widehat{S}_{\lambda}^{(2)}}^{(2)}\|^2_{\Sigma}+ \|\beta^{(1)}-\beta^{(2)}- \beta_{\widehat{S}_{\lambda}^{(2)}}^{(1)}+\beta_{\widehat{S}_{\lambda}^{(2)}}^{(2)}\|^2_{\Sigma}\\
&\leq& \|\beta_{\widehat{S}_{\lambda}^{(2)}}^{(1)}-\beta_{\widehat{S}_{\lambda}^{(2)}}^{(2)}\|^2_{\Sigma}+ \|\theta_*^{(2)}-\widehat{\theta}_{\lambda}^{(2)}\|^2_{\Sigma}\\
&\leq & \|\beta_{\widehat{S}_{\lambda}^{(2)}}^{(1)}-\beta_{\widehat{S}_{\lambda}^{(2)}}^{(2)}\|^2_{\Sigma}+ L \frac{|\theta_*^{(2)}|_0\log(p)}{n_1\wedge n_2}\frac{\Phi_{1,+}(\sqrt{\Sigma})}{\kappa^2[6,\tilde{k}_*,\sqrt{\Sigma}]}(\sigma^{(1)}\vee \sigma^{(2)})^2\ ,
\end{eqnarray*}
where we use the two previous upper bounds in the last line. Consequently, we obtain
\begin{eqnarray*}
 \mathcal{K}_1(\widehat{S}^{(2)}_{\lambda}) + \mathcal{K}_2(\widehat{S}^{(2)}_{\lambda})&\geq& L\frac{\|\beta^{(1)}-\beta^{(2)}\|^2_{\Sigma}}{\var(Y^{(1)})\vee \var(Y^{(2)})} - 
L' \frac{|\theta_*^{(2)}|_0\log(p)}{n_1\wedge n_2}\frac{\Phi_{1,+}(\sqrt{\Sigma})}{\kappa^2[6,\tilde{k}_*,\sqrt{\Sigma}]}\ .
\end{eqnarray*}
Applying Lemma \ref{lemma:controle_puissance_aleatoire} to $\widehat{S}^{(2)}_{\lambda}$, using \eqref{eq:upper_l02} and taking $L^*_3$ large enough then allows us to conclude.

\begin{proof}[Proof of Lemma \ref{lemma_correlation_petite}]

Given any matrix $A$, we define the norm $\|A\|_{\infty}=\max_{i,j} |A_{i,j}|$. Suppose that we are under events $\mathcal{A}\cap \mathcal{B}$ defined previously.
Arguing as in the proof of Lemma \ref{lem:cardinal-lasso}, we derive that   $|\theta_*|_0+|\widehat{\theta}_{\lambda}|_0 \leq \tilde{k}_*$  and
\begin{equation}\label{eq:upper_prediction2}
\|{\bf W}(\widehat{\theta}_\lambda-\theta_*)\|^2\leq L_1\frac{\lambda^2}{\kappa^2[6,|\theta_*|_0,\sqrt{\Sigma}] (n_1\wedge n_2)}\tilde{k}_*\ .
 \end{equation}
Thus, $|\theta_*^{(1)}-\widehat{\theta}^{(1)}_{\lambda}|_0\leq \tilde{k}_*$ and we derive
\begin{eqnarray}\nonumber 
\left|{\bf W}^{(2)\intercal}{\bf W}^{(1)}\left(\theta_*^{(1)}-\widehat{\theta}^{(1)}_{\lambda}\right)\right|_{\infty}&= &\left|\left({\bf X}^{(1)\intercal}{\bf X}^{(1)}- {\bf X}^{(2)\intercal}{\bf X}^{(2)}\right)\left(\theta_*^{(1)}-\widehat{\theta}^{(1)}_{\lambda}\right)\right|_{\infty} \\ \nonumber 
&\leq & \|\theta_*^{(1)}-\widehat{\theta}^{(1)}_{\lambda}\|\sqrt{\tilde{k}_*} \|{\bf X}^{(1)\intercal}{\bf X}^{(1)}- {\bf X}^{(2)\intercal}{\bf X}^{(2)}\|_{\infty}\\ \nonumber 
&\leq & \frac{\|{\bf W}(\theta_*-\widehat{\theta})\|}{\sqrt{\Phi_{k_*,-}({\bf W})}}\sqrt{\tilde{k}_*} \|{\bf X}^{(1)\intercal}{\bf X}^{(1)}- {\bf X}^{(2)\intercal}{\bf X}^{(2)}\|_{\infty}\\
&\leq &L \frac{\lambda\tilde{k}_* \|{\bf X}^{(1)\intercal}{\bf X}^{(1)}- {\bf X}^{(2)\intercal}{\bf X}^{(2)}\|_{\infty}}{\sqrt{n_1\wedge n_2}\kappa[6,|\theta_*|_0,\sqrt{\Sigma}]\sqrt{\Phi_{k_*,-}({\bf W})} } \label{eq:upper_prediction3}
\ ,
\end{eqnarray}
where we used (\ref{eq:upper_prediction2}) in the last line.

Combining deviations inequality for  $\chi^2$ distributions (Lemma \ref{lemma:concentration_chi2}) and for Gaussian distributions and a union bound, we derive that 
\begin{eqnarray}\label{eq:majoration_difference_covariance}
\|{\bf X}^{(1)\intercal}{\bf X}^{(1)}- {\bf X}^{(2)\intercal}{\bf X}^{(2)}\|_{\infty}\leq \Phi_{1,+}(\sqrt{\Sigma})\left[|n_1-n_2|+ L\sqrt{(n_1\vee n_2)\log(p)}\right]\ ,
\end{eqnarray}
with probability larger than $1-1/p$.
Consider some $\theta$ with $|\theta|_0\leq k_*$. When event $\mathcal{A}$ defined in Lemma \ref{lemma:controle_W} holds, we have 
\begin{eqnarray*}
\frac{\|{\bf W}\theta\|^2}{\|\theta\|^2}&=& \frac{\|\X^{(1)}(\theta^{(1)}+\theta^{(2)})\|^2}{\|\theta\|^2}+ \frac{\|\X^{(2)}(\theta^{(1)}-\theta^{(2)})\|^2}{\|\theta\|^2}\\ &\geq& \frac{\Phi_{k_*,-}(\sqrt{\Sigma})}{2}\frac{n_1\|\theta^{(1)}+\theta^{(2)}\|^2+ n_2 \|\theta^{(1)}-\theta^{(2)}\|^2 }{\|\theta\|^2}\\
& \geq  & \Phi_{k_*,-}(\sqrt{\Sigma})(n_1\wedge n_2)\ . 
\end{eqnarray*}
Let us note $T_{\Sigma}= \frac{\Phi_{1,+}(\sqrt{\Sigma})}{\kappa[6,k_*,\sqrt{\Sigma}] \Phi^{1/2}_{k_*,-}(\sqrt{\Sigma})} $.
Gathering the last upper  bound with \eqref{eq:upper_prediction3} and \eqref{eq:majoration_difference_covariance}, we get
\begin{eqnarray*}
\left|{\bf W}^{(2)\intercal}{\bf W}^{(1)}\left(\theta_*^{(1)}-\widehat{\theta}^{(1)}_{\lambda}\right)\right|_{\infty} \leq L\lambda \tilde{k}_*\left[\frac{|n_1-n_2|}{n_1\wedge n_2} + \sqrt{\frac{\log(p)}{n_1\wedge n_2}}\right]T_{\Sigma}\ ,
\end{eqnarray*}
since  $n_1\vee n_2\leq 2(n_1\wedge n_2)$.
Taking $\tilde{L}^{*}$ small enough in definition (\ref{eq:definition_k'*}) of $\tilde{k}_*$ allows us to conclude.
\end{proof}

\subsection{Proof of Proposition \ref{prte:minoration_minimax}}

By symmetry, we can assume that $n_1\leq n_2$. Let us fix $\beta^{(2)}=0$ and $\sigma^{(2)}=1$. Fix some positive integer $s\leq p^{1/2-\gamma}$ and fix $r\in (0,1/\sqrt{2})$.

We consider the test of hypotheses $\hyp_0:$  $\beta^{(1)}=0$, $\sigma^{(1)}=1$ against $\hyp_1:$ $|\beta^{(1)}|_0=s$, $\|\beta^{(1)}\|=r^2$, and $\sigma^{(1)}=\sqrt{1-r^2}$. Note that for this problem, the data $({\bf Y}^{(2)}, \X^{(2)})$ do not bring any information on the hypotheses.
This one-sample testing problem is a specific case of the two-sample testing problem considered in the proposition. Thus, a minimax lower bound for the one-sample problem provides us a minimax lower bound for the two-sample problem. 
 
According to Theorem 4.3 in \cite{2010_AS_Verzelen}, no level $\alpha$ test has power larger than $1-\delta$ if 
\begin{eqnarray*}
 \frac{r^2}{1-r^2}\leq  \frac{s}{2n_1}\log\left(1+\frac{p}{s^2}+\sqrt{\frac{2p}{s^2}}\right)
\end{eqnarray*}
Since $s\leq p^{1/2-\gamma}$, no level $\alpha$ test has power larger than $1-\delta$ if
\begin{eqnarray}
 \frac{r^2}{1-r^2}\leq  \gamma \frac{|s|}{n_1}\log(p)\ .
\end{eqnarray}
By Assumption $({\bf A.2})$, one may assume that that the right-hand side term is smaller than $1/2$.
Observe that 
\[2(\mathcal{K}_1+\mathcal{K}_2)= \frac{2r^2}{1-r^2}\quad \text{ and }  \quad \frac{\|\beta^{(1)}-\beta^{(2)}\|^2_{I_p}}{\var[Y^{(1)}]\wedge \var[Y^{(2)}]}= r^2\geq \frac{1}{2}\frac{r^2}{1-r^2}\ ,\]
for $r\leq \sqrt{2}$. The result follows.

\subsection{Technical lemmas}

In this section, some useful  deviation inequalities for $\chi^2$ random variables \cite{Laurent00} and for Wishart matrices~\cite{Davidson2001} are reminded.

\begin{lemma}\label{lemma:concentration_chi2}
 For any integer $d>0$ and any positive number $x$, 
 \begin{eqnarray}
\mathbb{P}\left(\chi^2(d) \leq d - 2\sqrt{dx} \right)& \leq& \exp(-x)
\nonumber\ ,\\
\mathbb{P}\left(\chi^2(d) \geq d + 2\sqrt{dx} + 2x \right) &\leq &\exp(-x)\ .
\nonumber
\end{eqnarray}
\end{lemma}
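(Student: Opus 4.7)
The plan is to use the Chernoff method together with the explicit moment generating function of a $\chi^2$ random variable. Write $Z \sim \chi^2(d)$ as $Z = \sum_{i=1}^{d}\xi_i^2$ with $\xi_i \stackrel{\text{i.i.d.}}{\sim} \mathcal{N}(0,1)$, so that for any $\lambda \in (0,1/2)$, $\mathbb{E}[\exp(\lambda Z)] = (1-2\lambda)^{-d/2}$, and for any $\lambda>0$, $\mathbb{E}[\exp(-\lambda Z)] = (1+2\lambda)^{-d/2}$. Both tails then reduce to optimizing a one-dimensional convex Cramér transform.

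For the upper tail, the Markov/Chernoff bound gives
\begin{equation*}
\mathbb{P}(Z \geq d+t) \leq \inf_{0<\lambda<1/2}\exp\!\left(-\tfrac{d}{2}\log(1-2\lambda)-\lambda(d+t)\right).
\end{equation*}
Cancelling the derivative yields the optimizer $\lambda^{*}=t/(2(d+t))$, which produces the Legendre transform
$\mathbb{P}(Z\geq d+t)\leq \exp\bigl(-\tfrac{1}{2}[\,t-d\log(1+t/d)\,]\bigr)$.
The core step is then to verify, for $t=2\sqrt{dx}+2x$, the inequality $t-d\log(1+t/d)\geq 2x$. Setting $y=\sqrt{x/d}$ so that $t/d = 2y+2y^2$, this reduces to proving $h(2y+2y^2)\geq 2y^2$ for all $y\geq 0$, where $h(u):=u-\log(1+u)$. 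I would do this by showing the function $g(y):=h(2y+2y^2)-2y^2$ satisfies $g(0)=0$ and has non-negative derivative $g'(y)=4y^2/(1+2y+2y^2)$; this is the one genuine calculation and should be the main (though routine) obstacle.

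For the lower tail, the symmetric Chernoff argument with negative exponential tilt gives $\mathbb{P}(Z\leq d-t)\leq \exp(-\tfrac{d}{2}\log(1+2\lambda)+\lambda(d-t))$ for $\lambda>0$. Optimizing at $\lambda^{*}=t/(2(d-t))$ (well-defined since we may assume $t<d$, the other case being trivial) yields
$\mathbb{P}(Z\leq d-t)\leq \exp\bigl(\tfrac{1}{2}[\,d\log(1-t/d)+t\,]\bigr)$.
Setting $u=t/d$, it remains to show $-\log(1-u)-u\geq u^2/2$ for $u\in[0,1)$, which is immediate from the Taylor expansion $-\log(1-u)-u=\sum_{k\geq 2}u^k/k$. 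Applied to $t=2\sqrt{dx}$ this gives exactly the claimed $\exp(-x)$ bound, completing the proof.
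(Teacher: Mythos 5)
Your proof is correct: the Chernoff exponents, the optimizers $\lambda^{*}=t/(2(d+t))$ and $\lambda^{*}=t/(2(d-t))$, and the two elementary inequalities you reduce to are all right. In particular, for the upper tail, with $g(y)=2y-\log(1+2y+2y^2)$ one indeed has $g(0)=0$ and $g'(y)=4y^2/(1+2y+2y^2)\geq 0$, which gives $t-d\log(1+t/d)\geq 2x$ at $t=2\sqrt{dx}+2x$; and for the lower tail the Taylor-series bound $-\log(1-u)-u\geq u^2/2$ settles the claim, with the case $2\sqrt{dx}\geq d$ being trivial as you note.

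One point of comparison: the paper does not prove this lemma at all. It is stated in the ``Technical lemmas'' subsection as a recalled classical result, with a citation to Laurent and Massart, so there is no in-paper argument to match yours against. The cited reference derives the bounds for (weighted) sums of squared Gaussians by bounding the log-moment generating function by $\lambda^2 d/(1-2\lambda)$ and inverting, a Bernstein-type route; you instead compute the exact Cram\'er--Legendre transform of the $\chi^2(d)$ distribution and then verify by hand that it dominates the quadratic-plus-linear rate $x\mapsto x$ at the specific thresholds $d\pm(2\sqrt{dx}+2x\cdot\mathds{1}_{\text{upper}})$. Your route is marginally sharper at the transform stage but pays for it with the two calculus verifications; the reference's route packages those verifications into the sub-gamma MGF bound once and for all. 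Either way, your argument is self-contained and would serve as a valid standalone proof of the lemma as stated.
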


\begin{lemma}\label{lemma:concentration_vp_wishart}
Let $Z^{\intercal}Z$ be a standard Wishart matrix of parameters $(n,d)$ with $n>d$. For
any positive number $x$, 
$$\mathbb{P}\left\{\varphi_{\min}\left(Z^{\intercal}Z\right) \geq
n\left(\left\{1-\sqrt{\frac{d}{n}}-x\right\}\vee 0\right)
\right\} \leq \exp(-nx^2/2)\ ,
$$
and 
$$\mathbb{P}\left[\varphi_{\max}\left( Z^{\intercal}Z\right) \leq
n\left(1+\sqrt{\frac{d}{n}}+x\right)^2 \right]\leq \exp(-nx^2/2)\ .	$$
\end{lemma}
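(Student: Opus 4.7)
The plan is to follow the classical Davidson--Szarek route: Gaussian concentration for 1-Lipschitz functions, combined with Gordon's min--max comparison. First, I would view the $n \times d$ matrix $Z$ with i.i.d.\ standard Gaussian entries as a standard Gaussian vector in $\mathbb{R}^{nd}$ equipped with the Frobenius inner product. By Weyl's inequality, the map $Z \mapsto \sigma_{\max}(Z)$ (respectively $Z \mapsto \sigma_{\min}(Z)$) is $1$-Lipschitz with respect to the Frobenius norm. Since the entries of $Z$ are i.i.d.\ $\mathcal{N}(0,1)$, the Cirel'son--Ibragimov--Sudakov (a.k.a.\ Borell) concentration inequality yields, for every $t > 0$,
\[
\mathbb{P}\bigl[\sigma_{\max}(Z) \geq \mathbb{E}\sigma_{\max}(Z) + t\bigr] \leq e^{-t^2/2}, \qquad
\mathbb{P}\bigl[\sigma_{\min}(Z) \leq \mathbb{E}\sigma_{\min}(Z) - t\bigr] \leq e^{-t^2/2}.
\]
Setting $t = \sqrt{n}\,x$ produces exactly the sub-Gaussian tails with exponent $nx^2/2$ that appear in the statement.

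The second step is to pin down the expectations. Here I would invoke Gordon's min--max theorem, a strengthening of Slepian's lemma that compares a Gaussian process indexed by a product of spheres to one with a simpler covariance structure. Applied to the dual representations
\[
\sigma_{\max}(Z) = \sup_{\|u\|=\|v\|=1} u^{\intercal} Z v, \qquad
\sigma_{\min}(Z) = \inf_{\|v\|=1} \sup_{\|u\|=1} u^{\intercal} Z v,
\]
Gordon's inequality gives the sharp bounds $\mathbb{E}\sigma_{\max}(Z) \leq \sqrt{n}+\sqrt{d}$ and $\mathbb{E}\sigma_{\min}(Z) \geq \sqrt{n}-\sqrt{d}$ (the assumption $n>d$ guaranteeing that the latter is nontrivial).

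Combining the two steps produces, with probability at least $1 - e^{-nx^2/2}$,
\[
\sigma_{\min}(Z) \geq \sqrt{n} - \sqrt{d} - \sqrt{n}\,x, \qquad
\sigma_{\max}(Z) \leq \sqrt{n} + \sqrt{d} + \sqrt{n}\,x.
\]
Using $\varphi_{\min}(Z^{\intercal}Z) = \sigma_{\min}(Z)^{2}$ and $\varphi_{\max}(Z^{\intercal}Z) = \sigma_{\max}(Z)^{2}$, and squaring (with the convention that the lower bound is truncated at $0$ when $\sqrt{d/n}+x>1$), immediately delivers the two statements of the lemma. The main obstacle is really just the expectation bound $\mathbb{E}\sigma_{\min}(Z)\geq \sqrt{n}-\sqrt{d}$: the concentration step is routine Gaussian isoperimetry, but this lower bound on the smallest singular value is the genuinely nontrivial piece that Gordon's theorem, as used in \cite{Davidson2001}, is designed to supply.
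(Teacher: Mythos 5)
Your argument is correct and coincides with how the paper handles this result: the paper gives no proof of the lemma, simply recalling it from Davidson and Szarek \cite{Davidson2001}, and the proof in that reference is exactly the one you reconstruct — Gaussian isoperimetry for the $1$-Lipschitz maps $\sigma_{\min}(Z)$, $\sigma_{\max}(Z)$ on $(\mathbb{R}^{nd},\|\cdot\|_F)$, Gordon's comparison to get $\mathbb{E}\sigma_{\max}(Z)\leq\sqrt{n}+\sqrt{d}$ and $\mathbb{E}\sigma_{\min}(Z)\geq\sqrt{n}-\sqrt{d}$, then $t=\sqrt{n}\,x$ and squaring. Note only that the lemma as printed has the inequalities inside the two probabilities reversed (and the square missing in the $\varphi_{\min}$ bound); what you prove is the intended deviation statement $\mathbb{P}\left[\varphi_{\min}(Z^{\intercal}Z)\leq n\left(\left\{1-\sqrt{d/n}-x\right\}\vee 0\right)^2\right]\leq e^{-nx^2/2}$ and symmetrically for $\varphi_{\max}$, which is the form the paper actually uses in its proofs.
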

\subsection*{Acknowledgements}
We are grateful to Christophe Giraud, two anonynous reviewers, the Associate Editor and the Editor for careful suggestions. We also thank Nicolas Städler for providing us the code of DiffRegr. The research of N. Verzelen is partly supported by the french Agence Nationale
de la Recherche (ANR 2011 BS01 010 01 projet Calibration).

\bibliographystyle{acmtrans-ims}
\bibliography{biblio}
\end{document}